\documentclass[a4paper, 11pt, leqno]{amsart} 
\usepackage[utf8]{inputenc}
\usepackage[T1]{fontenc}	

\usepackage{lmodern}
\usepackage[normalem]{ulem}

\usepackage{newtxtext}

\usepackage[plainpages=false,colorlinks,linkcolor=bleuFonce,citecolor=rougeFonce,urlcolor=vertFonce,breaklinks]{hyperref}

\usepackage{graphicx} 

\usepackage{color}
\definecolor{vertFonce}	{rgb}{0,0.5,0}
\definecolor{numLignes}	{rgb}{0.17,0.57,0.7}	
\definecolor{gris}		{rgb}{0.5,0.5,0.5}
\definecolor{grisFonce}	{rgb}{0.2,0.2,0.2}
\definecolor{orange}	{rgb}{1,0.65,0.31}		
\definecolor{orangeFonce}{rgb}{1,0.4,0}
\definecolor{bleuFonce}	{rgb}{0,0,0.4}
\definecolor{rougeFonce}{rgb}{0.3,0,0}
\definecolor{rougeWord}	{rgb}{0.5,0,0}
\definecolor{vertClair}	{rgb}{0.8,1,0.8}
\definecolor{rougeClair}{rgb}{1,0.5,0.5}
\definecolor{violet}	{rgb}{0.5,0,0.5}

\usepackage{amsmath, amssymb, amscd, amsthm, amsfonts}
\usepackage{mathtools}
\usepackage{bbm} 
\usepackage{amsaddr}
\usepackage{braket}
\usepackage{mathrsfs}

\newtheorem{theorem}{Theorem}[section]
\newtheorem{lemma}[theorem]{Lemma}
\newtheorem{proposition}[theorem]{Proposition}
\newtheorem{remark}[theorem]{Remark}
\newtheorem{definition}[theorem]{Definition}
\newtheorem{corollary}[theorem]{Corollary}

\newcounter{Hcounter}

\newenvironment{HypothesisList}{%
	\setcounter{Hcounter}{0} 
	\begin{enumerate}
	}{%
	\end{enumerate}
}

\newcommand{\Hitem}{\stepcounter{Hcounter}\item}

\newcommand{\N}{\mathbb{N}}
\newcommand{\rr}{\mathbb{R}}

\newcommand		{\lt}			{\left}				%
\newcommand		{\rt}			{\right}			%
\renewcommand	{\(}			{\lt(}
\renewcommand	{\)}			{\rt)}
\newcommand		{\lal}			{\langle}			%
\newcommand		{\ral}			{\rangle}			%
\newcommand		{\bangle}[1]	{\lt\lal #1\rt\ral}
\newcommand		{\weight}[1]	{\lt\lal #1\rt\ral}	

\newcommand		{\inprod}[2]	{\bangle{#1, #2}}

\newcommand		{\cB}		{\mathcal B}
\newcommand		{\cC}		{\mathcal C}

\newcommand		{\cH}		{\mathcal H}

\newcommand		{\cS}		{\mathcal S}

\newcommand		{\cL}		{\mathcal L}		
\newcommand		{\cW}		{\mathcal W}

\newcommand		\sfB		{\mathsf B}

\newcommand		\sfD		{\mathsf D}

\newcommand		\sfK		{\mathsf K}
\newcommand		\sfM		{\mathsf M}

\newcommand		\sfT		{\mathsf T}			

\newcommand		\sfX		{\mathsf X}			

\newcommand		\sfm 		{\mathsf m}

\newcommand		{\intd}			{\int_{\rr^d}}
\newcommand		{\iintd}		{\iint_{\rr^d\times\rr^d}}

\newcommand{\norm}[1]{\left\lVert #1 \right\rVert}
\newcommand		{\nrm}[1]		{\lt\lVert #1\rt\rVert}

\newcommand		{\Nrm}[2]		{\nrm{#1}_{#2}}
\newcommand		{\n}[1]			{\lt\lvert #1 \rt\rvert}
\newcommand{\stnorm}[2]{\left\lVert #2 \right\rVert_{\mathcal{L}^{#1}}}
\newcommand{\cstnorm}[2]{\left\lVert #2 \right\rVert_{#1}}
\newcommand{\znorm}[1]{\left\lVert #1 \right\rVert_{\mathcal{Z}^{A}_1}}
\newcommand{\ynorm}[1]{\left\lVert #1 \right\rVert_{\mathcal{Y}^{A}_1}}
\newcommand{\lnorm}[2]{\left\lVert #2 \right\rVert_{L^{#1}}}
\newcommand{\snorm}[1]{\left| #1 \right|}

\renewcommand		{\d}		{\mathrm{d}}		
\newcommand			{\dd}		{\,\d}		
\newcommand         {\grad}     {\nabla}
\newcommand			{\bd}		{\partial}	
\newcommand			{\Id}		{\mathrm{Id}}		

\newcommand		{\opp}		{\boldsymbol{p}}

\DeclareMathOperator{\tr}		{Tr}				

\newcommand		{\Tr}[1]		{\tr\!\( #1 \)}	

\newcommand		{\Dh}		{\boldsymbol{\nabla}}	

\newcommand     {\lb}       {\left(}
\newcommand     {\rb}       {\right)}

\newcommand{\OPAh}{\operatorname{Op}^{A}_h}
\newcommand{\OPA}{\operatorname{Op}^{A}}
\newcommand{\bB}{B}
\newcommand{\bA}{A}
\newcommand{\bp}{\boldsymbol{p}}
\newcommand{\bXr}{\mathsf{X}_{\boldsymbol{\rho}}}
\newcommand{\bX}{\mathsf{X}}
\newcommand{\bbxx}{\boldsymbol{\mathcal{X}}_{\brho}}
\newcommand{\brho}{\boldsymbol{\rho}}
\newcommand{\brhoAf}{\boldsymbol{\rho}^{A}_f}
\newcommand{\bxhf}{\mathsf{X}_{\brho}}

\newcommand{\mU}{\mathcal{U}(t,s)}

\newcommand{\mG}{K}
\newcommand{\bT}{\mathsf{T}}

\newcommand{\ma}{\sfM_{A}}
\newcommand{\za}{\mathcal{Z}^{A}_1}
\newcommand{\ya}{\mathcal{Y}^{A}_1}
\newcommand{\ee}{\operatorname{e}}

\newcommand{\wmu}{\widetilde{\mu}}
\newcommand{\wnu}{\widetilde{\nu}}

\title[Semiclassical limit of the Magnetic Hartree--Fock Equation]{A Gauge-Covariant Semiclassical Limit from the magnetic Hartree--Fock equation to the magnetic Vlasov--Poisson equation}

\author[\textsc{J. Chong}]{\vspace{-10pt}\textsc{Jacky J. Chong}}
\address[J. Chong]{\small\vspace{-10pt}Department of Mathematics and Statistics, Beijing Institute of Technology, Beijing, China}
\email{jwchong@bit.edu.cn}
\author[\textsc{Z. You}]{\vspace{-10pt}\textsc{Zhuohui You}}
\address[Z. You]{\small\vspace{-10pt}Laboratory of Mathematics and Complex Systems, Ministry of Education, School of Mathematical Sciences, Beijing Normal University, Beijing, China}
\email{zhuohui@mail.bnu.edu.cn}

\keywords{Magnetic Hartree--Fock equation, magnetic Vlasov system, semiclassical limit, propagation of moments, propagation of regularity}

\subjclass[2020]{35Q41, 35Q55, 82C10; 35Q83, 82C05}

\begin{document}
	
	\begin{abstract}
    We study the semiclassical limit from the magnetic Hartree--Fock equation to the magnetic Vlasov--Poisson equation in three spatial dimensions in the presence of a nonuniform, purely space-dependent magnetic field. We first establish propagation of regularity for solutions of the magnetic Vlasov--Poisson equation under exponentially weighted assumptions. Building on this result and a gauge-covariant magnetic Weyl quantization framework, we then derive explicit convergence bounds for the semiclassical limit in semiclassical Schatten norms. In particular, the estimates are formulated in terms of the magnetic field rather than a particular choice of vector potential and are therefore gauge covariant. The resulting semiclassical convergence holds for a suitable class of initial data on any fixed finite time interval.
	\end{abstract}
	
	\maketitle
	
	\renewcommand{\contentsname}{\centerline{Table of Contents}}
	\setcounter{tocdepth}{2}

	\section{Introduction}
	\subsection{Background and Context}
	\subsubsection*{The magnetic Vlasov--Poisson system}
	We consider properties of solutions to the Vlasov–Poisson system with an external space‑time dependent magnetic field $B=B(t,x)$, that is, we consider a solution $f(t, x, v)$ (with $x\in \rr^d$, $v \in \rr^d$, $t\ge 0$) to the Cauchy problem
	\begin{equation}\label{equation: MagneticVlasovSys}
		\begin{cases}
			\partial_t f + v\cdot\nabla_x f + (E+B(t,x)v)\cdot \nabla_v f = 0 \\[.5em]
			f(0,x,v) = f^{\mathrm{in}}(x,v) \ge 0
		\end{cases}
	\end{equation}
	with 
	\begin{equation*}
		E(t, x)=-\grad_x V_f,
	\end{equation*}
	where
	\begin{equation*}
		-\Delta V_{f} = \rho_f \quad \text{ and } \quad \rho_f(t, x)= \intd f(t, x, v)\,\mathrm{d} v\,.
	\end{equation*}
	The matrix $B$ is anti-symmetric, and the initial datum $f^{\mathrm{in}}$ is assumed to be nonnegative, integrable, and bounded on the kinetic phase space $\Xi:=\rr^d_x\times\rr^d_v$. Physically, $f(t,x,v)$ describes the particle density distribution on $\Xi$. For particles of unit mass, the velocity $v$ coincides with the kinetic momentum. The self-consistent potential is given by $V_f=K*\rho_f$. In three dimensions, $K$ is the Coulomb potential $K(x)=1/(4\pi|x|)$.

The existence theory for kinetic equations in the presence of magnetic
fields is classical. Global weak solutions for the Vlasov--Maxwell
system were established by DiPerna and Lions
\cite{diperna1989vlasovmaxwell}, and the corresponding renormalized
solution framework also applies to closely related magnetic transport
problems. The Vlasov--Poisson system under a strong external magnetic
field has been studied in various asymptotic regimes; see, for example,
\cite{frenod1998homogenization,golse1999vlasov,golse2003vlasov}. The
effect of an external magnetic field on Landau damping has also been
investigated recently; see \cite{charles2021magnetized}.

As in the non-magnetic case, classical well-posedness and the
propagation of regularity and moments have also been studied in the
presence of an external magnetic field. Knopf
\cite{knopf2018optimal} proved global classical well-posedness for
smooth compactly supported data and sufficiently regular bounded
magnetic fields. On every finite interval $[0,T]$, the solution remains
supported in a phase-space ball whose radius may depend on $T$; hence
all moments remain finite on finite time intervals. In two dimensions,
Bostan \cite{bostan2019asymptotic} gave a concise global-existence
argument for a spatially dependent magnetic field.

For noncompactly supported data, less is known about moment and
regularity propagation. Rege first treated the case of a uniform
magnetic field and also established propagation of regularity
\cite{rege2021vlasov}. This analysis was later extended to spatially
uniform, time-dependent magnetic fields
\cite{rege2023propagation}. Both works build on the classical methods
of Lions--Perthame \cite{lions1991propagation} and Pfaffelmoser
\cite{pfaffelmoser1992global}. Stability estimates for magnetized
Vlasov equations with nonuniform magnetic fields were subsequently
obtained in \cite{rege2025stability}. More recently,
\cite{porat2025propagation} studied moment and regularity propagation
for space-time-dependent magnetic fields, treating the Coulomb
interaction in two dimensions and a screened Coulomb interaction in
three dimensions, under suitable assumptions on the magnetic field.

\subsubsection*{The Hartree--Fock equation and its magnetic counterpart}

The Hartree equation
\begin{equation}
	i\hbar\,\partial_t\brho
	=
	\left[H_{\brho}^{\rm red},\brho\right]
	\quad\text{with}\quad
	\brho(0)=\brho^{\rm in}
\end{equation}
describes the evolution of density operators $\brho=\brho(t)$. These
operators are positive, self-adjoint, and trace class on $L^2(\rr^d)$,
and
\begin{equation*}
	[A,B]:=AB-BA
\end{equation*}
denotes the usual commutator. Here $h$ is Planck's constant and
$\hbar=h/(2\pi)$ is the reduced Planck constant. We impose the
normalization constraints
\begin{equation}\label{def:normalization}
	h^d\Tr{\brho}=1
	\quad\text{and}\quad
	\Nrm{\brho}{\infty}=\cC_\infty,
\end{equation}
where $\Nrm{\cdot}{\infty}$ denotes the operator norm and
$\cC_\infty$ is independent of $\hbar$. The Hartree, or reduced
Hartree--Fock, Hamiltonian is given by
\begin{equation*}
	H_{\brho}^{\rm red}
	=
	-\tfrac12\hbar^2\Delta+V_{\brho},
\end{equation*}
where $V_{\brho}$ is the multiplication operator by the mean-field
potential
\begin{equation*}
	V_{\brho}(x)=(K*\rho)(x),
\end{equation*}
and the quantum spatial density is defined as the scaled restriction
of the integral kernel of $\brho$ to the diagonal,
\begin{equation*}
	\rho(x):=h^d\brho(x,x).
\end{equation*}
Here and throughout the paper, we use the same notation for the
operator $\brho$ and its integral kernel $\brho(x,y)$. The Hartree
equation may be regarded as the quantum counterpart of the Vlasov
equation and provides a natural bridge between quantum and classical
mean-field dynamics.

If the Hartree equation is used to model a system of particles obeying
Fermi statistics, a more accurate description, taking into account
the anti-symmetry of the underlying many-body wave function, is given
by the Hartree--Fock equation
\begin{equation}\label{eq:HF}
	i\hbar\,\partial_t\brho
	=
	[H_{\brho},\brho],
\end{equation}
where
\begin{equation*}
	H_{\brho}
	=
	-\tfrac12\hbar^2\Delta+K*\rho_{\rm HF}-h^d\bxhf.
\end{equation*}
The exchange operator $\bxhf$ has integral kernel
\begin{equation*}
	\bxhf(x,y)
	=
	K(x-y)\brho(x,y).
\end{equation*}

To take into account the magnetic field, we consider the magnetic
Hartree--Fock equation given by \eqref{eq:HF} with effective
Hamiltonian
\begin{equation}\label{eq:magnetic_Hamiltonian}
	H^A_{\brho}
	=
	H^A_0+V_{\brho}-h^d\sfX_{\brho},
	\qquad
	H^A_0
	=
	\tfrac12(\opp-A)^2,
\end{equation}
where $A$ is a vector potential corresponding to the external magnetic
field and
\begin{equation*}
	\opp=-i\hbar\grad.
\end{equation*}
Compared with the non-magnetic case, the mathematical theory is less
developed. Well-posedness and related questions for magnetic Hartree
equations have been studied, for example, in
\cite{michelangeli2015global,dong2021hartree}. Mean-field derivations
in the presence of magnetic fields were obtained for bosonic Hartree
dynamics in \cite{luhrmann2012mean}. More recently, effective
Hartree--Fock dynamics for fermionic systems in magnetic fields has
been investigated in
\cite{ferrero2024effective,benedikter2025magnetic}.

The mathematical literature on Hartree and Hartree--Fock dynamics is
extensive; for an introduction and further references, see
\cite{benedikter2016effective}. Results for bosonic Hartree-type
dynamics can be found, for instance, in
\cite{bardos2002derivation,erdos2001derivation,rodnianski2009quantum,
grillakis2010second, grillakis2011second, grillakis2013pair}, while fermionic Hartree and Hartree--Fock
dynamics have been studied in
\cite{bach2016kinetic,benedikter2014mean,benedikter2016mixed,
petrat2016new,saffirio2017mean}.

\subsubsection*{Semiclassical limits}
The derivation of the Vlasov equation from quantum many-body dynamics
in a combined mean-field and semiclassical scaling goes back to
Narnhofer and Sewell \cite{narnhofer1981vlasov} and Spohn
\cite{spohn1981vlasov}; see, e.g.,
\cite{benedikter2014mean,benedikter2016mixed,petrat2016new,
chen2021combined,chen2022convergence,chong2024many}
for more recent results on effective fermionic dynamics in related
mean-field and semiclassical regimes. In particular, these works considered a scaling
in which the large-particle-number limit is coupled to the classical
limit. Early
results on the semiclassical limit established convergence in weak
topologies under various assumptions; see, for example, Lions and Paul
\cite{lions1993mesures} and Markowich and Mauser
\cite{markowich1993classical}. Stronger and quantitative forms of
semiclassical convergence were subsequently obtained in
\cite{athanassoulis2011strong,amour2013semiclassical,
benedikter2016hartree}.

More recently, considerable attention has been devoted to singular
interactions, and in particular to the Coulomb potential. Strong
semiclassical convergence for singular inverse-power interactions was
established in \cite{saffirio2020inverse}, while the Coulomb case was
treated in trace norm for suitable mixed states in
\cite{saffirio2020hartree}. Further results on propagation of
semiclassical regularity and quantitative convergence for the
Hartree and Hartree--Fock equations can be found in
\cite{lafleche2019propagation,lafleche2021global,chong2022global,
chong2023l2,lafleche2023strong,chong2024many}. In particular,
\cite{lafleche2023strong} establishes strong semiclassical convergence
from both Hartree and Hartree--Fock dynamics to the Vlasov--Poisson
equation in Schatten norms for singular interactions including the
Coulomb potential.

Semiclassical limits in the presence of external magnetic fields have
also been studied. In \cite{benporat2024magnetic}, the magnetic
Liouville equation with a nonconstant external magnetic field was
derived from quantum dynamics by means of a semiclassical transport
distance. The semiclassical limit from the Pauli--Poisson equation to
the magnetic Vlasov--Poisson equation was established in
\cite{moller2025pauli}. In the related setting of a dynamical
electromagnetic field, the semiclassical limit from the fermionic
Maxwell--Schr\"odinger system to the Vlasov--Maxwell system was
established in \cite{LeopoldSaffirio2026}. This result is formulated
in the Coulomb gauge and treats extended charges with a regularized
Coulomb interaction. Together with \cite{Leopold2024}, it also
provides an effective description of the corresponding spinless
Pauli--Fierz dynamics by the Vlasov--Maxwell system. Related magnetic
semiclassical limits leading to fluid equations have also recently
been considered in \cite{benporat2026magnetized}.

A natural tool for studying magnetic semiclassical limits is the
magnetic Weyl calculus. A gauge-covariant magnetic Weyl quantization
and the associated symbolic calculus were developed in
\cite{mantoiu2004magnetic,iftimie2007magnetic}; semiclassical
asymptotic expansions for the magnetic Weyl product were further
studied in \cite{lein2010two}. The use of this calculus is particularly
convenient in the present setting because it allows the magnetic field,
rather than a particular choice of vector potential, to enter the
semiclassical expansion in a gauge-covariant manner.

In this work, we study the semiclassical limit from the magnetic
Hartree--Fock equation to the magnetic Vlasov--Poisson equation in
three dimensions using magnetic Weyl quantization. As $h\to0$, we
establish convergence in Schatten norms on arbitrary fixed finite time
intervals. Our result combines a nonuniform external magnetic field,
the three-dimensional Coulomb interaction, and the Hartree--Fock
exchange term within a strong semiclassical convergence result. The
closest non-magnetic analogue is the strong Hartree--Fock to
Vlasov--Poisson limit of \cite{lafleche2023strong}, whereas
\cite{benporat2024magnetic,moller2025pauli} provide closely related
magnetic semiclassical limits in different quantum settings and
topologies. A key ingredient in our argument is the propagation of
moments and regularity for solutions of the Cauchy problem
\eqref{equation: MagneticVlasovSys}.

\bigskip 
\noindent The paper is organized as follows.
\begin{itemize}
	\item We conclude this section by introducing the main ingredients needed for the semiclassical analysis and by stating our principal results.
	\item In Section \ref{Section:ProofofPropagationofRegularity}, we prove Theorem \ref{Theorem:Propagation-of-Moments} and Theorem \ref{Theorem:PropagationOfRegularity}. The estimates obtained there will be used in the proof of the semiclassical limit in Section \ref{Section:ProofofSemiclassicalLimit}.
	\item In Section \ref{Section:ProofOfRegularityOfMagneticWeyl}, we establish regularity properties of the magnetic Weyl quantization. These may be viewed as magnetic analogues of the corresponding non-magnetic results in \cite{lafleche2023strong}.
	\item In Section \ref{Section:ProofofSemiclassicalLimit}, we prove Theorem \ref{Theorem:Semiclassicallimit} using the propagation of moments and regularity established in Section \ref{Section:ProofofPropagationofRegularity}. The arguments for the magnetic Hartree and magnetic Hartree--Fock equations are largely parallel, with the additional task of controlling the exchange term in the Hartree--Fock case.
	\item In Appendix \ref{Section:PropertiesofMagneticWeylTransform}, we collect several calculations and properties of the magnetic Weyl quantization. While related results are available in the literature, we formulate the versions needed here under finite Sobolev regularity assumptions and with explicit dependence on the semiclassical parameter.
	\item In Appendix \ref{Section:ExistenceResults}, we prove global existence for the magnetic Hartree--Fock equation by an energy method. We first establish local existence in a suitable Banach space and then extend the solution globally using a priori bounds in the associated energy space.
    \item Finally, in Appendix~\ref{Section:classical_ExistenceResults}, we give an argument for the global wellposedness of classical solutions to the magnetic Vlasov--Poisson equation with a space-time-dependent magnetic field and noncompactly supported initial data.
\end{itemize}

	\subsection{Function spaces and the magnetic Weyl quantization}
	Let us fix some notation prior to stating our main results. An element of the phase space $\Xi=\rr^d\times\rr^d$ is given by $z = (x, v)$. We denote the classical Lebesgue space associated to $\Xi$ by $L^p(\Xi)$. Moreover, we also define the weighted Sobolev spaces as follows: we adopt the standard notation
	$$\langle x\rangle:=\sqrt{1+\snorm{x}^2}\quad \text{ and } \quad w_\lambda(v) = e^{\lambda  \langle v\rangle},$$
	then, for every $n, k \in \N$, we define the weighted (kinetic) Sobolev space $W^{n,p}_k(\Xi)$ to the subspace of $L^p(\Xi)$ equipped with the norm
	\begin{equation*}
		\norm{f}_{W^{n,p}_k(\Xi)}:=\Big(\sum_{|\alpha|\le n} \norm{{\langle z\rangle }^k \partial_z^\alpha f(z)}_{L^p_z}^p\Big)^{1/p}\,\quad \text{ with }\quad  
		\renewcommand\arraystretch{1.5}
		\begin{array}{l}
			\alpha \in {\N_{0}^{2d}},\, |\alpha|= \sum^{2d}_{i=1} \alpha_{i},
			\\
			\text{and } \partial_z^\alpha = \partial_{x_1}^{\alpha_1}\cdots \partial_{v_d}^{\alpha_{2d}}
		\end{array}.
	\end{equation*}
    We also write $H^n_k(\Xi):=W^{n,2}_k(\Xi)$. In the case of exponential velocity weights, we write  
    \begin{equation*}
        \norm{f}_{\mathcal{X}^{n,p}_{\mu, \lambda}(\Xi)}=\Big(\sum_{|\alpha|\le n} \norm{w_{\mu}(x) w_{\lambda}(v) \partial_z^\alpha f(z)}_{L^p_z}^p\Big)^{1/p}\,.
    \end{equation*}
    Here, we adopt the usual modification when $p=\infty$. 
    We also use 
    \begin{equation*}
        D^{\beta, \gamma} f := \partial_x^\beta \partial_v^\gamma f \quad \text{ for }\quad \beta, \gamma \in \N_0^d\,.
    \end{equation*}

	We define the space of positive trace-class operators by
	\begin{align}
		\mathcal{L}^1_{+}:=\left\{\brho\in \cB(L^2(\rr^d)),\ \brho=\brho^{*}\ge 0,\ \operatorname{Tr}(\brho)<\infty\right\}\,,
	\end{align}
	where $\cB(L^2(\rr^d))$ denotes the space of bounded linear operators acting on $L^2(\rr^d)$. We denote the semiclassical Schatten-$p$ norm by
	\begin{align}
		\stnorm{p}{\brho}=h^{\frac{d}{p}}\norm{\brho}_p=h^{\frac{d}{p}}\left(\operatorname{Tr}(\snorm{\brho}^p)\right)^{\frac{1}{p}}\,,
	\end{align}
	where $\norm{\cdot}_p$ denotes the usual Schatten $p$-norm.

We regard the magnetic field $B$ as a closed anti-symmetric two-form on
$\mathbb{R}^d$. Since every closed two-form on $\mathbb{R}^d$ is exact,
there exists a one-form vector potential $A$ such that $\mathrm{d}A=B$;
in components,
$B_{ij}=\partial_{x_i}A_j-\partial_{x_j}A_i$.
The choice of $A$ is highly non-unique: for any sufficiently regular
function $\varphi$, the one-form $\widetilde A=A+\mathrm{d}\varphi$
generates the same magnetic field. Moreover, a vector potential may
have worse behavior at spatial infinity than the corresponding
magnetic field. We therefore impose regularity assumptions primarily
on $B$ and only local regularity on $A$.

\begin{HypothesisList}
	\Hitem \label{Assumption: bounded B}
	For some sufficiently large finite $m\in\N$,
	\begin{equation*}
		B\in W^{m,\infty}(\rr^d).
	\end{equation*}

	\Hitem \label{Assumption: vector potential A}
	The vector potential satisfies
	\begin{equation*}
		A\in W_{\mathrm{loc}}^{1,\infty}(\rr^d;\rr^d),
		\qquad
		\d A=B.
	\end{equation*}
\end{HypothesisList}

	\bigskip
	
	Let us now introduce the magnetic Weyl quantization (MWQ). 
	\begin{definition}
		Assume that the magnetic field $B$ and vector potential $A$ satisfy \ref{Assumption: bounded B} and \ref{Assumption: vector potential A}, respectively. For $f\in \mathcal{S}(\Xi)$ and $h>0$, we define the associated semiclassical magnetic Weyl quantized operator acting on $u \in L^2(\rr^d)$ by
		\begin{subequations}\label{def:magnetic_Weyl_quantization}
			\begin{equation}
				\OPAh(f)u:=\iintd e^{-2\pi i\,v\cdot(y-x)}e^{-\frac{2\pi i}{h}\,\Gamma^{A}([x,y])}f(\tfrac{x+y}{2},hv)u(y)\,\mathrm{d} y\mathrm{d}v\,.
			\end{equation}
			Here, $\Gamma^{A}([x,y]):=\int_{[x,y]}A$
			is the integral of the one-form $A$ along the line segment $[x,y]$. When $h=1$, we simply denote $\OPA_1(f)=\OPA(f)$. Moreover, we also denote the integral kernel of the magnetic Weyl quantized operator of $f$ by
			\begin{equation}
				\brho^{A}_f(x,y):=\int_{\rr^d}e^{-2\pi i\,v\cdot(y-x)}e^{-\frac{2\pi i}{h}\,\Gamma^{A}([x,y])}f(\tfrac{x+y}{2},hv)\,\mathrm{d} v\,,
			\end{equation}
		\end{subequations}
		and adopt the notation $\brho^{A}_f:=\OPAh(f)$.
	\end{definition}
	
	It is also convenient to consider the quantum analog of the above weighted (kinetic) Sobolev spaces (at least for $n = 1$). More precisely, we define the quantum gradients as follows:
	\begin{align*}
		\boldsymbol{\nabla}_x\brho:=[\nabla,\brho^{A}_f]
			-
			\frac{2\pi i}{h}
			\left(\brho^A_{Af}-\brhoAf A\right)\quad \text{ and } \quad \boldsymbol{\nabla}_v\brho:=\left[\frac{x}{i\hbar},\brho\right]\,.
	\end{align*}
	Moreover, let us denote $\opp :=-i\hbar \grad$ and $\sfm_n := \weight{\opp}^n$, defined as a multiplication operator with the symbol $\weight{v}^n$ in Fourier space with $\n{\opp}^2=-\hbar^2\Delta$. Then the semiclassical analogues of the kinetic homogeneous Sobolev norms are defined by
	\begin{equation}
		\begin{aligned}
			\Nrm{\brho}{\dot{\cW}^{1,p}_n}^p &:= \Nrm{\Dh_{\!x}\brho\, \sfm_n}{\cL^p}^p+\Nrm{\Dh_{\!v}\brho\, \sfm_n}{\cL^p}^p
			\\
			\Nrm{\brho}{\dot{\cW}^{1,\infty}_n} &:= \Nrm{\Dh_{\!x}\brho\, \sfm_n}{\cL^\infty}+\Nrm{\Dh_{\!v}\brho\, \sfm_n}{\cL^\infty},
		\end{aligned}
	\end{equation}
	and the inhomogeneous version by
	\begin{align}
		\Nrm{\brho}{\cW^{1,p}_n}^p &:= \Nrm{\brho\,\sfm_n}{\cL^p}^p + \Nrm{\brho}{\dot{\cW}^{1,p}_n}^p,
	\end{align}
	with the usual modification when $p=\infty$. Notice that for $p=2$, we have $\Nrm{\brho}{\cW^{1,2}} = \Nrm{f_{\brho}}{H^1}$ which is suggestive of the strong connection between $\cW^{1, p}_n$ and the classical kinetic Sobolev spaces.

The motivation for this definition is partly provided by the following identities. For any $f\in\mathcal{S}(\Xi)$ and $u\in\mathcal{S}(\mathbb{R}^d)$, by direct calculation, we have 
\[
\left[\frac{x}{i\hbar},\brhoAf\right]u=\brho^{A}_{\nabla_v f}u\,,
\]
while, more covariantly, we have
\[
	\left[
		\nabla-\frac{i}{\hbar}\bA ,
		\brho_f^A
	\right]
	=
	\brho_{\mathcal{D}_x^B f}^A
	+
	O(h^2)\,,
\]
where 
\[
	\mathcal{D}_x^B f
	:=
	\nabla_x f-\bB(x)\nabla_v f
\]
is the magnetic phase-space derivative. 
Thus, differentiation of the symbol with respect to the position and velocity variables corresponds, under the magnetic Weyl quantization, to commutators with $\nabla$ and $x/(i\hbar)$, respectively.
	
	\bigskip 
	
	Next, we introduce the magnetic version of the Poisson bracket. 
	
	\begin{definition}[Magnetic Poisson bracket]
		Let $B$ satisfy \ref{Assumption: bounded B}. For functions $f, g\in \cS(\Xi)$, the magnetic Poisson bracket of $f$ and $g$ associated with $B$ is defined as follows:
		\begin{equation}
			\left\{f,g\right\}_{B}=\sum^d_{j=1}\left(\partial_{x_j}f\partial_{v_j}g-\partial_{x_j}g\partial_{v_j}f\right)+\sum^d_{k,j=1}B_{kj}\partial_{v_k}f\partial_{v_j}g\,.
		\end{equation}
	\end{definition}
	By applying this, we can rewrite the equation \eqref{equation: MagneticVlasovSys} as
	\begin{equation}\label{equation:ModifiedMagneticVlasov}
		\partial_tf=\left\{\tfrac{1}{2}\snorm{v}^2+K*\rho_f,f\right\}_{B}\,.
	\end{equation}
	Moreover, when we consider equation \eqref{equation: MagneticVlasovSys} in the case of three dimensions, if we suppose that 
	\begin{equation*}
		B=
		\begin{pmatrix}
			0 & B_3 & -B_2 \\
			-B_3 & 0 & B_1 \\
			B_2 & -B_1 & 0
		\end{pmatrix}\,,
	\end{equation*}
	then the term $Bv$ has the following vector-field representation:
	\begin{equation*}
		\begin{aligned}
			B v = v \wedge\big( B_1,\; B_2,\; B_3\big)\,.
		\end{aligned}
	\end{equation*}
	Hence, if we regard the vector $\big( B_{1}(t,x),\; B_{2}(t,x),\; B_{3}(t,x) \big)$ above as the same as $B$, then $B(t,x)v\cdot \nabla_v f$ in \eqref{equation: MagneticVlasovSys} can be expressed as $v\wedge B\cdot\nabla_v f$. Since these two representations are equivalent, one can discuss the propagation of moments and regularity for \eqref{equation: MagneticVlasovSys} in three dimensions in the latter form.
	
	\subsection{Main results}
	We first present the results on the propagation of moments for the solution of \eqref{equation: MagneticVlasovSys}, which is crucial for proving the results of propagation of regularity.

    \begin{theorem}[Propagation of moments]
    \label{Theorem:Propagation-of-Moments}
    Let $d=3$, $\Xi=\rr^3\times\rr^3$, and $n\geq 3$, and fix a finite time interval $[0,T]$. Assume that the magnetic field $B=B(t,x)$ satisfies Assumption~\ref{Assumption: bounded B} and
    \[
    B\in C\bigl([0,T];L^\infty(\rr^3)\bigr).
    \]
    Let $f\geq 0$ be a classical solution to \eqref{equation: MagneticVlasovSys} on $[0,T]$ with initial datum
    \[
    f^{\rm in}\in L^1(\Xi)\cap L^\infty(\Xi),
    \]
    and assume that
    \[
    M_n(0)<\infty,
    \]
    where
    \[
    M_n(t)
    :=
    \iint_{\Xi}\snorm{v}^n f(t,x,v)\,\dd x\d v.
    \]
    Then
    \begin{equation}
            \rho_f\in L^2\bigl([0,T];L^2(\rr^3)\bigr)
    \qquad\text{and}\qquad
    E\in L^2\bigl([0,T];L^6(\rr^3)\bigr).
    \end{equation}
    Moreover, there exists a constant
    \[
    C=C(f^{\rm in},n,B)>0,
    \]
    depending only on the initial datum, $n$, and the magnetic field $B$, such that
    \begin{equation}
        \sup_{0\leq t\leq T} M_n(t)
        \leq
        M_n(0)
        \exp\!\left\{C(f^{\rm in},n,B)(1+T)\right\}.
    \end{equation}
    \end{theorem}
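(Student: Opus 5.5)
The plan follows the Lions--Perthame moment argument and uses one structural fact: the magnetic force does no work. Since $B$ is antisymmetric (equivalently, the force is $v\wedge B$), we have $\nabla_v\cdot(Bv)=0$ and $(Bv)\cdot v=0$.

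\textbf{Step 1: moment identity.} Multiply the equation by $|v|^n$ and integrate by parts in $x$ and $v$. This is justified for classical solutions, for instance by a cutoff-and-limit argument. The transport term vanishes. The magnetic term gives
\[
\iint |v|^n (Bv)\cdot\nabla_v f \,\d x\d v = -\iint n|v|^{n-2}\,v\cdot(Bv)\,f \,\d x\d v = 0.
\]
Hence
\[
\frac{\d}{\d t}M_n(t) = n\iint |v|^{n-2}\,v\cdot E\,f \,\d x\d v \le n\intd |E|\,\rho_{n-1}\,\d x,
\]
where $\rho_k(x):=\int |v|^k f\,\d v$. In particular $B$ never enters the differential inequality. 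Any dependence of $C$ on $B$ comes only through the classical solution framework of Appendix~\ref{Section:classical_ExistenceResults}.

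\textbf{Step 2: interpolation.} Transport along the characteristics preserves $\|f(t)\|_{L^\infty}=\|f^{\rm in}\|_{L^\infty}$ and $\|f(t)\|_{L^1}$, because the vector field $(v,E+Bv)$ is divergence free in $(x,v)$. The standard splitting $|v|\le R$ versus $|v|>R$ then gives
\[
\|\rho_k\|_{L^{(n+3)/(k+3)}}\lesssim \|f^{\rm in}\|_{L^\infty}^{(n-k)/(n+3)}M_n^{(k+3)/(n+3)}.
\]
With $k=0$ and $n\ge 3$ we get $\rho_f\in L^{(n+3)/3}\cap L^1$, so $\rho_f\in L^2$ with $\|\rho_f\|_{L^2}\lesssim M_n^{\theta}$ for some $\theta<1$. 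Hardy--Littlewood--Sobolev in $d=3$ then gives $\|E\|_{L^6}\lesssim\|\rho_f\|_{L^2}$. With $k=n-1$, H\"older applied to $\int|E|\rho_{n-1}$ uses $\rho_{n-1}\in L^{(n+3)/(n+2)}$ and needs $E\in L^{n+3}$. The exponent $n+3$ is large, so for this step I would instead estimate $E$ by HLS from $\rho_f\in L^{p}$ with $p=3(n+3)/(2n+9)$, interpolating between $L^1$ and $L^{(n+3)/3}$.

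\textbf{Step 3: Gronwall.} Collecting the bounds yields
\[
\frac{\d}{\d t}M_n\le C\, M_n^{\frac{n+2}{n+3}}\, M_n^{\frac{3-\vartheta}{n+3}}\quad\text{roughly},
\]
that is, $M_n' \le C M_n^{1+\delta}$ with $\delta\ge0$ in general. This is the main obstacle: the naive Lions--Perthame estimate is superlinear and only gives local-in-time bounds. Two routes are available to obtain the linear bound $M_n(t)\le M_n(0)e^{C(1+T)}$.

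\emph{Route (a).} Use conservation of energy, $\iint |v|^2f + \tfrac12\int |E|^2$, which is again independent of $B$ because the magnetic force does no work. This controls $M_2$ and hence $\|\rho_f\|_{L^{5/3}}$, so the low-order quantities are bounded by constants depending only on $f^{\rm in}$. Then interpolate $\rho_{n-1}$ between $M_2$ and $M_n$, so that the power of $M_n$ is at most one. In $d=3$ with Coulomb interaction, $\|E\|_{L^\infty}$ cannot be bounded by the energy alone, so I would close with the bound on $\|E(t)\|_{L^\infty}$ that comes with the classical solution on $[0,T]$ (Appendix~\ref{Section:classical_ExistenceResults}, Pfaffelmoser-type). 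Then $M_n'\le n\|E\|_{L^\infty}M_{n-1}\le C(M_n+1)$, where $M_{n-1}\le M_n+\|f^{\rm in}\|_{L^1}$.

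\emph{Route (b).} Split $E=E\mathbbm 1_{|E|\le\cdot}+\dots$. I consider (a) cleaner, and it explains why $C$ depends on $B$ through $\sup_t\|E\|_\infty$.

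Finally, the claims $\rho_f\in L^2_tL^2_x$ and $E\in L^2_tL^6_x$ follow from the uniform-in-time bound on $M_n$ via Step 2 and HLS, integrated over $[0,T]$.
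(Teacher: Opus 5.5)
Your Steps 1--2 are correct, and you rightly diagnose that the Lions--Perthame inequality is superlinear. Neither route closes Step 3, however, so there is a genuine gap.

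Energy conservation alone does not bring the power of $M_n$ down to one. It only gives $\rho_f\in L^\infty_tL^{5/3}_x$. Interpolating $\rho_f$ between $L^{5/3}$ and $L^{(n+3)/3}$ before applying HLS yields $\frac{\d}{\d t}M_n\lesssim M_n^{\frac{n+2}{n+3}+\frac{4n-3}{(n+3)(3n-6)}}$, whose exponent exceeds $1$ for every $n\ge 3$ (it equals $4/3$ when $n=3$).

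You therefore fall back on $\sup_{[0,T]}\norm{E(t)}_{L^\infty}$ for the given classical solution. That yields only $M_n(t)+\norm{f^{\rm in}}_{L^1}\le\bigl(M_n(0)+\norm{f^{\rm in}}_{L^1}\bigr)\exp\bigl(n\int_0^t\norm{E(s)}_{L^\infty}\d s\bigr)$. This is a qualitative statement: its constant depends on the particular solution rather than on $(f^{\rm in},n,B)$, and its growth in $T$ is uncontrolled.

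No bound on $\norm{E}_{L^\infty}$ in terms of the data is available here. Producing one is exactly the Pfaffelmoser-type estimate the paper notes is hard for nonuniform $B$, and Appendix~\ref{Section:classical_ExistenceResults} contains no such bound. Worse, in the paper the time-uniform $L^\infty$ bound on $E$ in Lemma~\ref{Lemma:EstimateofSingularPotential1} is obtained from this very theorem (through $M_m$, $m>6$), and the propagation of regularity underlying that appendix rests on it. Invoking them is therefore circular. Route (b), as stated, is not an argument.

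The missing idea is a data-controlled space--time bound on $\rho_f$ at the $L^2_x$ level, which is exactly the critical integrability for linearity. The paper proves it first, using only mass, energy and $\norm{B}_{L^\infty}$. It applies Serre's compensated integrability (Proposition~\ref{prop:serre-euclidean}) on $I\times\rr^3$, $|I|\le 1$, to the positive semidefinite tensor $A_f=\int(1,v)\otimes(1,v)f\,\d v$. With $j_f=\int vf\,\d v$, the $(t,x)$-divergence of $A_f$ is $\mathbf 1_I(0,\rho_fE+j_f\times B)$ plus Dirac masses at the endpoints of $I$. Its total mass is bounded by mass, energy and $\norm{B}_{L^\infty}$, and this is where $B$ enters the constant. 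Combined with $\rho_f^6\lesssim\norm{f^{\rm in}}_{L^\infty}^2\det A_f$ (Lemma~\ref{Lemma:kinetic-determinant-lower-bound}), this gives $\int_I\norm{\rho_f}_{L^2}^2\,\d t\le C(f^{\rm in},n,B)$, hence $\norm{\rho_f}_{L^2_tL^2_x}^2\lesssim 1+T$.

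Next, interpolate $\rho_f$ between $L^2$ (weight $2/3$) and $L^{(n+3)/3}$ before HLS. This gives $\norm{E}_{L^{n+3}}\lesssim\norm{\rho_f}_{L^2}^{2/3}\norm{f^{\rm in}}_{L^\infty}^{\frac{n}{3(n+3)}}M_n^{\frac{1}{n+3}}$. Combined with your bound for $\rho_{n-1}$ in $L^{(n+3)/(n+2)}$, the inequality becomes exactly linear: $M_n'\le C_n\norm{f^{\rm in}}_{L^\infty}^{1/3}\norm{\rho_f}_{L^2}^{2/3}M_n$. Gronwall and $\int_0^T\norm{\rho_f}_{L^2}^{2/3}\d t\le T^{2/3}\norm{\rho_f}_{L^2_tL^2_x}^{2/3}\lesssim 1+T$ finish the proof.

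So the logical order is the reverse of your last paragraph: $\rho_f\in L^2_tL^2_x$ is the input to the moment bound, not a consequence of it. One minor slip: HLS gives $E\in L^{n+3}$ from $\rho_f\in L^{3(n+3)/(n+6)}$, whereas your exponent $3(n+3)/(2n+9)$ is the one for $V_f$.
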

	
\begin{remark}
	In fact, to establish the propagation of higher-order regularity, it is sufficient to propagate a velocity moment of any order strictly greater than $6$. Such a moment yields the control of $\norm{E}_{L^\infty}$ needed to close the higher-order derivative estimates without derivative loss.

	One possible approach to the propagation of velocity moments is to adapt the method used in \cite{rege2023propagation}, which is related to the characteristic method introduced by Pfaffelmoser. For a general space--time dependent magnetic field, however, difficulties arise when estimating quantities analogous to $\snorm{v-V^*(t)}$, since the magnetic field alters the geometry of the characteristics. Similarly, a direct adaptation of the Lions--Perthame method \cite{lions1991propagation} appears difficult in this setting, in part because the presence of a nonuniform space--time dependent magnetic field prevents the use of the explicit representations available in the nonmagnetic case.

	Instead, our proof of propagation of moments follows the recent approach of \cite{wang2026critical}, combining standard kinetic interpolation estimates with the compensated-integrability method.
\end{remark}

	Next we present results on the propagation of regularity for the solution of \eqref{equation: MagneticVlasovSys}. Unlike the usual Vlasov--Poisson equations, when the propagation of regularity of solution of \eqref{equation: MagneticVlasovSys} the weight is in the form of exponential term rather than polynomial due to the difficulties caused by magnetic field.
    \begin{theorem}[Propagation of regularity]
    \label{Theorem:PropagationOfRegularity}
    Let $d=3$, $\Xi=\rr^3\times\rr^3$, $m\geq2$.
    Fix $T>0$ and assume that
    \begin{equation}
    B(t, x)\in C\bigl([0,T];W^{m,\infty}(\rr^3)\bigr).
    \end{equation}
    Let $f$ be a nonnegative classical solution of
    \eqref{equation: MagneticVlasovSys} on $[0,T]$, with initial datum
    $f^{\mathrm{in}}$ satisfying
    \begin{equation}\label{controlcondition}
    f^{\mathrm{in}}\in\mathcal{X}^{m,1}_{\mu_0,\lambda_0}\cap\mathcal{X}^{m,\infty}_{\mu_0,\lambda_0},
    \qquad
    \mu_0>0,
    \qquad
    \lambda_0>0,
    \end{equation}
    where
    \begin{equation}
    \norm{h}_{\mathcal{X}^{m,p}_{\mu,\lambda}}
    =
    \left(
    \sum_{|\alpha|+|\beta|\leq m}
    \norm{
    e^{\mu\langle x\rangle+\lambda\langle v\rangle}
    D^{\alpha,\beta}h
    }_{L^p(\Xi)}^p
    \right)^{1/p},
    \end{equation}
    with the usual modification when $p=\infty$.
    
    Let $C_{m}^{\rm mag}>0$ be the constant that appears in
 Proposition~\ref{prop:velocity_commutator}, and define
    \begin{align}\label{eq:choice-lambda}
    \mu(t):=&\, \mu_0 e^{-\kappa t} \quad \text{ and }\\
        \lambda(t) :=&\, \lambda_0 -\frac{\mu_0}{\kappa}(1-e^{-\kappa t})- C^{\rm mag}_{m}\,\|\grad_x B\|_{L^\infty([0, T]; W^{m-1, \infty}_x)}\, t\,,
    \end{align}
    for some $\kappa>0$.
    Assume that
    \begin{equation}
    \lambda_*:=\inf_{t\in[0,T]}\lambda(t)>0\,.
    \end{equation}
    Then, for all $1\le p \le \infty$, we have that
    \begin{equation}
    f\in L^\infty\bigl([0,T];\mathcal{X}^{m,p}_{\mu(t),\lambda(t)}\bigr)
    \end{equation}
    and
    \begin{equation}\label{eq:propagation-regularity}
    \sup_{0\leq t\leq T}
    \norm{f(t)}_{\mathcal{X}^{m,p}_{\mu(t),\lambda(t)}}
    \leq
    C_T,
    \end{equation}
    where $C_T$ depends only on
    \begin{equation}
    m,\quad p,\quad T,\quad \lambda_0,\quad \lambda_*,
    \quad
    \norm{B}_{L^\infty([0,T];W^{m,\infty})},
    \quad
    \norm{f^{\mathrm{in}}}_{L^1},
    \quad\text{and}\quad
    \norm{f^{\mathrm{in}}}_{\mathcal{X}^{m,1}_{\mu_0,\lambda_0}\cap\mathcal{X}^{m,\infty}_{\mu_0,\lambda_0}}.
    \end{equation}
    \end{theorem}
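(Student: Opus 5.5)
We propagate the weighted norm $\norm{f(t)}_{\mathcal{X}^{m,p}_{\mu(t),\lambda(t)}}$ with a time-dependent exponential weight. Decaying weights absorb the losses produced by commutators of derivatives with the transport field. Write the equation as $\partial_t f + \mathcal{T}f=0$ with $\mathcal{T}=v\cdot\nabla_x+(E+v\wedge B)\cdot\nabla_v$. The field $\mathcal{T}$ is divergence free in phase space, since $\nabla_v\cdot(v\wedge B)=0$. For $|\beta|+|\gamma|\le m$ set
\[
g_{\beta\gamma}=e^{\mu(t)\langle x\rangle+\lambda(t)\langle v\rangle}D^{\beta,\gamma}f .
\]
Differentiating gives $\partial_t g_{\beta\gamma}+\mathcal{T}g_{\beta\gamma}=R_{\beta\gamma}$. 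The remainder $R_{\beta\gamma}$ has three contributions:
\begin{enumerate}
\item[(i)] the weight terms $(\mu'\langle x\rangle+\lambda'\langle v\rangle)g$ and $-\mathcal{T}(\mu\langle x\rangle+\lambda\langle v\rangle)g$;
\item[(ii)] commutators $[D^{\beta,\gamma},v\cdot\nabla_x]$, which trade a $v$-derivative for an $x$-derivative of the same total order;
\item[(iii)] commutators with $E\cdot\nabla_v$ and $(v\wedge B)\cdot\nabla_v$.
\end{enumerate}
Since $\mathcal{T}$ is divergence free, the $L^p$ norm of $\sum g_{\beta\gamma}$ obeys $\frac{d}{dt}\|g\|_{L^p}\le\|R\|_{L^p}$. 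For $p=\infty$ we argue along characteristics. The same estimate for $p=1$ and $p=\infty$ then gives all $p$ by interpolation.

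The weight terms are handled by the choice \eqref{eq:choice-lambda}. We have $\mathcal{T}\langle x\rangle=\frac{x\cdot v}{\langle x\rangle}$, which is bounded by $|v|\le\langle v\rangle$. We also have $\mathcal{T}\langle v\rangle=\frac{v\cdot E}{\langle v\rangle}$, because $v\cdot(v\wedge B)=0$; this is bounded by $\|E\|_{L^\infty}$. The choice $\mu'=-\kappa\mu$ gives the term $-\kappa\mu\langle x\rangle g$, which is a favourable sign and is simply dropped. The quantity $\lambda'(t)=-\mu_0e^{-\kappa t}-C^{\rm mag}_m\|\nabla_xB\|$ is arranged so that $\lambda'\langle v\rangle$ dominates the term $\mu|v|$ coming from $\mathcal{T}\langle x\rangle$. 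This is exactly why $\mu_0e^{-\kappa t}=\mu(t)$ appears in $\lambda'$. The remaining slack $-C^{\rm mag}_m\|\nabla_x B\|\langle v\rangle g$ is reserved to absorb the magnetic commutator. The term $\lambda|\mathcal{T}\langle v\rangle|\le\lambda\|E\|_{L^\infty}$ is of Gronwall type.

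The main obstacle is the magnetic commutator. When $\partial_x^{\beta'}$ falls on $B$ in $(v\wedge B)\cdot\nabla_v D^{\beta-\beta',\gamma}f$, we obtain $|v|\,|\partial^{\beta'}B|\,|D^{\beta-\beta',\gamma+e_j}f|$. This term has the same total order $\le m$ but carries an extra factor of $|v|$. No finite polynomial weight can close this, which is why the weights are exponential. This is precisely the content of Proposition~\ref{prop:velocity_commutator}, which we invoke. It bounds the sum over $|\beta|+|\gamma|\le m$ of these terms by
\[
C^{\rm mag}_m\|\nabla_xB\|_{W^{m-1,\infty}}\langle v\rangle\sum|g_{\beta\gamma}| .
\]
This bound is then cancelled by the $\lambda'$ contribution. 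The positivity condition $\lambda_*>0$ ensures that the weight remains a genuine exponential throughout. The $\gamma$-derivatives of $v\wedge B$ produce only bounded factors, and the free-transport commutator (ii) is likewise of lower type and closes under the summed norm.

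The electric commutators involve $\partial_x^{\beta'}E\cdot\nabla_vD^{\beta-\beta',\gamma}f$. These require bounds on $\|\nabla_x^kE\|_{L^\infty}$ for $k\le m$. For this we use Theorem~\ref{Theorem:Propagation-of-Moments}. The exponential velocity weight gives moments of every order at time $0$, in particular $M_n$ with $n>6$, so the moment theorem yields $\|\rho_f\|_{L^\infty_tL^{p}}$ for $p>3$ and hence $E\in L^\infty_tL^\infty_x$. Higher derivatives are controlled by the standard estimate
\[
\|\nabla^k E\|_{L^\infty}\lesssim \|D^{k}\rho\|_{L^1\cap L^\infty}\bigl(1+\log\text{-loss}\bigr),
\]
and $\|D^k\rho\|_{L^\infty}\lesssim\|g\|_{L^\infty}\int e^{-\lambda_*\langle v\rangle}\,dv$. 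Since $\|\nabla^kE\|_{L^\infty}\lesssim 1+\|g\|$, these terms contribute at most $(1+\|g\|)\|g\|$, or with the log refinement $\|g\|\log(e+\|g\|)$. In the top-order case we replace $E\in W^{m,\infty}$ by $\nabla^mE$ controlled through $\|D^m\rho\|$ via the $L^1\cap L^\infty$ bound with a logarithmic loss. This closes by a log-Gronwall (Osgood) argument on $[0,T]$. The outcome is the bound $C_T$ with the dependence stated in \eqref{eq:propagation-regularity}.
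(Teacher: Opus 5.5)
Most of your scheme matches the paper's proof. You use the weighted identity of Lemma~\ref{lem:weighted-transport-identity}, drop the $\mu'$ term, and let $\lambda'\langle v\rangle$ absorb both $\mu\,x\cdot v/\langle x\rangle$ and the bound of Proposition~\ref{prop:velocity_commutator}. You also use the term $\lambda\|E\|_{L^\infty}$, $E\in L^\infty$ from the moments, and the logarithmic estimate of Lemma~\ref{Lemma:EstimateofSingularPotential2}. Treating $p=1$ and $p=\infty$ and then interpolating, rather than letting $p\to\infty$, is fine.

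One small point: $\frac{d}{dt}\|g\|_{L^p}\le\|R\|_{L^p}$ cannot be used as written. $R$ contains terms growing like $\langle v\rangle$, and the magnetic term in the equation for $g_{\beta\gamma}$ involves a different component, $g_{\beta-\beta',\gamma+e_j}$. The absorption by $\lambda'\langle v\rangle$ therefore only works inside the identity for the summed quantity $\sum_{\beta\gamma}\iint W|D^{\beta,\gamma}f|^p$, after Young's inequality. Alternatively, it works pointwise along characteristics for $\sum_{\beta\gamma}|g_{\beta\gamma}|$.

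The real gap is in the closing step. You propagate all orders $\le m$ at once through a single quantity $Y=\|g\|$ and assert that the electric commutators cost at most $Y\log(e+Y)$. That holds only for the terms in which exactly one $x$-derivative falls on $E$. There the prefactor in Lemma~\ref{Lemma:EstimateofSingularPotential2} is $\|\rho_f\|_{L^1\cap L^\infty}$, which is bounded a priori.

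Since $m\ge 2$, there are also terms $\partial_x^{\alpha_1}E\cdot\nabla_vD^{\alpha-\alpha_1,\beta}f$ with $|\alpha_1|=k\ge2$. For these,
$\|\nabla_x^kE\|_{L^\infty}\lesssim 1+\|\rho_{\nabla_x^{k-1}f}\|_{L^1}+\|\rho_{\nabla_x^{k-1}f}\|_{L^\infty}\ln(1+\|\rho_{\nabla_x^kf}\|_{L^\infty})\lesssim 1+Y\ln(1+Y)$,
and this is multiplied by another factor of size $Y$. The result is $Y'\lesssim Y^2\ln(1+Y)$; your cruder bound $(1+Y)Y$ is no better. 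Neither is of Osgood type: the comparison ODE blows up in finite time, so you only get a short-time bound, not one on an arbitrary $[0,T]$.

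The missing ingredient is the induction on the order of differentiation that the paper uses.
\begin{itemize}
\item Start with the order-$0$ weighted bound. It needs only $\|E\|_{L^\infty}$ and yields $\rho_f\in L^\infty$; the velocity-tail lemmas give the same conclusion.
\item At order $k$, the order-$(k-1)$ step already bounds on $[0,T]$ every $\|\nabla_x^jE\|_{L^\infty}$ with $j\le k-1$, all lower-order factors, and the prefactor $\|\rho_{\nabla_x^{k-1}f}\|_{L^1\cap L^\infty}$.
\item Only $\ln(1+\|\rho_{\nabla_x^kf}\|_{L^\infty})\lesssim\ln(1+Y_k)$ is new, so $Y_k'\le (a_k+b_k\ln(1+Y_k))Y_k$ with $a_k,b_k$ already controlled.
\item This closes with a double-exponential bound.
\end{itemize}
With this reorganization your argument goes through.
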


	\begin{remark}
		For the case of a constant magnetic field, combining the ideas from \cite{lafleche2023strong} and \cite{rege2023propagation}, one can obtain the results of propagation of moments and regularity under the condition that the given initial data has polynomial decay. However, for a general magnetic field, polynomial decay is not sufficient to obtain the results of propagation of regularity. Moreover, even if the initial data are Schwartz functions, the propagation of regularity is not guaranteed. To see this, we consider the corresponding DiPerna--Lions flow of \eqref{equation: MagneticVlasovSys} in $3$-dimensional space under the condition that the electric field is zero:
		\begin{equation}\label{eq:DiPerna-Lions flow}
			\begin{cases}
				\dot{X}(t,x,v)=V(t,x,v)\,,\\[.5em]
				\dot{V}(t,x,v)=V(t,x,v)\wedge B(t,X(t,x,v))\,.
			\end{cases}
		\end{equation}
		We set $B(t,x)=(0,0,\epsilon\tanh(x_2))$, where $\epsilon>0$ is a positive real number and $x=(x_1,x_2,x_3)\in\mathbb{R}^3$. We set $X(0)=(0,\delta,0)$ and $V(0)=(R,0,0)$, where $\delta$ is a sufficiently small positive real number (this initial data can be regarded as an operation of perturbation around the initial data $X(0)=0$ and $V(0)=(R,0,0)$). We also set $R>0$. Then the coordinate $V_2$ satisfies
		\begin{equation*}
			\dot V_2=R\epsilon\tanh(X_2).
		\end{equation*}
		For convenience we do the linear approximation of this ODE. That is to say
		\begin{equation*}
			\ddot X_2=\dot V_2=R\epsilon \tanh(X_2)\approx R\epsilon X_2,
			\qquad X_2(0)=\delta,\quad \dot X_2(0)=0.
		\end{equation*}
		Then we obtain $X_2(t)=\delta\cosh(\sqrt{R\epsilon}\,t)$, and $X_1(t)=Rt$, $X_3(t)=0$, and consequently
		\[
		\frac{\partial X_2(t)}{\partial x_2}
		=\frac{\partial(\delta\cosh(\sqrt{R\epsilon}\,t))}{\partial\delta}
		=\cosh(\sqrt{R\epsilon}\,t),
		\]
		and
		\[
		\frac{\partial X_1(t)}{\partial x_2}=\frac{\partial X_3(t)}{\partial x_2}=0.
		\]
		
		Now we take a nonnegative smooth bump function $\chi$ in $\mathbb{R}^3$ supported near the origin. Set
		\[
		\psi(z)=\exp(-|z|^a)(1-\chi(z)),\qquad 0<a<\frac12,
		\]
		and initial data $f^{\mathrm{in}}(x,v)=\psi(x)\psi(v)$. Therefore
		\begin{equation*}
			\partial_{x_2} f(t,x,v)
			=\cosh(\sqrt{R\epsilon}\,t)\left(\partial_{x_2} f^{\mathrm{in}}\right)(X(-t),V(-t))
			=\cosh(\sqrt{v_1\epsilon}\,t)\left(\partial_{x_2} f^{\mathrm{in}}\right)(X(-t),V(-t)).
		\end{equation*}
		Taking the $L^1$ integral one obtains
		\begin{equation*}
			\begin{aligned}
				\iint_{\Xi}
				\snorm{\partial_{x_2} f(t,x,v)}\dd x\,\d v
				&=
				\iint_{\Xi}
				\snorm{\cosh(\sqrt{v_1\epsilon}\,t)}
				\snorm{\left(\partial_{x_2} f^{\mathrm{in}}\right)(X^{-1}(t),V^{-1}(t))}\dd x\,\d v\\
				&=
				\iint_{\Xi}
				\snorm{\cosh(\sqrt{v_1\epsilon}\,t)}
				\snorm{\partial_{x_2} f^{\mathrm{in}}(x,v)}\dd x\d v
				=+\infty.
			\end{aligned}
		\end{equation*}
		This gives a heuristic argument of the reason that we choose exponential weight.
	\end{remark}
	
With the aid of Theorem \ref{Theorem:PropagationOfRegularity}, we obtain the following semiclassical limit from the magnetic Hartree--Fock equation to the Vlasov--Poisson equation \eqref{equation: MagneticVlasovSys} in three dimensions, under the assumption that the vector potential $\bA$ is purely space-dependent. This assumption is needed to establish global wellposedness for solutions of the magnetic Hartree--Fock equation.

We also remark that, in the Coulomb case, the proof strategy is largely parallel to that of \cite{lafleche2023strong}. However, some regularity indices must be recalculated because the magnetic Calder\'on--Vaillancourt theorem requires different regularity assumptions from its non-magnetic counterpart. In addition, implementing this strategy requires new regularity results for the magnetic Weyl quantization; see Section \ref{Section:ProofOfRegularityOfMagneticWeyl}.

	The energy of solution $\brho$ of the magnetic Hartree--Fock equation is given by
	\begin{equation*}
		\begin{aligned}
			\mathcal{E}_{\hbar}(\brho)=&h^3\operatorname{Tr}(H^{A}_{0}\brho)+\frac{h^6}{2}\iint_{\rr^3\times\rr^3}K(x-y)\(\brho(x,x)\brho(y,y)-\snorm{\brho(x,y)}^2\)\dd x\d y\,.
		\end{aligned}
	\end{equation*}
	The kinetic part of this energy is $\mathcal{E}_{\mathrm{kin},\hbar}(\brho):=h^3\operatorname{Tr}\left(H^A_{0}\brho\right)$, which we assume throughout our work to be uniformly bounded for $h \in (0, 1]$. It turns out that the energy of the solution can be controlled by the kinetic energy of the initial data, whether for the magnetic Hartree–Fock equation (see Appendix \ref{Section:ExistenceResults}). Consequently, we have the following semiclassical limit results:

\begin{theorem}[Semiclassical limit]
\label{Theorem:Semiclassicallimit}
Let $d=3$, $\Xi=\rr^3\times\rr^3$, and
$K(x)=(4\pi|x|)^{-1}$. Fix $T>0$. Assume that the vector potential $A=A(x)$ and the magnetic field $B = \d A$ are time-independent and satisfy Assumptions~\ref{Assumption: vector potential A}--\ref{Assumption: bounded B}, respectively. 

Let $f$ be a nonnegative classical solution of
\eqref{equation: MagneticVlasovSys} whose initial datum satisfies, for
some sufficiently large $m\in\mathbb{N}$, $\mu_0>0$ and $\lambda_0>0$,
\[
    f^{\rm in}
    \in
    \mathcal{X}^{m,1}_{\mu_0,\lambda_0}
    \cap
    \mathcal{X}^{m,\infty}_{\mu_0,\lambda_0}\,.
\]
Assume that the hypotheses of
Theorem~\ref{Theorem:PropagationOfRegularity} hold on $[0,T]$, including
\[
    \lambda_*:=\inf_{t\in[0,T]}\lambda(t)>0,
\]
and that
\[
    f\in
    L^\infty([0, T]; 
		H^{\sigma}_{\sigma}(\Xi)\cap W^{\delta,\infty}(\Xi))\,.
\]

Where $\sigma$ and $\delta$ are sufficiently large integers. Let $\brho^{\rm in}\in\mathcal{L}^1_+$ satisfy
$\mathcal{E}_{\mathrm{kin},\hbar}
    (\brho^{\rm in})<\infty$,
and let $\brho$ be the corresponding positive solution of the magnetic
Hartree--Fock equation on $[0,T]$. Then, for every $t\in[0,T]$,
\begin{equation}
    \stnorm{1}{\brho(t)-\brho_f^A(t)}
    \leq
    \left(
        \stnorm{1}{\brho^{\rm in}-\brho_{f^{\rm in}}^A}
        +C_1(t)\hbar
    \right)
    \exp\bigl(E_1(t)\bigr).
\end{equation}
For every $1\leq p<\frac32$,
\begin{equation}
    \stnorm{p}{\brho(t)-\brho_f^A(t)}
    \leq
    \left(
        \stnorm{p}{\brho^{\rm in}-\brho_{f^{\rm in}}^A}
        +C_2(t)\hbar
    \right)
    E_2(t).
\end{equation}
Moreover, for every $r\in[p,\infty)$,
\begin{multline}
    \stnorm{r}{\brho(t)-\brho_f^A(t)}
    \leq
    \left(
        \stnorm{\infty}{\brho^{\rm in}}
        +
        \norm{f}_{L^\infty([0, T]; 
		H^{17}_{16}(\Xi)\cap W^{27,\infty}(\Xi))}
    \right)^{1-\frac{p}{r}}
    \\
    {}\times
    \left[
        \left(
            \stnorm{p}{\brho^{\rm in}-\brho_{f^{\rm in}}^A}
            +C_2(t)\hbar
        \right)
        E_2(t)
    \right]^{\frac{p}{r}}.
\end{multline}
Here, $C_1$, $E_1$, $C_2$, and $E_2$ are nonnegative nondecreasing
functions on $[0,T]$. They depend only on $T$, on the regularity of $\bB$, on the relevant initial spatial moments, and on
\[
    \mathcal{E}_{\mathrm{kin},\hbar}(\brho^{\rm in}),
    \qquad
    \stnorm{\infty}{\brho^{\rm in}},
    \qquad
    \norm{f}_{L^\infty([0, T]; 
		H^{\sigma}_{\sigma}(\Xi)\cap W^{\delta,\infty}(\Xi))}.
\]
\end{theorem}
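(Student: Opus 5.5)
We follow the stability strategy of \cite{lafleche2023strong}, adapted to the magnetic Weyl calculus. Set $\brho_f:=\brho^A_f(t)$. The first task is to derive the equation satisfied by $\brho_f$. Since $f$ solves \eqref{equation:ModifiedMagneticVlasov}, we quantize it and compare with the commutator $\frac{1}{i\hbar}[H^A_0+V_f,\brho_f]$.

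For the kinetic part we use the magnetic Moyal expansion (Appendix~\ref{Section:PropertiesofMagneticWeylTransform}). This gives
\[
\frac{1}{i\hbar}[H^A_0,\brho^A_f]=\brho^A_{\{\frac12|v|^2,f\}_B}+O(\hbar^2)\,\text{remainders}.
\]
The remainder involves third derivatives of $f$ and derivatives of $B$. Its $\cL^p$ norm is controlled by the magnetic Calder\'on--Vaillancourt theorem in terms of $\|f\|_{W^{\delta,\infty}}$ and $\|f\|_{H^\sigma_\sigma}$, with $\sigma,\delta$ fixed by the regularity that theorem requires. For the potential part, $V_f$ is independent of $v$, so the magnetic phase plays no role:
\[
\frac{1}{i\hbar}[V_f,\brho_f]=\brho^A_{\nabla V_f\cdot\nabla_v f}+R_V.
\]
As in the non-magnetic case, $R_V$ is estimated through $\Dh_v\brho_f=\brho^A_{\nabla_v f}$ and higher velocity commutators. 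This yields
\[
i\hbar\partial_t\brho_f=[H^A_{\brho_f}-h^3\sfX_{\brho_f},\brho_f]+\hbar\,\mathcal{R}(t),
\]
with $\stnorm{p}{\mathcal R}\le C\hbar$. The constant depends on $\|f\|_{L^\infty H^\sigma_\sigma\cap W^{\delta,\infty}}$ and $\|B\|_{W^{m,\infty}}$. Theorem~\ref{Theorem:PropagationOfRegularity} and the regularity results of Section~\ref{Section:ProofOfRegularityOfMagneticWeyl} supply these bounds uniformly on $[0,T]$. Those Section~\ref{Section:ProofOfRegularityOfMagneticWeyl} results are what guarantee $\Nrm{\brho_f}{\cW^{1,q}_n}\lesssim$ weighted Sobolev norms of $f$.

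Next comes the Duhamel step. Let $\mathcal U(t,s)$ be the unitary propagator of $H^A_{\brho}$, whose existence follows from Appendix~\ref{Section:ExistenceResults}. Conjugating $\brho-\brho_f$ and using unitary invariance of Schatten norms gives
\begin{multline*}
\stnorm{p}{\brho(t)-\brho_f(t)}\le\stnorm{p}{\brho^{\rm in}-\brho^A_{f^{\rm in}}}+\int_0^t\Big(\tfrac1\hbar\stnorm{p}{[V_{\brho}-V_{\brho_f},\brho_f]}\\
+\tfrac{h^3}{\hbar}\stnorm{p}{[\sfX_{\brho},\brho_f]}+\stnorm{p}{\mathcal R}\Big)\,ds.
\end{multline*}
The key term is the mean-field commutator. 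We write
\[
[V_{\rho-\rho_f},\brho_f]=\int \big(\nabla K(z)\cdot\big)\ldots
\]
through the standard formula $[e^{2\pi i\xi\cdot x},\brho_f]$ and $\Dh_v\brho_f$. Combining this with $\|\nabla K*(\rho-\rho_f)\|$ and the quantum estimate $\|\rho-\rho_f\|_{L^q}\lesssim\stnorm{p}{(\brho-\brho_f)\sfm_n}$ gives the bound
\[
\tfrac1\hbar\stnorm{p}{[V_{\brho}-V_{\brho_f},\brho_f]}\le C\,\Nrm{\brho_f}{\cW^{1,q}_n}\,\stnorm{1}{\brho-\brho_f}^{\theta}\cdots.
\]
For $p=1$ this closes linearly and gives Grönwall with the factor $\exp(E_1)$. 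For $1\le p<3/2$ we use the Hardy–Littlewood–Sobolev and Lieb–Thirring (kinetic interpolation) inequalities, with the kinetic energy bound from Appendix~\ref{Section:ExistenceResults}, to bound $\|\rho-\rho_f\|$ in terms of $\stnorm{p}{\brho-\brho_f}$. The range $p<3/2$ is exactly where $\nabla K*$ maps into $L^\infty$ after Sobolev embedding.

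The exchange term carries the factor $h^3/\hbar\sim\hbar^2$. We bound it by $C\hbar$ using $\stnorm{\infty}{\brho}\le\stnorm\infty{\brho^{\rm in}}$ and the regularity of $\brho_f$, as in \cite{lafleche2023strong}. The estimate for $r\ge p$ then follows by interpolation. We use
\[
\stnorm{r}{X}\le\stnorm{\infty}{X}^{1-p/r}\stnorm{p}{X}^{p/r},
\]
with $\stnorm\infty{\brho}$ conserved and $\stnorm\infty{\brho_f}$ bounded by magnetic Calder\'on--Vaillancourt via $\|f\|_{H^{17}_{16}\cap W^{27,\infty}}$.

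The main obstacle I expect is the remainder bound on $\mathcal R$: the magnetic Moyal expansion to second order with explicit $\hbar$ dependence and only finite Sobolev regularity of $f$ and $B$. Here gauge covariance must be preserved, so the remainder must depend only on $B$, not on $A$. I would first try to write the magnetic product remainder as an oscillatory integral with magnetic flux phases through triangles, Taylor expanded in $\hbar$. Then I would apply the Calder\'on--Vaillancourt bound, counting the derivatives carefully to fix $\sigma$ and $\delta$.
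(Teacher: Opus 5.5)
Your derivation of the quantized equation matches the paper. That covers the magnetic Moyal expansion of $[H^A_0,\brho^A_f]$ with an $\hbar^2$ remainder controlled by magnetic Calder\'on--Vaillancourt and the trace-class criterion, and the second-order potential error estimated through $\brho^A_{\nabla_v^2 f}$. The Duhamel step with the propagator of $H^A_{\brho}$ and the final interpolation for $r\ge p$ also match. The remainder you flag as the main obstacle is comparatively routine.

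The genuine gap is the mean-field commutator $\frac1\hbar[K*(\rho-\rho_f),\brho^A_f]$, where the Coulomb singularity must be absorbed. Every mechanism you propose places that singularity on the density difference, and none closes in the quantity $\stnorm{p}{\brho-\brho^A_f}$ on which you run Gr\"onwall.
\begin{itemize}
\item The Fourier route via $[e^{2\pi i\xi\cdot x},\brho^A_f]$ gives at best $\frac1\hbar\stnorm{p}{[K*(\rho-\rho_f),\brho^A_f]}\lesssim\stnorm{p}{\Dh_v\brho^A_f}\int_{\rr^3}|\widehat{\rho-\rho_f}(\xi)|\,|\xi|^{-1}\,d\xi$. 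This integral diverges when only $\|\rho-\rho_f\|_{L^1}$ is available. The trivial bound $2\stnorm{p}{\brho^A_f}$ does not rescue it, since $|\xi|^{-2}$ is not integrable at infinity in $\rr^3$.
\item A bound on $\|\nabla K*(\rho-\rho_f)\|_{L^\infty}$ needs local integrability of $\rho-\rho_f$ beyond $L^3$. Kinetic energy plus Lieb--Thirring gives only $L^{5/3}$, and bounds $\rho$ and $\rho_f$ separately rather than making their difference small.
\item The estimate $\|\rho-\rho_f\|_{L^q}\lesssim\stnorm{p}{(\brho-\brho^A_f)\sfm_n}$ introduces a weighted norm of the difference that you never propagate.
\item For $p>1$, the density of the non-positive operator $\brho-\brho^A_f$ is not controlled by $\stnorm{p}{\brho-\brho^A_f}$. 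Any interpolation produces a power $\theta<1$, which breaks the linear Gr\"onwall structure and the $O(\hbar)$ rate.
\end{itemize}

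The missing idea, from \cite{lafleche2023strong} and used in Proposition~\ref{Proposition:ControlOfCommutator}, is to put the singularity on the regular operator instead. Write $[K*(\rho-\rho_f),\brho^A_f]=\int_{\rr^3}(\rho-\rho_f)(z)\,[K(\cdot-z),\brho^A_f]\,dz$, so that only $\|\rho-\rho_f\|_{L^1}\le\stnorm{1}{\brho-\brho^A_f}$ is needed. Then bound $\sup_z\stnorm{p}{[K(\cdot-z),\brho^A_f]}$ by $Ch$ times:
\begin{itemize}
\item the $L^{3\pm\epsilon}$ norms of $\operatorname{diag}(|\Dh_v\brho^A_f|)$ when $p=1$ (Corollary~\ref{Corollary:EstimateofDiag});
\item $\stnorm{q\pm\epsilon}{\Dh_v\brho^A_f\,\boldsymbol m_n}$ with $\frac1p=\frac1q+\frac23$ when $1\le p<\frac32$.
\end{itemize}
The restriction $p<\frac32$ comes from this Schatten commutator bound: $\nabla K\in L^{3/2,\infty}$ forces $q<\infty$. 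It does not come from mapping properties of $\nabla K*$.

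This also means the $\cL^p$ estimate is not an independent Gr\"onwall argument. The mean-field term is bounded by $\stnorm{1}{\brho-\brho^A_f}$ times norms of $f$, and the already proved $\cL^1$ estimate is inserted. That is how $C_1$ and $E_1$ enter $C_2$ and $E_2$.

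A smaller point concerns the exchange term, which is not controlled by $\stnorm{\infty}{\brho}$ and the regularity of $\brho^A_f$ alone. The paper combines the Hardy inequality with the diamagnetic inequality to get $\stnorm{p}{h^3\sfX_{\brho}\brho_2}\le Ch^2\stnorm{q}{\brho_2}\stnorm{2}{|\opp-\bA|\brho}$, with $\frac1q=\frac1p-\frac12$. It then controls $\stnorm{2}{|\opp-\bA|\brho}$ by $\stnorm{\infty}{\brho^{\rm in}}$ and the magnetic kinetic energy. The diamagnetic step is needed because the energy controls $(\opp-\bA)^2$, not $|\opp|^2$.
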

\begin{remark}[Gauge covariance]
	Although the proof is carried out in the transversal gauge (see Section~\ref{Section:ProofOfRegularityOfMagneticWeyl_2}), the
	result is gauge covariant. Indeed, by Proposition~\ref{prop:A1-gauge-covariance}, if
	$\widetilde{\bA}=\bA+\d \varphi$ and
	$U_\varphi=e^{i\varphi/\hbar}$, then
	\[
		\brho_f^{\widetilde A}
		=
		U_\varphi\brho_f^A U_\varphi^*.
	\]
	Moreover, if $\brho$ solves the magnetic Hartree--Fock equation
	associated with $\bA$, then
	$U_\varphi\brho U_\varphi^*$ solves the corresponding equation associated
	with $\widetilde{\bA}$. Since the semiclassical Schatten norms are
	unitarily invariant, the estimates of Theorem
	\ref{Theorem:Semiclassicallimit} are independent of the chosen
	vector potential and depend only on $\bB$ and its relevant
	seminorms.
\end{remark}
\begin{remark}
    From the proof in Section~\ref{Section:ProofofSemiclassicalLimit} we know that we can choose $\sigma=17$ and $\delta=27$. The regularity indices are primarily influenced by the application of the magnetic Calder\'{o}n--Vaillancourt inequality.
\end{remark}

\subsection*{Acknowledgements}

The first author would like to thank Chiara Saffirio and Laurent
Lafl\`eche for many helpful discussions on the topics considered in
this work. The work of J.~Chong was partially supported by the National
Key R\&D Program of China (Project No.~2024YFA1015500).

The authors also acknowledge the use of ChatGPT (GPT-5.6 Sol, OpenAI)
during the preparation of this manuscript. It was used primarily for
language editing, improving the presentation of the exposition, and
assisting with literature searches. In particular, ChatGPT brought to
the authors' attention a paper by Zhaopeng Wang that proved important
in the development of the propagation-of-moments argument used in this
work. All mathematical statements, proofs, calculations, and citations
appearing in the final manuscript were independently checked and
verified by the authors, who take full responsibility for the content
of the paper.

	\section{Propagation of Moments and Regularity}\label{Section:ProofofPropagationofRegularity}
	The aim of this section is to prove Theorem \ref{Theorem:Propagation-of-Moments} and Theorem \ref{Theorem:PropagationOfRegularity}. Throughout this section, we set $d=3$ and $\Xi=\rr^3\times\rr^3$. Before starting the proof, we present several useful preliminary results. The following lemmas and theorems are stated with constants of explicit dependence, since these constants are needed to determine the solution of a certain ordinary differential equation.
	
	\bigskip
	
	\subsection{Proof of propagation of moments}
The proof in this part draws on Serre's compensated-integrability method, developed in \cite{serre2018divergence}, and its recent application by Wang \cite{wang2026critical} to obtain a critical spacetime estimate for the Vlasov--Poisson system. Compensated integrability may be viewed as closely related in spirit to the interaction Morawetz method used in the study of nonlinear Schr\"odinger equations; see, for instance, \cite{colliander2004global,colliander2009tensor}. Before beginning the proof, we collect some necessary ingredients below.
    	
	\bigskip 
    
	We first recall the following well-known inequality.
\begin{lemma}[Kinetic interpolation inequality
\cite{lions1991propagation,golse2013mean}]
\label{lem:kinetic_interpolation}
For each $m\ge \ell\ge0$, there exists a constant
$C(d,m,\ell)>0$ such that, for every nonnegative measurable
function $f=f(x,v)$,
\begin{equation}\label{est:kinetic_interpolation}
	\norm{\int_{\rr^d}\snorm{v}^{\ell}f(\cdot,v)\dd v}
		_{L^{\frac{m+d}{\ell+d}}(\rr^d)}
	\le
	C(d,m,\ell)
	\norm{f}_{L^\infty(\Xi)}^{\frac{m-\ell}{m+d}}
	\norm{\snorm{v}^{m}f}_{L^1(\Xi)}^{\frac{\ell+d}{m+d}}.
\end{equation}
More generally, for $n>k\ge0$ and $1<q\le\infty$,
\begin{equation}
	\norm{\int_{\rr^d}\snorm{v}^{k}f(\cdot,v)\dd v}
		_{L^{p_{n,k}}(\rr^d)}
	\le
	C(d,n,k,q)
	\norm{\snorm{v}^{n}f}_{L^1(\Xi)}^{1-\theta}
	\norm{f}_{L^q(\Xi)}^\theta,
\end{equation}
where $q'$ is the Hölder conjugate of $q$,
$p_{n,k}=\frac{nq'+d}{n(q'-1)+k+d}$, and
$\theta=\frac{q'(n-k)}{nq'+d}$.
\end{lemma}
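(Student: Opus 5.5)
The plan is the classical splitting argument of Lions--Perthame: split the velocity integral at a radius $R(x)>0$ that is optimized pointwise in $x$. Fix $x$ and write
\[
\int_{\rr^d}\snorm{v}^{\ell}f(x,v)\dd v
=\int_{\snorm{v}\le R}\snorm{v}^{\ell}f\dd v+\int_{\snorm{v}>R}\snorm{v}^{\ell}f\dd v .
\]
For the inner part we use $f\le\norm{f}_{L^\infty}$, which gives the bound $C R^{\ell+d}\norm{f}_{L^\infty}$. For the outer part we use $\snorm{v}^{\ell}\le R^{\ell-m}\snorm{v}^m$, which gives $R^{\ell-m}\int\snorm{v}^m f(x,v)\dd v$. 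Set $g(x):=\int\snorm{v}^mf(x,v)\dd v$. Balancing the two terms leads to the choice
\[
R(x)=\bigl(g(x)/\norm{f}_{L^\infty}\bigr)^{1/(m+d)} .
\]
With this choice, pointwise in $x$,
\[
\int\snorm{v}^\ell f\dd v\le C\norm{f}_{L^\infty}^{\frac{m-\ell}{m+d}}g(x)^{\frac{\ell+d}{m+d}} .
\]
Raising this to the power $\frac{m+d}{\ell+d}$ and integrating in $x$ gives
\[
\norm{\cdot}_{L^{\frac{m+d}{\ell+d}}}^{\frac{m+d}{\ell+d}}\le C\norm{f}_\infty^{\frac{m-\ell}{\ell+d}}\norm{g}_{L^1},
\]
and $\norm{g}_{L^1}=\norm{\snorm{v}^mf}_{L^1(\Xi)}$. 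This is exactly \eqref{est:kinetic_interpolation}. Points where $g(x)=0$ or $g(x)=\infty$ are trivial, and the case $m=\ell$ is immediate.

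For the general $L^q$ version we again split at $R(x)$, but handle the inner part with Hölder in $v$ instead of the sup bound:
\[
\int_{\snorm{v}\le R}\snorm{v}^kf\dd v\le C R^{k+d/q'}\norm{f(x,\cdot)}_{L^q_v} .
\]
The outer part is bounded by $R^{k-n}g_n(x)$, where $g_n(x):=\int\snorm{v}^nf(x,v)\dd v$. Optimizing in $R$ requires
\[
R^{n+d/q'}=\frac{g_n}{\norm{f(x,\cdot)}_{L^q_v}},
\]
and yields the pointwise bound
\[
\int\snorm{v}^kf\dd v\le C\,g_n^{1-\theta}\norm{f(x,\cdot)}_{L^q_v}^{\theta},
\qquad \theta=\frac{q'(n-k)}{nq'+d}.
\]
It remains to take the $L^{p_{n,k}}_x$ norm and apply Hölder in $x$, with $g_n\in L^1_x$ and $\norm{f(x,\cdot)}_{L^q_v}\in L^q_x$. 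This requires the exponent identity
\[
\frac{1}{p_{n,k}}=(1-\theta)+\frac{\theta}{q}.
\]
Substituting $\theta$ gives $1-\theta+\theta/q=1-\theta/q'=\frac{n(q'-1)+k+d}{nq'+d}$, which matches the stated $p_{n,k}$.

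Main obstacle: there is no real difficulty. The only care needed is in verifying the exponent bookkeeping and in handling the endpoint $q=\infty$ ($q'=1$), which recovers the first inequality. Measurability of $x\mapsto R(x)$ is automatic, since $R$ is a measurable function of $g$ and $\norm{f(x,\cdot)}_{L^q_v}$.
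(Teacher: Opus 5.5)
Your argument is correct and is the standard Lions--Perthame splitting proof: you cut the velocity integral at a pointwise-optimized radius $R(x)$, and for the $L^q$ version you apply H\"older in $x$ using $1/p_{n,k}=(1-\theta)+\theta/q=1-\theta/q'$; the paper does not reprove the lemma but cites Lions--Perthame and Golse, whose proofs follow this same route. Your exponent bookkeeping checks out, including $0<\theta<1$ and the endpoint $q=\infty$ recovering the first inequality.
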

	
	Next, we state the compensated-integrability result.
	
    \begin{proposition}[Euclidean compensated integrability \cite{serre2018divergence}]
    \label{prop:serre-euclidean}
    Let $A \in L^1(\mathbb{R}^N; \mathrm{Sym}_N^+)$, where $\mathrm{Sym}_N^+$ denotes the cone of all real symmetric positive semidefinite $N\times N$ matrices. Assume that $\operatorname{div} A \in \mathcal{M}(\mathbb{R}^N; \mathbb{R}^N)$, the space of $\rr^N$-valued Radon measures.
    Then $(\det A)^{1/N} \in L^{N/(N-1)}(\mathbb{R}^N)$ and
    \begin{equation}
            \int_{\mathbb{R}^N} (\det A)^{1/(N-1)}\,\mathrm{d}z
    \leq C_N \| \operatorname{div} A \|_{\mathcal{M}(\mathbb{R}^N)}^{N/(N-1)}.
    \end{equation}
    \end{proposition}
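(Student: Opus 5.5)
The plan is to follow Serre's optimal-transport argument. It rests on two facts: a pointwise matrix inequality, and an integration by parts against the Hessian of a Brenier potential.

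\emph{Step 1: the pointwise inequality.} For $A,S\in\mathrm{Sym}_N^+$ one has
\[
(\det A)^{1/N}(\det S)^{1/N}\le \tfrac1N\operatorname{tr}(AS).
\]
To see this, write $\operatorname{tr}(AS)=\operatorname{tr}(S^{1/2}AS^{1/2})$ and apply the arithmetic--geometric mean inequality to the eigenvalues of $S^{1/2}AS^{1/2}$.

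\emph{Step 2: regularization.} Mollify, setting $A_\varepsilon=A*\eta_\varepsilon$. Then $A_\varepsilon$ is smooth and takes values in $\mathrm{Sym}_N^+$, since that cone is convex. Moreover $\|A_\varepsilon\|_{L^1}\le\|A\|_{L^1}$ and $\|\operatorname{div}A_\varepsilon\|_{L^1}\le\|\operatorname{div}A\|_{\mathcal M}$. Since $A_\varepsilon\to A$ a.e. along a subsequence, Fatou's lemma shows it suffices to prove the estimate for smooth $A$ with $\operatorname{div}A\in L^1$. One can also truncate the support first by multiplying with a cutoff $\chi_R$. This adds the error term $A\nabla\chi_R$ to the divergence, whose $L^1$ norm is at most $R^{-1}\|A\|_{L^1}\to0$, so compact support may be assumed as well.

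\emph{Step 3: the transport potential.} Set $g:=(\det A)^{1/(N-1)}\in L^1$ and $I:=\int g$. We may assume $I>0$. Choose the ball $B_r$ with $|B_r|=I$, and let $u$ be the convex Brenier potential pushing $g\,\d z$ forward to $\mathbf 1_{B_r}\,\d z$. Then $|\nabla u|\le r=c_NI^{1/N}$, and $\det D^2u=g$ on $\{g>0\}$ in the Alexandrov/Brenier sense, with $D^2u\ge0$. If full regularity is needed, one can use Caffarelli's theory, or replace $g$ by a smooth positive approximation on a large ball and pass to the limit. Only the inequality $\int\operatorname{tr}(A\,D^2_{\rm a}u)\le-\int\operatorname{div}A\cdot\nabla u$ is required, and it holds for convex $u$ because the distributional Hessian dominates the absolutely continuous part.

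\emph{Step 4: conclusion.} Combining Steps 1 and 3 gives
\[
I=\int (\det A)^{1/N}g^{1/N}\le \tfrac1N\int\operatorname{tr}(A\,D^2u)\le\tfrac1N\int|\operatorname{div}A|\,|\nabla u|\le \tfrac{c_N}{N}I^{1/N}\|\operatorname{div}A\|_{L^1}.
\]
The first equality holds because $\tfrac1N+\tfrac1{N(N-1)}=\tfrac1{N-1}$. There are no boundary terms, thanks to the compact support of $A$ and the boundedness of $\nabla u$. Dividing by $I^{1/N}$ yields $I^{(N-1)/N}\le C_N\|\operatorname{div}A\|$, which is exactly the claim; equivalently, $(\det A)^{1/N}\in L^{N/(N-1)}$.

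\emph{Main obstacle.} The delicate point is justifying the integration by parts against the possibly non-smooth Brenier potential. This is where the approximation has to be arranged carefully: smooth $A$, a strictly positive smooth density $g$ on a large ball, and a smooth convex target. Caffarelli regularity then makes $u$ smooth, and the estimate passes to the limit by the lower semicontinuity of $\int(\det A)^{1/(N-1)}$ (Fatou's lemma) together with the uniform bound $|\nabla u|\le r$.
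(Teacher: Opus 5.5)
Your proposal is correct and is essentially Serre's own optimal-transport proof; the paper gives no argument of its own and simply cites \cite{serre2018divergence} for this statement. All the key steps check out: the AM--GM inequality $(\det A\det S)^{1/N}\le\frac1N\operatorname{tr}(AS)$, the reduction to smooth compactly supported $A$, the $r$-Lipschitz Brenier--McCann potential with $\det D^2_{\rm a}u=g$ a.e.\ on $\{g>0\}$, and the one-sided integration by parts using that the singular part of $D^2u$ is positive semidefinite, so the Caffarelli-regularity detour in your last paragraph is not needed.
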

	
	We will also need the following lemma, whose proof can be found, e.g. in \cite[Lemma 3.3]{wang2026critical}.
	\begin{lemma}[Kinetic determinant lower bound \cite{wang2026critical}]\label{Lemma:kinetic-determinant-lower-bound}
		Let $0 \le g \in L^1(\mathbb{R}^3) \cap L^\infty(\mathbb{R}^3)$ and $|v|^2 g \in L^1(\mathbb{R}^3)$.
		Set
		\[
		m := \int_{\mathbb{R}^3} g(v)\,\mathrm{d}v,
		\qquad
		A_g := \int_{\mathbb{R}^3} \begin{pmatrix} 1 \\ v \end{pmatrix} \otimes \begin{pmatrix} 1 \\ v \end{pmatrix} g(v)\,\mathrm{d}v.
		\]
		If $m=0$, then $g=0$ almost everywhere and $\det A_g=0$.
		If $m>0$, then
		\[
		\det A_g \ge c\, \|g\|_\infty^{-2} \, m^6,
		\]
		where $c>0$ is a universal constant.
	\end{lemma}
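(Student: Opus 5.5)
The case $m=0$ is immediate. Since $g\ge0$ and $\int g=0$, we have $g=0$ almost everywhere, so $A_g=0$ and $\det A_g=0$. From now on assume $m>0$.

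\textbf{Step 1: reduction to the velocity covariance.} The matrix $A_g$ is the $4\times4$ block matrix with entries $m$, $\int v g$, and $\int v\otimes v\, g$. Taking the Schur complement with respect to the $(1,1)$ entry gives
\[
\det A_g = m\,\det\Big(\int v\otimes v\, g\,\d v - \tfrac{1}{m}\Big(\int v g\Big)\otimes\Big(\int v g\Big)\Big) = m^4 \det \Sigma .
\]
Here $\Sigma=\frac1m\int (v-\bar v)\otimes(v-\bar v)\, g\,\d v$ is the covariance matrix of the probability density $g/m$, and $\bar v=\frac1m\int v g$ is its mean. It therefore suffices to show $\det\Sigma\ge c\,\|g\|_\infty^{-2} m^2$.

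\textbf{Step 2: normalization.} Translating $v\mapsto v+\bar v$ leaves $\|g\|_\infty$, $m$ and $\Sigma$ unchanged, so we may take $\bar v=0$. The matrix $\Sigma$ is symmetric and positive definite; it is definite because $g$ cannot be concentrated on a hyperplane, as $g\in L^\infty$ and $m>0$. Diagonalize it by a rotation, which preserves all the quantities involved, so that $\Sigma=\mathrm{diag}(\sigma_1^2,\sigma_2^2,\sigma_3^2)$. Now apply the linear change of variables $v=Sw$ with $S=\Sigma^{1/2}$ and set $\tilde g(w)=g(Sw)\det S$. Then $\tilde g$ has mass $m$, identity covariance, and $\|\tilde g\|_\infty=\|g\|_\infty\det S$.

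\textbf{Step 3: a rearrangement bound.} This is the key inequality. Among densities $0\le \tilde g\le L$ with fixed mass $m$, the second moment $\int|w|^2\tilde g$ is minimized by the bathtub configuration $L\,\mathbbm 1_{B_R}$ with $|B_R|L=m$. For that configuration the second moment is $c_0 m R^2$ with $R\sim (m/L)^{1/3}$. For our $\tilde g$ the second moment equals $m\,\mathrm{tr}(\mathrm{Id})=3m$. Hence $3m\ge c\, m (m/L)^{2/3}$, which gives
\[
L=\|g\|_\infty \det S\ge c'\,m .
\]
Since $\det S=(\det\Sigma)^{1/2}$, this reads $\det\Sigma\ge c'^2 m^2\|g\|_\infty^{-2}$. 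Combining with Step 1,
\[
\det A_g = m^4\det\Sigma\ge c\,\|g\|_\infty^{-2}m^6 .
\]

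\textbf{Possible obstacle.} The only delicate point is justifying the bathtub inequality $\int|w|^2\tilde g\ge c\,m^{5/3}L^{-2/3}$. Write $\int|w|^2\tilde g\ge R^2\big(m-\int_{B_R}\tilde g\big)\ge R^2\big(m-L|B_R|\big)$. Then choose $R$ so that $L|B_R|=m/2$. This gives the bound directly, with an explicit constant. The hypothesis $|v|^2g\in L^1$ ensures that all the moments used above are finite.
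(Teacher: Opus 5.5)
Your proof is correct and complete. The paper itself gives no argument for this lemma; it only refers to \cite[Lemma 3.3]{wang2026critical}, so there is no internal proof to compare against. Your argument gives a self-contained proof with an explicit constant. It runs as follows: the Schur complement gives $\det A_g=m^4\det\Sigma$; an affine normalization brings $g$ to identity covariance; and the Chebyshev-type estimate $\int|w|^2\tilde g\,\d w\ge R^2\bigl(m-L|B_R|\bigr)$ with $L|B_R|=m/2$ finishes. One admissible constant is $c=(c_0/3)^3$ with $c_0=\tfrac12\bigl(3/(8\pi)\bigr)^{2/3}$.

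A few small remarks. Positive definiteness of $\Sigma$ only uses that $g\,\d v$ is absolutely continuous, since hyperplanes are Lebesgue-null; the $L^\infty$ bound plays no role there. The rotation in Step~2 is superfluous, because $\Sigma^{1/2}$ exists for any positive definite $\Sigma$. The claim that the bathtub profile is the minimizer is also never needed: the Chebyshev estimate you give at the end already is the rigorous argument.

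An equivalent shortcut for Steps~2--3 is the entropy bound for the probability density $\phi=g/m$:
$-\log\|\phi\|_\infty\le-\int\phi\log\phi\,\d v\le\tfrac12\log\bigl((2\pi e)^3\det\Sigma\bigr)$.
This yields $\det\Sigma\ge(2\pi e)^{-3}m^2\|g\|_\infty^{-2}$ directly, with no change of variables. The cost is invoking the Gaussian maximum-entropy principle in place of your elementary computation.
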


    Applying both Proposition~\ref{prop:serre-euclidean} and Lemma~\ref{Lemma:kinetic-determinant-lower-bound}, we can prove the following key result. 

    \begin{proposition}\label{prop:Control-of-density-L2L2}
    Under the assumptions of Theorem~\ref{Theorem:Propagation-of-Moments}, the density $\rho_f$ satisfies
    \begin{equation}
    	\norm{\rho_f}_{L^{2}([0,T];L^2(\rr^3))}
    	\le
    	C(f^{\rm in},n,B)(1+T)^{\frac{1}{2}}\,.
    \end{equation}
    Moreover, by the Hardy--Littlewood--Sobolev inequality, we have
    \begin{equation}
    	\norm{E}_{L^2([0,T];L^6(\rr^3))}
    	\le
    	C\norm{\rho_f}_{L^2([0,T];L^2(\rr^3))}
    	\le
    	C(f^{\rm in},n,B)(1+T)^{\frac{1}{2}}\,.
    \end{equation}
    \end{proposition}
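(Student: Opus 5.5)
We apply Proposition~\ref{prop:serre-euclidean} in space-time, $N=1+3=4$ with variables $(t,x)\in(0,T)\times\rr^3$, to the matrix field
\[
\mathcal A(t,x):=\intd \begin{pmatrix}1\\ v\end{pmatrix}\otimes\begin{pmatrix}1\\ v\end{pmatrix} f(t,x,v)\dd v
=\begin{pmatrix}\rho_f & j_f\\ j_f^{\top} & P_f\end{pmatrix},
\]
where $j_f=\intd vf\dd v$ and $P_f=\intd v\otimes v f\dd v$. To obtain a compactly supported (in time) field on $\rr^4$, we extend $\mathcal A$ by zero outside $[0,T]$. This is symmetric positive semidefinite, and $\mathcal A\in L^1(\rr^4)$ with
\[
\|\mathcal A\|_{L^1}\lesssim T\,(\|f^{\rm in}\|_{L^1}+M_2)
\]
by conservation of mass. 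The moment $M_2$ is controlled by the energy: the magnetic term does no work, since $(Bv)\cdot v=0$ by anti-symmetry, so kinetic plus potential energy is conserved.

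Next we compute the space-time divergence from the Vlasov equation. For each row, integrating \eqref{equation: MagneticVlasovSys} against $1$ and $v$ in $v$ gives
\[
\partial_t\rho_f+\nabla_x\!\cdot j_f=0,\qquad
\partial_t j_f+\nabla_x\!\cdot P_f=\rho_f E+ B\,j_f,
\]
since $\int v\,(E+Bv)\cdot\nabla_v f=-\int (E+Bv)f$. The cutoff in time contributes boundary measures $\delta_{t=0}\otimes(\rho,j)(0)$ and $\delta_{t=T}\otimes(\rho,j)(T)$, whose total mass is bounded by $\|f^{\rm in}\|_{L^1}+\sup_t\|\,|v|f\|_{L^1}$, and the latter is controlled by mass and $M_2$. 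Hence
\[
\|\operatorname{div}_{t,x}\mathcal A\|_{\mathcal M}\lesssim (1+\|B\|_{L^\infty})\Big(1+\sup_t M_2(t)\Big)(1+T)+\int_0^T\!\!\intd \rho_f|E|\dd x\dd t .
\]

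The main obstacle is the self-consistent force term $\rho_f E$. We handle it through the Coulomb structure:
\[
\rho_f E=-\nabla\cdot\Big(\nabla V\otimes\nabla V-\tfrac12|\nabla V|^2\Id\Big),
\]
so $\rho_fE$ is a divergence of a stress tensor bounded in $L^1_x$ by $\|E\|_{L^2}^2$, which is controlled by the conserved energy. We therefore fold the term $-(\nabla V\otimes\nabla V-\frac12|\nabla V|^2\Id)$ into the $(x,x)$-block of the matrix. This spoils positivity, however, so it is more convenient to add $+\tfrac12|E|^2\Id$, which keeps the block $P_f+E\otimes E\ge 0$ because $\tfrac12|E|^2\Id-E\otimes E+E\otimes E$ stays PSD after the rearrangement $P_f+E\otimes E-\tfrac12|E|^2\Id+|E|^2\Id$. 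The resulting modified matrix $\widetilde{\mathcal A}\ge\mathcal A$ is PSD, lies in $L^1$ with norm $\lesssim T\cdot$energy, and has divergence equal to $Bj_f$ plus the boundary measures. The remaining term satisfies $|Bj_f|\le\|B\|_\infty|j_f|\in L^1$.

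We then apply Proposition~\ref{prop:serre-euclidean} with $N=4$:
\[
\int_0^T\!\!\intd(\det\widetilde{\mathcal A})^{1/3}\lesssim \|\operatorname{div}\widetilde{\mathcal A}\|_{\mathcal M}^{4/3}\lesssim C(f^{\rm in},B)(1+T)^{4/3}.
\]
Monotonicity of the determinant on PSD matrices gives $\det\widetilde{\mathcal A}\ge\det\mathcal A$, and Lemma~\ref{Lemma:kinetic-determinant-lower-bound} with $g=f(t,x,\cdot)$ gives $\det\mathcal A\ge c\|f^{\rm in}\|_\infty^{-2}\rho_f^6$, using the transport bound $\|f(t)\|_\infty=\|f^{\rm in}\|_\infty$. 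Therefore $(\det\mathcal A)^{1/3}\gtrsim\rho_f^2$, which yields $\|\rho_f\|_{L^2_{t,x}}^2\lesssim C(1+T)^{4/3}$.

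This exponent is worse than the claimed $(1+T)^{1/2}$. To recover the stated rate, we apply the argument on unit time intervals $[k,k+1]$, on each of which the bound is uniform in $k$ by the conserved quantities, and sum over $k$; this gives $\|\rho_f\|_{L^2_{t,x}}^2\lesssim C(1+T)$. Finally, the bound $E\in L^2_tL^6_x$ follows from the Hardy--Littlewood--Sobolev inequality, $\|\nabla K*\rho\|_{L^6}\lesssim\|\rho\|_{L^2}$, applied pointwise in time.
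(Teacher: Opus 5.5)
The overall architecture is the paper's, and those steps are fine: Serre's inequality in $(t,x)\in\rr^4$ for the time-truncated kinetic tensor, the lower bound $\det\mathcal A\gtrsim\|f^{\rm in}\|_{L^\infty}^{-2}\rho_f^6$, localization to time intervals of length at most one, and HLS at the end. The gap is your treatment of $\rho_fE$ by folding the Maxwell stress into the matrix.

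\textbf{The folded block is not positive.} With $E=-\nabla V_f$ and $-\Delta V_f=\rho_f$ one has $\rho_fE=(\nabla\cdot E)E=+\nabla\cdot\bigl(E\otimes E-\tfrac12|E|^2\Id\bigr)$, so your sign is wrong. The folded block is therefore $P_f-E\otimes E+\tfrac12|E|^2\Id$. In vacuum regions, where $f=0$ but $E\neq0$, the whole space--time matrix reduces to $0\oplus\bigl(-E\otimes E+\tfrac12|E|^2\Id\bigr)$, which has the eigenvalue $-\tfrac12|E|^2$ along $E$. With your sign one gets $E\otimes E-\tfrac12|E|^2\Id$, which is equally indefinite.

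\textbf{Repairing positivity changes the divergence.} Positivity can only be restored by adding a nonnegative correction, and that changes the divergence. Adding $\tfrac12|E|^2\Id$ contributes $\tfrac12\nabla_x|E|^2=(\nabla E)E$ to the momentum rows. Adding $E\otimes E$ instead reintroduces $\rho_fE$ itself, again with $\tfrac12\nabla_x|E|^2$. So your claim that $\operatorname{div}_{t,x}\widetilde{\mathcal A}$ consists only of $Bj_f$ and the boundary measures is false. The dropped term satisfies $\|(\nabla E)E\|_{L^1}\lesssim\|\rho_f\|_{L^2}\|E\|_{L^2}$, which the conserved energy does not control, since it involves the very quantity you are estimating. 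The monotonicity $\det\widetilde{\mathcal A}\ge\det\mathcal A$ does not address this.

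\textbf{The fix.} The detour is unnecessary, because $\rho_fE$ is bounded in $L^1_x$ uniformly in time. Combine Hölder, HLS ($\|E\|_{L^{15/4}}\lesssim\|\rho_f\|_{L^{5/3}}$), the interpolation $\|\rho_f\|_{L^{15/11}}\le\|\rho_f\|_{L^1}^{1/3}\|\rho_f\|_{L^{5/3}}^{2/3}$, and kinetic interpolation $\|\rho_f\|_{L^{5/3}}\lesssim\|f\|_{L^\infty}^{2/5}M_2^{3/5}$. This gives
\[
\|\rho_fE\|_{L^1}\le\|\rho_f\|_{L^{15/11}}\|E\|_{L^{15/4}}\lesssim\|f^{\rm in}\|_{L^1}^{1/3}\|f^{\rm in}\|_{L^\infty}^{2/3}\sup_{t}M_2(t),
\]
and $\sup_tM_2(t)$ is bounded by the conserved energy, exactly as you already argued. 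This is what the paper does. With it, the term $\int\!\!\int\rho_f|E|$ already present in your measure bound is at most $C$ on every interval $I$ of length at most one. Hence $\|\operatorname{div}_{t,x}(\mathbf 1_I\mathcal A)\|_{\mathcal M}\le C(f^{\rm in},B)$ uniformly in $I$, and the rest of your argument, applied to $\mathcal A$ itself, goes through unchanged.
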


\begin{proof}
	In the following proof, we use $C(f^{\mathrm{in}},n,B)$ to denote a constant depending only on $B$, $f^{\mathrm{in}}$, and $n$, whose value may change from line to line. We follow the approach of \cite{wang2026critical}. Define
	\begin{equation*}
		\rho_f(t,x)=\int_{\rr^3}f(t,x,v)\dd v\,,
		\qquad
		j_f(t,x)=\int_{\rr^3}v f(t,x,v)\dd v\,.
	\end{equation*}
	Integrating equation~\eqref{equation: MagneticVlasovSys} with respect to $v$ yields the conservation laws
	\begin{equation}\label{eq:Equations-of-Conservation-Law}
		\begin{cases}
			\partial_t\rho_f+{\rm div}_x j_f=0\,,\\[.5em]
			\partial_t j_f+{\rm div}_x\Pi=\rho_fE+j_f\times B\,,
		\end{cases}
	\end{equation}
	where
	\begin{equation*}
		\Pi(t,x)=\int_{\rr^3}v\otimes v\,f(t,x,v)\dd v\,.
	\end{equation*}
	We further define the space--time kinetic tensor $A_f(t,x)$ by
	\begin{equation*}
		A_f(t,x)
		=
		\int_{\rr^3}
		\begin{pmatrix}
			1\\
			v
		\end{pmatrix}
		\otimes
		\begin{pmatrix}
			1\\
			v
		\end{pmatrix}
		f(t,x,v)\dd v\,.
	\end{equation*}
	As in \cite{wang2026critical}, we obtain
	\begin{equation}\label{eq:Divergence-of-kinetic-tensor}
		{\rm div}_{t,x}A_f(t,x)
		=
		\mathbf{1}_{[0,T]}
		\begin{pmatrix}
			0\\
			\rho_fE+j_f\times B
		\end{pmatrix}
		+\delta_{t=0}
		\begin{pmatrix}
			\rho_f(0)\\
			j_f(0)
		\end{pmatrix}
		-\delta_{t=T}
		\begin{pmatrix}
			\rho_f(T)\\
			j_f(T)
		\end{pmatrix},
	\end{equation}
	where $\delta_{t=a}$ denotes the Dirac measure concentrated at $t=a$.

	Applying Lemma~\ref{Lemma:kinetic-determinant-lower-bound} with
	$g(v)=f(t,x,v)$, we obtain
	\begin{equation}\label{Equation:estimate-of-rhof}
		\snorm{\rho_f(t,x)}^6
		\le
		C\,(\det A_f(t,x))
		\norm{f^{\rm in}}_{L^\infty}^2\,.
	\end{equation}
	We first observe that the terms appearing in the divergence of $A_f$
	are uniformly bounded in time. By the kinetic interpolation inequality
	and the Hardy--Littlewood--Sobolev inequality,
	\begin{equation*}
		\begin{aligned}
			\lnorm{1}{\rho_fE}
			&\leq
			\lnorm{\frac{15}{11}}{\rho_f}
			\lnorm{\frac{15}{4}}{E}\\
			&\leq
			C\lnorm{1}{\rho_f}^{\frac13}
			\lnorm{\frac53}{\rho_f}^{\frac53}\\
			&\leq
			C\lnorm{1}{f^{\rm in}}^{\frac13}
			\lnorm{\infty}{f^{\rm in}}^{\frac23}
			M_2(0)
			\leq
			C(f^{\rm in},n,B)\,.
		\end{aligned}
	\end{equation*}
	Similarly,
	\begin{equation*}
		\lnorm{1}{j_f\times B}
		\leq
		\lnorm{\infty}{B}\,
		\lnorm{1}{f^{\rm in}}^{\frac12}
		M_2(0)^{\frac12}
		\leq
		C(f^{\rm in},n,B)\,.
	\end{equation*}
	Moreover, by conservation of mass and energy,
	\begin{equation*}
		\norm{\rho_f(t)}_{L^1}
		+
		\norm{j_f(t)}_{L^1}
		\leq
		C(f^{\rm in},n,B)
	\end{equation*}
	uniformly for $t\in[0,T]$.

	We now apply the compensated integrability estimate locally in time.
	Let
	\[
		I=[s,s+\ell]\subset[0,T],
		\qquad
		0<\ell\leq1,
	\]
	and consider the zero extension
	\[
		\widetilde A_f:=\mathbf{1}_I A_f.
	\]
	By \eqref{eq:Equations-of-Conservation-Law}, we have
	\begin{equation*}
		{\rm div}_{t,x}\widetilde A_f
		=
		\mathbf{1}_{I}
		\begin{pmatrix}
			0\\
			\rho_fE+j_f\times B
		\end{pmatrix}
		+\delta_{t=s}
		\begin{pmatrix}
			\rho_f(s)\\
			j_f(s)
		\end{pmatrix}
		-\delta_{t=s+\ell}
		\begin{pmatrix}
			\rho_f(s+\ell)\\
			j_f(s+\ell)
		\end{pmatrix}.
	\end{equation*}
	Therefore, since $\ell\leq1$, the preceding estimates imply
	\begin{equation*}
		\|{\rm div}_{t,x}\widetilde A_f\|_{\mathcal{M}}
		\leq
		C(f^{\rm in},n,B),
	\end{equation*}
	where the constant is independent of $s$ and $\ell$.

	By Proposition~\ref{prop:serre-euclidean}, we obtain
	\begin{equation*}
		\int_I\int_{\rr^3}
		(\det A_f)^{1/3}\dd x\d t
		\leq
		C
		\|{\rm div}_{t,x}\widetilde A_f\|_{\mathcal{M}}^{4/3}
		\leq
		C(f^{\rm in},n,B)\,.
	\end{equation*}
	Combining this estimate with
	\eqref{Equation:estimate-of-rhof}, we conclude that
	\begin{equation}\label{eq:local-rho-L2}
		\int_I\norm{\rho_f(t)}_{L^2}^2\d t
		\leq
		C(f^{\rm in},n,B)
	\end{equation}
	for every interval $I\subset[0,T]$ of length at most one.

	Finally, partition $[0,T]$ into at most $1+T$ intervals of length at
	most one. Summing \eqref{eq:local-rho-L2} over these intervals completes the proof.
\end{proof}
    As an immediate consequence of Proposition~\ref{prop:Control-of-density-L2L2}, we can now prove Theorem~\ref{Theorem:Propagation-of-Moments}.

	\begin{proof}[Proof of Theorem~\ref{Theorem:Propagation-of-Moments}]

Let $F=\|f^{\mathrm{in}}\|_{L^\infty}$, and set
\[
s_n=\frac{n+3}{3},\qquad
a_n=\frac{3(n+3)}{n+6},\qquad
p_n=\frac{n+3}{n+2}.
\]
By kinetic interpolation,
\[
\|\rho_f\|_{L^{s_n}}
\le C_nF^{n/(n+3)}M_n^{3/(n+3)}.
\]
Since
\[
\frac1{a_n}=\frac23\frac12+\frac13\frac1{s_n},
\qquad
\frac1{n+3}=\frac1{a_n}-\frac13,
\]
interpolation and the Hardy--Littlewood--Sobolev inequality give
\[
\|E\|_{L^{n+3}}
\le
C_n\|\rho_f\|_{L^2}^{2/3}
F^{n/[3(n+3)]}M_n^{1/(n+3)}.
\]
The same estimate remains valid for $n=3$, when $a_n=s_n=2$.

Kinetic interpolation also gives
\[
\left\|\int_{\mathbb R^3}|v|^{n-1}f\,dv\right\|_{L^{p_n}}
\le
C_nF^{1/(n+3)}M_n^{(n+2)/(n+3)}.
\]
Since $p_n$ and $n+3$ are conjugate and
$v\cdot(v\wedge B)=0$, we obtain
\[
\frac{d}{dt}M_n(t)
\le
C_nF^{1/3}\|\rho_f(t)\|_{L^2}^{2/3}M_n(t).
\]
Therefore,
\[
M_n(t)
\le
M_n(0)\exp\left(
C_nF^{1/3}
\int_0^t\|\rho_f(s)\|_{L^2}^{2/3}\dd s
\right).
\]
By Hölder's inequality and Proposition~2.4, we have
\[
\int_0^T\|\rho_f(s)\|_{L^2}^{2/3}\dd s
\le
T^{2/3}
\left(\int_0^T\|\rho_f(s)\|_{L^2}^2\dd s\right)^{1/3}
\le C(f^{\mathrm{in}},n,B)(1+T).
\]
This completes the proof. 
\end{proof}

\subsection{Estimates for spatial density \texorpdfstring{$\rho_f$}{rhof} and electric field \texorpdfstring{$E$}{E}}
	Let us recall some standard estimates.

    \begin{lemma}
\label{Lemma:EstimateofSingularPotential1}
Let $f\geq0$ and suppose that
\[
	f\in L^1(\Xi)\cap L^\infty(\Xi),
	\qquad
	M_m:=\iint_{\Xi}\snorm{v}^m f(x,v)\dd x\dd v<\infty
\]
for some $m>6$. 
Then
\[
	\rho_f\in L^{\frac{m+3}{3}}(\rr^3)
\]
and
\begin{equation}
	\norm{\rho_f}_{L^{\frac{m+3}{3}}}
	\leq
	C_m
	\norm{f}_{L^\infty}^{\frac{m}{m+3}}
	M_m^{\frac{3}{m+3}}.
\end{equation}
In particular, since $(m+3)/3>3$, one has $E\in L^\infty(\rr^3)$ and
\begin{equation}
	\norm{E}_{L^\infty}
	\leq
	C_m
	\norm{f}_{L^1}^{\frac{m-6}{3m}}
	\norm{f}_{L^\infty}^{\frac23}
	M_m^{\frac2m} \le C_m\exp\(C(f^{\rm in}, m, B) (1+T)\)\,.
\end{equation}
Consequently, if $M_m(t)$ remains bounded on every finite time interval,
then so does $\norm{E(t)}_{L^\infty}$.
\end{lemma}

\begin{lemma}[Propagation of pointwise velocity tails]
\label{lem:propagation-velocity-tail}
Let $f\geq0$ be a classical solution to
\eqref{equation: MagneticVlasovSys} on $[0,T]$ and assume that
\[
	E\in L^1\bigl([0,T];L^\infty(\rr^3)\bigr).
\]
Suppose that, for some $k>0$,
\[
	\norm{\langle v\rangle^k f^{\rm in}}_{L^\infty(\Xi)}<\infty.
\]
Then
\begin{equation}
	\sup_{0\leq t\leq T}
	\norm{\langle v\rangle^k f(t)}_{L^\infty(\Xi)}
	\leq
	C_k
	\left(
		1+\int_0^T\norm{E(s)}_{L^\infty}\d s
	\right)^k
	\norm{\langle v\rangle^k f^{\rm in}}_{L^\infty(\Xi)}.
\end{equation}
In particular, polynomial velocity decay is propagated on every finite
time interval.
\end{lemma}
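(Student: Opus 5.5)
The plan is to work along characteristics, exploiting that the magnetic force does no work. Let $(X(s),V(s))=(X(s;t,x,v),V(s;t,x,v))$ be the characteristic curve through $(x,v)$ at time $t$:
\[
\dot X=V,\qquad \dot V=E(s,X)+V\wedge B(s,X).
\]
Since $f$ is a classical solution, it is constant along characteristics, so
\[
f(t,x,v)=f^{\rm in}\bigl(X(0),V(0)\bigr).
\]
Consequently
\[
\langle v\rangle^k f(t,x,v)\le \Bigl(\frac{\langle v\rangle}{\langle V(0)\rangle}\Bigr)^k\,\norm{\langle v\rangle^k f^{\rm in}}_{L^\infty},
\]
and the whole statement reduces to the pointwise comparison
\[
\langle v\rangle\le C\Bigl(1+\int_0^T\norm{E(s)}_{L^\infty}\d s\Bigr)\langle V(0)\rangle,
\]
holding uniformly along every characteristic.

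To prove this comparison, differentiate $|V|^2$ along the characteristic. Because $V\cdot(V\wedge B)=0$, the magnetic field contributes nothing:
\[
\tfrac{\d}{\d s}\tfrac12|V|^2=V\cdot E(s,X).
\]
This gives $\bigl|\tfrac{\d}{\d s}|V|\bigr|\le\norm{E(s)}_{L^\infty}$ wherever $V\neq0$. To avoid the singularity at $V=0$, I would work with $\langle V\rangle$ instead, since
\[
\Bigl|\tfrac{\d}{\d s}\langle V\rangle\Bigr|=\frac{|V\cdot E|}{\langle V\rangle}\le\norm{E(s)}_{L^\infty}.
\]
Integrating from $0$ to $t$ yields
\[
\langle v\rangle=\langle V(t)\rangle\le\langle V(0)\rangle+\int_0^T\norm{E}_{L^\infty}\d s\le\Bigl(1+\int_0^T\norm{E}_{L^\infty}\d s\Bigr)\langle V(0)\rangle,
\]
using $\langle V(0)\rangle\ge1$. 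Raising this to the power $k$ and taking the supremum over $(x,v)$ and $t\in[0,T]$ gives the claim with $C_k=1$.

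The one step needing care is justifying the characteristic representation. The assumption $E\in L^1([0,T];L^\infty)$ together with the regularity of a classical solution (in particular $E$ locally Lipschitz in $x$) and $B\in W^{m,\infty}$ ensure that the flow is well defined and globally defined on $[0,T]$. Global existence follows because the velocity bound above rules out blow-up. The flow is also a measure-preserving bijection (the vector field is divergence-free, since $\nabla_v\cdot(v\wedge B)=0$), although the $L^\infty$ bound only uses the pointwise identity. I expect no real obstacle beyond this routine ODE justification. The cancellation $V\cdot(V\wedge B)=0$ is the key point, since it makes the estimate completely independent of the size and the nonuniformity of $B$.
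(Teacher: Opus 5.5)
Your proposal is correct and follows essentially the same argument as the paper: you follow characteristics, use the cancellation $V\cdot(V\wedge B)=0$ to compare $\langle V(t)\rangle$ with $\langle V(0)\rangle$ up to $\int_0^T\norm{E(s)}_{L^\infty}\d s$, and use that $f$ is constant along the flow. Differentiating $\langle V\rangle$ rather than $|V|$ is a small refinement: it avoids the nondifferentiability of $|V|$ at $V=0$, which the paper glosses over, and it yields the explicit constant $C_k=1$.
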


\begin{proof}
	Let $(X(s),V(s))$ be a characteristic associated with
	\eqref{equation: MagneticVlasovSys}
    and given by the DiPerna--Lions flow of
\eqref{equation: MagneticVlasovSys} as follows:
\begin{equation}\label{eq:DiPerna-Lions-flow2}
    \begin{cases}
        \dot{X}(s,x,v)=V(s,x,v),\\[.5em]
        \dot{V}(s,x,v)
        =
        E(s,X(s,x,v))
        +
        V(s,x,v)\wedge B(s,X(s,x,v)),
    \end{cases}
\end{equation}
    Since $V(s)\cdot\bigl(V(s)\times B(s,X(s))\bigr)=0$, 
	we have
	\[
		\frac{\d}{\d s}|V(s)|
		\leq
		\norm{E(s)}_{L^\infty}.
	\]
	Hence, 
	\begin{equation}\label{eq:velocity-magnitude-comparison}
		\bigl||V(t)|-|V(0)|\bigr|
		\leq
		\int_0^t\norm{E(s)}_{L^\infty}\d s =: A_t.
	\end{equation}
	Then this implies
	\[
		|V(t)|\leq |V(0)|+A_t \quad \implies \quad \langle V(t)\rangle
		\leq
		C(1+A_t)\langle V(0)\rangle\,,
	\]
	and consequently
	\begin{equation}\label{eq:weighted-velocity-comparison}
		\langle V(t)\rangle^k
		\leq
		C_k(1+A_t)^k\langle V(0)\rangle^k.
	\end{equation}

	Since $f$ is constant along characteristics, then combining this identity with
	\eqref{eq:weighted-velocity-comparison}, we obtain
	\[
		\begin{aligned}
			\langle V(t)\rangle^k
			f(t,X(t),V(t))
			&=
			\langle V(t)\rangle^k
			f^{\rm in}(X(0),V(0))\\
			&\leq
			C_k(1+A_t)^k
			\langle V(0)\rangle^k
			f^{\rm in}(X(0),V(0))\\
			&\leq
			C_k(1+A_t)^k
			\norm{\langle v\rangle^k f^{\rm in}}_{L^\infty}.
		\end{aligned}
	\]
	Taking the supremum over all characteristics yields
	\[
		\norm{\langle v\rangle^k f(t)}_{L^\infty}
		\leq
		C_k
		\left(
			1+\int_0^t\norm{E(s)}_{L^\infty}\d s
		\right)^k
		\norm{\langle v\rangle^k f^{\rm in}}_{L^\infty}.
	\]
	Taking the supremum over $t\in[0,T]$ completes the proof.
\end{proof}

As a consequence, we have the following result. 
\begin{lemma}[Pointwise velocity tails imply bounded density]
\label{lem:velocity-tail-bounded-density}
Let $k>3$ and suppose that
\[
	\norm{\langle v\rangle^k f(t)}_{L^\infty(\Xi)}<\infty.
\]
Then
\[
	\rho_f(t)\in L^\infty(\rr^3)
\]
and
\begin{equation}
	\norm{\rho_f(t)}_{L^\infty}
	\leq
	C_k
	\norm{\langle v\rangle^k f(t)}_{L^\infty(\Xi)}.
\end{equation}
Consequently, if
\[
	\sup_{0\leq t\leq T}
	\norm{\langle v\rangle^k f(t)}_{L^\infty(\Xi)}<\infty,
\]
then
\[
	\rho_f\in L^\infty\bigl([0,T]\times\rr^3\bigr).
\]
\end{lemma}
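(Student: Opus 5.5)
The plan is to bound $\rho_f(t,x)=\int_{\rr^3} f(t,x,v)\dd v$ pointwise in $x$ by inserting the weight $\langle v\rangle^k$. For fixed $t$ and almost every $x$ we write
\[
	\rho_f(t,x)=\int_{\rr^3}\langle v\rangle^{-k}\,\langle v\rangle^{k}f(t,x,v)\dd v
	\leq
	\norm{\langle v\rangle^k f(t)}_{L^\infty(\Xi)}\int_{\rr^3}\langle v\rangle^{-k}\dd v .
\]
This uses only $f\geq0$ and the definition of the essential supremum on $\Xi$. Because of Fubini, the bound $\langle v\rangle^k f(t,x,v)\leq \norm{\langle v\rangle^k f(t)}_{L^\infty(\Xi)}$ holds for a.e.\ $x$ and then for a.e.\ $v$.

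Next we check that the constant is finite. In polar coordinates,
\[
\int_{\rr^3}\langle v\rangle^{-k}\dd v=4\pi\int_0^\infty r^2(1+r^2)^{-k/2}\dd r .
\]
This integral converges exactly when $k-2>1$, that is, when $k>3$. We set $C_k:=\int_{\rr^3}\langle v\rangle^{-k}\dd v<\infty$, and taking the essential supremum in $x$ gives the stated estimate.

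For the consequence, we apply the estimate at each $t\in[0,T]$. This gives
\[
\sup_{t}\norm{\rho_f(t)}_{L^\infty}\leq C_k\sup_t\norm{\langle v\rangle^k f(t)}_{L^\infty}<\infty ,
\]
so $\rho_f\in L^\infty([0,T]\times\rr^3)$. Joint measurability is immediate since $f$ is a classical solution. In practice, the finiteness of the right-hand side comes from Lemma~\ref{lem:propagation-velocity-tail} combined with Lemma~\ref{Lemma:EstimateofSingularPotential1}, which supplies $E\in L^1([0,T];L^\infty)$.

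There is no real obstacle here. The only point requiring care is the integrability threshold $k>3=d$.
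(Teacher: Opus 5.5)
Your proof is correct and is essentially the argument the paper intends. The paper states this lemma without a separate proof, as a direct consequence, and your bound $\rho_f(t,x)\le \norm{\langle v\rangle^k f(t)}_{L^\infty(\Xi)}\int_{\rr^3}\langle v\rangle^{-k}\dd v$, with the integral finite precisely when $k>3$, is the same H\"older-in-velocity estimate the paper uses with exponential weights in Lemma~\ref{lem:weighted-velocity-integration}.
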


	\begin{lemma}\label{Lemma:EstimateofSingularPotential2}
		Let $k\in\mathbb{N}^*$. There exists a universal constant $C>0$ such that
		\begin{equation}
			\lnorm{\infty}{\nabla^k_xE}\le 
			C\left(1+\lnorm{1}{\rho_{\nabla^{k-1}_{x}f}}+\lnorm{\infty}{\rho_{\nabla^{k-1}_{x}f}}\ln\left(1+\lnorm{\infty}{\rho_{\nabla^k_{x}f}}\right)\right)\,.\label{est:L^infty_derivatives_E}
		\end{equation}
        Here, we define
        \[
        \rho_g(x):=\int_{\mathbb{R}^3}g(x,v)\,\mathrm{d}v,
        \]
        and the norms of the tensor-valued quantities
        $\rho_{\nabla_x^\ell f}$ are understood componentwise.
	\end{lemma}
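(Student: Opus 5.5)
The approach is the classical Beale--Kato--Majda-type logarithmic estimate for a Calder\'on--Zygmund kernel, applied to $g:=\nabla_x^{k-1}\rho_f$. The point to use first is that differentiation in $x$ commutes with integration in $v$, so $\rho_{\nabla_x^{k-1}f}=\nabla_x^{k-1}\rho_f=g$ and $\rho_{\nabla^k_x f}=\nabla_x g$ (componentwise). Since $E=-\nabla K*\rho_f$ with $K(x)=(4\pi|x|)^{-1}$, we get $\nabla_x^kE=-(\nabla^2K)*g$. The claim then reduces to the purely harmonic-analytic bound
\[
\lnorm{\infty}{(\nabla^2K)*g}\le C\bigl(1+\lnorm{1}{g}+\lnorm{\infty}{g}\ln(1+\lnorm{\infty}{\nabla g})\bigr),
\]
applied to each component of $g$.

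For this bound I would use the distributional identity
\[
\partial_i\partial_jK=\mathrm{p.v.}\,\Omega_{ij}(x)|x|^{-3}-\tfrac13\delta_{ij}\delta_0 ,
\]
where $\Omega_{ij}$ is homogeneous of degree zero with zero mean on the sphere. For fixed $x$, the principal-value part is split into three regions, with radii $r:=(1+\lnorm{\infty}{\nabla g})^{-1}\le1$ and $R:=1$.
\begin{itemize}
\item On $|x-y|<r$, the vanishing spherical mean lets us replace $g(y)$ by $g(y)-g(x)$. This contribution is bounded by $C r\lnorm{\infty}{\nabla g}\le C$.
\item On $r\le|x-y|\le 1$, it is bounded by $C\lnorm{\infty}{g}\ln(1/r)=C\lnorm{\infty}{g}\ln(1+\lnorm{\infty}{\nabla g})$.
\item On $|x-y|>1$, the kernel is bounded by $C$, giving $C\lnorm{1}{g}$.
\end{itemize}

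The main obstacle is the local term $-\tfrac13 g$ coming from the Dirac mass. It gives a bare $\lnorm{\infty}{g}$, which does not appear in the stated right-hand side, so it has to be absorbed into the other terms; I will split into two cases.
\begin{itemize}
\item If $\lnorm{\infty}{\nabla g}\ge e-1$, then $\ln(1+\lnorm{\infty}{\nabla g})\ge1$. Hence $\lnorm{\infty}{g}\le\lnorm{\infty}{g}\ln(1+\lnorm{\infty}{\nabla g})$, which is already among the allowed terms.
\item If $\lnorm{\infty}{\nabla g}<e-1$, I use the elementary interpolation $\lnorm{\infty}{g}\le C\lnorm{1}{g}^{1/4}\lnorm{\infty}{\nabla g}^{3/4}$.
\end{itemize}
The interpolation follows by averaging $g$ over a ball of radius $s$ around a near-maximum point: this gives $|g(x)|\le Cs^{-3}\lnorm{1}{g}+s\lnorm{\infty}{\nabla g}$, and we optimize in $s$. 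In the second case this gives $\lnorm{\infty}{g}\le C\lnorm{1}{g}^{1/4}\le C(1+\lnorm{1}{g})$.

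Collecting the three regions and the local term yields the stated estimate \eqref{est:L^infty_derivatives_E}, with a universal constant depending only on $k$ through the number of components.
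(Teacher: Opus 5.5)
Your proof is correct and follows the paper's argument: the same three-region split of the Calder\'on--Zygmund integral for $(\nabla^2 K)*\rho_{\nabla^{k-1}_x f}$ at radii $r=(1+\|\rho_{\nabla^k_x f}\|_{L^\infty})^{-1}$ and $1$, using the vanishing spherical mean on the inner region. The one difference is that you also treat the local term $-\tfrac13\delta_{ij}\delta_0$ in $\partial_i\partial_j K$, which the paper's proof drops by writing $\nabla^2_x K$ as a pure principal value; your case split correctly absorbs this term into the stated right-hand side (trivially when $\ln(1+\|\nabla g\|_{L^\infty})\ge 1$, and otherwise via $\|g\|_{L^\infty}\le C\|g\|_{L^1}^{1/4}\|\nabla g\|_{L^\infty}^{3/4}$).
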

	\begin{proof}
			Since $\nabla^2_x K$ is a Calder\'{o}n--Zygmund kernel whose mean over $\mathbb{S}^2$ vanishes, for simplicity we denote it by $\nabla^2_x K=\mathrm{p.v.}\,\frac{\Omega\left(x/\snorm{x}\right)}{\snorm{x}^3}$. Following the method of the proof of \cite[Lemma 4.5.2]{golse2013mean}, we obtain
			\begin{equation*}
					\begin{aligned}
							\int_{\snorm{y}>\epsilon}\frac{\Omega\left(\frac{y}{\snorm{y}}\right)}{\snorm{y}^3}\rho_{\nabla^{k-1}_xf}(x-y)\,\mathrm{d} y&=\int_{\snorm{y}>1}+\int_{r\le\snorm{y}\le 1}+\int_{\epsilon\le\snorm{y}\le r}\frac{\Omega\left(\frac{y}{\snorm{y}}\right)}{\snorm{y}^3}\rho_{\nabla^{k-1}_xf}(x-y)\,\mathrm{d} y\\&=:I_1+I_2+I_3\,.
						\end{aligned}
				\end{equation*}
			First, it is easy to see 
			\begin{equation*}
					\snorm{I_1}\le \norm{\Omega}_{\mathbb{S}^2}\norm{\rho_{\nabla^{k-1}_xf}}_{L^1}\,.
				\end{equation*}
			For the $I_2$ we have
			\begin{equation*}
					\begin{aligned}
							\snorm{I_2}&\le \norm{\Omega}_{L^{\infty}(\mathbb{S}^2)}\norm{\rho_{\nabla^{k-1}_xf}}_{L^{\infty}(\rr^3)}\int_{r\le\snorm{y}\le 1}\frac{dy}{\snorm{y}^d}\\
							&=\norm{\Omega}_{L^{\infty}(\mathbb{S}^2)}\norm{\rho_{\nabla^{k-1}_xf}}_{L^{\infty}(\rr^3)}\snorm{\mathbb{S}^2}\ln\frac{1}{r}\,.
						\end{aligned}
				\end{equation*}
			We apply the vanishing property of integral kernel to obtain
			\begin{equation*}
					\begin{aligned}
							\snorm{I_3}&=\snorm{\int_{\epsilon\le \snorm{y}\le r}\frac{\Omega(\frac{y}{\snorm{y}})}{\snorm{y}^d}\left(\rho_{\nabla^{k-1}_xf}(x-y)-\rho_{\nabla^{k-1}_xf}(x)\right)\,\mathrm{d} y}\\
							&\le\int_{\epsilon\le \snorm{y}\le r}\frac{\snorm{\Omega(\frac{y}{\snorm{y}})}}{\snorm{y}^d}\snorm{\rho_{\nabla^{k}_xf}}\snorm{y}\,\mathrm{d} y\\
							&\le \norm{\Omega}_{L^{\infty}(\mathbb{S}^2)}\norm{\rho_{\nabla^{k}_xf}}_{L^{\infty}(\rr^3)}\snorm{\mathbb{S}^2}r\,.
						\end{aligned}
				\end{equation*}
			Therefore
			\begin{equation*}
					\begin{aligned}
							\snorm{I_1}+\snorm{I_2}+\snorm{I_3}\le  \norm{\Omega}_{L^{\infty}}
							\bigl(
							\lnorm{1}{\rho_{\nabla^{k-1}_xf}}
							+ \lnorm{\infty}{\rho_{\nabla^{k-1}_xf}}\,\snorm{\mathbb{S}^2}\,\ln\frac{1}{r}
							+ \lnorm{\infty}{\rho_{\nabla^k_xf}}\,\snorm{\mathbb{S}^2}\,r
							\bigr)\,.
						\end{aligned}
				\end{equation*}
			Take $r=(1+\lnorm{\infty}{\rho_{\nabla^k_xf}})^{-1}$, then we have
			\begin{equation}
					\lnorm{\infty}{\nabla^k_xE}\le C\left(1+\lnorm{1}{\rho_{\nabla^{k-1}_{x}f}}+\lnorm{\infty}{\rho_{\nabla^{k-1}_{x}f}} \ln\left(1+\lnorm{\infty}{\rho_{\nabla^k_{x}f}}\right)\right)\,.
				\end{equation}
			This completes the proof. 
		\end{proof}

	\subsection{Proof of propagation of regularity}
    In this subsection, we shall prove Theorem~\ref{Theorem:PropagationOfRegularity}.

    \bigskip 

    Let us begin by proving some lemmas. 

\begin{lemma}
\label{lem:weighted-velocity-integration}
Let $1\leq p\leq\infty$, $r\in\mathbb{N}_0$, and $\lambda\geq\lambda_*>0$. Suppose that
\begin{equation*}
w_\lambda\partial_x^\alpha u
\in L^p(\mathbb{R}^6),
\qquad
|\alpha|\leq r.
\end{equation*}
Then
$\rho_u\in W^{r,p}(\mathbb{R}^3)$ and satisfies
\begin{equation}
\label{eq:weighted-velocity-integration}
\|\partial_x^\alpha\rho_u\|_{L^p_x}
\leq
C_{\lambda,p}
\|w_\lambda\partial_x^\alpha u\|_{L^p_{x,v}},
\qquad
|\alpha|\leq r,
\end{equation}
where
\begin{equation}
\label{eq:velocity-weight-endpoints}
C_{\lambda,p}:=\|e^{-\lambda\langle v\rangle}\|_{L^{p'}_v}
\leq
\begin{cases}
e^{-\lambda},
& p=1,\\[2mm]
C_p\left(1+\lambda^{-3/p'}\right),
& 1<p<\infty,\\[2mm]
C\left(1+\lambda^{-3}\right),
& p=\infty.
\end{cases}
\end{equation}
Consequently,  we have
\begin{equation}
\label{eq:density-Sobolev-bound}
\|\rho_u\|_{W^{r,p}_x}
\leq
C_{\lambda_*,r,p}\|u\|_{\mathcal{X}^{r, p}_{0, \lambda}}.
\end{equation}
\end{lemma}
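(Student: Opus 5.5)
The argument is a direct Hölder estimate in the velocity variable, applied after commuting spatial derivatives with the $v$-integration. First I will justify that $\partial_x^\alpha\rho_u=\rho_{\partial_x^\alpha u}$ for $|\alpha|\le r$, in the weak sense. The hypothesis $w_\lambda\partial_x^\alpha u\in L^p$ together with $e^{-\lambda\langle v\rangle}\in L^{p'}_v$ shows that $\partial_x^\alpha u(x,\cdot)\in L^1_v$ for a.e.\ $x$, and in fact that $\partial_x^\alpha u\in L^1_{\rm loc}(\rr^3_x;L^1_v)$. For a test function $\varphi\in C_c^\infty(\rr^3)$ we may therefore apply Fubini:
\[
\int\rho_u\,\partial^\alpha\varphi\,\d x=\iint u\,\partial_x^\alpha\varphi\,\d x\,\d v=(-1)^{|\alpha|}\iint\partial_x^\alpha u\,\varphi\,\d x\,\d v .
\]
This identifies the weak derivative. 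Here the derivative of $u$ is understood in the distributional sense in $x$, and the identity holds for a.e.\ $v$ after a standard slicing or mollification argument.

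Next comes the pointwise bound. For a.e.\ $x$, Hölder's inequality in $v$ gives
\[
|\partial_x^\alpha\rho_u(x)|\le\int e^{-\lambda\langle v\rangle}\,\bigl|w_\lambda\partial_x^\alpha u(x,v)\bigr|\,\d v\le C_{\lambda,p}\,\|w_\lambda\partial_x^\alpha u(x,\cdot)\|_{L^p_v}.
\]
Taking the $L^p_x$ norm yields \eqref{eq:weighted-velocity-integration}. For $p=\infty$ one simply takes the supremum in $x$, and for $p=1$ the bound is immediate since $e^{-\lambda\langle v\rangle}\le e^{-\lambda}$.

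Then I compute the constants in \eqref{eq:velocity-weight-endpoints}. For $p=1$, $p'=\infty$ and $\sup_v e^{-\lambda\langle v\rangle}=e^{-\lambda}$ because $\langle v\rangle\ge1$. For $p'<\infty$, use $\langle v\rangle\ge|v|$ to get
\[
\int e^{-\lambda p'\langle v\rangle}\,\d v\le\int e^{-\lambda p'|v|}\,\d v=8\pi(\lambda p')^{-3}.
\]
Hence $C_{\lambda,p}\le C_p\lambda^{-3/p'}\le C_p(1+\lambda^{-3/p'})$. The case $p=\infty$ (so $p'=1$) gives $C(1+\lambda^{-3})$.

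Finally, the inequality \eqref{eq:density-Sobolev-bound} follows by summing over $|\alpha|\le r$. Since $\lambda\ge\lambda_*$, we have $C_{\lambda,p}\le C_{\lambda_*,p}$ by monotonicity of $e^{-\lambda\langle v\rangle}$ in $\lambda$. The norm $\|u\|_{\mathcal X^{r,p}_{0,\lambda}}$ dominates each $\|w_\lambda\partial_x^\alpha u\|_{L^p}$ (taking only pure $x$-derivatives, with $w_0(x)=e^{0}=1$). Summing then gives a constant depending on $\lambda_*,r,p$.

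The only mildly delicate step is the justification that differentiation commutes with the $v$-integral under mere weighted $L^p$ hypotheses. I handle this through the local $L^1_xL^1_v$ integrability provided by the Hölder step, as sketched above; everything else is routine.
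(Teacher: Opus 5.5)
Your proof is correct and follows the paper's route: H\"older's inequality in $v$ against $e^{-\lambda\langle v\rangle}\in L^{p'}_v$, then taking the $L^p_x$ norm and summing over $|\alpha|\le r$. You also justify $\partial_x^\alpha\rho_u=\rho_{\partial_x^\alpha u}$ and compute the endpoint constants explicitly (for instance $\int e^{-\lambda p'|v|}\,\d v=8\pi(\lambda p')^{-3}$), both of which the paper leaves implicit.
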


\begin{proof}
Hölder's inequality in the velocity variable gives
\begin{equation*}
\begin{aligned}
|\partial_x^\alpha\rho_u(x)|
&=
\left|
\int_{\mathbb{R}^3}
\partial_x^\alpha u(x,v)\,\mathrm{d}v
\right|
\\
&\leq
\left(
\int_{\mathbb{R}^3}
w_\lambda^p
|\partial_x^\alpha u(x,v)|^p\,\mathrm{d}v
\right)^{1/p}
\left(
\int_{\mathbb{R}^3}
e^{-p'\lambda\langle v\rangle}\,\mathrm{d}v
\right)^{1/p'}.
\end{aligned}
\end{equation*}
Taking the $L^p_x$ norm completes the proof. 
\end{proof}
        \begin{lemma}[Commutator structure]\label{lem:exact-commutator}  
        Let
        \[
        \mathcal \sfT:=\partial_t+v\cdot\nabla_x+F\cdot\nabla_v \quad \text{ with } \quad F=E+v\wedge B\,,
        \]
        and suppose $\sfT f=0$.  Suppose
        $g_{\alpha\beta}=D^{\alpha,\beta}f$, then we have
        \begin{equation} \label{eq:T_acting_on_derivatives}
        \begin{aligned}
        \mathsf{T}g_{\alpha\beta}
        ={}&-\sum_{i=1}^3\beta_i
        D^{\alpha+e_i,\beta-e_i}f\\
        &-\sum_{\substack{\alpha_1\leq\alpha,\ \beta_1\leq\beta\\
                 |\alpha_1|+|\beta_1|\geq1}}
        C_{\alpha,\beta}^{\alpha_1,\beta_1}
        (D^{\alpha_1,\beta_1}F)\cdot
        \nabla_vD^{\alpha-\alpha_1,\beta-\beta_1}f .
        \end{aligned}
        \end{equation}
        Furthermore, we have
        \begin{equation}\label{eq:derivative_of_F}
        D^{\alpha_1,\beta_1}F=
        \begin{cases}
        \partial_x^{\alpha_1}E+ v\wedge\partial_x^{\alpha_1}B,
           &\beta_1=0,\\
        e_\ell\wedge\partial_x^{\alpha_1}B,
           &\beta_1=e_\ell,\\
        0,&|\beta_1|\geq2.
        \end{cases}                                       
        \end{equation}
        Note that in \eqref{eq:T_acting_on_derivatives}, the undifferentiated force is excluded.  No term contains more
        than $|\alpha|+|\beta|$ derivatives of $f$.  The only commutator terms that retain a factor $v$ are
\begin{equation}\label{eq:linear_in_v}
    \bigl(v\wedge\partial_x^{\alpha_1}B\bigr)
\cdot
\nabla_vD^{\alpha-\alpha_1,\beta}f,
\qquad
0<\alpha_1\leq\alpha\,.
\end{equation}
        \end{lemma}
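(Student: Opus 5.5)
We prove the identity \eqref{eq:T_acting_on_derivatives} by computing the commutator $[D^{\alpha,\beta},\sfT]$ directly. Since $\sfT f=0$, we have
\[
\sfT g_{\alpha\beta}=\sfT D^{\alpha,\beta}f=-[D^{\alpha,\beta},\sfT]f .
\]
The operator $\partial_t$ commutes with $D^{\alpha,\beta}$, so only two commutators need to be computed: the one with the free transport $v\cdot\nabla_x$ and the one with the force term $F\cdot\nabla_v$.

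\textbf{Free transport.} The coefficient $v$ depends only on $v$, so $\partial_x^\alpha$ commutes with $v\cdot\nabla_x$. By the Leibniz rule in $v$, and since $\partial_v^{\gamma}v_i=0$ for $|\gamma|\ge2$,
\[
[\partial_v^\beta,v\cdot\nabla_x]=\sum_i\beta_i\,\partial_{x_i}\partial_v^{\beta-e_i}.
\]
Applying this to $f$ gives the first sum $-\sum_i\beta_iD^{\alpha+e_i,\beta-e_i}f$.

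\textbf{Force term.} Write $F\cdot\nabla_v=\sum_j F_j\partial_{v_j}$ and apply the multivariate Leibniz formula to $D^{\alpha,\beta}(F_j\partial_{v_j}f)$. This produces the terms
\[
\binom{\alpha}{\alpha_1}\binom{\beta}{\beta_1}(D^{\alpha_1,\beta_1}F_j)\,\partial_{v_j}D^{\alpha-\alpha_1,\beta-\beta_1}f .
\]
The term with $(\alpha_1,\beta_1)=(0,0)$ is $F\cdot\nabla_v g_{\alpha\beta}$, which is part of $\sfT g_{\alpha\beta}$ itself. The remaining terms have $|\alpha_1|+|\beta_1|\ge1$; they give the second sum with $C^{\alpha_1,\beta_1}_{\alpha,\beta}=\binom{\alpha}{\alpha_1}\binom{\beta}{\beta_1}$. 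This is why the undifferentiated force is excluded.

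\textbf{Formula for $D^{\alpha_1,\beta_1}F$.} The field $F=E(t,x)+v\wedge B(t,x)$ is affine in $v$, and $E$ and $B$ are $v$-independent. Hence:
\begin{itemize}
\item For $\beta_1=0$, $D^{\alpha_1,0}F=\partial_x^{\alpha_1}E+v\wedge\partial_x^{\alpha_1}B$.
\item For $\beta_1=e_\ell$, using $\partial_{v_\ell}(v\wedge B)=e_\ell\wedge B$, we get $D^{\alpha_1,e_\ell}F=e_\ell\wedge\partial_x^{\alpha_1}B$.
\item For $|\beta_1|\ge2$, $D^{\alpha_1,\beta_1}F=0$.
\end{itemize}
This is \eqref{eq:derivative_of_F}.

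\textbf{Derivative count.} In the first sum each term carries exactly $|\alpha|+|\beta|$ derivatives of $f$. In the second sum each term carries $1+|\alpha-\alpha_1|+|\beta-\beta_1|\le|\alpha|+|\beta|$ derivatives of $f$, because $|\alpha_1|+|\beta_1|\ge1$.

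\textbf{Terms linear in $v$.} Among the terms of the second sum, only those with $\beta_1=0$ involve a factor $v$, since the $e_\ell\wedge\partial_x^{\alpha_1}B$ terms do not. The $\partial_x^{\alpha_1}E$ part carries no $v$ either. For $\beta_1=0$ the constraint $|\alpha_1|\ge1$ forces $0<\alpha_1\le\alpha$, so the terms retaining a factor $v$ are exactly those in \eqref{eq:linear_in_v}.

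There is no real obstacle here: the argument is pure bookkeeping. The only care needed is with the index conventions. Terms with $\beta_i=0$ vanish, so $\beta-e_i$ is never used when it would be negative. The cross product must be differentiated componentwise in $v$.
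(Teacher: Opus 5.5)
Your proof is correct and follows the same route as the paper. You use $\sfT f=0$ to write $\sfT g_{\alpha\beta}=-[D^{\alpha,\beta},\sfT]f$; the velocity derivatives landing on $v\cdot\nabla_x$ give the first line, Leibniz on $F\cdot\nabla_v$ gives the second, and the affine-in-$v$ structure of $F$ yields \eqref{eq:derivative_of_F} and \eqref{eq:linear_in_v}. Your version is simply more explicit than the paper's sketch, for instance in identifying $C^{\alpha_1,\beta_1}_{\alpha,\beta}=\binom{\alpha}{\alpha_1}\binom{\beta}{\beta_1}$.
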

\begin{proof}[Proof]

Commute $D^{\alpha,\beta}$ through each part of $\sfT$, i.e., compute $[D^{\alpha, \beta}, \sfT]f$. 
Every velocity derivative that lands on $v\cdot\nabla_x$ produces
one $x$-derivative and gives the first line of \eqref{eq:T_acting_on_derivatives}; its total order is
unchanged. Leibniz's formula for the force term gives the second line.
Since $E$ is independent of $v$ and $v\mapsto v\wedge B$ is
linear, \eqref{eq:derivative_of_F} is exact. If a velocity derivative lands on that linear
map, the factor $v$ disappears. Thus it remains only when at least one
spatial derivative, and no velocity derivative, lands on $B$, which is
exactly \eqref{eq:linear_in_v}. 
\end{proof}
\begin{lemma}[Phase-space weighted transport identity]
\label{lem:weighted-transport-identity}
Let $1\leq p<\infty$, and let $g$ be a smooth, sufficiently decaying
real-valued solution of
\[
    \mathsf{T}g=H,
    \qquad
    \mathsf{T}
    :=
    \partial_t
    +v\cdot\nabla_x
    +F\cdot\nabla_v\quad \text{ with } \quad F=E+v\times B\,.
\]
Let $\lambda,\mu\in C^1([0,T])$, and define
\[
    \Phi(t,x,v)
    :=
    \lambda(t)\langle v\rangle
    +
    \mu(t)\langle x\rangle,
    \qquad
    W(t,x,v):=e^{p\Phi(t,x,v)}.
\]
Then
\begin{multline}
\label{eq:weighted_transport_id}
    \frac{1}{p}\frac{\d}{\d t}
    \iint_{\Xi}W|g|^p\,\dd x\d v
    =
    \iint_{\Xi}
    WH|g|^{p-2}g\,\dd x\d v
    \\
    +
    \iint_{\Xi}
    \left[
        \lambda'(t)\langle v\rangle
        +
        \mu'(t)\langle x\rangle
        +
        \mu(t)\frac{x\cdot v}{\langle x\rangle}
        +
        \lambda(t)\frac{v\cdot E(t,x)}{\langle v\rangle}
    \right]
    W|g|^p\,\dd x\d v.
\end{multline}
For $p=1$, the factor $|g|^{p-2}g$ is understood as
$\operatorname{sgn}(g)$, and the identity follows by a standard
regularization argument.
\end{lemma}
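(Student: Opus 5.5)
The identity will follow from differentiating under the integral sign and integrating the transport part by parts. Write $\Phi=\lambda\langle v\rangle+\mu\langle x\rangle$ and $W=e^{p\Phi}$, and assume first $1<p<\infty$, so that $|g|^p$ is $C^1$ with $\partial|g|^p=p|g|^{p-2}g\,\partial g$. Then
\[
\frac{\d}{\d t}\iint_\Xi W|g|^p\dd x\d v=\iint_\Xi (\partial_t W)|g|^p\dd x\d v+\iint_\Xi W\,p|g|^{p-2}g\,\partial_t g\dd x\d v,
\]
with $\partial_t W=p(\lambda'\langle v\rangle+\mu'\langle x\rangle)W$. Using the equation, we substitute $\partial_t g=H-v\cdot\nabla_x g-F\cdot\nabla_v g$. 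The $H$ term gives exactly $p\iint WH|g|^{p-2}g$. The remaining terms are $-\iint W\,(v\cdot\nabla_x+F\cdot\nabla_v)|g|^p$.

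Next we integrate by parts in $x$ and $v$, which requires the decay of $g$ to kill boundary terms. The divergence of the phase-space field $(v,F)$ vanishes: $\nabla_x\cdot v=0$, $\nabla_v\cdot E=0$, and $\nabla_v\cdot(v\times B)=\sum_i\partial_{v_i}(v\times B)_i=0$ because $(v\times B)_i$ does not involve $v_i$. Hence
\[
-\iint W\,(v\cdot\nabla_x+F\cdot\nabla_v)|g|^p=\iint \bigl(v\cdot\nabla_x W+F\cdot\nabla_v W\bigr)|g|^p .
\]
We have $\nabla_x W=p\mu\,\frac{x}{\langle x\rangle}W$ and $\nabla_v W=p\lambda\,\frac{v}{\langle v\rangle}W$. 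The magnetic contribution is $F\cdot v/\langle v\rangle=E\cdot v/\langle v\rangle+(v\times B)\cdot v/\langle v\rangle$, and $(v\times B)\cdot v=0$. Collecting terms and dividing by $p$ gives \eqref{eq:weighted_transport_id}.

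The main point requiring care is justification rather than algebra. The weight $W$ grows exponentially, so we need $W|g|^p$, $W|H||g|^{p-1}$ and $(\langle x\rangle+\langle v\rangle+|E|)W|g|^p$ to be integrable, and boundary terms to vanish. This is the meaning of ``sufficiently decaying''; alternatively, one can insert a cutoff $\chi(x/R)\chi(v/R)$ and let $R\to\infty$, since the cutoff errors are supported where $|x|\sim R$ or $|v|\sim R$ and carry an extra factor $|v|/R$ or $|F|/R\lesssim(|E|+|v|\|B\|_\infty)/R$.

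For $p=1$, we replace $|g|$ by $\sqrt{g^2+\varepsilon^2}-\varepsilon$, repeat the computation, and let $\varepsilon\to0$ by dominated convergence. Then $g/\sqrt{g^2+\varepsilon^2}\to\operatorname{sgn}(g)$ and the $H$ term converges to $\iint WH\operatorname{sgn}(g)$.
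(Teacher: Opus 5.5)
Your proposal is correct and follows the paper's proof. Both arguments multiply the equation by $W|g|^{p-2}g$ and integrate the transport term by parts using that $(v,F)$ is divergence free, then use $v\cdot(v\times B)=0$ to drop the magnetic part of $F\cdot\nabla_v W$. Your cutoff argument and the $\sqrt{g^2+\varepsilon^2}-\varepsilon$ regularization for $p=1$ spell out the justification that the paper calls a standard regularization argument.
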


\begin{proof}
The phase-space vector field $\bigl(v, F\bigr)$
is divergence free. Multiplying $\mathsf{T}g=H$ by $W|g|^{p-2}g$, integrating over
$\Xi$, and integrating by parts therefore gives
\begin{multline}
    \frac{1}{p}\frac{\d}{\d t}
    \iint_{\Xi}W|g|^p\,\dd x\d v
    =
    \iint_{\Xi}
    WH|g|^{p-2}g\,\dd x\d v
    \\
    +
    \frac{1}{p}
    \iint_{\Xi}
    \left[
        \partial_tW
        +
        v\cdot\nabla_xW
        +
        (E+v\times B)\cdot\nabla_vW
    \right]
    |g|^p\,\dd x\d v.
\end{multline}
The derivatives of the weight are
\[
    \partial_tW
    =
    p\left[
        \lambda'(t)\langle v\rangle
        +
        \mu'(t)\langle x\rangle
    \right]W,
\]
\[
    \nabla_xW
    =
    p\mu(t)\frac{x}{\langle x\rangle}W,
    \qquad
    \nabla_vW
    =
    p\lambda(t)\frac{v}{\langle v\rangle}W.
\]
Consequently, we have
\[
    v\cdot\nabla_xW
    =
    p\mu(t)\frac{x\cdot v}{\langle x\rangle}W.
\]
Moreover, since $v\cdot(v\times B)=0$,
\begin{align*}
    (E+v\times B)\cdot\nabla_vW
    &=
    p\lambda(t)
    \frac{v\cdot(E+v\times B)}{\langle v\rangle}W
    \\
    &=
    p\lambda(t)
    \frac{v\cdot E}{\langle v\rangle}W.
\end{align*}
Substitution of these identities gives
\eqref{eq:weighted_transport_id}.
\end{proof}

\begin{proposition}\label{prop:velocity_commutator}
    Let $1\le p<\infty$, and let $n$ be the order of the derivatives. There exists a constant $C_{n}^{\rm mag}>0$, dependent on $n$, such that the sum of all contributions to \eqref{eq:weighted_transport_id} by terms \eqref{eq:linear_in_v}, denoted by $\mathcal{C}^{v}_{p, n}$, has the bound 
    \begin{equation}
        |\mathcal{C}^{v}_{p, n}|\le C_{n}^{\rm mag}\|\grad_x B\|_{W^{n-1, \infty}} \sum_{|\alpha|+|\beta|\le n}\iint_{\Xi} \langle v\rangle W |D^{\alpha, \beta}f|^p\dd x\d v\,.
    \end{equation}
\end{proposition}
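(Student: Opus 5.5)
We apply Lemma~\ref{lem:weighted-transport-identity} with $g=g_{\alpha\beta}=D^{\alpha,\beta}f$ and $H=\sfT g_{\alpha\beta}$, which Lemma~\ref{lem:exact-commutator} computes explicitly. By \eqref{eq:linear_in_v}, the terms of $H$ that keep a factor $v$ are
\[
-\sum_{0<\alpha_1\le\alpha}C^{\alpha_1,0}_{\alpha,\beta}\bigl(v\wedge\partial_x^{\alpha_1}B\bigr)\cdot\nabla_vD^{\alpha-\alpha_1,\beta}f .
\]
Their contribution to the first integral on the right of \eqref{eq:weighted_transport_id} is
\[
\mathcal{C}^v_{p,n}=-\sum_{|\alpha|+|\beta|\le n}\ \sum_{0<\alpha_1\le\alpha}C^{\alpha_1,0}_{\alpha,\beta}\iint_\Xi W\,\bigl(v\wedge\partial_x^{\alpha_1}B\bigr)\cdot\nabla_vD^{\alpha-\alpha_1,\beta}f\;|g_{\alpha\beta}|^{p-2}g_{\alpha\beta}\,\dd x\d v .
\]
We bound each summand pointwise, without integrating by parts. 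The magnetic factor satisfies $|v\wedge\partial_x^{\alpha_1}B|\le|v|\,\|\partial_x^{\alpha_1}B\|_{L^\infty}\le\langle v\rangle\,\|\nabla_xB\|_{W^{n-1,\infty}}$, because $1\le|\alpha_1|\le n$ and so $\partial_x^{\alpha_1}B$ is a derivative of $\nabla_xB$ of order at most $n-1$.

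The remaining factor $\nabla_vD^{\alpha-\alpha_1,\beta}f$ consists of derivatives $D^{\alpha-\alpha_1,\beta+e_k}f$ of total order $|\alpha|-|\alpha_1|+|\beta|+1\le|\alpha|+|\beta|\le n$. These are therefore among the functions summed on the right-hand side, so there is no loss of derivatives. This is the key structural point, and it is guaranteed by $|\alpha_1|\ge1$.

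We then split the weight as $\langle v\rangle W=(\langle v\rangle W)^{1/p}(\langle v\rangle W)^{1/p'}$ and apply Young's inequality $ab\le \tfrac1p a^p+\tfrac1{p'}b^{p'}$ with $a=(\langle v\rangle W)^{1/p}|D^{\alpha-\alpha_1,\beta+e_k}f|$ and $b=(\langle v\rangle W)^{1/p'}|g_{\alpha\beta}|^{p-1}$. This gives
\[
\iint_\Xi\langle v\rangle W\,|D^{\alpha-\alpha_1,\beta+e_k}f|\,|g_{\alpha\beta}|^{p-1}\,\dd x\d v\le\iint_\Xi\langle v\rangle W\bigl(|D^{\alpha-\alpha_1,\beta+e_k}f|^p+|g_{\alpha\beta}|^p\bigr)\,\dd x\d v .
\]
For $p=1$ the factor $|g|^{p-2}g$ is replaced by $\operatorname{sgn}(g)$, and the bound is immediate with no Young step.

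Finally we sum over $|\alpha|+|\beta|\le n$, over $0<\alpha_1\le\alpha$, and over $k=1,2,3$. The number of terms and the binomial coefficients $C^{\alpha_1,0}_{\alpha,\beta}$ depend only on $n$ and the dimension. Each target term $\iint\langle v\rangle W|D^{\alpha',\beta'}f|^p$ is hit a number of times bounded in terms of $n$. Collecting these counts gives the constant $C_n^{\rm mag}$ and the stated bound.

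The main thing to check is the exact form of the weight. The right-hand side involves only $\langle v\rangle W$, so no growth in $x$ can appear. Since $B$ depends on $x$ but is bounded in $W^{n,\infty}$, only the $v$-linearity of the force produces weight growth. That growth is exactly one power of $\langle v\rangle$, which is what the choice of $\lambda(t)$ in \eqref{eq:choice-lambda} is designed to absorb through the $\lambda'(t)\langle v\rangle$ term.
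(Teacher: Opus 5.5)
Your proposal is correct and follows the paper's proof essentially verbatim. The paper likewise bounds the magnetic factor pointwise by $|v|\,\|\partial_x^{\alpha_1}B\|_{L^\infty}$. It then applies the weighted Young inequality $\iint\langle v\rangle W|h||g|^{p-1}\le\frac1p\iint\langle v\rangle W|h|^p+\frac{p-1}{p}\iint\langle v\rangle W|g|^p$ with $g=D^{\alpha,\beta}f$ and $h=D^{\alpha-\alpha_1,\beta+e_\ell}f$. Your explicit check that $|\alpha_1|\ge1$ keeps $h$ of total order at most $n$ is the point the paper leaves implicit.
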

\begin{proof}
    Observe that for any pairs $(g, h)$, by Young's inequality, we have
    \begin{equation*}
        \iint_{\Xi} \langle v\rangle W |h||g|^{p-1}\dd x\d v \le \frac{1}{p}\iint_{\Xi} \langle v\rangle W |h|^p\dd x\d v +\frac{p-1}{p}\iint_{\Xi} \langle v\rangle W |g|^p\dd x\d v\,.
    \end{equation*}
    In this application, we set $g=D^{\alpha, \beta} f$ and $h = D^{\alpha-\alpha_1, \beta+e_\ell} f$ where  we already have $\|B\|_{W^{|\alpha_1|, \infty}}$. Then the desired estimate follows.
\end{proof}

\begin{proposition}\label{prop:remaining_commutator}
    For all the commutator terms in \eqref{eq:T_acting_on_derivatives}--\eqref{eq:derivative_of_F} not covered by the previous proposition, denoted by $\mathcal{C}_{p,n}$, we have the estimate  
    \begin{equation}
        |\mathcal{C}_{p, n}| \le C_{n}\(1+\| B\|_{L^\infty}+\|\grad_xB\|_{W^{n-1, \infty}}+\|E\|_{W^{n, \infty}}\)\|f\|_{\mathcal{X}^{n, p}_{\mu(t), \lambda(t)}}^p\,.
    \end{equation}
\end{proposition}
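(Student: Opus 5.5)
The plan is to enumerate the commutator terms in \eqref{eq:T_acting_on_derivatives}--\eqref{eq:derivative_of_F} that are not of the form \eqref{eq:linear_in_v}, bound each pointwise by a coefficient independent of $v$ times $|D^{\alpha',\beta'}f|$ with $|\alpha'|+|\beta'|\le n$, and then close with Hölder/Young exactly as in Proposition~\ref{prop:velocity_commutator}. The key point is that none of these terms carries a factor of $v$, so no $\langle v\rangle$ weight is produced. Each term enters \eqref{eq:weighted_transport_id} through
\[
\iint_{\Xi} W\, |H_{\alpha\beta}|\,|g_{\alpha\beta}|^{p-1}\dd x\d v, \qquad g_{\alpha\beta}=D^{\alpha,\beta}f .
\]
For a term of the form $c(x)\,D^{\alpha',\beta'}f$ we have
\[
\iint_{\Xi} W|c||D^{\alpha',\beta'}f||g_{\alpha\beta}|^{p-1}\le \|c\|_{L^\infty}\Big(\tfrac1p\iint_{\Xi} W|D^{\alpha',\beta'}f|^p+\tfrac{p-1}{p}\iint_{\Xi} W|g_{\alpha\beta}|^p\Big),
\]
and both integrals are bounded by $\|f\|^p_{\mathcal{X}^{n,p}_{\mu(t),\lambda(t)}}$, since $W=e^{p\mu\langle x\rangle+p\lambda\langle v\rangle}$.

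The terms are sorted into four groups.

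\textbf{(i) Transport commutator.} The terms $-\beta_i D^{\alpha+e_i,\beta-e_i}f$ have coefficient at most $n$ and total order $|\alpha|+|\beta|$. They contribute $C_n\|f\|^p$, which accounts for the constant $1$ in the bound.

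\textbf{(ii) Electric terms.} These are the terms with $\beta_1=0$ and $\alpha_1\ne0$, namely $\partial_x^{\alpha_1}E\cdot\nabla_v D^{\alpha-\alpha_1,\beta}f$. Since $|\alpha_1|\ge1$, the $f$-factor has order at most $|\alpha|+|\beta|$, and the coefficient is bounded by $\|E\|_{W^{n,\infty}}$.

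\textbf{(iii) Magnetic terms with one velocity derivative.} These are the terms with $\beta_1=e_\ell$, namely $(e_\ell\wedge\partial_x^{\alpha_1}B)\cdot\nabla_v D^{\alpha-\alpha_1,\beta-e_\ell}f$. The $f$-factor again has order $|\alpha|+|\beta|-|\alpha_1|\le n$. The coefficient is bounded by $\|B\|_{L^\infty}$ when $\alpha_1=0$, and by $\|\nabla_xB\|_{W^{n-1,\infty}}$ otherwise.

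\textbf{(iv) Vanishing terms.} Terms with $|\beta_1|\ge2$ vanish by \eqref{eq:derivative_of_F}.

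Summing over the at most $C_n$ multi-indices then gives the claim.

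The delicate point, and the one most worth checking, is that the \emph{undifferentiated} magnetic force $v\wedge B$ (and $E$) never appears, since otherwise an order-$(n+1)$ derivative or an unbounded $v$-factor would arise. Lemma~\ref{lem:exact-commutator} excludes this, because $|\alpha_1|+|\beta_1|\ge1$. It also remains to confirm that every surviving term either has strictly lower order in $f$ or, in case (i), is exactly of order $n$, so that no derivative loss occurs. For $p=1$ the same argument works, with Young's inequality replaced by the trivial bound $|H|\,\mathrm{sgn}(g)\le|H|$.
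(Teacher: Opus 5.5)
Your proposal is correct and is essentially the paper's argument. The paper's proof says only that it is ``similar to the previous proposition,'' i.e.\ the Young inequality $\iint W|h||g|^{p-1}\le \frac1p\iint W|h|^p+\frac{p-1}{p}\iint W|g|^p$ applied term by term, and that is exactly what you do, without the $\langle v\rangle$ factor since all remaining coefficients are independent of $v$. Your sorting into transport, electric, and one-velocity-derivative magnetic terms, together with the order count $|\alpha|+|\beta|-|\alpha_1|-|\beta_1|+1\le n$, supplies the bookkeeping the paper leaves implicit.
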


\begin{proof}
    The argument is similar to the proof of the previous proposition. 
\end{proof}

\begin{lemma}\label{lem:diff_ineq_for_regularity}
    Fix $[0, T]$. Choose 
    \begin{align}
        \mu(t):=&\, \mu_0 e^{-\kappa t} \quad \text{ and }\\
        \lambda(t) :=&\, \lambda_0 -\frac{\mu_0}{\kappa}(1-e^{-\kappa t})- C^{\rm mag}_{ n}\,\|\grad_x B\|_{L^\infty([0, T]; W^{n-1, \infty}_x)}\, t\,,
    \end{align}
    such that $\lambda(t)>0$ on $[0, T]$, where $C^{\rm mag}_{n}>0$ is from Proposition~\ref{prop:velocity_commutator} and $\mu_0, \kappa>0$.
    Then we have the bound 
    \begin{equation}
        \frac{1}{p}\frac{\d}{\d t}\|f\|_{\mathcal{X}^{n, p}_{\mu(t), \lambda(t)}}^p \le C_{n}\(1+\|\grad_x B\|_{W^{n-1, \infty}}+\|E\|_{W^{n, \infty}}+\lambda_0 \|E\|_{L^\infty}\)\|f\|_{\mathcal{X}^{n, p}_{\mu(t), \lambda(t)}}^p\,.
    \end{equation}
\end{lemma}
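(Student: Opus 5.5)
The plan is to sum the weighted transport identity of Lemma~\ref{lem:weighted-transport-identity} over all multi-indices $|\alpha|+|\beta|\le n$, applied to $g=g_{\alpha\beta}=D^{\alpha,\beta}f$ and $H=\mathsf{T}g_{\alpha\beta}$ as computed in Lemma~\ref{lem:exact-commutator}. Then we show that every term which could grow faster than $\|f\|^p_{\mathcal{X}^{n,p}_{\mu(t),\lambda(t)}}$ is absorbed by the negative contributions of $\lambda'(t)$ and $\mu'(t)$. Here $W=e^{p\Phi}$, so $\sum_{\alpha,\beta}\iint W|g_{\alpha\beta}|^p=\|f\|^p_{\mathcal{X}^{n,p}_{\mu(t),\lambda(t)}}$.

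We split the right-hand side of \eqref{eq:weighted_transport_id} into four groups.

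\textbf{Source terms.} These are $\iint W H|g|^{p-2}g$. We split them into the linear-in-$v$ commutators \eqref{eq:linear_in_v} and the rest. Proposition~\ref{prop:velocity_commutator} bounds the first part by $C^{\rm mag}_n\|\nabla_xB\|_{W^{n-1,\infty}}\sum\iint\langle v\rangle W|D^{\alpha,\beta}f|^p$. Proposition~\ref{prop:remaining_commutator} bounds the second part, which also covers the transport term $-\beta_iD^{\alpha+e_i,\beta-e_i}f$ via Young's inequality, by $C_n(1+\|B\|_{L^\infty}+\|\nabla_xB\|_{W^{n-1,\infty}}+\|E\|_{W^{n,\infty}})\|f\|^p_{\mathcal{X}^{n,p}_{\mu,\lambda}}$.

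\textbf{Weight-derivative terms.} With the chosen profiles we have $\mu'=-\kappa\mu$ and $\lambda'=-\mu_0e^{-\kappa t}-C^{\rm mag}_n\|\nabla_xB\|_{L^\infty_tW^{n-1,\infty}}=-\mu(t)-C^{\rm mag}_n\|\nabla_xB\|_{L^\infty_tW^{n-1,\infty}}$.

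\textbf{Free-streaming term.} We use $|x\cdot v|/\langle x\rangle\le|v|\le\langle v\rangle$, so $\mu\frac{x\cdot v}{\langle x\rangle}\le\mu(t)\langle v\rangle$.

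\textbf{Electric term.} We use $\lambda\frac{v\cdot E}{\langle v\rangle}\le\lambda(t)\|E\|_{L^\infty}\le\lambda_0\|E\|_{L^\infty}$, since $0<\lambda(t)\le\lambda_0$.

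Collecting the coefficients of $\langle v\rangle W|g|^p$ gives
\[
\lambda'(t)+\mu(t)+C^{\rm mag}_n\|\nabla_xB(t)\|_{W^{n-1,\infty}}\le 0 ,
\]
because the sup-in-time norm dominates the pointwise one. The coefficient of $\langle x\rangle W|g|^p$ is $\mu'(t)=-\kappa\mu(t)\le0$. Both of these terms can therefore be dropped. What remains is bounded by $C_n(1+\|B\|_{L^\infty}+\|\nabla_xB\|_{W^{n-1,\infty}}+\|E\|_{W^{n,\infty}}+\lambda_0\|E\|_{L^\infty})\|f\|^p_{\mathcal{X}^{n,p}_{\mu,\lambda}}$. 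The constant $\|B\|_{L^\infty}$ is either absorbed into $C_n$, which may depend on $B$, or vanishes because the undifferentiated magnetic force does not appear in \eqref{eq:T_acting_on_derivatives}. This gives the claim.

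\textbf{Main obstacle.} The key point is that the $\langle v\rangle$-growth produced by $\partial_x^{\alpha_1}B$ and by the $x\cdot v$ transport of the spatial weight cannot be controlled by $\|f\|_{\mathcal{X}}^p$ itself. It is compensated only by letting $\lambda$ decrease linearly at the rate $\mu(t)+C^{\rm mag}_n\|\nabla_xB\|$. The Young-inequality step in Proposition~\ref{prop:velocity_commutator} must keep the factor $\langle v\rangle$ on both factors, and the index shift $\beta\to\beta+e_\ell$ must stay within order $n$. Since $\nabla_vD^{\alpha-\alpha_1,\beta}f$ has order at most $|\alpha|+|\beta|$ when $|\alpha_1|\ge1$, this holds. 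Finally, the case $p=1$ is handled by the regularization noted in Lemma~\ref{lem:weighted-transport-identity}.
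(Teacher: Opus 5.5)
Your proposal is correct and matches the paper's proof. The $\lambda'(t)$ contribution $-(\mu(t)+C^{\rm mag}_n\|\nabla_x B\|_{L^\infty_t W^{n-1,\infty}})\sum\iint\langle v\rangle W|D^{\alpha,\beta}f|^p$ absorbs both the linear-in-$v$ commutators of Proposition~\ref{prop:velocity_commutator} and the $\mu(t)\,x\cdot v/\langle x\rangle$ term, the $\mu'(t)$ term is dropped by sign, and the rest is handled by Proposition~\ref{prop:remaining_commutator} together with $|\lambda(t)\, v\cdot E/\langle v\rangle|\le\lambda_0\|E\|_{L^\infty}$.

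One small correction: your alternative that $\|B\|_{L^\infty}$ ``vanishes'' is wrong. The term $D^{0,e_\ell}F=e_\ell\wedge B$ enters \eqref{eq:T_acting_on_derivatives} whenever $|\beta|\ge1$, so this factor must be absorbed into a $B$-dependent $C_n$, which is what the paper tacitly does as well.
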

\begin{proof}
    By Lemma~\ref{lem:weighted-transport-identity}, we see that the term containing $\lambda'(t)$ yields 
    \begin{align}
       -\(\mu(t) +C^{\rm mag}_{n}\,\|\grad_x B\|_{L^\infty([0, T]; W^{n-1, \infty}_x)}\)\sum_{|\alpha|+|\beta|\le n}\iint_{\Xi} \langle v\rangle W |D^{\alpha, \beta}f|^p\dd x\d v\,,
    \end{align}
    which exactly cancels out sum of the upper bound of the terms with linear growth in $v$ given in Proposition~\ref{prop:velocity_commutator} and the term with $\mu(t) x\cdot v/\langle x\rangle$. The term with $\mu'(t)$ can be dropped since $\mu'(t)\le 0$. The remaining terms can be controlled by Proposition~\ref{prop:remaining_commutator} and the fact
    \[
    \n{\lambda(t)\frac{v}{\langle v\rangle}\cdot E(t,x)}
    \leq
    \lambda(t)\|E(t)\|_{L^\infty}\le \lambda_0\|E(t)\|_{L^\infty} \,.
    \]
    This completes the proof.
\end{proof}

\begin{proof}[Proof of Theorem~\ref{Theorem:PropagationOfRegularity}]
    Let $n=1$. By Lemma~\ref{lem:diff_ineq_for_regularity} and Lemma~\ref{Lemma:EstimateofSingularPotential2},  we obtain the bound 
    \begin{align*}
               \frac{1}{p}\frac{\d}{\d t}\|f\|_{\mathcal{X}^{1, p}_{\mu(t), \lambda(t)}}^p
                &\le\,  (b_1 +d_1 \ln\left(1+\lnorm{\infty}{\rho_{\nabla_{x}f}}\right)) \|f\|_{\mathcal{X}^{1, p}_{\mu(t),\lambda(t)}}^p\,,
    \end{align*}
where $b_1=b_1(\|B\|_{W^{1, \infty}}, \lambda_0, \|f^{\rm in}\|_{W^{1,1}})$ and $d_1 = d_1(\lambda_0, \|f^{\rm in}\|_{W^{1,\infty}})$ are constants, $k\in \mathbb{N}$. Hence, by a Gr\"onwall argument, for all $t$ in $[0,T]$, we have that 
\begin{equation}
\|f(t)\|_{\mathcal{X}_{\mu(t),\lambda(t)}^{1,p}}
\leq
\|f^{\mathrm{in}}\|_{\mathcal{X}_{\mu_0,\lambda_0}^{1,p}}
\exp\left[
b_1t
+
d_1
\int_0^t
\ln\left(
1+\|\rho_{\nabla_x f}(s)\|_{L^\infty}
\right)\dd s
\right]\,.
\end{equation}

		Take $p\to \infty$, we have
        \begin{equation}
        \|f(t)\|_{\mathcal{X}_{\mu(t),\lambda(t)}^{1,\infty}}
        \leq
        \|f^{\mathrm{in}}\|_{\mathcal{X}_{\mu_0,\lambda_0}^{1,\infty}}
        \exp\left[
        b_1t
        +
        d_1
        \int_0^t
        \ln\left(
        1+\|\rho_{\nabla_x f}(s)\|_{L^\infty}
        \right)\dd s
        \right]\,.
        \end{equation}
        Then, by Lemma~\ref{lem:weighted-velocity-integration}, we have
        \begin{equation}
            \|\rho_{\grad_x f}\|_{L^\infty} \le C\|f^{\mathrm{in}}\|_{\mathcal{X}_{\mu_0,\lambda_0}^{1,\infty}}
            \exp\left[
            b_1t
            +
            d_1
            \int_0^t
            \ln\left(
            1+\|\rho_{\nabla_x f}(s)\|_{L^\infty}
            \right)\dd s
            \right]
        \end{equation}
		which yields
        \begin{equation}
        \begin{aligned}
        1+\ln\left(
        1+\left\|\rho_{\grad_x f}(t)\right\|_{L^\infty}
        \right)
        &\le C_1
        +D_1\int_0^t
        \left[
        1+\ln\left(
        1+\left\|\rho_{\grad_x f}(s)\right\|_{L^\infty}
        \right)
        \right]\dd s\,,
        \end{aligned}
        \end{equation}
        for some $C_1, D_1$ positive constants. 
		Again, we apply Gr\"{o}nwall's inequality to obtain
		\begin{equation*}
        \lnorm{\infty}{\rho_{\nabla_x f}}\le \exp\left(c_1\exp\left(c_1 t\right)\right)\,.
		\end{equation*}
         Here and below, the constants $c_k$ have the following dependence
        $$c_k=c_k\left(\norm{B}_{W^{k,\infty}},\mu_0,\lambda_0,\norm{f^{\mathrm{in}}}_{W^{k,1}\cap W^{k,\infty}}\right),\qquad k\in \mathbb{N}\,.$$
        Consequently, set $p=1$ we directly obtain for $\forall t\in [0,T]$,
        \begin{equation}
            \begin{aligned}
                \norm{f}_{\mathcal{X}^{1, 1}_{\mu(t),\lambda(t)}}\le \norm{f^{\mathrm{in}}}_{\mathcal{X}^{1,1}_{\mu_0,\lambda_0}}\exp\left[c_1 t+\exp(c_1t)\right]
            \end{aligned}
        \end{equation}

        Now we suppose that for $1\le j\le k-1$, $k\le m$ and for all $t\in [0,T]$, we have the following estimates:
        \begin{equation}
            \begin{aligned}
            \norm{f}_{\mathcal{X}^{j,1}_{\mu(t),\lambda(t)}\cap\mathcal{X}^{j,\infty}_{\mu(t),\lambda(t)}}\le \norm{f^{\mathrm{in}}}_{\mathcal{X}^{j,1}_{\mu_0,\lambda_0}\cap\mathcal{X}^{j,\infty}_{\mu_0,\lambda_0}}\exp\left[c_j t+\exp(c_j t)\right]
            \end{aligned}
        \end{equation}
        and 
        \begin{equation}
            \begin{aligned}
                \norm{\rho_{\nabla^j_x f}}_{L^\infty}\le \exp[c_j\exp(c_j t)]\,.
            \end{aligned}
        \end{equation}
        Then according to Lemma \ref{Lemma:EstimateofSingularPotential2} and Lemma \ref{lem:diff_ineq_for_regularity} we repeat the argument above to obtain
        \begin{equation*}
            \begin{aligned}
                \norm{f}_{\mathcal{X}^{k,1}_{\mu(t),\lambda(t)}\cap\mathcal{X}^{k,\infty}_{\mu(t),\lambda(t)}}\le \norm{f^{\mathrm{in}}}_{\mathcal{X}^{k,1}_{\mu_0,\lambda_0}\cap\mathcal{X}^{k,\infty}_{\mu_0,\lambda_0}}\exp\left[c_{k-1}t+c_{k-1}\int^t_0\left(1+\ln\left(1+\norm{\rho_{\nabla^k_x f}(s)}_{L^{\infty}}\right)\right)\dd s\right]\,
            \end{aligned}
        \end{equation*}
        and
           \begin{equation}
        \begin{aligned}
        1+\ln\left(
        1+\left\|\rho_{\grad_x^k f}(t)\right\|_{L^\infty}
        \right)
        &\le C_k
        +D_k\int_0^t
        \left[
        1+\ln\left(
        1+\left\|\rho_{\grad_x^k f}(s)\right\|_{L^\infty}
        \right)
        \right]\dd s\,,
        \end{aligned}
        \end{equation}
        Therefore,
        \begin{equation}
            \begin{aligned}
                \norm{f}_{\mathcal{X}^{k,1}_{\mu(t),\lambda(t)}\cap\mathcal{X}^{k,\infty}_{\mu(t),\lambda(t)}}\le \norm{f^{\mathrm{in}}}_{\mathcal{X}^{k,1}_{\mu_0,\lambda_0}\cap\mathcal{X}^{k,\infty}_{\mu_0,\lambda_0}}\exp\left[c_k t+\exp(c_k t)\right]\,.
            \end{aligned}
        \end{equation}
        Hence the proof of Theorem \ref{Theorem:PropagationOfRegularity} is completed via mathematical induction.
\end{proof}

        \begin{corollary}[Propagation of spatial moments]
\label{cor:propagation-spatial-moments}
Let the assumptions of
Theorem~\ref{Theorem:PropagationOfRegularity} hold, and let
$\lambda_*=\inf_{t\in[0,T]}\lambda(t)>0$. For $k\geq 1$, assume in
addition that
\begin{equation}
    M_k^x(0)
    :=
    \iint_{\Xi}\langle x\rangle^k
    f^{\mathrm{in}}(x,v)\,\dd x\d v
    <\infty.
\end{equation}
Then
\begin{equation}
    M_k^x(t)
    :=
    \iint_{\Xi}\langle x\rangle^k
    f(t,x,v)\,\dd x\d v
\end{equation}
remains finite on $[0,T]$. More precisely,
\begin{equation}
\label{eq:propagation-spatial-moments}
    \sup_{0\leq t\leq T}M_k^x(t)
    \leq
    e^{C_kT}
    \left(
        M_k^x(0)
        +
        C_kT\,C_{k,\lambda_*}
        \sup_{0\leq t\leq T}
        \norm{f(t)}_{\mathcal{X}^{m,1}_{\mu(t),\lambda(t)}}
    \right),
\end{equation}
where
\begin{equation}
    C_{k,\lambda_*}
    :=
    \sup_{v\in\mathbb{R}^3}
    \langle v\rangle^k
    e^{-\lambda_*\langle v\rangle}
    <\infty.
\end{equation}
In particular, every spatial moment that is finite initially remains
finite on any interval on which the exponential regularity estimate
holds.
\end{corollary}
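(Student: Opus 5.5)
The plan is a direct moment computation along the Vlasov flow, followed by Grönwall's inequality. Since $f$ is a classical solution transported by the divergence-free phase-space field $(v,E+v\wedge B)$, we will multiply \eqref{equation: MagneticVlasovSys} by $\langle x\rangle^k$ and integrate over $\Xi$. The $\nabla_v$ term integrates to zero because the weight is independent of $v$. The $v\cdot\nabla_x$ term, after integration by parts, leaves
\begin{equation*}
\frac{\d}{\d t}M_k^x(t)=k\iint_{\Xi}\langle x\rangle^{k-2}(x\cdot v)\,f(t,x,v)\,\dd x\d v
\le k\iint_{\Xi}\langle x\rangle^{k-1}|v|\,f\,\dd x\d v .
\end{equation*}
The decay of $f$ needed to justify dropping boundary terms comes from $f\in L^\infty([0,T];\mathcal{X}^{m,1}_{\mu(t),\lambda(t)})$ and Theorem~\ref{Theorem:PropagationOfRegularity}. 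If one wants to be careful, a cutoff $\langle x\rangle^k\chi(x/R)$ can be used first and then $R\to\infty$.

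Next, split the mixed moment with Young's inequality in the form $\langle x\rangle^{k-1}|v|\le \frac{k-1}{k}\langle x\rangle^k+\frac1k|v|^k$. This gives
\begin{equation*}
\frac{\d}{\d t}M_k^x(t)\le (k-1)M_k^x(t)+\iint_{\Xi}|v|^k f\,\dd x\d v .
\end{equation*}
The velocity moment is controlled by the exponential weight: $|v|^k\le \langle v\rangle^k\le C_{k,\lambda_*}e^{\lambda_*\langle v\rangle}\le C_{k,\lambda_*}e^{\lambda(t)\langle v\rangle}e^{\mu(t)\langle x\rangle}$, using $\lambda(t)\ge\lambda_*$ and $\mu(t)\ge0$. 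Therefore
\begin{equation*}
\iint_{\Xi}|v|^kf\,\dd x\d v\le C_{k,\lambda_*}\|f(t)\|_{\mathcal{X}^{0,1}_{\mu(t),\lambda(t)}}\le C_{k,\lambda_*}\sup_{0\le s\le T}\|f(s)\|_{\mathcal{X}^{m,1}_{\mu(s),\lambda(s)}} .
\end{equation*}
The right-hand side is finite by Theorem~\ref{Theorem:PropagationOfRegularity} with $p=1$.

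Finally, Grönwall's inequality applied to $y'\le C_k y+C_kC_{k,\lambda_*}S$, where $S$ is the supremum above, gives $M_k^x(t)\le e^{C_kt}\bigl(M_k^x(0)+C_kt\,C_{k,\lambda_*}S\bigr)$, which is \eqref{eq:propagation-spatial-moments}.

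The only genuinely delicate point is justifying the differentiation and the integration by parts when $M_k^x$ is not yet known to be finite. I would handle this with the truncated weight $\langle x\rangle^k\chi(x/R)$. The extra term $R^{-1}\langle x\rangle^k|\nabla\chi|\,|v|$ is again bounded by $C\langle x\rangle^{k-1}|v|$ uniformly in $R$, so the same Grönwall bound holds uniformly in $R$, and monotone convergence concludes. The magnetic field plays no role here, since the weight does not depend on $v$.
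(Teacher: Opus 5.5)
Your argument is correct, and it reaches the statement by a genuinely different (Eulerian) route. The paper works along characteristics. It uses $\tfrac12\tfrac{\d}{\d t}|V|^2=V\cdot E$ and $C_E(T)=\sup_{t\le T}\|E(t)\|_{L^\infty}<\infty$ to get $|X(t,x,v)|\le|x|+|v|t+\tfrac12C_E(T)t^2$. It then uses measure preservation of the flow to transfer the weight $|x|^k$ back to the initial datum. This ends with a bound $C_{k,T}\bigl(\iint|x|^kf^{\rm in}+M_k(0)+\|f^{\rm in}\|_{L^1}\bigr)$, which depends only on initial data and $C_E(T)$ and grows polynomially in $T$. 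That bound is not of the shape \eqref{eq:propagation-spatial-moments}. Your moment identity involves neither $E$ nor $B$, since the weight is $v$-independent and $\nabla_v\cdot(E+v\wedge B)=0$, and it needs no flow map. You estimate $\iint|v|^kf$ through $C_{k,\lambda_*}$ and the time-uniform weighted $L^1$ bound of Theorem~\ref{Theorem:PropagationOfRegularity}. This yields exactly \eqref{eq:propagation-spatial-moments} with $C_k=\max\{k-1,1\}$, so your proof matches the displayed estimate more faithfully than the paper's does. The cost is the factor $e^{C_kT}$. If you want polynomial growth, use H\"older instead of Young: $\iint\langle x\rangle^{k-1}|v|f\le (M_k^x)^{1-1/k}\bigl(\iint|v|^kf\bigr)^{1/k}$ gives $\tfrac{\d}{\d t}(M_k^x)^{1/k}\le\bigl(\iint|v|^kf\bigr)^{1/k}$.

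The one step to fix is the justification. As written, your truncation does not close. The cutoff-gradient term is supported in $\{R\le|x|\le2R\}$, where $\chi(x/R)$ can be arbitrarily small. After Young's inequality it therefore produces $\langle x\rangle^k$ without the factor $\chi(x/R)$, and the truncated moment does not control that. A cutoff of the form $\chi=\psi^k$, for which $|\nabla\chi|\le k|\nabla\psi|\,\chi^{1-1/k}$, would repair this. More simply, no truncation is needed. Since $\mu(t)\ge\mu_0e^{-\kappa T}>0$ and $\lambda(t)\ge\lambda_*>0$, one has $\langle x\rangle^k\langle v\rangle\le C\,e^{\mu(t)\langle x\rangle+\lambda(t)\langle v\rangle}$. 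So Theorem~\ref{Theorem:PropagationOfRegularity}, together with $E,B\in L^\infty$, already makes $M_k^x(t)$ finite and justifies differentiating under the integral and integrating by parts directly. In particular the qualitative finiteness claim is immediate, and the real content of the corollary is the quantitative bound, which your argument supplies.
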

\begin{proof}
Taking the scalar product of the second equation in
\eqref{eq:DiPerna-Lions-flow2} with $V$, we obtain
\[
    \frac{1}{2}\frac{\d}{\d t}|V|^2
    =
    V\cdot E(t,X)\,.
\]
Consequently, we have
\[
    |V(t,x,v)|
    \leq
    |v|+\int_0^t\norm{E(s)}_{L^\infty}\,\dd s.
\]
The field estimates used in the proof of
Theorem~\ref{Theorem:PropagationOfRegularity} imply that
\[
    C_E(T):=
    \sup_{0\leq s\leq T}\norm{E(s)}_{L^\infty}<\infty.
\]
Therefore,
\begin{equation*}
    |V(t,x,v)|
    \leq
    |v|+C_E(T)t.
\end{equation*}
Since $\dot X=V$, it follows that
\begin{equation*}
\begin{aligned}
    |X(t,x,v)|
    &\leq
    |x|+\int_0^t|V(s,x,v)|\,\dd s\\
    &\leq
    |x|+|v|t+\tfrac{1}{2}C_E(T)t^2.
\end{aligned}
\end{equation*}

The phase-space vector field
\[
    (v,E+v\wedge B)
\]
is divergence free. Hence the associated DiPerna--Lions flow preserves
Lebesgue measure, and
\[
    f(t,X(t,x,v),V(t,x,v))
    =
    f^{\mathrm{in}}(x,v)
\]
for almost every $(x,v)\in\Xi$. Thus,
\begin{align}
    \int_{\Xi}|x|^k f(t,x,v)\,\dd x\d v
    &=
    \int_{\Xi}
    |X(t,x,v)|^k
    f^{\mathrm{in}}(x,v)\,\dd x\d v \notag\\
    &\leq
    C_k
    \int_{\Xi}
    \left(
        |x|^k
        +|v|^k t^k
        +C_E(T)^k t^{2k}
    \right)
    f^{\mathrm{in}}(x,v)\,\dd x\d v \notag\\
    &\leq
    C_{k,T}
    \left(
        \int_{\Xi}|x|^k f^{\mathrm{in}}\,\dd x\d v
        +
        M_k(0)
        +
        \norm{f^{\mathrm{in}}}_{L^1}
    \right)
    <\infty.
\end{align}
Here, the initial spatial moment is finite by assumption, while
$M_k(0)<\infty$ follows from the exponential velocity decay of
$f^{\mathrm{in}}$. This proves the propagation of the spatial moment.
\end{proof}

	\section{Regularity of the Magnetic Weyl Quantization}\label{Section:ProofOfRegularityOfMagneticWeyl}
    The aim of this section is to establish the regularity estimates for the magnetic Weyl quantization needed in the proof of Theorem~\ref{Theorem:Semiclassicallimit}. Throughout this section, the dimension $d$ is arbitrary. We divide the section into two parts. The first subsection contains gauge-covariant estimates whose constants depend on the magnetic field $B$ but not on the choice of vector potential, while the second contains auxiliary estimates proved in a fixed reference gauge. We then use the gauge-covariance property of the magnetic Weyl quantization to transfer the fixed-gauge estimates to arbitrary vector potentials generating the same magnetic field; see Proposition~\ref{prop:A1-gauge-covariance} for a proof of this property. 

\subsection{Gauge--covariant estimates} We begin with the magnetic analogue of the semiclassical Calder\'on--Vaillancourt theorem, which provides the $\mathcal{L}^\infty$ estimate required to control the error term; see \cite{boulkhemair1999l2,calderon1972class} for the corresponding nonmagnetic results.
	
\begin{proposition}[Magnetic Calder\'on--Vaillancourt theorem \cite{iftimie2007magnetic}]
\label{prop:MagneticCalderonVaillancourt}
	Assume $\bB$ is a bounded magnetic field and $\bA$ is a polynomially bounded vector potential associated with $\bB$ (i.e., all components of $\bA$ are bounded by smooth polynomials). Set $h=1$; then for all $f\in\mathcal{S}^{0}_{\rho,\delta}$ with $0\le\rho=\delta<1$ or $0\le\delta<\rho\le 1$, we can assert that $\OPA(f)\in \mathcal{B}(L^{2}(\rr^d))$ and the operator norm (i.e., the Schatten-$\infty$ norm) can be bounded by the following relation:
	\begin{equation}
		\norm{\OPA(f)}_{\mathcal{B}(L^{2})}
		\le
		C(d)
		\sup_{\snorm{a},\snorm{\alpha}\le p(d)}
		\sup_{(x,v)\in\rr^d\times\rr^d}
		\langle v\rangle^{\rho\snorm{a}-\delta\snorm{\alpha}}
		\snorm{D^{a,\alpha}f(x,v)}.
	\end{equation}
	The constants $C(d)$ and $p(d)$ depend only on $d$ and can be determined explicitly.
\end{proposition}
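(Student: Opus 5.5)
The result is cited from \cite{iftimie2007magnetic}, so the plan is to reduce the magnetic case to a nonmagnetic Calder\'on--Vaillancourt argument of Cotlar--Stein type. The magnetic phase $e^{-2\pi i\Gamma^A([x,y])}$ will be absorbed locally by a gauge change.

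\textbf{Step 1: phase-space partition and almost orthogonality.} Take a smooth partition of unity $\{\chi_{j}\}$ of $\Xi$ adapted to the metric $\d x^2+\langle v\rangle^{-2\rho}\d v^2$, that is, boxes of size $\langle v_j\rangle^{-\delta}\times\langle v_j\rangle^{\rho}$ when $\rho=\delta$, with the usual modification when $\delta<\rho$. Write $f=\sum_j f\chi_j$ and set $T_j=\OPA(f\chi_j)$. Everything then reduces to the Cotlar--Stein lemma, which requires
\[
\sup_j\sum_k\norm{T_j^*T_k}^{1/2}\le C\sup_{|a|,|\alpha|\le p(d)}\sup_{(x,v)}\langle v\rangle^{\rho|a|-\delta|\alpha|}|D^{a,\alpha}f(x,v)|,
\]
together with the analogous bound for $T_jT_k^*$.

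\textbf{Step 2: local gauge transformation.} For a fixed box centred at $x_j$, use the transversal gauge $A_j(x)=-\int_0^1 s\,B(x_j+s(x-x_j))(x-x_j)\dd s$. It differs from $A$ by $\d\varphi_j$, and conjugation by $e^{i\varphi_j}$ leaves operator norms unchanged. One should not literally apply Proposition~\ref{prop:A1-gauge-covariance} here, since that belongs to the later discussion; the identity itself is elementary. In this gauge the kernel of $T_j^*T_k$ has the form
\[
\int e^{2\pi i[\cdots]}e^{-2\pi i\,\Phi_B(x,y,z)}\,\overline{(f\chi_j)}\,(f\chi_k)\cdots.
\]
Here the phase $\Phi_B$ is the magnetic flux through the triangle with vertices $x,y,z$, and it depends only on $B$. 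This flux factor is bounded by $1$, and its derivatives grow only polynomially in $|x-y|$ and $|y-z|$, with $\norm{B}_{L^\infty}$ entering through a Stokes-type estimate. The polynomial bound on $A$ is used only to ensure that the operators are defined on $\mathcal{S}$.

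\textbf{Step 3: decay from integration by parts.} Integrate by parts in $v$ using $(1-\Delta_y)$ and in $x$ using $(1-\Delta_v)$, in the standard way. This gives decay of $\norm{T_j^*T_k}$ of order $(1+d(j,k))^{-N}$ in the phase-space distance between the boxes. Each derivative that falls on the flux phase costs a power of the spatial separation, and the oscillation in $v$ compensates for it. The $\rho/\delta$ weights come from the size of the boxes. Counting derivatives fixes $p(d)$, which is roughly $d+1$ derivatives in each variable, enough for the sum over boxes to converge.

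\textbf{Main obstacle.} The hard step is controlling the derivatives of the flux phase uniformly, so that they do not destroy the decay obtained by integrating by parts. When $\rho=\delta>0$ the localized boxes shrink in $x$, and the flux phase must be shown to be a symbol of the right class on each box. If that fails, my fallback is to work on the $h=1$ twisted-product side and adapt Boulkhemair's Gabor-frame proof, placing the magnetic phase into the window functions.
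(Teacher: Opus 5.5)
The paper gives no proof of this proposition. It quotes the result from \cite{iftimie2007magnetic} and only reads off the value $p(d)=2\lfloor d/2\rfloor+2d+4$, so there is no in-paper argument to compare with. Your Cotlar--Stein/Schur outline is a reasonable way to prove a statement of this kind. As written, however, it stops at the step you call the main obstacle, and that step closes directly.

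\textbf{The flux phase is not an obstacle.} First, the gauge change in Step~2 is unnecessary. By Stokes, in any gauge the kernel of $T_j^*T_k$ is $e^{-2\pi i\Gamma^A([x,z])}$ times a $y$-integral whose only magnetic factor is $e^{-2\pi i\Phi_B(x,y,z)}$. The prefactor is unimodular and independent of $y$, so neither the $y$-integration by parts nor Schur's test sees it.

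Second, reduce to $\rho=\delta$; the case $\delta<\rho$ is just the inclusion $S^0_{\rho,\delta}\subset S^0_{\delta,\delta}$. Then the boxes have $x$-size $r_j=\langle v_j\rangle^{-\rho}\le 1$, and shrinking helps rather than hurts:
\begin{itemize}
\item the Weyl kernel of $f\chi_j$ decays like $(1+|x-y|/r_j)^{-N}$;
\item each $y$-derivative of $e^{-2\pi i\Phi_B}$ is bounded by $C_B(1+|x-y|+|y-z|)^2$;
\item $1+|x-y|\le 1+|x-y|/r_j$.
\end{itemize}
After absorbing the polynomial factor into the kernel decay, a derivative on the flux phase costs $O(C_B)$. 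This absorption uses extra $v$-derivatives of $f$, and together with the square-root summation in Cotlar--Stein over a $2d$-dimensional family of boxes, it is why the count is well above your $d+1$; only finiteness matters here. A derivative on the localized symbol already costs $r_j^{-1}=\langle v_j\rangle^{\rho}\ge 1$. So the magnetic terms are dominated by the non-magnetic ones, and the Gabor-frame fallback is not needed.

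What your sketch should make explicit instead is where $\rho=\delta<1$ enters. For boxes far apart in $v$, the gain $|v_j-v_k|^{-1}$ per integration by parts must beat $\max(\langle v_j\rangle^{\delta},\langle v_k\rangle^{\delta})$, and this fails at $\delta=1$. Also, to match the boxes you describe, your metric should read $\langle v\rangle^{2\delta}dx^2+\langle v\rangle^{-2\rho}dv^2$.

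\textbf{What your argument actually proves.} Higher $y$-derivatives of the flux involve derivatives of $B$, not only $\|B\|_{L^\infty}$. So you need $B\in W^{N,\infty}$ for some $N=N(d)$, as in the paper's standing assumption, and the constant you obtain is $C(d,\|B\|_{W^{N,\infty}})$, not $C(d)$.

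This is not a weakness of your route; a $B$-independent constant does not exist. Take a constant field of strength $b$ in the $(x_1,x_2)$-plane and $f=g(v_1,v_2)$.
\begin{itemize}
\item $\OPA(f)$ is the Weyl functional calculus of the kinetic momenta $\Pi=\opp-A$, with $[\Pi_1,\Pi_2]=\pm i\hbar b$.
\item By Stone--von Neumann it is unitarily equivalent to a direct sum of copies of $\operatorname{Op}^W_1\bigl(g(\sqrt{\hbar b}\,\cdot)\bigr)$.
\item Choose $g=e^{-2i\tau\xi_1\xi_2/(\hbar b)}\chi\bigl(\xi/(\hbar b)\bigr)$. All $C^N$ seminorms of $g$ stay bounded uniformly in $b$ and in $\tau\in[0,1]$.
\item Yet the operator norms have $\liminf_{b\to\infty}$ at least $\|\operatorname{Op}^W_1(e^{-2i\tau x\xi})\|=(1-\tau^2)^{-1/2}$, which is unbounded as $\tau\to 1$.
\end{itemize}
So the displayed claim should be read with $C=C(d,B)$. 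That is also the version the paper actually uses when it rescales to $B_h$ and bounds the seminorms of $B_h$ by those of $B$.

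Finally, you are implicitly using the standard normalization $\langle v\rangle^{\rho|\alpha|-\delta|a|}$, with $\alpha$ the $v$-index. The weight displayed in the statement has the two indices interchanged.
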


From the work of Athmouni and Purice in \cite{athmouni2018schatten}, we also have the following $\cL^1$ estimate result.

\begin{proposition}\label{Proposition:L1estimateofsymbol}
	Suppose $B$ is a magnetic field with corresponding vector potential $A$ which satisfies Assumption \ref{Assumption: bounded B}. Let $f\in \mathcal{S}'(\Xi)$, and set
	\[
		s(d):=2\left\lfloor \frac{d}{2}\right\rfloor+2,
		\qquad
		t(d):=d+\left\lfloor\frac{d}{2}\right\rfloor+1.
	\]
	If $\partial^\alpha_x\partial^\beta_vf\in L^1(\Xi)$ for $\snorm{\alpha}\le s(d)$ and $\snorm{\beta}\le t(d)$, then $\OPA_1(f)\in \mathfrak{S}^1$, and there exists a positive constant $C$, depending on the dimension $d$ and finitely many seminorms of $B$, such that
	\begin{equation*}
		\cstnorm{1}{\OPA_1(f)}
		\le
		C
		\sum_{\snorm{\alpha}\le s(d)}
		\sum_{\snorm{\beta}\le t(d)}
		\lnorm{1}{\partial^\alpha_x\partial^\beta_vf}.
	\end{equation*}
\end{proposition}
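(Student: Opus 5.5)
The plan is to reduce the trace-class bound to Hilbert--Schmidt bounds, using a phase-space partition of unity and the factorization $\|ST\|_1\le \|S\|_2\|T\|_2$. The first ingredient is exact: for $h=1$, the integral kernel $\brho^A_g(x,y)$ differs from the non-magnetic Weyl kernel only by the unimodular factor $e^{-2\pi i\Gamma^A([x,y])}$. Hence $\|\OPA(g)\|_2=\|g\|_{L^2(\Xi)}$ holds exactly, for any vector potential $A$. Because this isometry is gauge invariant, the whole argument can be carried out gauge-covariantly. Constants will depend only on $d$ and finitely many seminorms of $B$.

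First I localize. Fix a smooth partition of unity $\sum_{\gamma\in\mathbb{Z}^{2d}}\chi_\gamma=1$ on $\Xi$, where each $\chi_\gamma=\chi(\cdot-\gamma)$ is supported in a cube of side $2$. Write $f=\sum_\gamma f_\gamma$ with $f_\gamma=\chi_\gamma f$, so that $\|\OPA(f)\|_1\le\sum_\gamma\|\OPA(f_\gamma)\|_1$. For a fixed cell centred at $\gamma=(x_0,v_0)$, I use Proposition~\ref{prop:A1-gauge-covariance}: I replace $A$ by the transversal gauge centred at $x_0$, $A_{x_0}(x)=\int_0^1 sB(x_0+s(x-x_0))(x-x_0)\,ds$. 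This gauge satisfies $|\partial^\alpha A_{x_0}(x)|\lesssim \|B\|_{W^{|\alpha|,\infty}}\langle x-x_0\rangle$, uniformly in $x_0$. I also conjugate by the plane wave $e^{2\pi i v_0\cdot x}$ to shift $v_0$ to $0$. Both operations are unitary and preserve Schatten norms, so I may assume $\gamma=0$ with a vector potential whose bounds are uniform in $\gamma$.

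Next, for a symbol $g$ supported in the unit cube, I factor
\[
\OPA(g)=\Big[\OPA(g)\,\langle x\rangle^{N}\big(1+(\opp-A)^2\big)^{M}\Big]\Big[\big(1+(\opp-A)^2\big)^{-M}\langle x\rangle^{-N}\Big]
\]
with $\opp=-i\nabla$ here, since $h=1$. The second factor is Hilbert--Schmidt for $N,M>d/2$. I would prove this either via the diamagnetic inequality, $|(1+(\opp-A)^2)^{-M}u|\le (1-\Delta)^{-M}|u|$ pointwise, or directly, since $A_0$ is polynomially bounded. For the first factor I want to express $\OPA(g)\langle x\rangle^N(1+(\opp-A)^2)^M$ as $\OPA(\tilde g)$ with $\|\tilde g\|_{L^2}$ controlled by $\sum_{|\alpha|\le 2N',|\beta|\le 2M}\|\partial_x^\alpha\partial_v^\beta g\|_{L^2}$. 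Right multiplication by $\langle x\rangle^{2N}$ and by the kinetic operator $(\opp-A)^2=\OPA(|v|^2)$ are finite-order magnetic compositions with polynomial symbols. The magnetic Moyal product with a polynomial symbol in $v$ (respectively in $x$) terminates, or is given by an explicit finite expansion whose coefficients involve $B$ and finitely many of its derivatives. Each application therefore costs at most two $x$- or $v$-derivatives of $g$, with coefficients bounded on the support of $g$. To balance derivatives I will actually split the weights symmetrically, one power on each side, which produces the asymmetric counts $s(d)=2\lfloor d/2\rfloor+2$ in $x$ and $t(d)=d+\lfloor d/2\rfloor+1$ in $v$.

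Finally I sum the cell estimates. This gives $\|\OPA(f_\gamma)\|_1\lesssim\sum\|\partial_x^\alpha\partial_v^\beta f_\gamma\|_{L^2}$. Since $f_\gamma$ lives on a unit cell, I use the local Sobolev-type bound $\|u\|_{L^2(Q)}\lesssim\sum_{\text{mixed}}\|\partial u\|_{L^1(\tilde Q)}$: one extra derivative per variable, in a mixed form where possible. Summing over $\gamma$, with bounded overlap of the enlarged cubes $\tilde Q$, then converts $\ell^1$ sums of local norms into the global $L^1$ norms in the statement. The main obstacle is this bookkeeping. I need the magnetic compositions to produce only symbols controlled in $L^2$ with exactly the claimed number of derivatives, and I need all constants to be uniform in $\gamma$; the latter is precisely why the transversal gauge recentred at each cell is used. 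If the direct composition is too lossy, my fallback is to bound the kernel directly, integrating by parts in the explicit kernel $\brho^A_g(x,y)$ against $(1-\Delta_v)^{M}$ and $\langle x-y\rangle^{-2M}$. Derivatives of the phase $\Gamma^{A_0}([x,y])$ then produce the factors bounded by $B$-seminorms.
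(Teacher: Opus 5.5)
There is a genuine gap, in the step you yourself call bookkeeping. The proposition asserts the specific indices $s(d)$ and $t(d)$. Your argument asserts that it produces them rather than deriving them, and the mechanism you describe actually overshoots them. For context, the paper gives no proof of its own; it imports the result, with these indices, from Athmouni--Purice. Your architecture is a legitimate different route: phase-space localization, $\mathfrak S^1\subset\mathfrak S^2\cdot\mathfrak S^2$, the isometry $\|\OPA(g)\|_{\mathfrak S^2}=\|g\|_{L^2}$, and diamagnetic domination for $(1+(\opp-A)^2)^{-M}\langle x\rangle^{-N}$. It does give a trace-class bound by finitely many $L^1$ norms of derivatives, with constants depending only on $d$ and seminorms of $B$; just not with the stated indices.

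To see the count, set $\Pi_j:=\OPA(v_j)$ and write the kernel as $K=e^{-2\pi i\Gamma^A([x,y])}\Phi$. The kernel of $\OPA(g)\Pi_j$ is $e^{-2\pi i\Gamma^A([x,y])}(i\hbar\partial_{y_j}+a_j)\Phi$. Here $a_j(x,y)=\sum_l (y_l-x_l)\int_0^1 tB_{jl}(x+t(y-x))\,\mathrm{d}t$ is gauge invariant and of size $|x-y|$. So each $\Pi_j$ costs one $x$-derivative or, through $a_j$, one $v$-derivative; the factor $v_j$ it can also produce is harmless on a unit cell. Right multiplication by $\langle x\rangle^N$ costs $N$ more $v$-derivatives, since $\langle y\rangle\lesssim\langle x-y\rangle$ on the support.

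Take $d=3$ with the minimal integer choices $M=1$ (only $4M>d$ is needed) and $N=2$. You then need $\|\partial_x^\alpha\partial_v^\beta g\|_{L^2}$ for $|\alpha|\le2$ and $|\beta|\le4$. Passing from local $L^2$ to $L^1$ costs at least two more derivatives in each of $x$ and $v$, because $W^{1,1}(\rr^3)\not\subset L^2_{\mathrm{loc}}$. With your one-derivative-per-coordinate bound it costs three each. You therefore prove the inequality with $(|\alpha|,|\beta|)$ up to $(4,6)$ at best, and $(5,7)$ as written, instead of $(s(3),t(3))=(4,5)$.

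Splitting the weights symmetrically does not help. Any factorization through Schatten classes still needs a total weight $\langle x\rangle^{N}\langle\Pi\rangle^{s}$ with $N,s>d/2$ paired inside compact factors, so the total cost is unchanged.

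Along your route, reaching $t(d)$ would require at least two further ingredients:
1. Non-integer exponents $N,s$ just above $d/2$, for instance via interpolation in the analytic family $\langle\Pi\rangle^{z}$.
2. A sharp tensorized embedding $W^{k,1}(\rr^d)\subset H^\sigma(\rr^d)$ with $k>\sigma+d/2$, applied separately in $x$ and $v$.

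A scaling count then gives $d+1\le s(d)$ in $x$ and $\lfloor 3d/2\rfloor+1=t(d)$ in $v$, but none of this is in the proposal.

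Two smaller points. First, for nonconstant $B$ the magnetic Moyal product $g\#_B v_j$ neither terminates nor is a finite local expansion in derivatives of $B$. Its correction is the segment average appearing in $a_j$, which is nonlocal in $y-x$. Your kernel-level fallback is the right tool here, because Hilbert--Schmidt norms only need pointwise bounds such as $|\partial_y^\gamma a_j|\lesssim\langle x-y\rangle$. Second, since $a_j$ and $|K|$ are gauge invariant, the recentred transversal gauges are not actually needed on this route.
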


We first record the scaling relation for the magnetic Weyl
quantization. For $h>0$, define the unitary dilation
$\sfD_{h}:L^2(\mathbb{R}^d)\to L^2(\mathbb{R}^d)$ by
\begin{equation}
    (\sfD_{h} u)(x):=h^{d/2}u(hx).
\end{equation}
We also introduce the rescaled symbol and vector potential
\begin{equation}
    f_h(x,v):=f(hx,v),
    \qquad
    A_h(x):=A(hx).
\end{equation}

\begin{lemma}[Scaling of the magnetic Weyl quantization]
\label{Lemma:ScalingOfMagneticWeylTransform}
Let $f\in\mathcal{S}(\Xi)$ and
$u\in\mathcal{S}(\mathbb{R}^d)$. Then
\begin{equation}
\label{eq:scaling-magnetic-Weyl}
    \sfD_{h}\OPAh(f)\sfD_{h}^{-1}
    =
    \operatorname{Op}^{A_h}_1(f_h).
\end{equation}
Equivalently, we have
\begin{equation}
\label{eq:scaling-magnetic-Weyl-pointwise}
    \bigl(\OPAh(f)u\bigr)(x)
    =
    \left[
        \operatorname{Op}^{A(h\cdot)}_1
        \bigl(f(h\cdot,\cdot)\bigr)
        \bigl(u(h\cdot)\bigr)
    \right]\left(\frac{x}{h}\right).
\end{equation}
\end{lemma}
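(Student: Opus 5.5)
The plan is a direct change of variables in the defining integral \eqref{def:magnetic_Weyl_quantization}, checked term by term: the oscillatory factor, the magnetic phase, and the symbol argument. Fix $u\in\mathcal S(\rr^d)$ and write
\[
\bigl(\OPAh(f)u\bigr)(x)=\iintd e^{-2\pi i\,v\cdot(y-x)}e^{-\frac{2\pi i}{h}\Gamma^A([x,y])}f\bigl(\tfrac{x+y}{2},hv\bigr)u(y)\dd y\d v .
\]
I would prove the pointwise form \eqref{eq:scaling-magnetic-Weyl-pointwise} first and then deduce \eqref{eq:scaling-magnetic-Weyl} by conjugating with $\sfD_h$. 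The factors $h^{\pm d/2}$ in $\sfD_h^{\pm1}$ cancel, so only the argument rescaling $x\mapsto hx$ matters.

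\textbf{Step 1: change variables.} Set $x=hx'$, $y=hy'$, $v=v'/h$. The Jacobian is $h^d\cdot h^{-d}=1$, and the oscillatory phase is invariant: $v\cdot(y-x)=v'\cdot(y'-x')$. The symbol becomes
\[
f\bigl(\tfrac{x+y}{2},hv\bigr)=f\bigl(h\tfrac{x'+y'}{2},v'\bigr)=f_h\bigl(\tfrac{x'+y'}{2},v'\bigr),
\]
and $u(y)=u(hy')$. This matches exactly the $h=1$ quantization of the symbol $f_h$, applied to $u(h\cdot)$ and evaluated at $x'=x/h$.

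\textbf{Step 2: the magnetic phase.} This is the only nontrivial point. Parametrize the segment by $\gamma(s)=x+s(y-x)$, $s\in[0,1]$. Then
\[
\Gamma^A([x,y])=\int_0^1 A\bigl(x+s(y-x)\bigr)\cdot(y-x)\dd s
= h\int_0^1 A\bigl(h(x'+s(y'-x'))\bigr)\cdot(y'-x')\dd s = h\,\Gamma^{A_h}([x',y']).
\]
Hence $\frac1h\Gamma^A([x,y])=\Gamma^{A_h}([x',y'])$, which is exactly the phase in $\operatorname{Op}^{A_h}_1$. Since $A\in W^{1,\infty}_{\rm loc}$ is continuous, the line integral is well defined and the substitution is legitimate.

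\textbf{Step 3: conclude and deduce the operator identity.} Combining the three factors gives \eqref{eq:scaling-magnetic-Weyl-pointwise}. The oscillatory $v$-integral should be read in the usual distributional/oscillatory sense, which is compatible with the linear change of variables since $f\in\mathcal S$ makes it absolutely convergent in $v$. For the operator identity, apply \eqref{eq:scaling-magnetic-Weyl-pointwise} to $u=\sfD_h^{-1}w$, so that $u(hy')=h^{-d/2}w(y')$. Then
\[
\bigl(\sfD_h\OPAh(f)\sfD_h^{-1}w\bigr)(x')=h^{d/2}\bigl(\OPAh(f)u\bigr)(hx')=\operatorname{Op}^{A_h}_1(f_h)w(x').
\]
No genuine obstacle is expected beyond the bookkeeping of the magnetic phase in Step 2.
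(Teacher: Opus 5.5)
Your proposal is correct and follows the paper's proof. Both rest on the same phase identity $\tfrac1h\Gamma^A([hX,hY])=\Gamma^{A_h}([X,Y])$ and the substitution $y=hY$, $v=\eta/h$, which has unit Jacobian; the only difference is cosmetic, since you prove the pointwise form first and deduce the conjugation identity, while the paper proves the conjugation identity and then reads off the pointwise form.
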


\begin{proof}
For $X,Y\in\mathbb{R}^d$, the magnetic phases satisfy
\begin{equation}
\label{eq:scaling-magnetic-phase}
    \frac{1}{h}\Gamma^A([hX,hY])
    =
    \Gamma^{A_h}([X,Y]).
\end{equation}
Indeed, we have
\begin{align*}
    \Gamma^A([hX,hY])
    &=
    \int_0^1
    A\bigl(hX+sh(Y-X)\bigr)
    \cdot h(Y-X)\,\mathrm ds
    \\
    &=
    h\int_0^1
    A\bigl(h(X+s(Y-X))\bigr)
    \cdot(Y-X)\,\mathrm ds
    \\
    &=
    h\,\Gamma^{A_h}([X,Y]).
\end{align*}

Using the definition of $\OPAh(f)$, we obtain
\begin{align*}
    &\bigl(
        \sfD_{h}
        \OPAh(f)
        \sfD_{h}^{-1}u
    \bigr)(X)
    \\
    &\quad =
    h^{d/2}
    \iint_{\mathbb{R}^d\times\mathbb{R}^d}
    e^{-2\pi i\,v\cdot(y-hX)}
    e^{-\frac{2\pi i}{h}\Gamma^A([hX,y])}
    f\left(\frac{hX+y}{2},hv\right)
    h^{-d/2}u\left(\frac{y}{h}\right)
    \,\mathrm dy\,\mathrm dv.
\end{align*}
Making the changes of variables $y=hY,
    \eta=hv$,
and using \eqref{eq:scaling-magnetic-phase}, we find
\begin{align*}
    &\bigl(
        \sfD_{h}
        \OPAh(f)
        \sfD_{h}^{-1}u
    \bigr)(X)
    \\
    &\quad =
    \iint_{\mathbb{R}^d\times\mathbb{R}^d}
    e^{-2\pi i\,\eta\cdot(Y-X)}
    e^{-2\pi i\,\Gamma^{A_h}([X,Y])}
    f\left(h\tfrac{X+Y}{2},\eta\right)
    u(Y)\,\mathrm dY\,\mathrm d\eta
    \\
    &\quad =
    \bigl(\operatorname{Op}^{A_h}_1(f_h)u\bigr)(X).
\end{align*}
This proves \eqref{eq:scaling-magnetic-Weyl}. Formula
\eqref{eq:scaling-magnetic-Weyl-pointwise} follows by applying
$\sfD_{h}^{-1}$ and using the linearity of
$\operatorname{Op}^{A_h}_1(f_h)$.
\end{proof}

We now apply the preceding $h=1$ estimates. Notice that the
magnetic field associated with $A_h$ is
\begin{equation}
\label{eq:rescaled-magnetic-field}
    B_h:=\mathrm dA_h=hB(h\cdot).
\end{equation}
Consequently, for every multi-index $\gamma$,
\begin{equation}
\label{eq:rescaled-B-seminorms}
    \|\partial^\gamma B_h\|_{L^\infty}
    =
    h^{|\gamma|+1}
    \|\partial^\gamma B\|_{L^\infty}.
\end{equation}
In particular, when $0<h\leq1$, all the finitely many seminorms of
$B_h$ appearing in the scale-one estimates are bounded by the
corresponding seminorms of $B$. Thus the constants in those estimates
can be chosen independently of $h$.

We first apply the magnetic Calder\'on--Vaillancourt estimate. Since
$\sfD_{h}$ is unitary, Lemma
\ref{Lemma:ScalingOfMagneticWeylTransform} gives
\begin{equation}
\label{eq:scaled-operator-norm-identity}
    \norm{\OPAh(f)}_{\cL^\infty}
    =
    \norm{\operatorname{Op}^{A_h}_1(f_h)}_{\cL^\infty}.
\end{equation}
Applying Proposition \ref{prop:MagneticCalderonVaillancourt} to
$f_h$ and $A_h$, we obtain
\begin{align}
\label{eq:semiclassical-magnetic-CV}
    \norm{\OPAh(f)}_{\cL^\infty}
    &\leq
    C
    \sup_{\snorm{a},\snorm{\alpha}\leq p(d)}
    \sup_{(x,v)\in\Xi}
    \langle v\rangle^{\rho\snorm{a}-\delta\snorm{\alpha}}
    \snorm{D^{a,\alpha}f_h(x,v)}
    \notag\\
    &=
    C
    \sup_{\snorm{a},\snorm{\alpha}\leq p(d)}
    h^{\snorm{a}}
    \sup_{(x,v)\in\Xi}
    \langle v\rangle^{\rho\snorm{a}-\delta\snorm{\alpha}}
    \snorm{D^{a,\alpha}f(x,v)}
    \notag\\
    &\leq
    C
    \sup_{\snorm{a},\snorm{\alpha}\leq p(d)}
    \sup_{(x,v)\in\Xi}
    \langle v\rangle^{\rho\snorm{a}-\delta\snorm{\alpha}}
    \snorm{D^{a,\alpha}f(x,v)},
    \qquad 0<h\leq1.
\end{align}
Here we used
\[
    D^{a,\alpha}f_h(x,v)
    =
    h^{\snorm{a}}
    \bigl(D^{a,\alpha}f\bigr)(hx,v),
\]
where $a$ denotes the multi-index corresponding to differentiation
in the $x$-variable. Since the $\cL^\infty$ norm
coincides with the usual operator norm, this also gives
\begin{equation}
\label{eq:semiclassical-Schatten-infinity-estimate}
    \stnorm{\infty}{\OPAh(f)}
    \leq
    C
    \sup_{\snorm{a},\snorm{\alpha}\leq p(d)}
    \sup_{(x,v)\in\Xi}
    \langle v\rangle^{\rho\snorm{a}-\delta\snorm{\alpha}}
    \snorm{D^{a,\alpha}f(x,v)},
    \qquad 0<h\leq1.
\end{equation}

We next apply the preceding trace-class estimate at $h=1$. Since
$\sfD_{h}$ is unitary, Lemma
\ref{Lemma:ScalingOfMagneticWeylTransform} gives
\begin{equation}
    \cstnorm{1}{\OPAh(f)}
    =
    \cstnorm{1}{\operatorname{Op}^{A_h}_1(f_h)}.
\end{equation}
The $h=1$ estimate therefore yields
\begin{equation}
\label{eq:classical-trace-scaled}
    \cstnorm{1}{\OPAh(f)}
    \leq
    C
    \sum_{|\alpha|\leq s(d)}
    \sum_{|\beta|\leq t(d)}
    \left\|
       D^{\alpha, \beta} f_h
    \right\|_{L^1(\Xi)}.
\end{equation}
The derivatives of the rescaled symbol satisfy
\begin{equation}
   D^{\alpha, \beta} f_h(x,v)
    =
    h^{|\alpha|}
    \bigl(
       D^{\alpha, \beta} f
    \bigr)(hx,v),
\end{equation}
and hence
\begin{equation}
\label{eq:L1-rescaled-symbol}
    \left\|
       D^{\alpha, \beta} f_h
    \right\|_{L^1(\Xi)}
    =
    h^{|\alpha|-d}
    \left\|
       D^{\alpha, \beta} f
    \right\|_{L^1(\Xi)}.
\end{equation}
Recalling that
\[
    \stnorm{1}{T}=h^d\cstnorm{1}{T},
\]
we conclude from
\eqref{eq:classical-trace-scaled}--\eqref{eq:L1-rescaled-symbol} that
\begin{equation}\label{eq:semiclassical-trace-estimate}
    \begin{aligned}
    \stnorm{1}{\OPAh(f)}
    &\leq
    C
    \sum_{|\alpha|\leq s(d)}
    \sum_{|\beta|\leq t(d)}
    h^{|\alpha|}
    \left\|
       D^{\alpha, \beta} f
    \right\|_{L^1(\Xi)}\\
    &\leq
    C
    \sum_{|\alpha|\leq s(d)}
    \sum_{|\beta|\leq t(d)}
    \left\|
       D^{\alpha, \beta} f
    \right\|_{L^1(\Xi)},
    \qquad 0<h\leq1.
\end{aligned}
\end{equation}

Here $C$ depends only on $d$ and on the regularity of $B$
required by the corresponding $h=1$ estimates, and is independent
of $h$.

	\begin{remark}
		According to the proof of the magnetic Calder\'{o}n--Vaillancourt theorem presented in \cite{iftimie2007magnetic}, $p(d)$ can be chosen as $p(d)=2\left\lfloor\frac{d}{2}\right\rfloor+2d+4$, and in the following discussion we always take $p(d)$ to be this value.
	\end{remark}

	\subsection{Estimates with the transversal gauge}\label{Section:ProofOfRegularityOfMagneticWeyl_2} Next we present a proof of the regularity results for the magnetic Weyl quantization. More specifically, we prove that if the solution $f$ of the magnetic Vlasov--Poisson equation \eqref{equation: MagneticVlasovSys} is sufficiently regular, then we can obtain uniform bounds in $\hbar$ for the magnetic Weyl quantization of $f$, which is denoted by $\brhoAf$. However, the estimates in this section will depend on the choice of gauge $A$. Here, we use the transversal gauge
    \begin{equation}
    A_B(x)
    :=
    -\int_0^1 sB(sx)x\dd s
    \end{equation}
    as our reference gauge. One can readily check that $\d A_B=B$ and that $A_B$ depends only on $B$. For simplicity of notation, we shall write $A$ instead of $A_B$.

Before stating and proving the regularity results, we establish estimates
for the derivatives of the magnetic phase
\[
    e^{-\frac{2\pi i}{h}\Gamma^A([x,y])}.
\]
The line integral along the segment from $x$ to $y$ is given by
\begin{equation}
    \Gamma^A([x,y])
    =
    \int_0^1
    (y-x)\cdot A\bigl(x+s(y-x)\bigr)\,\mathrm{d}s.
\end{equation}
Differentiating with respect to $y$, we obtain
\begin{align}
    \nabla_y\Gamma^A([x,y])
    &=
    \int_0^1
    A\bigl(x+s(y-x)\bigr)\,\mathrm{d}s
    \notag\\
    &\quad+
    \int_0^1
    s\sum_{j=1}^d
    (y_j-x_j)
    \nabla A_j\bigl(x+s(y-x)\bigr)\,\mathrm{d}s.
\end{align}
Since $A$ is the transversal gauge, for every multi-index $\gamma$,
\begin{equation}
    \snorm{\partial^\gamma A(z)}
    \leq
    C_{\gamma,B}\langle z\rangle,
\end{equation}
provided that the corresponding derivatives of $B$ are bounded.
Consequently, whenever $|\alpha|+|\beta|\geq 1$,
\begin{equation}
    \snorm{
        \partial_x^\alpha\partial_y^\beta
        \Gamma^A([x,y])
    }
    \leq
    C_{\alpha,\beta,B}
    \bigl(\langle x\rangle+\langle y\rangle\bigr)
    \langle x-y\rangle.
\end{equation}
Repeated application of the chain and product rules gives the following
estimate.

\begin{lemma}[Estimate of the magnetic phase]\label{lem:magnetic_phase_est}
    Let $k\in\mathbb N$, assume that
    $B\in W^{k,\infty}(\mathbb R^d)$, and let $A$ be the transversal
    gauge defined above. Then there exists a constant $C_{k,B}>0$ such
    that, for every $0<h\leq 1$ and all multi-indices $\alpha,\beta$
    satisfying $|\alpha|+|\beta|=k$,
    \begin{equation}
        \snorm{
            \partial_x^\alpha\partial_y^\beta
            e^{-\frac{2\pi i}{h}\Gamma^A([x,y])}
        }
        \leq
        \frac{C_{k,B}}{h^k}
        \bigl(\langle x\rangle+\langle y\rangle\bigr)^k
        \langle x-y\rangle^k.
    \end{equation}
    The constant $C_{k,B}$ depends only on $k$, $d$, and
    $\norm{B}_{W^{k,\infty}(\mathbb R^d)}$.
\end{lemma}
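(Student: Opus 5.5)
The plan is to write $e^{-\frac{2\pi i}{h}\Gamma}$ with $\Gamma:=\Gamma^A([x,y])$, and to expand its mixed derivatives with the multivariate Faà di Bruno formula. For $|\alpha|+|\beta|=k$, $\partial_x^\alpha\partial_y^\beta e^{-\frac{2\pi i}{h}\Gamma}$ equals $e^{-\frac{2\pi i}{h}\Gamma}$ times a finite sum of terms of the form
\[
\Bigl(-\tfrac{2\pi i}{h}\Bigr)^{j}\prod_{l=1}^{j}\partial_x^{\alpha_l}\partial_y^{\beta_l}\Gamma,
\qquad 1\le j\le k,\quad \sum_l(\alpha_l,\beta_l)=(\alpha,\beta),\quad |\alpha_l|+|\beta_l|\ge1 .
\]
The combinatorial coefficients depend only on $k$ and $d$, and the unimodular prefactor has modulus one.

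The main input is the pointwise bound stated just before the lemma:
\[
\snorm{\partial_x^{\alpha_l}\partial_y^{\beta_l}\Gamma}\le C_{\alpha_l,\beta_l,B}\bigl(\langle x\rangle+\langle y\rangle\bigr)\langle x-y\rangle,
\qquad |\alpha_l|+|\beta_l|\ge1 .
\]
I would verify it briefly from the transversal gauge. Write $\Gamma=\int_0^1(y-x)\cdot A(x+s(y-x))\,\mathrm ds$. Each derivative either hits the linear factor $(y-x)$, removing it, or hits $A$, producing $\partial^\gamma A$ evaluated at $x+s(y-x)$ with chain-rule factors $(1-s)$ or $s$. For $A_B(x)=-\int_0^1 sB(sx)x\,\mathrm ds$ we have $|\partial^\gamma A(z)|\le C\|B\|_{W^{|\gamma|,\infty}}\langle z\rangle$. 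The factor $\langle z\rangle$ arises only when $\gamma=0$, and for $|\gamma|\ge1$ the derivative is in fact bounded, up to the $x$ factor. Since $\langle x+s(y-x)\rangle\le\langle x\rangle+\langle y\rangle$, each term is at most $C(\langle x\rangle+\langle y\rangle)\langle x-y\rangle$: the factor $\langle x-y\rangle$ comes from a surviving $(y-x)$, or is simply $\ge1$. Only derivatives of $B$ up to order $k-1\le k$ enter, so the constant depends on $\|B\|_{W^{k,\infty}}$.

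Inserting this bound into each product gives the estimate
\[
h^{-j}\,C\,\bigl(\langle x\rangle+\langle y\rangle\bigr)^{j}\langle x-y\rangle^{j}.
\]
Because $0<h\le1$, $j\le k$, and both $\langle x\rangle+\langle y\rangle\ge1$ and $\langle x-y\rangle\ge1$, each such term is dominated by $h^{-k}(\langle x\rangle+\langle y\rangle)^k\langle x-y\rangle^k$. Summing the finitely many terms yields the claim with $C_{k,B}$ depending only on $k$, $d$, and $\|B\|_{W^{k,\infty}}$.

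The only delicate point is checking that the derivative bound for $\Gamma$ really holds with a single power of $(\langle x\rangle+\langle y\rangle)$. Higher derivatives of the transversal gauge must not generate extra polynomial growth. This follows because $\partial^\gamma A_B$ is a combination of terms $s^{|\gamma|+1}(\partial^{\gamma'}B)(sx)\,x$ and lower-order terms without the $x$ factor. The rest is routine bookkeeping.
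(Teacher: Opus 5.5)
Your proposal is correct and follows the paper's argument. The paper likewise bounds every derivative of order at least one of $\Gamma^A([x,y])$ by $C(\langle x\rangle+\langle y\rangle)\langle x-y\rangle$, using the linear growth of all derivatives of the transversal gauge, and then concludes by repeated chain and product rules; this is your Fa\`a di Bruno bookkeeping with $h^{-j}\le h^{-k}$ and bases $\ge 1$.

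Two small remarks, neither of which affects validity:
\begin{enumerate}
\item The sentence saying the factor $\langle z\rangle$ arises only for $\gamma=0$ should be dropped. In fact $\partial^\gamma A_B$ grows linearly for every $\gamma$, because the term $s^{|\gamma|+1}(\partial^\gamma B)(sz)\,z$ is always present.
\item In your direct Leibniz expansion, the top-order term $(y-x)\cdot\partial^\gamma A$ with $|\gamma|=k$ involves $\partial^\gamma B$ of order $k$. So your route does not justify the claim that only derivatives up to order $k-1$ enter. That claim becomes true only after rewriting $\nabla_y\Gamma^A([x,y])$ using $\mathrm{d}A=B$. This is harmless, since $B\in W^{k,\infty}$ is assumed.
\end{enumerate}
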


Next, we state the regularity estimate for the magnetic Weyl
quantization.

\begin{lemma}
\label{Lemma:RegularityofMagneticWeylTransform}
    Let $n,n_1\in 2\mathbb N_0$ and set $\sigma:=2n_1+n$.
    Assume that $B\in W^{n_1,\infty}$ and that $A$ is the
    transversal gauge.
    Then, for every $f\in H_\sigma^\sigma(\Xi)$ and every $0<h\leq1$,
    \begin{equation}
    \label{eq:mixed-magnetic-Weyl-HS}
        \stnorm{2}{
            \brhoAf\snorm{\bp}^{n_1}\snorm{x}^n
        }
        \leq
        C_{d,n,n_1,B}
        \norm{f}_{H_\sigma^\sigma(\Xi)}.
    \end{equation}
    In particular,
    \begin{align}
        \stnorm{2}{\brhoAf\snorm{\bp}^{n}}
        &\leq
        C_{d,n,B}
        \norm{f}_{H_{2n}^{2n}(\Xi)},
        \label{eq:magnetic-Weyl-momentum-HS}
        \\
        \stnorm{2}{\brhoAf\snorm{x}^{n}}
        &\leq
        C_{d,n}
        \norm{f}_{H_\sigma^\sigma(\Xi)}.
        \label{eq:magnetic-Weyl-position-HS}
    \end{align}
    The constants are independent of $h$.
\end{lemma}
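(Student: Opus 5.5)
The plan is to compute the Hilbert--Schmidt norm through the integral kernel. Since $\stnorm{2}{T}^2=h^d\iint|T(x,y)|^2\dd x\d y$, everything reduces to an explicit weighted $L^2$ bound on the kernel of $\brhoAf\snorm{\bp}^{n_1}\snorm{x}^n$. Because $n,n_1$ are even, both $\snorm{x}^n=(x\cdot x)^{n/2}$ and $\snorm{\bp}^{n_1}=(-\hbar^2\Delta)^{n_1/2}$ are polynomial differential or multiplication operators. The kernel is therefore
\[
K(x,y)=\snorm{y}^n\,(-\hbar^2\Delta_y)^{n_1/2}\brhoAf(x,y)
\]
(up to transposition conventions). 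By Leibniz, $\Delta_y^{n_1/2}$ falls either on the magnetic phase $e^{-\frac{2\pi i}{h}\Gamma^A([x,y])}$ or on the remaining kernel
\[
k_f(x,y)=\intd e^{-2\pi i v\cdot(y-x)}f(\tfrac{x+y}{2},hv)\d v,
\]
which is the standard Weyl kernel.

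The first step is to organize the derivatives using the natural variables $X=\frac{x+y}{2}$ and $Y=x-y$. When $\partial_y^{\gamma}$ acts on $k_f$, it produces either $\frac12\partial_X$ of the symbol or a factor $2\pi i v$. After multiplying by $\hbar^{|\gamma|}$, the factor $v$ becomes a symbol factor $(hv)$, since $\hbar^{|\gamma|}(2\pi v)^{|\gamma|}\sim(hv)^{|\gamma|}$. The $x$-derivatives instead keep a harmless power $h^{j}\le1$. When derivatives fall on the phase, Lemma~\ref{lem:magnetic_phase_est} gives a factor $h^{-k}(\langle x\rangle+\langle y\rangle)^k\langle x-y\rangle^k$, and this $h^{-k}$ is exactly compensated by the prefactor $\hbar^{k}$. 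The cost is polynomial growth of total degree at most $2k\le 2n_1$ in $\langle X\rangle$ and $\langle Y\rangle$.

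The second step converts these factors into symbol operations. The weight $\snorm{y}^n\le C(\langle X\rangle^n+\langle Y\rangle^n)$ splits the same way. Powers of $X$ stay as spatial weights on $f$. Powers of $Y=x-y$ become velocity derivatives: $Y^{\beta}k_g=(2\pi i h)^{-|\beta|}\cdots$ applied to $k_{\partial_v^\beta g}$ after rescaling by $hv$. One has to check the $h$-bookkeeping here. The Fourier transform in $v$ of $g(X,hv)$ evaluated at $Y$ equals $h^{-d}\widehat g(X,Y/h)$, so $Y$ corresponds to $h\cdot(Y/h)$, which produces $h\,\partial_v$ and hence nonnegative powers of $h$. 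Plancherel in $v\leftrightarrow Y$ then gives
\[
h^d\iint|k_g|^2\dd x\d y=\|g\|_{L^2(\Xi)}^2 .
\]
Each term is therefore bounded by $\|\langle X\rangle^{a}\langle v\rangle^{b}\partial_X^{\alpha}\partial_v^{\beta}f\|_{L^2}$ with $a,b,|\alpha|+|\beta|\le 2n_1+n=\sigma$, which is dominated by $\norm{f}_{H^\sigma_\sigma}$. Here we use that the weight is $\langle z\rangle^\sigma$ and that polynomial factors $(hv)^j$ are bounded by $\langle v\rangle^{j}$. The two special cases follow by taking $n=0$ or $n_1=0$. For $n_1=0$ no phase derivatives occur, which explains why that constant does not depend on $B$.

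The main obstacle will be the phase term. Lemma~\ref{lem:magnetic_phase_est} produces $h^{-k}$ together with growth in both $\langle X\rangle$ and $\langle Y\rangle$. I must make sure that each $h^{-1}$ is paired with exactly one $\hbar$ from $\snorm{\bp}^{n_1}$, so that no extra $h^{-1}$ appears. I also need every $\langle Y\rangle$ factor to be turned into an $h\partial_v$ derivative without losing a power of $h$. Factors of $Y$ alone (as opposed to $Y/h$) do in fact give $h\partial_v$, so this should be fine. This accounting is what fixes $\sigma=2n_1+n$ rather than $n_1+n$. A smaller technical point is the prior justification that the differentiated kernels are in $L^2$, which I will handle by density of $\mathcal S(\Xi)$ in $H^\sigma_\sigma$.
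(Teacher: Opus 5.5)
Your proposal is correct and follows essentially the same route as the paper. Both write the kernel of $\brhoAf\snorm{\bp}^{n_1}\snorm{x}^n$ as $\snorm{y}^n(-\hbar^2\Delta_y)^{n_1/2}$ applied to the magnetic kernel, distribute derivatives by Leibniz, absorb each $h^{-1}$ from Lemma~\ref{lem:magnetic_phase_est} with one factor of $\hbar$, pass to center/relative variables with $r=h\eta$, and conclude by Plancherel in $v$. The only slip is your displayed prefactor for powers of $Y=x-y$: it should read $Y^\beta k_g=(i\hbar)^{\snorm{\beta}}k_{\partial_v^\beta g}$, a nonnegative power of $h$, which is exactly what your subsequent bookkeeping concludes.
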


\begin{remark}
    It should be noted that \eqref{eq:magnetic-Weyl-position-HS} is actually gauge-covariant. 
\end{remark}

\begin{proof}
    Let $\mathcal F_v$ denote the partial Fourier transform in the
    velocity variable:
    \[
        \widehat f(q,\eta)
        :=
        \int_{\mathbb R^d}
        e^{-2\pi i v\cdot\eta}
        f(q,v)\,\mathrm{d}v.
    \]
    Introduce the center and relative coordinates
    \[
        q:=\tfrac{1}{2}(x+y),
        \qquad
        r:=y-x\,.
    \]
    After making the change of variables $\xi=hv$ in the definition of
    the magnetic Weyl quantization, its integral kernel becomes
    \begin{equation}
    \label{eq:magnetic-Weyl-kernel-Fourier}
        K_f^A(x,y)
        =
        h^{-d}
        e^{-\frac{2\pi i}{h}\Gamma^A([x,y])}
        \widehat f\left(q,\frac{r}{h}\right).
    \end{equation}

    Since $n_1$ is even, right multiplication by
    $\snorm{\bp}^{n_1}$ gives
    \begin{equation}\label{equation:integralkernelK1}
        K_{\brhoAf\snorm{\bp}^{n_1}}(x,y)
        =
        \left(-\hbar^2\Delta_y\right)^{n_1/2}
        K_f^A(x,y).
    \end{equation}
    Consequently,
    \begin{equation}
    \label{eq:mixed-kernel}
        K_{\brhoAf\snorm{\bp}^{n_1}\snorm{x}^n}(x,y)
        =
        \snorm{y}^n
        \left(-\hbar^2\Delta_y\right)^{n_1/2}
        K_f^A(x,y).
    \end{equation}

    In the variables $(q,r)$, differentiation with respect to $y$ is
    given by
    \[
        \nabla_y
        =
        \tfrac{1}{2}\nabla_q+\nabla_r.
    \]
    Moreover,
    \[
        \nabla_r
        \widehat f\left(q,\frac{r}{h}\right)
        =
        \frac{1}{h}
        \left(\nabla_\eta\widehat f\,\right)
        \left(q,\frac{r}{h}\right).
    \]
    Thus, after multiplication by $\hbar=h/(2\pi)$, derivatives in
    $r$ produce derivatives in $\eta$ with constants independent of
    $h$, whereas derivatives in $q$ carry a nonnegative power of $h$.

    By Lemma~\ref{lem:magnetic_phase_est}, for every
    multi-index $\gamma$ with $1\leq|\gamma|\leq n_1$,
    \begin{equation}
    \label{eq:phase-bound-used-in-HS-proof}
        \snorm{
            \partial_y^\gamma
            e^{-\frac{2\pi i}{h}\Gamma^A([x,y])}
        }
        \leq
        \frac{C_{\gamma,B}}{h^{|\gamma|}}
        \bigl(\langle x\rangle+\langle y\rangle\bigr)^{|\gamma|}
        \langle x-y\rangle^{|\gamma|}.
    \end{equation}
    In center-relative coordinates,
    \[
        \langle x\rangle+\langle y\rangle
        \leq
        C\bigl(\langle q\rangle+\langle r\rangle\bigr).
    \]
    We also have
    \[
        \snorm{y}^n
        =
        \snorm{q+\frac{r}{2}}^n
        \leq
        C_n\bigl(\langle q\rangle^n+\langle r\rangle^n\bigr).
    \]

    Applying the product rule to \eqref{eq:mixed-kernel}, using
    \eqref{eq:phase-bound-used-in-HS-proof}, and then setting
    $r=h\eta$, we obtain a finite sum of terms bounded by expressions
    of the form
    \begin{equation}
    \label{eq:generic-HS-term}
        C_{d,n,n_1,B}\,
        h^a
        \snorm{q}^{m_1}
        \snorm{\eta}^{m_2}
        \snorm{
            \partial_q^\alpha
            \partial_\eta^\beta
            \widehat f(q,\eta)
        },
    \end{equation}
    where
    \[
        a\geq0,
        \qquad
        m_1+m_2+|\alpha|+|\beta|
        \leq
        2n_1+n=\sigma.
    \]
    Since $0<h\leq1$, the factor $h^a$ is uniformly bounded.

    Multiplication by $\eta$ corresponds, under the inverse Fourier
    transform, to differentiation in $v$, while differentiation in
    $\eta$ corresponds to multiplication by $v$. Therefore,
    Plancherel's theorem in the $v$ variable gives
    \begin{equation}
    \label{eq:generic-HS-term-bound}
        \left\|
            \snorm{q}^{m_1}
            \snorm{\eta}^{m_2}
            \partial_q^\alpha
            \partial_\eta^\beta
            \widehat f
        \right\|_{L^2_{q,\eta}}
        \leq
        C_{d,\sigma}
        \norm{f}_{H_\sigma^\sigma(\Xi)}.
    \end{equation}

    Finally, the change of variables
    \[
        (x,y)\longmapsto(q,r),
        \qquad
        r=h\eta,
    \]
    has Jacobian
    \[
        \mathrm{d}x\,\mathrm{d}y
        =
        h^d\,\mathrm{d}q\,\mathrm{d}\eta.
    \]
    Combining \eqref{eq:magnetic-Weyl-kernel-Fourier}--%
    \eqref{eq:generic-HS-term-bound} yields
    \[
        h^{d/2}
        \left\|
            K_{\brhoAf\snorm{\bp}^{n_1}\snorm{x}^n}
        \right\|_{L^2_{x,y}}
        \leq
        C_{d,n,n_1,B}
        \norm{f}_{H_\sigma^\sigma(\Xi)}.
    \]
    Since
    \[
        \stnorm{2}{T}
        =
        h^{d/2}\norm{T}_{\mathfrak S^2}
        =
        h^{d/2}\norm{K_T}_{L^2_{x,y}},
    \]
    this proves \eqref{eq:mixed-magnetic-Weyl-HS}.

    Taking $n=0$ gives
    \eqref{eq:magnetic-Weyl-momentum-HS}, while taking $n_1=0$ gives
    \eqref{eq:magnetic-Weyl-position-HS}. This concludes our proof.
\end{proof}
	
	We also obtain the following estimate, analogous to
\cite[Proposition 3.4]{lafleche2023strong}.

\begin{corollary}
\label{Corollary:EstimateofTrace}
    Let $n\in 2\mathbb N$ and $n_1\in 2\mathbb N_0$, with
    $n>d/2$, and set $k:=n+n_1$, $\sigma:=k+n=2n+n_1$.
    Assume that $A$ is the transversal gauge associated with $B$ and
    that $B$ has sufficiently many bounded derivatives. Then
    \begin{equation}    \label{eq:weighted-trace-magnetic-Weyl}
        \begin{aligned}
        h^d
        \operatorname{Tr}
        \left(
            \snorm{\brhoAf}\snorm{\bp}^{n_1}
        \right)
        &\leq
        C_{d,n,n_1,B}
        \Bigl(
            \norm{
                \langle v\rangle^k
                \langle x\rangle^n f
            }_{L^2(\Xi)}
            +
            h^k
            \norm{
                \langle x\rangle^n
                \nabla_{x,v}^k f
            }_{L^2(\Xi)}
       \\
        &\hspace{8em}
            +
            h^n
            \norm{
                \langle v\rangle^k
                \nabla_{x,v}^n f
            }_{L^2(\Xi)}
            +
            h^\sigma
            \norm{
                \nabla_{x,v}^\sigma f
            }_{L^2(\Xi)}
        \Bigr)
       \\
        &\leq
        C_{d,n,n_1,B}
        \norm{f}_{H^\sigma_\sigma(\Xi)}.
    \end{aligned}
    \end{equation}
    Here, the constant $C_{d,n,n_1,B}$ is independent of $h\in(0,1]$.
\end{corollary}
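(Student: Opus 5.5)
We follow the strategy of \cite[Proposition 3.4]{lafleche2023strong}: factor $\brhoAf\snorm{\bp}^{n_1}$ through a Hilbert--Schmidt product and apply Hölder's inequality for Schatten norms. Writing $\snorm{T}$ for the polar part, we have $\operatorname{Tr}(\snorm{\brhoAf}\snorm{\bp}^{n_1})\le \Nrm{\brhoAf\snorm{\bp}^{n_1}}{1}$ up to the usual polar-decomposition argument. We then insert the weight $\langle x\rangle^{n}$:
\[
\brhoAf\snorm{\bp}^{n_1}
=
\bigl(\brhoAf\snorm{\bp}^{n_1}\langle x\rangle^{n}\bigr)\,\langle x\rangle^{-n}.
\]
This is harmless only after commuting $\snorm{\bp}^{n_1}$ past $\langle x\rangle^{n}$; the alternative is to weight on the left, writing $\langle x\rangle^{-n}\langle x\rangle^{n}\brhoAf\snorm{\bp}^{n_1}$ and using the adjoint. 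By Hölder's inequality,
\[
h^d\Nrm{\brhoAf\snorm{\bp}^{n_1}}{1}
\le
h^{d/2}\Nrm{\langle x\rangle^{-n}}{2}\cdot h^{d/2}\Nrm{\langle x\rangle^{n}\brhoAf\snorm{\bp}^{n_1}}{2}.
\]
The first factor is a multiplication operator, so it is not Hilbert--Schmidt. We therefore use instead the weight $\langle x\rangle^{-n}\langle \bp\rangle^{-n}$, or rather the standard split with $\langle \bp\rangle^{-k}$ and $\langle x\rangle^{-n}$, mimicking Lafl\`eche. Kato--Seiler--Simon gives
\[
h^{d/2}\Nrm{\langle x\rangle^{-n}\langle\bp\rangle^{-n}}{2}
\le
C\,\|\langle x\rangle^{-n}\|_{L^2}\,\|\langle v\rangle^{-n}\|_{L^2},
\]
and both norms are finite since $n>d/2$. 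The $h$-scaling is exactly compensated by the semiclassical normalization.

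It thus remains to bound $h^{d/2}\Nrm{\langle\bp\rangle^{n}\langle x\rangle^{n}\brhoAf\snorm{\bp}^{n_1}}{2}$, which amounts to the Hilbert--Schmidt norm of a kernel. Since $n$ and $n_1$ are even, $\langle x\rangle^n$ and $\langle\bp\rangle^n$ are polynomials in $x$ and in $-\hbar^2\Delta$. The kernel is therefore
\[
(1-\hbar^2\Delta_x)^{n/2}\,\langle x\rangle^{n}\,(-\hbar^2\Delta_y)^{n_1/2}K^A_f(x,y).
\]
We expand this exactly as in the proof of Lemma~\ref{Lemma:RegularityofMagneticWeylTransform}: pass to center/relative coordinates $q,r$ with $r=h\eta$, use Lemma~\ref{lem:magnetic_phase_est} for the phase derivatives, and use Plancherel in $v$. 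Each term has the form
\[
h^a\,\langle q\rangle^{m_1}\snorm{\eta}^{m_2}\,\partial_q^\alpha\partial_\eta^\beta\widehat f.
\]
Under Plancherel these become $v$-weights of order at most $k=n+n_1$ (coming from $\eta$-derivatives and the factor $\langle x\rangle^n$) and derivatives of order at most $k$.

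To obtain the refined four-term bound rather than just $\norm{f}_{H^\sigma_\sigma}$, we track the powers of $h$. Each $r$-derivative becomes an $\eta$-derivative at no cost, but here we convert $\hbar\nabla$ acting through $\widehat f(q,r/h)$ into multiplication by $v$. Each $q$-derivative costs a factor of $h$. A phase derivative costs $h^{-1}$, but this is paired with $\hbar$, giving the factor $(\langle q\rangle+\langle r\rangle)\langle r\rangle$; since $\langle r\rangle=\langle h\eta\rangle$, it becomes a $v$-derivative weighted by a positive power of $h$. Grouping the terms gives four classes: no derivatives (weight $\langle v\rangle^k\langle x\rangle^n$), derivatives coming only from $\hbar\nabla_x$ (at least order $h^k$), derivatives from the $\langle \bp\rangle^n$ factor on the $x$-side (order $h^n$), and all derivatives together (order $h^\sigma$). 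Mixed intermediate terms are absorbed by interpolation, namely Young's inequality on the weights.

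The main obstacle is the magnetic phase. Its derivatives grow like $(\langle q\rangle+\langle r\rangle)^{|\gamma|}\langle r\rangle^{|\gamma|}$, which raises the spatial weight beyond $\langle x\rangle^n$. The clean four-term structure then holds only if these extra weights are controlled by $h$-powers times higher derivatives. If that bookkeeping fails, we settle for the cruder final inequality $\le C\norm{f}_{H^\sigma_\sigma}$, which follows directly from Lemma~\ref{Lemma:RegularityofMagneticWeylTransform}-type estimates with $\sigma=2n+n_1$; this is all that is used later.
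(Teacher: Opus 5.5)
Your skeleton is the paper's: $\operatorname{Tr}(\snorm{\brhoAf}\snorm{\bp}^{n_1})\le\norm{\brhoAf\snorm{\bp}^{n_1}}_1$, then Schatten--H\"older against the Hilbert--Schmidt weight $\omega_n(x)\omega_n(\bp)$, $\omega_n(z)=(1+\snorm{z}^n)^{-1}$, which Kato--Seiler--Simon controls since $n>d/2$. What remains is bounded in Hilbert--Schmidt norm through the kernel computation of Lemma~\ref{Lemma:RegularityofMagneticWeylTransform}. The paper, however, puts the whole weight on the right: $\brhoAf\snorm{\bp}^{n_1}=\brhoAf\snorm{\bp}^{n_1}(1+\snorm{\bp}^n)(1+\snorm{x}^n)\,\omega_n(x)\omega_n(\bp)$. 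The four pieces $\brhoAf\snorm{\bp}^{a}\snorm{x}^{b}$ ($a\in\{n_1,k\}$, $b\in\{0,n\}$) are then literally instances of that lemma. Your left placement gives the two-sided operator $\langle\bp\rangle^n\langle x\rangle^n\brhoAf\snorm{\bp}^{n_1}$, whose adjoint is also two-sided. This forces you to redo the kernel computation with $x$-derivatives.

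The genuine gap is the one you flag at the end, and it is not mere bookkeeping. One has $\hbar\nabla_y e^{-\frac{2\pi i}{h}\Gamma^A([x,y])}=-i\,\nabla_y\Gamma^A([x,y])\,e^{-\frac{2\pi i}{h}\Gamma^A([x,y])}$ with $\nabla_y\Gamma^A([x,y])=A(y)+\int_0^1 t\,B(x+t(y-x))(y-x)\,\mathrm{d}t$. So a derivative hitting the phase produces the position weight $A(y)$, of size $\simeq\snorm{y}$ in the transversal gauge, with no power of $h$. This cannot be traded for $h$-powers times derivatives, and in fact the four-term inequality is false when $n_1>n$.

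Take constant $B\neq0$ and $f=\phi(x-x_0)\psi(v)$. Apply Proposition~\ref{prop:A1-gauge-covariance} with $\varphi(x)=c\cdot x$, $c=A(x_0)=-\tfrac12Bx_0$, and then translate by $x_0$. This gives $\operatorname{Tr}(\snorm{\brhoAf}\snorm{\bp}^{n_1})=\operatorname{Tr}(\snorm{\brho^A_{\phi\otimes\psi}}\snorm{\bp+c}^{n_1})\ge2^{1-n_1}\snorm{c}^{n_1}\operatorname{Tr}\snorm{\brho^A_{\phi\otimes\psi}}-\operatorname{Tr}(\snorm{\brho^A_{\phi\otimes\psi}}\snorm{\bp}^{n_1})$. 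For $\snorm{Bx_0}\simeq\snorm{x_0}$ this grows like $\snorm{x_0}^{n_1}$, while the four terms on the right are $O(\snorm{x_0}^n)$ at fixed $h$. The underlying reason is that the symbol variable is the kinetic momentum $\bp-A$, whereas the statement weights the canonical one. So your ``four classes'' paragraph must be discarded, not hedged; the paper's proof does not derive that line either.

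Your fallback is true, but it does not ``follow directly from Lemma-type estimates with $\sigma=2n+n_1$''. Lemma~\ref{lem:magnetic_phase_est} charges $(\langle x\rangle+\langle y\rangle)\langle x-y\rangle$, i.e. two units, per derivative on the phase. So $k$ derivatives plus $\langle x\rangle^n$ give $H^{2k+n}_{2k+n}=H^{3n+2n_1}_{3n+2n_1}$, which is exactly the index $2n_1+n$ of Lemma~\ref{Lemma:RegularityofMagneticWeylTransform} with $n_1\to k$. The paper's proof has the same mismatch when it applies that lemma to $\brhoAf\snorm{\bp}^{k}\snorm{x}^n$ and then invokes $k+n=\sigma$.

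The missing ingredient is a sharper phase bound. Use the formula above (and its analogue in $x$), together with $\snorm{\partial^\gamma A(z)}\le C\langle z\rangle$ and bounded derivatives of $B$. Then every derivative of order $\ge1$ of $\Gamma^A([x,y])$ is $O(\langle q\rangle+\langle r\rangle)$, so by Fa\`a di Bruno $\hbar^{j}\partial^{j}$ of the phase is $O((\langle q\rangle+\langle r\rangle)^j)$. Each of the $k$ derivatives now contributes at most one unit: $\langle v\rangle$, $h\nabla_x$, $\langle x\rangle$, or $h\nabla_v$ via $\langle r\rangle=\langle h\eta\rangle$. The weight contributes $n$ more, and $\langle x\rangle^a\langle v\rangle^b\le\langle z\rangle^{a+b}$ puts everything into $\norm{f}_{H^{k+n}_{k+n}(\Xi)}$.
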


\begin{proof}
    Since $\snorm{\bp}^{n_1}$ is positive, the trace inequality for
    products of operators gives
    \begin{equation}
        \operatorname{Tr}
        \left(
            \snorm{\brhoAf}\snorm{\bp}^{n_1}
        \right)
        \leq
        \cstnorm{1}{
            \brhoAf\snorm{\bp}^{n_1}
        }.
    \end{equation}

    Define
    \[
        \omega_n(z):=(1+\snorm{z}^n)^{-1},
        \qquad
        \boldsymbol m_n
        :=
        \omega_n(x)\omega_n(\bp).
    \]
    Since $n>d/2$, we have $\omega_n\in L^2(\mathbb R^d)$.
    The Kato--Seiler--Simon inequality therefore gives
    \begin{equation}
    \label{eq:mn-HS-bound}
        \cstnorm{2}{\boldsymbol m_n}
        \leq
        C_{d,n}h^{-d/2}.
    \end{equation}

    Since $\boldsymbol m_n^{-1}
        =
        \bigl(1+\snorm{\bp}^n\bigr)
        \bigl(1+\snorm{x}^n\bigr),$
    then  the Schatten Hölder's inequality gives
    \begin{align}
        \cstnorm{1}{
            \brhoAf\snorm{\bp}^{n_1}
        }
        &=
        \cstnorm{1}{
            \brhoAf\snorm{\bp}^{n_1}
            \boldsymbol m_n^{-1}\boldsymbol m_n
        }
        \leq
        \cstnorm{2}{
            \brhoAf\snorm{\bp}^{n_1}
            \boldsymbol m_n^{-1}
        }
        \cstnorm{2}{\boldsymbol m_n}.
    \end{align}
    Moreover, notice we have
    \begin{align*}
        \brhoAf\snorm{\bp}^{n_1}\boldsymbol m_n^{-1}
        &=
        \brhoAf\snorm{\bp}^{n_1}
        \bigl(1+\snorm{\bp}^n\bigr)
        \bigl(1+\snorm{x}^n\bigr)
        \notag\\
        &=
        \brhoAf\snorm{\bp}^{n_1}
        +
        \brhoAf\snorm{\bp}^{n_1}\snorm{x}^n
        \notag\\
        &\quad+
        \brhoAf\snorm{\bp}^{k}
        +
        \brhoAf\snorm{\bp}^{k}\snorm{x}^n.
    \end{align*}
    Using \eqref{eq:mn-HS-bound} and $\cstnorm{2}{T}
        =
        h^{-d/2}\stnorm{2}{T}$, 
    we obtain
\begin{multline*}
	h^d
	\operatorname{Tr}
	\left(
		\snorm{\brhoAf}\snorm{\bp}^{n_1}
	\right)
	\leq
	C_{d,n}
	\Bigl(
		\stnorm{2}{\brhoAf\snorm{\bp}^{n_1}}
		+
		\stnorm{2}{\brhoAf\snorm{\bp}^{n_1}\snorm{x}^n}
		\\
		+
		\stnorm{2}{\brhoAf\snorm{\bp}^{k}}
		+
		\stnorm{2}{\brhoAf\snorm{\bp}^{k}\snorm{x}^n}
	\Bigr)\,.
\end{multline*}
    Applying Lemma
    \ref{Lemma:RegularityofMagneticWeylTransform} to these four terms
    and using $k+n=\sigma$ gives
\begin{multline*}
	h^d
	\operatorname{Tr}
	\left(
		\snorm{\brhoAf}\snorm{\bp}^{n_1}
	\right)
	\leq
	C_{d,n,n_1,B}
	\Bigl(
		\norm{\langle v\rangle^k\langle x\rangle^n f}_{L^2(\Xi)}
		+
		h^k\norm{\langle x\rangle^n\nabla_x^k f}_{L^2(\Xi)}
		\\
		+
		h^n\norm{\langle v\rangle^k\nabla_v^n f}_{L^2(\Xi)}
		+
		h^\sigma\norm{\nabla_x^k\nabla_v^n f}_{L^2(\Xi)}
	\Bigr)\,.
\end{multline*}
    Since $0<h\leq1$, the right-hand side is bounded by
    $C_{d,n,n_1,B}\norm{f}_{H^\sigma_\sigma(\Xi)}$. This proves
    \eqref{eq:weighted-trace-magnetic-Weyl}.
\end{proof}

\begin{corollary}
\label{Corollary:EstimateofDiag}
    Let $n,n_1\in 2\mathbb N$, with $n>d/2$, and set
    \[
        k:=n+n_1,
        \qquad
        \sigma:=k+n=2n+n_1,
        \qquad
        N_0
        :=
        2\left\lfloor\frac d2\right\rfloor+2d+4.
    \]
    Assume that $A$ is the transversal gauge associated with $B$ and
    that $B$ has sufficiently many bounded derivatives. Then, for every
    \[
        1\leq p\leq1+\frac{n_1}{d},
    \]
    we have
    \begin{equation}
    \label{eq:diagonal-gradient-estimate}
        \lnorm{p}{
            \operatorname{diag}
            \left(
                \snorm{\boldsymbol{\nabla}_v\brhoAf}
            \right)
        }
        \leq
        C_{d,n,n_1,B}
        \norm{\nabla_vf}_{
            H^\sigma_\sigma(\Xi)
            \cap W^{2N_0,\infty}(\Xi)
        }.
    \end{equation}
    The constant $C_{d,n,n_1,B}$ is independent of $h\in(0,1]$.
\end{corollary}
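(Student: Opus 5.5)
We would follow the strategy of \cite[Proposition 3.4/Corollary 3.5]{lafleche2023strong}, adapted to the magnetic setting. The starting point is the identity recorded in the introduction,
\[
\boldsymbol{\nabla}_v\brhoAf=\Bigl[\tfrac{x}{i\hbar},\brhoAf\Bigr]=\brho^A_{\nabla_v f},
\]
which holds exactly: the magnetic phase $\Gamma^A([x,y])$ commutes with multiplication by $x$, and $(x-y)/(i\hbar)$ acting on the kernel corresponds to $\nabla_v$ on the symbol. Writing $g:=\nabla_v f$ (componentwise), we must bound $\lnorm{p}{\operatorname{diag}(\snorm{\brho^A_g})}$, with the diagonal normalized as $\rho(x)=h^d\,\cdot(x,x)$. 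We obtain this by interpolating between an $L^1$ bound and an $L^{1+n_1/d}$ bound for the diagonal of the positive operator $\snorm{\brho^A_g}$.

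\emph{$L^1$ endpoint.} Since $\snorm{\brho^A_g}\ge0$, its diagonal is nonnegative, and
\[
\lnorm{1}{\operatorname{diag}(\snorm{\brho^A_g})}=h^d\Tr{\snorm{\brho^A_g}}.
\]
This quantity is controlled by Corollary~\ref{Corollary:EstimateofTrace} with $n_1=0$, giving the bound $C\norm{g}_{H^{2n}_{2n}}\le C\norm{g}_{H^\sigma_\sigma}$.

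\emph{Kinetic-interpolation endpoint.} We use the quantum analogue of Lemma~\ref{lem:kinetic_interpolation}, namely the Lieb--Thirring-type inequality from \cite{lafleche2023strong}: for a positive operator $\boldsymbol\gamma$,
\[
\lnorm{1+n_1/d}{\operatorname{diag}(\boldsymbol\gamma)}\le C\,\stnorm{\infty}{\boldsymbol\gamma}^{\frac{n_1}{n_1+d}}\bigl(h^d\Tr{\boldsymbol\gamma\snorm{\bp}^{n_1}}\bigr)^{\frac{d}{n_1+d}}.
\]
This inequality is purely operator-theoretic and contains no magnetic field, so it applies verbatim. We take $\boldsymbol\gamma=\snorm{\brho^A_g}$.
\begin{itemize}
\item The kinetic factor $h^d\Tr{\snorm{\brho^A_g}\snorm{\bp}^{n_1}}$ is bounded by Corollary~\ref{Corollary:EstimateofTrace}, giving $C\norm{g}_{H^\sigma_\sigma}$.
\item The operator-norm factor satisfies $\stnorm{\infty}{\snorm{\brho^A_g}}=\stnorm{\infty}{\brho^A_g}$. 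By the semiclassical magnetic Calder\'on--Vaillancourt bound \eqref{eq:semiclassical-Schatten-infinity-estimate} with $\rho=\delta=0$, this is at most $C\sup_{\snorm{a},\snorm{\alpha}\le N_0}\norm{D^{a,\alpha}g}_{L^\infty}\le C\norm{g}_{W^{2N_0,\infty}}$. Here $p(d)=N_0$ from the subsequent remark, and we allow up to $N_0$ derivatives in each variable.
\end{itemize}
Hölder interpolation between the two endpoints then covers every $1\le p\le 1+n_1/d$. Finally, the product of powers is bounded by the sum of the norms, which yields \eqref{eq:diagonal-gradient-estimate}.

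\emph{Main obstacle.} The step needing care is justifying that the Calder\'on--Vaillancourt and trace estimates, which are stated for $\OPAh$ with polynomially bounded $A$, apply to $\brho^A_{\nabla_v f}$ uniformly in $h$. The transversal gauge satisfies $\snorm{A(z)}\le C\langle z\rangle$, so it is polynomially bounded. The rescaling argument of Lemma~\ref{Lemma:ScalingOfMagneticWeylTransform} together with \eqref{eq:rescaled-B-seminorms} gives $h$-independent constants. A secondary technical point is the exact commutator identity for $\boldsymbol\nabla_v$. We verify it directly on kernels: $\frac{x-y}{i\hbar}$ times $h^{-d}e^{-\frac{2\pi i}{h}\Gamma^A}\widehat f(q,r/h)$ equals the same expression with $\widehat f$ replaced by $\widehat{\nabla_v f}$, because $\widehat{\nabla_v f}(q,\eta)=2\pi i\eta\widehat f(q,\eta)$ and $r/h=(y-x)/h$.
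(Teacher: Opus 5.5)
Your proposal is correct and matches the paper's proof essentially step for step. You identify $\boldsymbol{\nabla}_v\brhoAf=\brho^A_{\nabla_v f}$, bound the $L^{1+n_1/d}$ norm of $\operatorname{diag}(\snorm{\boldsymbol{\nabla}_v\brhoAf})$ by the quantum kinetic interpolation inequality (the paper cites \cite[Theorem 6]{lafleche2019propagation}), control the weighted trace by Corollary~\ref{Corollary:EstimateofTrace} and the operator norm by the magnetic Calder\'on--Vaillancourt bound, treat the $L^1$ endpoint with the same corollary at $n_1=0$, and interpolate; your explicit kernel check of the commutator identity and your rescaling argument for $h$-uniformity fill in details the paper leaves implicit without changing the argument.
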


\begin{proof}
	Set $T:=\boldsymbol{\nabla}_v\brhoAf$. Differentiation in the
	velocity variable corresponds exactly to commutation with the
	position operator, so that $T=\brho^A_{\nabla_v f}$. The following
	estimates are applied componentwise to $T$.

	Let
	\[
		p_*:=1+\frac{n_1}{d}.
	\]
	The quantum kinetic interpolation inequality
	\cite[Theorem 6]{lafleche2019propagation}, applied to $\snorm{T}$,
	gives
	\begin{equation}
		\label{eq:quantum-kinetic-interpolation-endpoint}
		\lnorm{p_*}{\operatorname{diag}(\snorm{T})}
		\leq
		C_{d,n_1}
		\left[
			h^d
			\operatorname{Tr}
			\left(
				\snorm{T}\snorm{\bp}^{n_1}
			\right)
		\right]^{1/p_*}
		\stnorm{\infty}{T}^{1-1/p_*}.
	\end{equation}

	Applying Corollary \ref{Corollary:EstimateofTrace} to $\nabla_v f$
	gives
	\begin{equation}
		\label{eq:trace-gradient-bound}
		h^d
		\operatorname{Tr}
		\left(
			\snorm{T}\snorm{\bp}^{n_1}
		\right)
		\leq
		C_{d,n,n_1,B}
		\norm{\nabla_v f}_{H^\sigma_\sigma(\Xi)}.
	\end{equation}
	The magnetic Calder\'on--Vaillancourt theorem gives
	\begin{equation}
		\label{eq:operator-gradient-bound}
		\stnorm{\infty}{T}
		\leq
		C_{d,B}
		\norm{\nabla_v f}_{W^{2N_0,\infty}(\Xi)}.
	\end{equation}
	Combining
	\eqref{eq:quantum-kinetic-interpolation-endpoint}--%
	\eqref{eq:operator-gradient-bound}, we obtain
	\[
		\lnorm{p_*}{\operatorname{diag}(\snorm{T})}
		\leq
		C_{d,n,n_1,B}
		\norm{\nabla_v f}{
			H^\sigma_\sigma(\Xi)
			\cap W^{2N_0,\infty}(\Xi)
		}.
	\]

	At the other endpoint,
	\begin{align}
		\lnorm{1}{\operatorname{diag}(\snorm{T})}
		&=
		h^d\operatorname{Tr}(\snorm{T})
		\notag\\
		&\leq
		C_{d,n,B}
		\norm{\nabla_v f}_{H^{2n}_{2n}(\Xi)}
		\notag\\
		&\leq
		C_{d,n,n_1,B}
		\norm{\nabla_v f}_{H^\sigma_\sigma(\Xi)}.
	\end{align}
	Interpolation between $L^1$ and $L^{p_*}$ proves
	\eqref{eq:diagonal-gradient-estimate} for every
	$p\in[1,p_*]$.
\end{proof}

	\section{Semiclassical Limit: Proof of Theorem~\ref{Theorem:Semiclassicallimit}}\label{Section:ProofofSemiclassicalLimit}
	The aim of this section is to prove Theorem \ref{Theorem:Semiclassicallimit}. The main strategy is to compare the difference between the solution $\brho$ of the magnetic Hartree--Fock equation and the magnetic Weyl quantization of the solution $f$ of the Cauchy problem \eqref{equation: MagneticVlasovSys}.
	
The magnetic Weyl quantization of the Hamiltonian
$\frac{1}{2}\snorm{v}^2$ satisfies, in the sense of distributions,
\begin{equation*}
	\OPAh\lb\tfrac{1}{2}\snorm{v}^2\rb
	=
	H^A_0
	:=
	\tfrac{1}{2}(\opp-\bA)^2\,.
\end{equation*}
The proof is given in
Lemma~\ref{Lemma:TransformOfHamiltonian} of
Appendix~\ref{Section:PropertiesofMagneticWeylTransform}.

We introduce the two-parameter unitary propagator
$\mathcal{U}(t,s)$ satisfying, for $t>s$,
\begin{align*}
	i\hbar\,\partial_t\mathcal{U}(t,s)
	&=
	H^A_{\brho}(t)\mathcal{U}(t,s)\,,
	&
	\mathcal{U}(s,s)
	&=
	\Id\,.
\end{align*}
Denote by $\brho$ the
solution of the magnetic Hartree--Fock equation. Then
\begin{multline}\label{equation:ControlOfErrorHartree}
	i\hbar\,\partial_t
	\lb
	\mU^*(\brho-\brhoAf)\mU
	\rb
	=
	\mU^*
	\left[K*(\rho-\rho_f),\brhoAf\right]
	\mU
	\\
	-h^d\mU^*
	\left[\bXr,\brhoAf\right]
	\mU
	+\mU^*{\sf B}_t\mU
	-\hbar^2\mU^*\OPAh(\widetilde{R})\mU\,.
\end{multline}
Here $\widetilde{R}$ satisfies
\begin{equation*}
	\snorm{
	\partial_x^\alpha\partial_v^\beta
	\widetilde{R}(x,v)
	}
	\leq
	C_{\alpha,\beta}
	\lal x\ral^{-d-2-\snorm{\alpha}}
	\lal v\ral^{-d-2-\snorm{\beta}}
\end{equation*}
for all multi-indices $\alpha,\beta\in\mathbb{N}^d$ allowed by the
assumed finite regularity. The proof of
\eqref{equation:ControlOfErrorHartree} is given in
Section~\ref{Subsection:ExplicitCalculation} of the appendix. The error term ${\sf B}_t$ is defined by
\begin{align}\label{equation:DefinitionOfErrorTerm}
	{\sf B}_t(x,y)
	={}&
	\left\{
	(K*\rho_f^A)(x)
	-(K*\rho_f^A)(y)
	-(x-y)\cdot
	(\nabla K*\rho_f^A)
	\lb\tfrac{x+y}{2}\rb
	\right\}
	\brhoAf(x,y)\,.
\end{align}

	Take the classical Schatten norm of both sides we have
	\begin{equation*}
		\begin{aligned}
			\stnorm{p}{\brho-\brhoAf}\le& \stnorm{p}{\brho^{\mathrm{in}}-\brho^{A}_{f^{\mathrm{in}}}}+\frac{1}{\hbar}\int_0^t\stnorm{p}{\left[K*(\rho-\rho_f),\brhoAf\right]}\,\mathrm{d} s\\&+\hbar\int^t_0 \stnorm{p}{\OPAh(\widetilde{R})}\,\mathrm{d} s+\frac{1}{\hbar}\int^t_0\stnorm{p}{\left[h^d\bxhf,\brhoAf\right]}\,\mathrm{d} s+\frac{1}{\hbar}\int^t_0\stnorm{p}{\sfB_s}\,\mathrm{d} s\,.
		\end{aligned}
	\end{equation*}
	Therefore, to study the semiclassical limit of the magnetic Hartree--Fock equation, it suffices to estimate the remainder term $\OPAh(\widetilde{R})$, the error term $\sf{B}_t$, the commutator term $[K(\cdot-z),\brhoAf(s)]$ and the exchange term $\left[h^d\bxhf,\brhoAf\right]$. We first estimate the remainder term, which turns out to be a straightforward corollary of interpolation.

	\bigskip
	\noindent\textbf{Step 1.} \textit{Estimate of the remainder term.}
	According to \eqref{eq:semiclassical-Schatten-infinity-estimate} and \eqref{eq:semiclassical-trace-estimate},  we know that the remainder term $h^2\widetilde{R}$ contributes optimal convergence rate: for $1\le p\le \infty$, by applying interpolation we have
	\begin{equation*}
		\stnorm{p}{h^2\OPAh(\widetilde{R})}\le Ch^2\lb\sum_{\snorm{\alpha}\le s(d)}\sum_{\snorm{\beta}\le t(d)}\lnorm{1}{\partial^\alpha_x\partial^\beta_v\widetilde{R}}\rb^{\frac{1}{p}}\lb \sum_{\snorm{\alpha},\snorm{\beta}\le p(d)}\norm{\partial^\alpha_x\partial^\beta_v\widetilde{R}}_{L^{\infty}}\rb^{1-\frac{1}{p}}\,.
	\end{equation*} 
	For the sake of convenience, we define the function $\sf{R}_p(t)$ as
	\begin{equation*}
		\begin{aligned}
			{\sf{R}_p(t)}:=\lb\sum_{\snorm{\alpha}\le s(d)}\sum_{\snorm{\beta}\le t(d)}\lnorm{1}{\partial^\alpha_x\partial^\beta_v\widetilde{R}}\rb^{\frac{1}{p}}\lb \sum_{\snorm{\alpha},\snorm{\beta}\le p(d)}\norm{\partial^\alpha_x\partial^\beta_v\widetilde{R}}_{L^{\infty}}\rb^{1-\frac{1}{p}}\,.
		\end{aligned}
	\end{equation*}
	and consequently
	\begin{equation*}
		\begin{aligned}
			\hbar\int^t_0 \stnorm{p}{\OPAh(\widetilde{R})}\,\mathrm{d} s\le C\hbar\int^t_0{\sf{R}}_p(s)\,\mathrm{d} s\,.
		\end{aligned}
	\end{equation*}
	\noindent\textbf{Step 2.} \textit{Estimate of the error term.}
    Estimate of the error term is given by following proposition:
	\begin{proposition}\label{Proposition:ControlOfErrorTerm}
		For any $1\le p\le\infty$, we have the following estimate:
		\begin{align*}
			&\stnorm{p}{\sf{B}_t}\le C_{d}\hbar^2\norm{\rho_f}_{H^{\lfloor\frac{d}{2}\rfloor+1}}\left(\norm{f(t,\cdot,\cdot)}_{W^{4\left\lfloor\frac{d}{2}\right\rfloor+4d+10,\infty}(\Xi)}\right)^{1-\frac{1}{p}}\widetilde{B}^{\frac{1}{p}}_d\,.
		\end{align*}
		The definition of $\widetilde{B}_d$ is
		\begin{equation}
			\begin{aligned}
				\widetilde{B}_d&:=C_{d} 
				\left(
				h^{2\lceil\frac{d+2}{4}\rceil}\norm{\nabla^{2\lceil\frac{d+2}{4}\rceil}_{x,v} f}_{L^2(\Xi)}+\norm{{\langle x\rangle}^{2\lceil\frac{d+2}{4}\rceil}f}_{L^2(\Xi)}+\norm{{\langle v \rangle}^{2\lceil\frac{d+2}{4}\rceil}f }_{L^2(\Xi)} \right)\,.
			\end{aligned}
		\end{equation}
		Here, $C_d$ depends only on the dimension $d$.
	\end{proposition}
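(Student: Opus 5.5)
We estimate $\sfB_t$ the same way Lafl\`eche--Saffirio treat the non-magnetic error term: Taylor expand the bracket, factor the $\hbar^2$ out of the kernel, bound the $\cL^1$ and $\cL^\infty$ norms separately, and interpolate. Write $\Phi:=K*\rho_f$. Taylor's formula at the midpoint $q=\tfrac{x+y}{2}$, with $r=x-y$, cancels the zeroth- and first-order terms. What remains is a third-order remainder,
\begin{equation*}
\Phi(x)-\Phi(y)-r\cdot\nabla\Phi(q)=\sum_{|\gamma|=3}c_\gamma\, r^\gamma\int_{-1/2}^{1/2}w(\tau)\,\partial^\gamma\Phi(q+\tau r)\,\mathrm{d}\tau ,
\end{equation*}
with an explicit polynomial weight $w$. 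Since the exact identity is a superposition in $\tau$, we can instead use the Fourier representation $\Phi(x)=\int \widehat{\Phi}(\xi)e^{2\pi i\xi\cdot x}\,\mathrm{d}\xi$. Then $\sfB_t$ is a superposition over $\xi$ of kernels
\begin{equation*}
\widehat{\Phi}(\xi)\,\bigl(e^{2\pi i\xi\cdot x}-e^{2\pi i\xi\cdot y}-2\pi i\,\xi\cdot(x-y)e^{2\pi i\xi\cdot q}\bigr)\brhoAf(x,y).
\end{equation*}
The bracket equals $e^{2\pi i\xi\cdot q}\bigl(2i\sin(\pi\xi\cdot r)-2\pi i\,\xi\cdot r\bigr)$, and the odd function $2\sin s-2s$ is $O(|s|^3)$. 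The phase $e^{2\pi i\xi\cdot q}$ is left/right multiplication by plane waves, which commutes harmlessly with the magnetic phase $e^{-\frac{2\pi i}{h}\Gamma^A}$ because that phase depends only on $(x,y)$. Multiplication by $r^\gamma=(x-y)^\gamma$ is $(i\hbar)^{|\gamma|}$ times the iterated commutator $\Dh_v^\gamma$. So each term becomes $\hbar^{2}$ times $\hbar\,|\xi|^3\,|\widehat{\Phi}(\xi)|$ times a bounded function of $\xi\cdot r$ applied to $\brho^A_{\nabla_v^3 f}$. More precisely, one writes $2\sin s-2s=s^2 g(s)$ with $g$ smooth and bounded and uses two powers of $r$ to produce $\hbar^2\brho^A_{\nabla_v^2 f}$. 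This is what allows a $\xi$-integrand weighted by $|\xi|^2|\widehat\Phi|=|\widehat{\rho_f}|/(4\pi^2)$ up to constants.

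\textbf{Step 1 ($\xi$-integrability).} We get $\int|\xi|^2|\widehat{\Phi}(\xi)|\,\mathrm{d}\xi\lesssim\int|\widehat{\rho_f}|\lesssim\norm{\rho_f}_{H^{\lfloor d/2\rfloor+1}}$ by Cauchy--Schwarz with weight $\langle\xi\rangle^{-(\lfloor d/2\rfloor+1)}\in L^2$. This produces the factor $\norm{\rho_f}_{H^{\lfloor d/2\rfloor+1}}$, and the remaining task is a bound on $\stnorm{p}{\cdot}$ uniform in $\xi$.

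\textbf{Step 2 (the operator bound).} For fixed $\xi$, the multiplier $g(\pi\xi\cdot(x-y))$ acting on the kernel equals a convolution-type superposition of conjugations $e^{2\pi i s\xi\cdot x}(\cdot)e^{-2\pi i s\xi\cdot y}$ against the Fourier transform of $g$. This needs $g$ to have an integrable Fourier transform, or else an expansion of $g$ that is handled directly. Such conjugations are unitary and so preserve every $\stnorm{p}{\cdot}$.

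\textbf{Step 3 (endpoints and interpolation).} We bound the $p=\infty$ endpoint by \eqref{eq:semiclassical-Schatten-infinity-estimate} (magnetic Calder\'on--Vaillancourt) applied to $\nabla_v^2 f$. This needs $p(d)$ derivatives in each variable, which is accounted for by $W^{4\lfloor d/2\rfloor+4d+10,\infty}$ since $2p(d)+2=4\lfloor d/2\rfloor+4d+10$. We bound the $p=1$ endpoint by Corollary~\ref{Corollary:EstimateofTrace} with $n_1=0$ and $n=2\lceil\frac{d+2}{4}\rceil>d/2$ even, which yields exactly $\widetilde B_d$. Interpolating $\stnorm{p}{T}\le\stnorm{1}{T}^{1/p}\stnorm{\infty}{T}^{1-1/p}$ gives the claim.

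The main obstacle is Step 2. The nonsmooth (only bounded) multiplier $g(\xi\cdot r)$ must be kept from destroying the Schatten bounds uniformly in $\xi$. If the Fourier-superposition trick fails because $\widehat g$ is not integrable, the fallback is the integral Taylor form: absorb $r^\gamma$ into $\brho^A_{\nabla_v^3f}$ and treat $\partial^\gamma\Phi(q+\tau r)$ as a multiplication by plane waves in Fourier space at shifted points.
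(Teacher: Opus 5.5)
Your route is essentially the paper's: the paper obtains $\stnorm{p}{\sfB_t}\le C\hbar^2\norm{\rho_f}_{H^{\lfloor d/2\rfloor+1}}\stnorm{p}{\brho^A_{\nabla_v^2 f}}$ by quoting Lafl\`eche--Saffirio, which is what your Steps 1--2 reconstruct, and then, as in your Step 3, it bounds the $p=\infty$ endpoint by magnetic Calder\'on--Vaillancourt and the $p=1$ endpoint by the trace estimate before interpolating (in both versions the $p=1$ endpoint actually gives weighted Sobolev norms of $\nabla_v^2 f$ with product weights $\langle x\rangle^{n}\langle v\rangle^{n}$, $n=2\lceil\frac{d+2}{4}\rceil$, rather than $\widetilde B_d$ as literally defined). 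The obstacle you flag in Step 2 is not real, because $g(s)=2(\sin s-s)/s^2=i\int_{-1}^{1}\operatorname{sgn}(\theta)(1-|\theta|)e^{is\theta}\,\mathrm{d}\theta$, so multiplying the kernel by $g(\pi\xi\cdot(x-y))$ is an average with total weight $1$ of the unitary conjugations $e^{i\pi\theta\xi\cdot x}(\cdot)e^{-i\pi\theta\xi\cdot y}$; equivalently, you can use the second-order Taylor formula with odd kernel $\operatorname{sgn}(\tau)(\tfrac12-|\tau|)$ on $[-\tfrac12,\tfrac12]$, whereas your third-order fallback would prove a different bound, with $\hbar^3$, $\nabla_v^3 f$ and one more derivative on $\rho_f$.
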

	\begin{proof}
        By repeating the calculation presented in the Proposition 4.7 of \cite{lafleche2023strong} we can obtain
		\begin{align*}
			\stnorm{p}{\sfB_t}\le C_{d,n}\hbar^2\norm{\rho_f}_{H^{{n}}}\stnorm{p}{\brho^{A}_{\nabla_v^2f}}\,.
		\end{align*}
		For the sake of simplicity, we shall take ${n}=\lfloor\frac{d}{2}\rfloor+1$. When $p=\infty$, the magnetic Calder\'{o}n--Vaillancourt theorem gives that
		\begin{align*}
			\stnorm{\infty}{\brho^{A}_{\nabla_v^2f}}\le C \sum_{\snorm{\alpha}\le p(d),\snorm{\beta}\le p(d)+2}\lnorm{\infty}{\partial^\alpha_x\partial^\beta_v f(t,x,v)}\le C\norm{f(t,\cdot,\cdot)}_{W^{4\left\lfloor\frac{d}{2}\right\rfloor+4d+10,\infty}(\Xi)}\,.
		\end{align*}
		When $p=1$ we use $\nabla_v^2f$ to replace the $f$ in $\brhoAf$ and set $n=2\left\lceil\frac{d+2}{4}\right\rceil$, $n_1=0$ (hence $n$ is the smallest positive even integer which satisfies $n>\frac{d}{2}$) and apply Lemma \ref{Lemma:RegularityofMagneticWeylTransform}, then we have
		\begin{align*}
			\stnorm{1}{\brho^{A}_{\nabla_v^2f}}\le& C_{d}\sum_{\snorm{\alpha+\beta+\gamma}\le2\left\lceil\frac{d+2}{4}\right\rceil}\sum_{\snorm{\delta}\le\snorm{\beta}}h^{2\left\lceil\frac{d+2}{4}\right\rceil-\snorm{\alpha}-\snorm{\beta}}\lnorm{2}{\partial^\delta_v(v^\alpha\partial^\gamma_xf(t,x,v))}
			\\&+C_{d}\sum_{\snorm{\alpha+\beta}\le2\left\lceil\frac{d+2}{4}\right\rceil}h^{\snorm{\beta}}\lnorm{2}{\snorm{x}^{\snorm{\alpha}}\partial^\beta_v f(t,x,v)}
			\\\le& C_{d} h^{2\lceil\frac{d+2}{4}\rceil}
			\left(
			\norm{\nabla^{2\lceil\frac{d+2}{4}\rceil}_x f}_{L^2(\Xi)}
			+\norm{\nabla^{2\lceil\frac{d+2}{4}\rceil}_v f}_{L^2(\Xi)}
			\right)
			\\&+C_d\left(\norm{{\langle x\rangle}^{2\lceil\frac{d+2}{4}\rceil}f}_{L^2(\Xi)}+\norm{{\langle v \rangle}^{2\lceil\frac{d+2}{4}\rceil}f }_{L^2(\Xi)} \right)\,.
		\end{align*}
		Then the interpolation gives the desired result.
	\end{proof}

	\noindent\textbf{Step 3.} \textit{Estimate of the commutator term.}
	By applying the method introduced in the work \cite{lafleche2023strong} we have the following estimate of commutator term:
	\begin{proposition}\label{Proposition:ControlOfCommutator}
		We have the following estimates:
		\begin{enumerate}
			\item For any $\epsilon \in (0,d-1]$ and any $\widetilde{\epsilon}\in (0,\frac{\epsilon}{2d})$ there exists a constant $C>0$ such that
			\begin{equation*}
				\stnorm{1}{\left[K(\cdot-z),\brhoAf\right]}\le Ch\lnorm{d-\epsilon}{\operatorname{diag}\left(\snorm{\boldsymbol{\nabla}_v \brhoAf}\right)}^{\frac{1}{2}+\widetilde{\epsilon}}\lnorm{d+\epsilon}{\operatorname{diag}\left(\snorm{\boldsymbol{\nabla}_v\brhoAf}\right)}^{\frac{1}{2}-\widetilde{\epsilon}}\,.
			\end{equation*}
			\item For $1\le p<\frac{d}{d-1}$ and $\epsilon\in(0,q-1)$, $n>d-1$ there exists a constant $C>0$ such that
			\begin{equation*}
				\stnorm{p}{\left[K(\cdot-z),\brhoAf\right]}\le Ch\stnorm{q+\epsilon}{\boldsymbol{\nabla}_v\brhoAf \boldsymbol{m}_n}^{\frac{1}{2}+\widetilde{\epsilon}}\stnorm{q-\epsilon}{\boldsymbol{\nabla}_v\brhoAf \boldsymbol{m}_n}^{\frac{1}{2}-\widetilde{\epsilon}}\,.
			\end{equation*}
			Where $\widetilde{\epsilon}=\frac{\epsilon}{q}$, $\boldsymbol{m}_n=1+\snorm{\bp}^n$ and with $\frac{1}{p}=\frac{1}{q}+\frac{d-1}{d}$.
		\end{enumerate}
	\end{proposition}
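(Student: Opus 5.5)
The plan is to follow the non-magnetic argument of \cite[Propositions 4.3--4.4]{lafleche2023strong} essentially line by line. The only magnetic input will be the identity $\bigl[\frac{x}{i\hbar},\brhoAf\bigr]=\brho^A_{\nabla_v f}$. This holds exactly for the magnetic Weyl quantization, because the phase $e^{-\frac{2\pi i}{h}\Gamma^A([x,y])}$ is a function of $(x,y)$ only and commutes with multiplication by $x$. Since $K(\cdot-z)$ is a multiplication operator, the magnetic phase never has to be differentiated. The kernel of the commutator is
\[
\bigl(K(x-z)-K(y-z)\bigr)\brhoAf(x,y),
\]
and the whole problem reduces to controlling the difference quotient $K(x-z)-K(y-z)$ against the factor $(x-y)\brhoAf(x,y)=i\hbar\,\boldsymbol{\nabla}_v\brhoAf(x,y)$.

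For part (1) I would use the layer-cake (Fefferman--de la Llave type) representation of the Coulomb kernel,
\[
K(x)=c_d\int_0^\infty \mathbf 1_{B_r}(x)\,\frac{\d r}{r^{d-1}},
\]
which reduces the commutator to commutators with indicator functions of balls $\mathbf 1_{B_r(z)}$. For each $r$, write $[\mathbf 1_{B_r(z)},\brhoAf]$ in terms of $\boldsymbol{\nabla}_v\brhoAf$ via the operator identity
\[
[\mathbf 1_{B_r(z)},\brhoAf]=\int_0^1 \ldots
\]
or, more simply, via the trace-class bound of \cite[Lemma 4.2]{lafleche2023strong}: $\|[\mathbf 1_{B_r},\brho]\|_{\cL^1}\lesssim h\,r^{d-1}$ times a localized average of $\operatorname{diag}|\boldsymbol{\nabla}_v\brho|$. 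That lemma uses only the polar decomposition $\boldsymbol{\nabla}_v\brho=U|\boldsymbol{\nabla}_v\brho|$ and the positivity of $|\boldsymbol{\nabla}_v\brho|$, so it applies verbatim to the self-adjoint operator $\brhoAf$. I would then bound the localized average by $\min\bigl(r^{-(d-\epsilon)/(d-\epsilon)'}\ldots\bigr)$, using H\"older with the $L^{d-\epsilon}$ norm for small $r$ and the $L^{d+\epsilon}$ norm for large $r$. Integrating in $r$ and optimizing the splitting radius produces the product with exponents $\frac12\pm\widetilde\epsilon$. The constraint $\widetilde\epsilon<\epsilon/(2d)$ comes from the convergence of the two $r$-integrals near $0$ and $\infty$.

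For part (2) I would instead write $K(x-z)-K(y-z)$ through the Fourier representation $\widehat K(\xi)\propto|\xi|^{-2}$, namely
\[
K(x-z)-K(y-z)=\int \widehat K(\xi)e^{2\pi i\xi\cdot z}\bigl(e^{-2\pi i\xi\cdot x}-e^{-2\pi i\xi\cdot y}\bigr)\d\xi,
\]
and use $e^{-2\pi i\xi\cdot x}\brho-\brho e^{-2\pi i\xi\cdot x}=\int_0^1 e^{-2\pi i\theta\xi\cdot x}[\,-2\pi i\xi\cdot x,\brho\,]e^{-2\pi i(1-\theta)\xi\cdot x}\d\theta$. This gives a factor $h|\xi|\,\boldsymbol{\nabla}_v\brhoAf$ conjugated by unitaries. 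To get the Schatten-$p$ bound with $\frac1p=\frac1q+\frac{d-1}{d}$, I would insert $\boldsymbol m_n\boldsymbol m_n^{-1}$ and use the Kato--Seiler--Simon inequality for $|\xi|^{-(d-1)}$-type multipliers against $\boldsymbol m_n^{-1}$, which is where $n>d-1$ is needed. Splitting the $\xi$-integral into low and high frequencies, with $q\pm\epsilon$ exponents in each region, gives the interpolated product.

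The main obstacle I expect is checking that \cite[Lemma 4.2 / Prop.\ 4.4]{lafleche2023strong} uses only the algebraic identity for $\boldsymbol{\nabla}_v$ and nothing about the standard Weyl symbol, since $\boldsymbol{\nabla}_x$ is not exact here. I would verify this first, and I expect it to go through because only velocity gradients appear.
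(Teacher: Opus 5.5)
Your reduction matches the paper's, whose entire proof is the remark that the method of \cite{lafleche2023strong} applies. The statement sees $\brhoAf$ only through commutators with multiplication operators, namely $[K(\cdot-z),\brhoAf]$ and $\boldsymbol{\nabla}_v\brhoAf=[x/(i\hbar),\brhoAf]$. So the magnetic phase never enters, and your concern about $\boldsymbol{\nabla}_x$ is moot. The trouble is in the mechanisms you propose for the non-magnetic steps.

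\textbf{Part (2): the Fourier route diverges.} The variable $\xi$ in $\widehat K(\xi)\propto|\xi|^{-2}$ is dual to position. After conjugating by the unitaries $e^{-2\pi i\theta\xi\cdot x}$ and putting norms inside the integral, what remains is $\int|\widehat K(\xi)|\,|\xi|\,\d\xi\propto\int_{\rr^3}|\xi|^{-1}\,\d\xi=\infty$. Splitting frequencies does not help: the high-frequency bound $\|[e^{-2\pi i\xi\cdot x},\brhoAf]\|\le 2\|\brhoAf\|$ leaves $\int_{|\xi|>R}|\xi|^{-2}\,\d\xi=\infty$, which is just the unboundedness of $K$. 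Nor is there any ``$|\xi|^{-(d-1)}$ multiplier'' to feed into Kato--Seiler--Simon, since such bounds pair a function of $x$ with a function of $\opp$.

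The relation $\frac1p=\frac1q+\frac{d-1}{d}$ and the condition $n>d-1$ (that is, $\int(1+|\xi|^n)^{-r}\,\d\xi<\infty$ at $r=\frac{d}{d-1}$, the critical exponent of $|x|^{-(d-1)}$) reflect pairing \emph{position-space} weights that build up $|x-z|^{-(d-1)}$ with $\boldsymbol m_n^{-1}$. What you need is an exact representation of $[K(\cdot-z),\brhoAf]$ as a superposition of operator products $W_1(x)\,\boldsymbol{\nabla}_v\brhoAf\,W_2(x)$ with explicit weights. In $d=3$, for instance, $\frac1{|x|}-\frac1{|y|}=\int_0^\infty\frac{e^{-t|x|}}{|x|}\frac{e^{-t|y|}}{|y|}\,(y-x)\cdot(x+y)\,\d t$, combined with $(y-x)\brhoAf(x,y)=-i\hbar\,\boldsymbol{\nabla}_v\brhoAf(x,y)$, gives one. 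The Fourier integral supplies nothing of this kind.

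\textbf{Part (1): your H\"older step is reversed.} Set $g=\operatorname{diag}(|\boldsymbol{\nabla}_v\brhoAf|)$. The layer-cake computation produces essentially $h\sup_z\int g(x)\,|x-z|^{-(d-1)}\,\d x$. Now $|x|^{-(d-1)}$ lies in $L^{p'}$ near the origin only for $p>d$, and near infinity only for $p<d$. So small radii require $L^{d+\epsilon}$ and large radii $L^{d-\epsilon}$. Optimizing the cut then gives $\lnorm{d-\epsilon}{g}^{\frac12-\frac{\epsilon}{2d}}\lnorm{d+\epsilon}{g}^{\frac12+\frac{\epsilon}{2d}}$ with no free parameter; a range of $\widetilde\epsilon$ does not come out of the convergence of the $r$-integrals.

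In fact the exponents are forced. Take $B=0$ and $K\propto|x|^{2-d}$, and conjugate by $(U_\lambda u)(x)=\lambda^{d/2}u(\lambda x)$. This multiplies $\sup_z\stnorm{1}{[K(\cdot-z),\brhoAf]}$ by $\lambda^{d-2}$ and $\lnorm{p}{g}$ by $\lambda^{d-1-d/p}$. Hence any bound $Ch\lnorm{d-\epsilon}{g}^{\theta_-}\lnorm{d+\epsilon}{g}^{\theta_+}$ with $\theta_-+\theta_+=1$ requires $\frac{\theta_-}{d-\epsilon}+\frac{\theta_+}{d+\epsilon}=\frac1d$, i.e.\ $\theta_\mp=\frac12\mp\frac{\epsilon}{2d}$. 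The exponents as printed, $\frac12+\widetilde\epsilon$ on $L^{d-\epsilon}$ with $\widetilde\epsilon>0$, fail as $\lambda\to\infty$. The sign must be flipped, with $\widetilde\epsilon=\frac{\epsilon}{2d}$, and that is the version your argument should target.
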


	\noindent\textbf{Step 4.} \textit{Estimate of the exchange term.}
	In this step, we apply results from quantum function spaces. For more general details on this topic, we refer to \cite{lafleche2024quantum} and \cite{simon2005trace}.
	
	We first present some estimates on exchange terms. Recall the following fractional Hardy–Rellich inequality. For the proof, see Theorem 1.72 and Remark 1.73 of \cite{bahouri2011fourier}.
	\begin{theorem}\label{Theorem:HardyRellichInequality}
		For a function $f$ defined on $\rr^d$ and an index $0\le s<\frac{d}{2}$, we have
		\begin{equation}
			\lnorm{2}{\frac{f}{\snorm{x}^s}}\le C_{d,s}\norm{f}_{\dot{H}^s}\,.
		\end{equation}
	\end{theorem}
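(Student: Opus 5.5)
We prove the fractional Hardy--Rellich inequality $\lnorm{2}{f/\snorm{x}^s}\le C_{d,s}\norm{f}_{\dot H^s}$ for $0\le s<d/2$ by duality and the Hardy--Littlewood--Sobolev inequality, phrased on the Fourier side via Pitt's inequality. The case $s=0$ is trivial, so fix $0<s<d/2$ and $f\in\mathcal{S}(\rr^d)$, and extend by density at the end. Write $g:=(-\Delta)^{s/2}f$, so that $\norm{g}_{L^2}=\norm{f}_{\dot H^s}$ by Plancherel. Then $f=I_s g$, where $I_s$ is the Riesz potential with kernel $c_{d,s}\snorm{x-y}^{s-d}$; this kernel is locally integrable since $s<d$. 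The claim is therefore equivalent to the weighted $L^2$ bound
\[
	\norm{\snorm{x}^{-s}\, I_s g}_{L^2}\le C_{d,s}\norm{g}_{L^2}.
\]

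I would prove this bound by the Schur test with power weights. The operator $T g(x):=\snorm{x}^{-s}\intd \snorm{x-y}^{s-d}g(y)\dd y$ has a positive kernel $k(x,y)=\snorm{x}^{-s}\snorm{x-y}^{s-d}$ that is homogeneous of degree $-d$. Test it against $w(x)=\snorm{x}^{-a}$, choosing $a$ so that both $\intd k(x,y)\snorm{y}^{-a}\dd y\le C\snorm{x}^{-a}$ and $\intd k(x,y)\snorm{x}^{-a}\dd x\le C\snorm{y}^{-a}$ hold. By homogeneity, each integral equals its value at a unit vector times the appropriate power. It then suffices that the following one-point integrals are finite:
\[
	\intd \snorm{e-y}^{s-d}\snorm{y}^{-a}\dd y \quad\text{and}\quad \intd \snorm{x}^{-s-a}\snorm{x-e}^{s-d}\dd x.
\]
The first converges if $a<d$ (at $0$) and $a>s$ (at $\infty$). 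The second converges if $s+a<d$ (at $0$) and $a>0$ (at $\infty$). The point $x=e$ is fine in both since $s>0$. Choosing $a=d/2$ satisfies all four conditions, because $s<d/2$ gives $s<a$ and $s+a<d$. The Schur test then yields boundedness of $T$ on $L^2$.

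The main step is this exponent bookkeeping, and it is exactly where the hypothesis $s<d/2$ is used: it is what allows a single weight exponent $a$ to be admissible for both Schur integrals. An alternative I would keep in reserve is Stein--Weiss, the weighted HLS inequality with weights $\snorm{x}^{-s}$ and $\snorm{y}^{0}$ at $p=q=2$. Its conditions reduce to $0\le s<d/2$ and the scaling balance $s/d=s/d$, so it gives the same bound.

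Finally, the density of $\mathcal{S}$ in $\dot H^s$ (valid for $s<d/2$), combined with Fatou's lemma on the left-hand side, extends the inequality to all $f\in\dot H^s$. This matches the statement cited from \cite{bahouri2011fourier}.
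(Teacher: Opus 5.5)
Your proof is correct. The paper gives no argument of its own for this statement; it simply defers to \cite[Theorem~1.72 and Remark~1.73]{bahouri2011fourier}. So your proposal is a self-contained alternative to a citation rather than a reconstruction of the paper's route.

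You write $f=I_s g$ with $g=(-\Delta)^{s/2}f$, which reduces the claim to the $L^2$-boundedness of the positive operator $g\mapsto |x|^{-s}I_s g$. Its kernel is homogeneous of degree $-d$, and you close the argument with Schur's test. The bookkeeping $s<a<d-s$ (together with $0<a<d$, and $s>0$ for integrability near $e$) is right. It also shows transparently why $s<d/2$ is exactly the condition needed.

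What your route buys over the citation is an explicit constant, which is in fact optimal. With $a=d/2$ the two Schur integrals coincide: apply the inversion $x\mapsto x/|x|^2$ to the second one. For the normalization $\|f\|_{\dot H^s}=\|(-\Delta)^{s/2}f\|_{L^2}$, the resulting bound is $C_{d,s}=2^{-s}\Gamma(\frac{d-2s}{4})/\Gamma(\frac{d+2s}{4})$, which is Herbst's sharp constant. For $d=3$, $s=1$ it equals $2$, the classical Hardy constant.

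What the citation buys is brevity, plus the functional-analytic setup for $\dot H^s$ with $s<d/2$: its definition as a space of tempered distributions and the density of nice functions, which you only invoke.

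Two cosmetic fixes:
\begin{enumerate}
\item Your first sentence advertises duality, HLS and Pitt's inequality, but the argument uses none of them.
\item In the density step, Fatou's lemma needs pointwise convergence. You get it along a subsequence from $\dot H^s\hookrightarrow L^{2d/(d-2s)}$. Alternatively, bypass Fatou: the inequality on the dense class makes $|x|^{-s}f_n$ Cauchy in $L^2$.
\end{enumerate}
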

	
	For the sake of simplicity, in the following discussion we always set $d=3$ and $\Xi=\rr^3\times\rr^3$. Recall that $\bXr$ is the operator with the time-dependent integral kernel $\bXr(x,y)=K(x-y)\brho(x,y)$. Here we use $\brho(x,y)$ to represent the integral kernel of $\brho$. Then, by applying the Hardy–Rellich inequality, we obtain the following estimate \cite{lafleche2023strong}. Again, for the sake of clarity, we shall give a brief proof of this estimate due to the slight differences in notation between our work and \cite{lafleche2023strong}.
	\begin{proposition}
		Let $\mG(x)=\frac{1}{4\pi \snorm{x}}$. $\brho$ is a positive trace class operator. Then we have
		\begin{equation*}
			\stnorm{1}{h^3\bXr\brho}\le Ch^2\stnorm{2}{\snorm{\bp}^{\frac{1}{2}}\brho}^2\,.
		\end{equation*}
		Suppose $\brho_2$ is another trace class operator. Then when $p\in [1,2]$ for each $q$ such that $\frac{1}{q}=\frac{1}{p}-\frac{1}{2}$ we have
		\begin{equation*}
			\stnorm{p}{h^3\bXr\brho_2}\le Ch^2\stnorm{q}{\brho_2}\stnorm{2}{\snorm{\opp-\bA}\brho}
		\end{equation*}
		When $p\in[2,\infty]$ we have
		\begin{equation*}
			\stnorm{p}{h^3\bXr\brho_2}\le Ch^{2-3\left(\frac{1}{2}-\frac{1}{p}\right)}\stnorm{\infty}{\brho_2}\stnorm{2}{\snorm{\opp-\bA}\brho}\,.
		\end{equation*}
	\end{proposition}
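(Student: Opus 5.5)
The plan is to work directly with integral kernels and use the Hardy--Rellich inequality (Theorem~\ref{Theorem:HardyRellichInequality}) with $s=1$ in dimension $d=3$. The exchange operator has kernel $\bXr(x,y)=K(x-y)\brho(x,y)$, and $K(x-y)=(4\pi|x-y|)^{-1}$ is a weight of exactly the type $|z|^{-1}$ controlled by $\dot H^1$ in the variable $z=x-y$.

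\textbf{Hilbert--Schmidt bound on $\bXr$.} For fixed $y$, apply Hardy's inequality in $x$ to the function $x\mapsto\brho(x,y)$, centered at $y$. Since $|\brho(x,y)|$ is unaffected by the magnetic phase, use the diamagnetic inequality $|\nabla|u||\le|(\nabla-\tfrac{i}{\hbar}A)u|$ to replace $\nabla_x$ by the covariant derivative. This gives
\[
\iint K(x-y)^2|\brho(x,y)|^2\,\d x\,\d y\le C\hbar^{-2}\iint|((\opp-A)_x\brho)(x,y)|^2\,\d x\,\d y .
\]
In semiclassical units this reads
\[
\stnorm{2}{\bXr}\le C\hbar^{-1}\stnorm{2}{|\opp-\bA|\brho}.
\]

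\textbf{Case $p\in[1,2]$.} Write $h^3\bXr\brho_2$ and use Hölder in semiclassical Schatten norms, $\stnorm{p}{ST}\le h^{-3}\stnorm{2}{S}\stnorm{q}{T}$ when $\tfrac1p=\tfrac12+\tfrac1q$. The factor $h^{-3}$ appears because $h^{3/p}=h^{3/2}h^{3/q}$ and the semiclassical norms carry the $h^{3/r}$ weights. Combining,
\[
\stnorm{p}{h^3\bXr\brho_2}\le \stnorm{2}{\bXr}\stnorm{q}{\brho_2}\le Ch\,\hbar^{-1}\cdot h\,\stnorm{q}{\brho_2}\stnorm{2}{|\opp-\bA|\brho}.
\]
The powers of $h$ must be tracked so that the final prefactor is $h^2$. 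The expected bookkeeping is: $h^3$ from the prefactor, $h^{-3/2}$ from converting $\stnorm{2}{\bXr}$ to the classical norm, and $h^{-1}$ from Hardy. I anticipate that this accounting, rather than any analytic step, will be the main obstacle, since it hinges on whether the normalization $h^3\Tr\brho=1$ is absorbed into $\stnorm{2}{\cdot}$. I would resolve it by redoing the computation entirely in classical $\mathfrak S^p$ norms and reinserting the $h^{3/p}$ factors only at the end.

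\textbf{Case $p\in[2,\infty]$.} Bound
\[
\|\bXr\brho_2\|_{p}\le\|\bXr\|_{2}\|\brho_2\|_\infty .
\]
Then convert to semiclassical norms: $h^{3/p}$ multiplies the left side while the right side carries $h^{3/2}$. This mismatch produces exactly the loss $h^{-3(\frac12-\frac1p)}$ appearing in the statement.

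\textbf{First estimate.} Take $\brho_2=\brho$ and $p=1$. Since $\brho\ge0$, write $\bXr\brho=\bXr\brho^{1/2}\brho^{1/2}$ and apply Hardy to the kernel of $\brho^{1/2}$ rather than $\brho$. Equivalently, use the Cauchy--Schwarz pointwise bound $|\brho(x,y)|^2\le\brho(x,x)\brho(y,y)$ together with Hardy applied to $|\bp|^{1/2}$-type quantities, as in \cite{lafleche2023strong}. This gives
\[
\Tr(\bXr\brho)\le\iint\frac{|\brho(x,y)|^2}{|x-y|}\,\d x\,\d y ,
\]
and the kernel-$\dot H^{1/2}$ Hardy (Kato) inequality $\int\frac{|u|^2}{|x|}\le C\||\nabla|^{1/2}u\|_2^2$ yields $Ch^2\stnorm{2}{|\bp|^{1/2}\brho}^2$ after rescaling.
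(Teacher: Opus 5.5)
Your analytic route is the paper's: the Hilbert--Schmidt bound $\stnorm{2}{\bXr}\le C\hbar^{-1}\stnorm{2}{\snorm{\opp-\bA}\brho}$ from Hardy's inequality and the diamagnetic inequality (correct as you state it), Schatten--H\"older for $p\in[1,2]$, the monotonicity $\norm{\cdot}_p\le\norm{\cdot}_2$ for $p\ge2$, and Kato's inequality for the first bound. Your computation for $p\in[2,\infty]$ is also right. The case $p\in[1,2]$, however, is wrong as written. Semiclassical H\"older carries no factor $h^{-3}$. The identity you cite, $h^{3/p}=h^{3/2}h^{3/q}$, shows $\stnorm{p}{ST}\le\stnorm{2}{S}\stnorm{q}{T}$ with constant one. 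With your extra $h^{-3}$ the prefactor $h^3$ disappears and the Hardy bound would only give $C\hbar^{-1}$, so the factor $h\,\hbar^{-1}\cdot h$ in your next line is not derived from anything. Likewise, your proposed bookkeeping $h^3\cdot h^{-3/2}\cdot h^{-1}=h^{1/2}$ omits the $h^{3/p}$ on the left and the $h^{-3/q}$ coming from $\brho_2$. Done correctly, there is no obstacle:
\[
\stnorm{p}{h^3\bXr\brho_2}=h^{3+\frac3p}\norm{\bXr\brho_2}_p\le h^{3+\frac3p}\norm{\bXr}_2\norm{\brho_2}_q=h^3\stnorm{2}{\bXr}\stnorm{q}{\brho_2}\le Ch^3\hbar^{-1}\stnorm{q}{\brho_2}\stnorm{2}{\snorm{\opp-\bA}\brho},
\]
and $h^3\hbar^{-1}=2\pi h^2$. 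This is exactly the paper's computation, which it carries out in classical norms.

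In the first estimate, neither the factorization $\bXr\brho=\bXr\brho^{1/2}\brho^{1/2}$ nor the pointwise bound $\snorm{\brho(x,y)}^2\le\brho(x,x)\brho(y,y)$ is used. The latter is also not an equivalent route: it replaces the exchange integral by the direct term $\iint K(x-y)\brho(x,x)\brho(y,y)\,\mathrm{d}x\,\mathrm{d}y$. For semiclassical states that term is larger by a factor $h^{-2}$ and cannot give the stated bound. What you actually need is the identity $\operatorname{Tr}(\bXr\brho)=\iint K(x-y)\snorm{\brho(x,y)}^2\,\mathrm{d}x\,\mathrm{d}y$, which only uses $\brho(y,x)=\overline{\brho(x,y)}$. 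Then apply Kato's inequality in $x$ for fixed $y$ and use $\snorm{\bp}^{1/2}=\hbar^{1/2}\snorm{\nabla}^{1/2}$; this is precisely the paper's argument.

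Be aware that this controls $h^6\operatorname{Tr}(\bXr\brho)$, whereas the statement is about $\stnorm{1}{h^3\bXr\brho}=h^6\norm{\bXr\brho}_1$. Since $\bXr\brho$ is not positive (not even self-adjoint), the inequality $\snorm{\operatorname{Tr}(\bXr\brho)}\le\norm{\bXr\brho}_1$ goes the wrong way. The paper makes the same identification without comment. So your argument proves what the paper's proves, and the trace-norm version as literally stated would require an additional argument.
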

	\begin{proof}
		By changing variables and applying the Hardy--Rellich inequality we can obtain
		\begin{equation*}
			\begin{aligned}
				\operatorname{Tr}\left(\bXr\brho\right)&=\iint_{\Xi}\frac{1}{4\pi \snorm{x-y}}\snorm{\brho(x,y)}^2\,\mathrm{d} x\mathrm{d}y=\iint_{\Xi}\frac{1}{4\pi\snorm{z}}\snorm{\brho(z+y,y)}^2\,\mathrm{d} z\mathrm{d}y\\&\le C\iint_{\Xi}\snorm{\Delta_z^{\frac{1}{2}}}\snorm{\brho(z+y,y)}^2\,\mathrm{d} z\mathrm{d}y\le Ch^{-1}\norm{\snorm{\bp}^{\frac{1}{2}}\brho}_{2}^2\,.
			\end{aligned}
		\end{equation*}
		Thus $\stnorm{1}{h^3\bXr\brho}\le Ch^2\stnorm{2}{\snorm{\bp}^{\frac{1}{2}}\brho}^2$. Now let $\bbxx$ be the exchange operator with kernel $\mG^2$. Then by applying diamagnetic inequality (see Theorem \ref{Theorem:DiamagneticInequality}) we have
		\begin{equation*}
			\begin{aligned}
				\operatorname{Tr}\left(\bbxx\brho\right)=\iint_{\Xi}\snorm{\mG(x-y)}^2\snorm{\brho(x,y)}^2\,\mathrm{d} x \mathrm{d}y\le C\frac{1}{h^2}\iint_{\Xi}\snorm{\snorm{\opp-\bA}\brho}^2=\frac{C}{h^2}\norm{\snorm{\opp-\bA}\brho}^2_{2}\,.
			\end{aligned}
		\end{equation*}
		For $p\in[1,2]$ the H\"{o}lder's inequality of Schatten norm gives that
		\begin{equation*}
			\begin{aligned}
				\norm{\bXr\brho_2}_{p}\le\norm{\brho_2}_{q}\norm{\bXr}_{2}&=h^{-\frac{3}{q}}\stnorm{q}{\brho_2}\left(\operatorname{Tr}\left(\bbxx\brho\right)\right)^\frac{1}{2}\le Ch^{-\frac52-\frac{3}{q}}\stnorm{q}{\brho_2}\stnorm{2}{\snorm{\opp-\bA}\brho}\,.
			\end{aligned}
		\end{equation*}
		When $p\in[2,\infty]$ we have $\norm{\bXr\brho_2}_{p}\le \norm{\bXr\brho_2}_{2}$ and consequently
		\begin{equation*}
			\stnorm{p}{h^3\bXr\brho_2}\le h^{\frac{3}{p}-\frac{3}{2}}\stnorm{2}{h^3\bXr\brho_2}\le Ch^{2-3\left(\frac{1}{2}-\frac{1}{p}\right)}\stnorm{\infty}{\brho_2}\stnorm{2}{\snorm{\opp-\bA}\brho}\,,
		\end{equation*}
		and the proof is completed.
	\end{proof}
	
	\noindent\textbf{Step 5.} \textit{Proof of the $\mathcal{L}^p$ semiclassical limit.}
	Now we gather the aforementioned results to finish the proof of semiclassical limit. Let $d=3$. For $1\le p<\frac{3}{2}$ the self adjoint property gives that 
	\begin{equation}\label{Estimate:EstimateOfExchangeTerm}
		\begin{aligned}
			\stnorm{p}{\left[h^3\bxhf, \brhoAf\right]}\le 2\stnorm{p}{h^3\bxhf\brhoAf}\,.
		\end{aligned}
	\end{equation}
	
	First we take $p=1$. From Corollary \ref{Corollary:EstimateofDiag}, Proposition \ref{Proposition:ControlOfErrorTerm} and Proposition \ref{Proposition:ControlOfCommutator} we have
	\begin{equation*}
		\begin{aligned}
			\stnorm{1}{\left(\brho-\brhoAf\right)(t)}\le&\stnorm{1}{{\brho}^{\mathrm{in}}-\brho^{A}_{f^{\mathrm{in}}}}+C\hbar\int^t_0\norm{\rho_f}_{H^2}\widetilde{B}_3d\tau+C\hbar\int^t_0{\sf{R}}_p(\tau)\,\mathrm{d}\tau\\
			&+C\int^t_0\stnorm{1}{\left(\brho-\brhoAf\right)(\tau)}\norm{\nabla_v f}_{H^{16}_{16}(\Xi)\cap W^{26,\infty}(\Xi)}\,\mathrm{d}\tau\\
			&+C\hbar\int^t_0\stnorm{\infty}{\brhoAf}\stnorm{2}{\snorm{\opp-A}\brho}\dd\tau\\
			&+C\hbar\int^t_0\stnorm{1}{\left(\brho-\brhoAf\right)(\tau)}\stnorm{2}{\snorm{\opp-A}\brho}\dd\tau\\
			&+C\hbar\int^t_0\stnorm{\infty}{\brho^{\mathrm{in}}}\stnorm{2}{\snorm{\opp-A}\brho}\dd\tau\,.
		\end{aligned}
	\end{equation*}
	Then Cauchy-Schwarz inequality and Gr\"{o}nwall's inequality give that
	\begin{equation}\label{Estimate:L1Schattenestimate}
		\begin{aligned}
			\stnorm{1}{\left(\brho-\brhoAf\right)(t)}\le \left(\stnorm{1}{{\brho}^{\mathrm{in}}-\brho^{A}_{f^{\mathrm{in}}}}+C_1(t)\hbar\right)\exp(E_1(t))\,.
		\end{aligned}
	\end{equation}
	The definitions of $C_1(t)$ and $E_1(t)$ are as follows:
	\begin{equation*}
		\begin{aligned}
			C_1(t)=&C\int^t_0\norm{f}_{H^3_2(\Xi)}\widetilde{B}_3d\tau+C\int_0^t \left(\stnorm{\infty}{\brho^{\mathrm{in}}}
			+\norm{f}_{H^{17}_{16}\cap W^{27,\infty}(\Xi)}\right)
			\stnorm{2}{\snorm{\opp-\bA}\brho}\,\mathrm{d}\tau\\
            &+C\int^t_0{\sf{R}}_{p}(\tau)\,\mathrm{d}\tau\,,\\
			E_1(t)=&\int^t_0\left(\norm{f}_{H^{17}_{16}(\Xi)\cap W^{27,\infty}(\Xi)}+\hbar\stnorm{2}{\snorm{\opp-\bA}\brho}\right)\,\mathrm{d}\tau\,.
		\end{aligned}
	\end{equation*}
	Where we apply Corollary \ref{Corollary:EstimateofDiag}, we take $\epsilon=2-\delta$, $n=2$ and $n_1=12$, where $\delta>0$ is sufficiently small. This is why we take $m$ as an integer large enough in the condition of the initial data. We also need to ensure that the finiteness of $\norm{\rho_f}_{H^2}$ and $\widetilde{B}_3$ can be propagated. Indeed,
	\begin{equation*}
		\lnorm{2}{\nabla^2\rho_f}\le C\norm{{\langle v\rangle}^2\nabla^2_xf}_{L^2(\Xi)}\le C\norm{f}_{H^3_2(\Xi)}\,,
	\end{equation*}
	and the definition of $\widetilde{B}_d$ gives
	\begin{equation*}
		\begin{aligned}
			\widetilde{B}_3:=C\left( h^{4}
			\left(
			\norm{\nabla^{4}_x f}_{L^2(\Xi)}
			+\norm{\nabla^{4}_v f}_{L^2(\Xi)}
			\right)
			+\norm{{\langle x\rangle}^{4}f}_{L^2(\Xi)}+\norm{{\langle v \rangle}^{4}f }_{L^2(\Xi)} \right)\,.
		\end{aligned}
	\end{equation*}
	Hence, the propagation of finiteness of $\widetilde{B}_3$ and $\norm{\rho_f}_{H^2}$ can be guaranteed by Theorem \ref{Theorem:PropagationOfRegularity}. Once we obtain the estimate of $\mathcal{L}^1$ we just take $1\le p<\frac{3}{2}$ and $\frac{1}{p}=\frac{1}{q}+\frac{1}{2}$ and we have following estimate by applying interpolation:
	\begin{equation}\label{Estimate:PreLpSchattemestimate}
		\begin{aligned}
			\stnorm{p}{\brho-\brhoAf}\le& \stnorm{p}{\brho^{\mathrm{in}}-\brho^{A}_{f^{\mathrm{in}}}}+C\hbar\int^t_0\norm{\rho_f}_{H^2}\left(\norm{f(\tau,\cdot,\cdot)}_{W^{27,\infty}(\Xi)}\right)^{1-\frac{1}{p}}\widetilde{B}^{\frac{1}{p}}_3\,\mathrm{d}\tau\\
			&+C\hbar\int^t_0{\sf{R}}_{p}(\tau)\,\mathrm{d}\tau\\
			&+C\int^t_0\stnorm{1}{\left(\brho-\brhoAf\right)(\tau)}\norm{f}_{H^{17}_{16}(\Xi)\cap W^{27,\infty}(\Xi)}\,\mathrm{d}\tau\\
			&+C\hbar \int_0^t \left(\stnorm{\infty}{\brho^{\mathrm{in}}}+\stnorm{\infty}{\brho^{A}_{f^{\mathrm{in}}}}\right)^{1-\frac{p}{q}}\stnorm{p}{\brho-\brhoAf}^{\frac{p}{q}}
			\stnorm{2}{\snorm{\opp-\bA}\brho}\,\mathrm{d}\tau\,.
		\end{aligned}
	\end{equation}
	Here we applied the second part of Proposition \ref{Proposition:ControlOfCommutator}. In this case $q>3$. By the isometry relation given by Proposition \ref{Proposition:IsometryRelation} and the magnetic Calder\'{o}n--Vaillancourt theorem, we obtain the following inequality by interpolation:
	\begin{equation}
		\stnorm{q}{\brhoAf}\le C{\norm{f}^{1-\frac{2}{q}}_{W^{24,\infty}(\Xi)}}\lnorm{2}{f}^{\frac{2}{q}}\,.
	\end{equation}
	Recall formula \eqref{equation:integralkernelK1} gives the form of the integral kernel of the operator $\boldsymbol{\nabla}_v\brhoAf\snorm{\bp}^n$. Therefore we utilize Young's inequality and estimates given by Lemma \ref{Lemma:RegularityofMagneticWeylTransform} to obtain
	\begin{equation*}
		\begin{aligned}
			\stnorm{q+\epsilon}{\boldsymbol{\nabla}_v\brhoAf \boldsymbol{m}_n}^{\frac{1}{2}+\widetilde{\epsilon}}\stnorm{q-\epsilon}{\boldsymbol{\nabla}_v\brhoAf \boldsymbol{m}_n}^{\frac{1}{2}-\widetilde{\epsilon}}\le C\norm{\nabla_v f}_{H^{16}_{16}(\Xi)\cap W^{26,\infty}(\Xi)}\,.
		\end{aligned}
	\end{equation*}
	Which explains the way we control diagonal term. Now bring \eqref{Estimate:L1Schattenestimate} to \eqref{Estimate:PreLpSchattemestimate} and apply Cauchy-Schwarz inequality we have
	\begin{equation*}
		\begin{aligned}
			\stnorm{p}{\brho-\brhoAf}\le \left(\stnorm{p}{\brho^{\mathrm{in}}-\brho^{A}_{f^{\mathrm{in}}}}+C_2(t)\hbar\right)E_2(t)
		\end{aligned}
	\end{equation*}
	The definitions of $C_2(t)$ and $E_2(t)$ are given by
	\begin{equation*}
		\begin{aligned}
			C_2(t):=&C\int^t_0\norm{\rho_f}_{H^2}\left(\norm{f(\tau)}_{W^{27,\infty}(\Xi)}\right)^{1-\frac{1}{p}}\widetilde{B}^{\frac{1}{p}}_3\,\mathrm{d}\tau\\&+C\norm{f}_{L^\infty([0, T]; H^{17}_{16}(\Xi)\cap W^{27,\infty}(\Xi))}\int^t_0C_1(\tau)\,\mathrm{d}\tau\\&+C\int_0^t \left(\stnorm{\infty}{\brho^{\mathrm{in}}}+\stnorm{\infty}{\brho^{A}_{f^{\mathrm{in}}}}\right)
			\stnorm{2}{\snorm{\opp-\bA}\brho}\,\mathrm{d}\tau+C\int^t_0{\sf{R}}_p(\tau)\,\mathrm{d}\tau\,,\\
			E_2(t):=&\left(1+\int^t_0\exp(\hbar E_1(\tau))\,\mathrm{d}\tau\right)\exp\left(\hbar\int^t_0\stnorm{2}{\snorm{\opp-\bA}\brho}\,\mathrm{d}\tau\right)\,.
		\end{aligned}
	\end{equation*} 
	Finally let $r\in[p,\infty)$ we have
	\begin{equation*}
		\begin{aligned}
			\stnorm{r}{\brho-\brhoAf}\le& \left(\stnorm{\infty}{\brho^{\mathrm{in}}}+\norm{f}_{L^\infty{[0, T]; (H^{17}_{16}(\Xi)\cap W^{27,\infty}(\Xi))}}\right)^{1-\frac{p}{r}}
			\\
			&\times \left(\left(\stnorm{p}{\brho^{\mathrm{in}}-\brho^{A}_{f^{\mathrm{in}}}}+C_2(t)\hbar\right)E_2(t)\right)^{\frac{p}{r}}\,.
		\end{aligned}
	\end{equation*}
	Hence the proof of Theorem \ref{Theorem:Semiclassicallimit} is completed.

	\appendix
	
	\section{Calculations and properties of the magnetic Weyl quantization}\label{Section:PropertiesofMagneticWeylTransform}
	The aim of this appendix is to revisit the magnetic Weyl quantization in the semiclassical regime and to collect several explicit formulas and basic properties that will be used in the main text. Many of the corresponding calculations in the case $\hbar=1$ are standard and can be found, for instance, in \cite{iftimie2007magnetic}. Our main purpose here is therefore to redo these calculations with only finite Sobolev regularity while keeping careful and explicit track of the dependence on the semiclassical parameter $\hbar$ at every step. 
    
    Throughout this appendix, we work in arbitrary spatial dimension $d$ and denote the corresponding phase space by
    $\Xi=\rr^d\times\rr^d$. 
    We write points in $\Xi$ as $X=(x,v), Y=(y,\eta), Z=(z,\zeta)$.
    The phase space $\Xi$ is naturally equipped with the canonical symplectic $2$-form
    \begin{equation*}
        \sigma(Y,Z)=\eta\cdot z-y\cdot\zeta.
    \end{equation*}
    
    We shall also encounter circulations of the magnetic vector potential $A$ along the boundary of triangles in configuration space. By Stokes' theorem, these circulations can be expressed as the flux of the magnetic $2$-form $B$ through the corresponding triangle. Denoting by $\langle x,y,z\rangle$ the oriented triangle with vertices $x,y,z$, we write
    \begin{equation*}
        \Gamma^B(\langle x,y,z\rangle)
        :=
        \Gamma^B([x,y])
        +
        \Gamma^B([y,z])
        +
        \Gamma^B([z,x])
        =
        \int_{\langle x,y,z\rangle}\bB\dd \sigma\,.
    \end{equation*}
	
	\subsection{Properties of magnetic Weyl quantization and estimates of magnetic terms}
	The magnetic Weyl quantization has the gauge covariance property, which is the following proposition:
	\begin{proposition}\label{prop:A1-gauge-covariance}
    	Let $A_1,A_2\in W^{1,\infty}_{\rm loc}$ be real-valued and suppose that $\d A_1=\d A_2$ distributionally. Then there exists a real-valued function $\varphi\in W^{2,\infty}_{\rm loc}$ such that $A_1-A_2=\nabla\varphi$, and
		\begin{equation}
				\operatorname{Op}^{A_1}_{h}(f)=\operatorname{e}^{\frac{2\pi i}{h}\varphi}\operatorname{Op}^{A_2}_{h}(f)\operatorname{e}^{-\frac{2\pi i}{h}\varphi}\,.
		\end{equation}
        on $\mathcal{S}(\mathbb{R}^d)$, whenever the magnetic Weyl quantizations
        are well defined.
        
        In particular, if $\operatorname{Op}_{h}^{A_2}(f)\in\mathfrak{S}^p$
        for some $1\le p\le\infty$, then
        $\operatorname{Op}_{h}^{A_1}(f)\in\mathfrak{S}^p$ and
        \begin{equation}\label{eq:gauge_covariance_schatten-p}
            \bigl\|\operatorname{Op}_{h}^{A_1}(f)\bigr\|_{\cL^p}
            =
            \bigl\|\operatorname{Op}_{h}^{A_2}(f)\bigr\|_{\cL^p}\,.
        \end{equation}
	\end{proposition}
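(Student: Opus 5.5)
The plan has three parts: build the gauge function $\varphi$, compare the two magnetic phases pointwise, and then conjugate by a unitary multiplication operator.

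\emph{Step 1 (existence of $\varphi$).} The one-form $\omega:=A_1-A_2\in W^{1,\infty}_{\rm loc}$ is closed in the distributional sense, since $\d\omega=0$. Because $\rr^d$ is star-shaped, I would define
\[
\varphi(x):=\Gamma^{A_1-A_2}([0,x])=\int_0^1 x\cdot(A_1-A_2)(sx)\,\d s .
\]
To see that $\nabla\varphi=A_1-A_2$, I would use the Poincaré lemma computation. The formula for $\nabla_x\Gamma$ recorded before Lemma~\ref{lem:magnetic_phase_est}, with $x$ replaced by $0$, gives
\[
\partial_j\varphi(x)=\int_0^1\Bigl(\omega_j(sx)+s\sum_k x_k\partial_j\omega_k(sx)\Bigr)\d s .
\]
Using $\partial_j\omega_k=\partial_k\omega_j$, the integrand becomes $\frac{\d}{\d s}\bigl(s\,\omega_j(sx)\bigr)$, so $\partial_j\varphi(x)=\omega_j(x)$.

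This is the main technical obstacle: the identity $\partial_j\omega_k=\partial_k\omega_j$ only holds almost everywhere, and restricting it to rays $s\mapsto sx$ is not immediate. I would handle this by mollifying. Set $\omega_\varepsilon=\omega*\eta_\varepsilon$; it is smooth and closed, so $\nabla\varphi_\varepsilon=\omega_\varepsilon$. Then $\omega_\varepsilon\to\omega$ locally uniformly, hence $\varphi_\varepsilon\to\varphi$ locally uniformly, and passing to the limit in distributions gives $\nabla\varphi=\omega$. Since $\omega\in W^{1,\infty}_{\rm loc}$, it follows that $\varphi\in W^{2,\infty}_{\rm loc}$, and $\varphi$ is real-valued.

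\emph{Step 2 (phase identity).} Because $\nabla\varphi=A_1-A_2$ and $\varphi\in C^1$, the fundamental theorem of calculus along the segment $[x,y]$ gives
\[
\Gamma^{A_1}([x,y])-\Gamma^{A_2}([x,y])=\int_0^1(y-x)\cdot\nabla\varphi\bigl(x+s(y-x)\bigr)\,\d s=\varphi(y)-\varphi(x).
\]
Therefore
\[
e^{-\frac{2\pi i}{h}\Gamma^{A_1}([x,y])}=e^{\frac{2\pi i}{h}\varphi(x)}\,e^{-\frac{2\pi i}{h}\Gamma^{A_2}([x,y])}\,e^{-\frac{2\pi i}{h}\varphi(y)} .
\]

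\emph{Step 3 (conjugation).} Substituting this identity into the kernel formula \eqref{def:magnetic_Weyl_quantization} gives
\[
\brho^{A_1}_f(x,y)=e^{\frac{2\pi i}{h}\varphi(x)}\,\brho^{A_2}_f(x,y)\,e^{-\frac{2\pi i}{h}\varphi(y)} .
\]
Let $U$ denote multiplication by $e^{\frac{2\pi i}{h}\varphi}$. Then for $u\in\cS$ the kernel identity reads $\operatorname{Op}^{A_1}_h(f)u=U\operatorname{Op}^{A_2}_h(f)U^*u$. This also shows that whenever one side is well defined, so is the other: $U^*u$ is a bounded $C^1$ function, and the oscillatory integral in $v$ is unaffected by the $x,y$-dependent phases. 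The $v$-integral can be left unevaluated and treated as an oscillatory integral, with the identity tested against Schwartz functions.

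\emph{Step 4 (Schatten norms).} $U$ is unitary on $L^2$, since $\varphi$ is real-valued. If $\operatorname{Op}^{A_2}_h(f)$ extends to an element of $\mathfrak S^p$, then $U\operatorname{Op}^{A_2}_h(f)U^*$ is a bounded extension of $\operatorname{Op}^{A_1}_h(f)$. The singular values of an operator are invariant under conjugation by a unitary, so this extension lies in $\mathfrak S^p$ with the same norm. Multiplying by the factor $h^{d/p}$ gives \eqref{eq:gauge_covariance_schatten-p}.
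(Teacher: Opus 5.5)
Your proposal is correct and follows the paper's proof essentially step by step. Both arguments use the same four ingredients:
\begin{itemize}
\item the radial primitive $\varphi(x)=\int_0^1 x\cdot(A_1-A_2)(tx)\,\mathrm{d}t$, justified by regularizing and passing to the limit;
\item the phase identity $\Gamma^{A_1}([x,y])-\Gamma^{A_2}([x,y])=\varphi(y)-\varphi(x)$;
\item substitution of this identity into the kernel of the magnetic Weyl quantization;
\item unitary invariance of the Schatten norms.
\end{itemize}
Your handling of the mollification limit, and of applying $\operatorname{Op}^{A_2}_h(f)$ to the non-Schwartz function $e^{-\frac{2\pi i}{h}\varphi}u$, is more explicit than what the paper writes.
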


    \begin{proof}
         Set $C=A_1-A_2$ and define $\varphi(x)=\int_0^1 x\cdot C(tx)\dd t$.
        Regularize $C$, differentiate, use $\d C=0$, and pass to the limit yields  $\nabla\varphi=C$ distributionally.  More precisely, we have 
        \begin{align*}
            \bd_k\varphi(x) =&\, \int^1_0\(C_k(tx)+t \sum^d_{j=1}x_j\,\bd_k C_j(tx)\)\d t
            =\, \int^1_0 \frac{\d}{\d t}(t\, C_k(tx))\dd t = C_k(x)\,
        \end{align*}
        where $\d C=0$ was used in the second equality. 
        Hence, we have
        \[
            \Gamma^{A_1}([x,y])-\Gamma^{A_2}([x,y])=\varphi(y)-\varphi(x).
        \]
        Substituting this back into \eqref{def:magnetic_Weyl_quantization} yields the desired result. Moreover, since multiplication by
        $e^{\frac{2\pi i}{h}\varphi}$ is unitary in $L^2(\mathbb{R}^d)$,
        \eqref{eq:gauge_covariance_schatten-p} follows from the unitary invariance of
        $\mathfrak{S}^p$.
    \end{proof}
        
	By imitating and modifying the arguments in Section 3.2 of the lecture notes \cite{laflechesemiclassical}, we can define the magnetic Wigner transform as follows, which associates each density operator $\brho$ with a function $f_{\brho}$ on the phase space. Like the Fourier transform, when the function $f$ on the phase space and $\brho$ behave well enough, one can regard the magnetic Wigner transform as the inverse transform of the magnetic Weyl quantization.
	
	\begin{definition}
		For a Hilbert--Schmidt kernel $\brho(x,y)$, we define in $L^2$ the magnetic Wigner transform of $\brho$ as follows in the strong sense:
		\begin{equation}
			f^h_{\brho}(x,v):=\int_{\rr^d}\operatorname{e}^{-\frac{2\pi i}{h}y\cdot v}\brho\left(x+\tfrac{y}{2},x-\tfrac{y}{2}\right)\operatorname{e}^{\frac{2\pi i}{h}\Gamma^A\left(\left[x+\frac{1}{2}y,\, x-\frac{1}{2}y\right]\right)}\dd y\,.
		\end{equation}
	\end{definition}
	Similar to the Weyl quantization, the magnetic Weyl quantization is also an $L^2$ isometry, which is a direct corollary of Plancherel's theorem:
	\begin{proposition}\label{Proposition:IsometryRelation}
		For $f\in L^2(\Xi)$, we have the following isometry relation:
		\begin{equation}
			\stnorm{2}{\OPAh(f)}=\lnorm{2}{f}\,.
		\end{equation}
	\end{proposition}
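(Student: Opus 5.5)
The plan is to compute the Hilbert--Schmidt norm directly from the integral kernel, as in the proof of Lemma~\ref{Lemma:RegularityofMagneticWeylTransform}. Recall that $\stnorm{2}{T}=h^{d/2}\norm{K_T}_{L^2_{x,y}}$, where $K_T$ is the integral kernel of $T$. In the center--relative coordinates $q=\tfrac12(x+y)$, $r=y-x$, the change of variables $\xi=hv$ gives the kernel formula \eqref{eq:magnetic-Weyl-kernel-Fourier}:
\[
K^A_f(x,y)=h^{-d}e^{-\frac{2\pi i}{h}\Gamma^A([x,y])}\,\widehat f\bigl(q,\tfrac{r}{h}\bigr),
\]
where $\widehat f$ is the partial Fourier transform in $v$.

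The first and essential observation is that the magnetic phase factor is unimodular, because $\Gamma^A([x,y])$ is real for real-valued $A$. Hence it drops out of $\snorm{K_f^A(x,y)}^2$ pointwise. This is exactly why no regularity or growth assumption on $A$ beyond Assumption~\ref{Assumption: vector potential A} is needed; local boundedness suffices to make the kernel well defined. Consequently
\[
\norm{K_f^A}_{L^2_{x,y}}^2=h^{-2d}\iint\Bigl|\widehat f\bigl(q,\tfrac{r}{h}\bigr)\Bigr|^2\dd x\,\d y .
\]

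Next, change variables $(x,y)\mapsto(q,r)$, which has unit Jacobian, and then set $r=h\eta$, so that $\d r=h^d\,\d\eta$. This gives $\norm{K_f^A}_{L^2}^2=h^{-d}\norm{\widehat f}_{L^2_{q,\eta}}^2$. Plancherel's theorem in the $v$ variable gives $\norm{\widehat f}_{L^2_{q,\eta}}=\lnorm{2}{f}$. Multiplying by $h^{d}$ then yields $\stnorm{2}{\OPAh(f)}^2=\lnorm{2}{f}^2$.

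The only real care is extending from $f\in\mathcal S(\Xi)$, where the kernel formula is rigorous, to general $f\in L^2(\Xi)$. The identity shows that $f\mapsto\OPAh(f)$ is an isometry from a dense subspace of $L^2(\Xi)$ into the Hilbert--Schmidt class. I would therefore define $\OPAh$ on $L^2(\Xi)$ by continuous extension; equivalently, the kernel formula holds in $L^2_{x,y}$ as a composition of a partial Fourier transform, a measure-preserving change of variables, and multiplication by a unimodular function. The isometry then persists by density. I expect this density and definitional step to be the main point to state carefully; the computation itself is routine.
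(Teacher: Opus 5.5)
Your proof is correct and follows the paper's approach: the paper states the isometry as a direct corollary of Plancherel's theorem without further detail. Your steps supply exactly that detail: the unimodular magnetic phase drops out, the change to center--relative coordinates has unit Jacobian, the rescaling $r=h\eta$ is followed by Plancherel in $v$, and the identity is extended from $\mathcal S(\Xi)$ to $L^2(\Xi)$ by density.
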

	Next, we introduce the composition law for the MWQ. 
\begin{definition}[Magnetic Moyal product]
Let $B$ be real-valued, continuous, and closed.
For $f,g\in C_c(\Xi)$, define
\begin{subequations}
       \begin{multline}\label{eq:magnetic_moyal_product}
                      (f\#_B g)(X)= \left(\frac{2}{h}\right)^{2d}
        \iint_{\Xi\times\Xi}
        e^{-\frac{4\pi i}{h}\sigma(X-Y,X-Z)}\\
        \Omega^B_h(x-z+y, -x+y+z, x-y+z)f(Y)g(Z)\dd Y\d Z
       \end{multline}
where 
       \begin{equation*}
       \Omega^B_h(x, y, z):=e^{-\frac{2\pi i}{h}\Gamma^B(\langle x,y,z\rangle)}, \qquad                 
      \Gamma^B(\langle x,y,z\rangle)
        =
        \int_{\langle x,\;y,\;z\rangle} B\dd \sigma\,.
       \end{equation*}
       We can also write \begin{equation}\label{eq:magnetic_moyal_product2}
    (f\#_B g)(X)=
        \left(\frac{2}{h}\right)^{2d}
        \iint_{\Xi\times\Xi}
        e^{-\frac{4\pi i}{h}\sigma(Y,Z)}
        \omega^B_{h}(x, y, z)f(X-Y)g(X-Z)\dd Y\d Z\,,
\end{equation}
       \end{subequations}
       where 
       \begin{equation}
           \omega^B_{h}(x, y, z) := \Omega^B_{h}(x+y-z,x-y+z,x-y-z)\,.
       \end{equation}
    Here, $f\#_B g$ is called the magnetic Moyal product.
\end{definition}

    Let us now state the composition law. Its proof only requires a corresponding modification of the case $h=1$, see \cite[Section 1.3]{iftimie2007magnetic}.
    
        \begin{proposition}[composition law]
        \label{prop:magnetic-product}
        Let $A \in W_{\rm loc}^{1, \infty}$ be real-valued and $B=\d A\in W_{\mathrm{loc}}^{1,\infty}$. Let $f,g\in C_c(\Xi)$, then we have
        \begin{equation}
            \operatorname{Op}_h^A(f)\operatorname{Op}_h^A(g)
        =
        \operatorname{Op}_h^A(f\#_B g)\,.
        \end{equation}
        \end{proposition}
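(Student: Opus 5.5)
The plan is to verify the identity directly at the level of integral kernels, working first with $f,g\in\mathcal{S}(\Xi)$ (or $C_c$ and smooth) and then extending by density. By the definition \eqref{def:magnetic_Weyl_quantization}, the kernel of $\operatorname{Op}_h^A(f)$ is
\[
\brho^A_f(x,y)=\int_{\rr^d}e^{-2\pi i v\cdot(y-x)}e^{-\frac{2\pi i}{h}\Gamma^A([x,y])}f(\tfrac{x+y}{2},hv)\dd v,
\]
so the kernel of the product is
\[
\int_{\rr^d}\brho^A_f(x,w)\,\brho^A_g(w,y)\dd w .
\]
The key step is to combine the three magnetic phases. We have the factors $e^{-\frac{2\pi i}{h}\Gamma^A([x,w])}$ and $e^{-\frac{2\pi i}{h}\Gamma^A([w,y])}$, and we multiply and divide by $e^{-\frac{2\pi i}{h}\Gamma^A([x,y])}$. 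By Stokes' theorem (valid since $A\in W^{1,\infty}_{\rm loc}$, after regularising $A$ and passing to the limit),
\[
\Gamma^A([x,w])+\Gamma^A([w,y])=\Gamma^A([x,y])+\Gamma^B(\langle x,w,y\rangle).
\]
The product kernel is therefore $e^{-\frac{2\pi i}{h}\Gamma^A([x,y])}$ times a gauge-invariant kernel that depends only on $B$ through the flux factor $\Omega^B_h$.

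Next I would identify this gauge-invariant kernel as the magnetic Weyl kernel of $f\#_Bg$, by computing the symbol through the magnetic Wigner transform. The flux factor is removed by the Wigner transform's own phase $e^{+\frac{2\pi i}{h}\Gamma^A}$ on the segment $[x+\frac y2,x-\frac y2]$.

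Concretely, I apply $f^h_{\cdot}$ to the product kernel, with centre $X=(x,v)$ and relative variable $y$. I write $f$ and $g$ through their partial Fourier transforms in $v$, so that each factor has the form $\widehat f(\frac{x+w}{2},\frac{w-x}{h})$. I then change variables so that the three triangle vertices become $x\pm\frac{y}{2}$ and $w$. Reintroducing the velocity integrals of $f$ and $g$ as $Y=(y',\eta)$ and $Z=(z',\zeta)$ via the linear substitution used in \cite[Section 1.3]{iftimie2007magnetic}, the Gaussian-free oscillatory factor collapses to $e^{-\frac{4\pi i}{h}\sigma(X-Y,X-Z)}$. The Jacobian produces the prefactor $(2/h)^{2d}$, and the triangle becomes $\langle x-z'+y',-x+y'+z',x-y'+z'\rangle$, matching \eqref{eq:magnetic_moyal_product}. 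Tracking the powers of $h$ is the only place where the argument differs from the $h=1$ case. As a cross-check I would use Lemma~\ref{Lemma:ScalingOfMagneticWeylTransform} to reduce to $h=1$ with $A_h$ and $B_h=hB(h\cdot)$, and confirm that $\Gamma^{B_h}(\langle X,Y,Z\rangle)=h^{-1}\Gamma^B(\langle hX,hY,hZ\rangle)$ reproduces the same formula.

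Finally, by Proposition~\ref{Proposition:IsometryRelation}, $\operatorname{Op}^A_h$ is an isometry $L^2(\Xi)\to\mathcal L^2$ and is invertible by the magnetic Wigner transform. Equality of kernels therefore gives the operator identity. The main obstacle I anticipate is justifying the Fubini interchanges and the Stokes identity with only $W^{1,\infty}_{\rm loc}$ regularity for $f,g\in C_c$, because the $v$-integrals are oscillatory and not absolutely convergent. My plan for this is to prove the identity for Schwartz symbols with smooth $A$, using the standard mollification $A_\varepsilon$, for which all the integrals converge absolutely after Fourier transforming in $v$. I would then pass to the limit: both sides depend continuously on $A$ locally uniformly, and on $f,g$ in $L^2$, in the sense of distributions on $\mathcal S(\rr^d)$.
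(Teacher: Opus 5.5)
Your argument is correct; it is the paper's kernel computation run in reverse.

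\textbf{The paper's direction.} It substitutes \eqref{eq:magnetic_moyal_product} into the kernel of $\operatorname{Op}^A_h(f\#_B g)$ and collapses the $v$-integral to $2^{-d}\delta(\tfrac{z-x}{2}+p-q)$. It then integrates out $q$, sets $y=2p-x$, rescales the momenta, and invokes Stokes only at the end.

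\textbf{Your direction.} Write the product kernel at $(x_1,x_3)$ with intermediate variable $w$. You apply Stokes to it first, take the magnetic Wigner transform, and use the inverse substitution $p=\tfrac{x_1+w}{2}$, $q=\tfrac{w+x_3}{2}$. This gives a Jacobian $4^d$, times $h^{-2d}$ from rescaling the momenta. It does reproduce $(2/h)^{2d}e^{-\frac{4\pi i}{h}\sigma(X-Y,X-Z)}$ and the triangle in \eqref{eq:magnetic_moyal_product}.

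\textbf{What each route buys.} For $f,g\in C_c(\Xi)$, every integral you write is absolutely convergent over a compact set. The product kernel is bounded and supported in a strip $|x_1-x_3|\le R$. The paper, by contrast, must treat $\int e^{-2\pi i v\cdot a}\,\mathrm{d}v$ as a delta function against $f\#_B g$, which is generally not integrable in $v$. The price on your side is that you need $\operatorname{Op}^A_h$ to be onto $\mathfrak S^2$, with inverse the magnetic Wigner transform. That is more than the isometry in Proposition~\ref{Proposition:IsometryRelation}, which only gives injectivity. It does hold, because $f\mapsto\brho^A_f$ is a partial Fourier transform, a linear change of variables and multiplication by a unimodular function, each surjective on $L^2$.

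\textbf{Two corrections.} First, the Wigner phase cancels the segment phase $e^{-\frac{2\pi i}{h}\Gamma^A([x+\frac y2,x-\frac y2])}$, not the flux factor; the flux factor is exactly what survives as $\Omega^B_h$ in $f\#_B g$. Second, the Fubini obstacle you anticipate does not arise for $C_c$ symbols in your direction. The detour through Schwartz symbols and density is therefore unnecessary, and mollifying $A$ is needed at most to justify Stokes for a locally Lipschitz $A$ with continuous $B$.
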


\begin{proof}
	For $u\in\mathcal{S}(\rr^d)$ and $f$, $g\in C_{c}(\Xi)$, applying the definition of the magnetic Weyl quantization twice gives
	\begin{equation*}
		\begin{aligned}
			\OPAh(f)\OPAh(g)u(x)=&\iint_{\rr^{d}\times\rr^d} e^{-2\pi i\eta\cdot(y-x)-\frac{2\pi i}{h}\Gamma^A([x,y])}f\left(\tfrac{x+y}{2},h\eta\right)\OPAh(g)u(y)\,\mathrm{d} y\d\eta\\
			=&\iiiint_{\rr^{d}\times\rr^d\times \rr^{d}\times\rr^d}e^{-2\pi i\eta\cdot(y-x)-\frac{2\pi i}{h}\Gamma^A([x,y])}f\left(\tfrac{x+y}{2},h\eta\right)\\
			&\times e^{-2\pi i\zeta\cdot(z-y)-\frac{2\pi i}{h}\Gamma^A([y,z])}g\left(\tfrac{y+z}{2},h\zeta\right)u(z)\,\mathrm{d} y\d\eta\mathrm{d}z\d\zeta\,.
		\end{aligned}
	\end{equation*}
	Hence, the integral kernel of $\OPAh(f)\OPAh(g)$, denoted by $\brho_{f\#_B g}$, is given by
	\begin{multline*}
			\brho_{f\#_B g}=\iiint_{\rr^d\times\rr^d\times\rr^d}\ee^{-2\pi i\eta\cdot(y-x)-2\pi i\zeta\cdot(z-y)-\frac{2\pi i}{h}\Gamma^A([x,y])-\frac{2\pi i}{h}\Gamma^A([y,z])}\\
            \times f\left(\tfrac{x+y}{2},h\eta\right)g\left(\tfrac{y+z}{2},h\zeta\right)\,\mathrm{d} y\d\eta\d\zeta\,.
	\end{multline*}

By Definition~\eqref{def:magnetic_Weyl_quantization}, the integral kernel of $\OPAh(f\#_B g)$ is
\[
\brho_{f\#_B g}(x,z)
=
\int_{\rr^d}
e^{-2\pi i v\cdot(z-x)}
e^{-\frac{2\pi i}{h}\Gamma^A([x,z])}
(f\#_B g)(\tfrac{x+z}{2}, hv)\dd v.
\]
Substituting \eqref{eq:magnetic_moyal_product} into this expression gives
\begin{multline*}
\brho_{f\#_B g}(x,z)
=
\left(\frac{2}{h}\right)^{2d}
\iint_{\Xi\times\Xi}\int_{\rr^d}
e^{-2\pi i v\cdot(z-x)}
e^{-\frac{2\pi i}{h}\Gamma^A([x,z])}
e^{-\frac{4\pi i}{h}\sigma(X-Y,X-Z)}
\\
\times
\Omega^B_h(m-q+p, -m+p+q, m-p+q)
f(Y)g(Z)\dd v\d Y\d Z,
\end{multline*}
where we write $m=\frac{x+z}{2}, X=(\frac{x+z}{2}, hv), Y=(p,\xi)$, and $Z = (q,\theta)$.

Since
\[
\sigma(X-Y,X-Z)
=
\xi\cdot(q-m)-(p-m)\cdot\theta
+h\,v\cdot(p-q),
\]
it follows that
\begin{align*}
\brho_{f\#_B g}(x,z)
=&
\left(\frac{2}{h}\right)^{2d}
\iint_{\Xi\times\Xi}\int_{\rr^d}
e^{-2\pi i v\cdot(z-x+2p-2q)}
e^{-\frac{2\pi i}{h}\Gamma^A([x,z])}
e^{-\frac{4\pi i}{h}(\xi\cdot(q-m)-(p-m)\cdot\theta)}
\\
&\qquad\qquad\times
\Omega^B_h(m-q+p, -m+p+q, m-p+q)
f(Y)g(Z)\dd v\d Y\d Z\\
=&
\left(\frac{2}{h}\right)^{2d}\frac{1}{2^d}
\iint_{\Xi\times\Xi}\delta(\tfrac{z-x}{2}+p-q)
e^{-\frac{2\pi i}{h}\Gamma^A([x,z])}
e^{-\frac{4\pi i}{h}(\xi\cdot(q-m)-(p-m)\cdot\theta)}
\\
&\qquad\qquad\times
\Omega^B_h(m-q+p, -m+p+q, m-p+q)
f(Y)g(Z)\d Y\d Z\,.
\end{align*}
Integrating with respect to $q$ therefore yields $q=p+\frac{z-x}{2}$ and hence
\begin{multline*}
\left(\frac{2}{h}\right)^{2d}\frac{1}{2^d}
\iiint_{\rr^d\times\rr^d\times\rr^d}
e^{-\frac{2\pi i}{h}\Gamma^A([x,z])}
e^{-\frac{4\pi i}{h}(\xi\cdot(p-x)-(p-m)\cdot\theta)}
\\
\times
\Omega^B_h(x, 2p-x, z)
f(p, \xi)g(\tfrac{z-x}{2}+p, \theta)\dd p\d\xi\d \theta
\end{multline*}
Finally, set $y:=2p-x$, so that $p=\frac{x+y}{2}$ and $q=\frac{y+z}{2}$,
and rescale $\xi=h\eta, \theta=h\zeta$. Then
\[
-\tfrac{4\pi i}{h}
\left(\xi\cdot(q-m)-(p-m)\cdot\theta\right)
=
-2\pi i\eta\cdot(y-x)
-2\pi i\zeta\cdot(z-y).
\]
Consequently,
\begin{multline*}
\brho_{f\#_B g}(x,z)=\iiint_{\rr^d\times \rr^d\times\rr^d}
e^{-\frac{2\pi i}{h}\Gamma^A([x,z])}
e^{-2\pi i\eta\cdot(y-x)
-2\pi i\zeta\cdot(z-y)}\\
\Omega^B_h(x, y, z)
f(\tfrac{x+y}{2}, h\eta)g(\tfrac{y+z}{2}, h\zeta)\dd y \d \eta\d \zeta\,.
\end{multline*}
Finally, Stokes' theorem gives
\begin{align*}
\Gamma^B(\langle x,y, z\rangle)
&=
\Gamma^A([x,y])
+\Gamma^A([y,z])
+\Gamma^A([z,x])\\
&=
-\Gamma^A([x,z])
+\Gamma^A([x,y])
+\Gamma^A([y,z]),
\end{align*}
and hence
\[
\Gamma^A([x,z])
+\Gamma^B(\langle z,x,y\rangle)
=
\Gamma^A([x,y])+\Gamma^A([y,z]).
\]
This completes the proof of the proposition.
\end{proof}
	Next we investigate the magnetic term. The extra magnetic term in the definition of magnetic Weyl quantization will contribute polynomially growing terms. For the sake of convenience, we introduce the following notation
	\begin{equation*}
		\begin{aligned}
			\Gamma^B_h:=-\frac{2\pi i}{h}\Gamma^B\lb\lal x+y-z,x-y+z,x-y-z\ral\rb\,.
		\end{aligned}
	\end{equation*}
    Hence it follows $\omega^B_{h}= \exp(\Gamma^B_h)$.
	Then the following estimate is a trivial application of mathematical induction.

\begin{lemma}[Estimate of the flux]
	Let $h>0$, $A\in W^{1, \infty}_{\rm loc}$ be real-valued and suppose that $\d A=B\in W^{k,\infty}$ for some $k \in \mathbb{N}$. For any multi-indices $\alpha, \beta, \gamma \in \mathbb{N}^d_0$ satisfying $\snorm{\alpha+\beta+\gamma}\le k$, there exists a positive constant $C\left(\alpha,\beta,\gamma,B, d\right)$ such that
	\begin{equation*}
		\begin{aligned}
			\snorm{\partial^\alpha_x\partial^\beta_y\partial^\gamma_z \omega^B_h(x,y,z)}\le \frac{C\left(\alpha,\beta,\gamma,B,d\right)}{h^{\snorm{\alpha+\beta+\gamma}}}\lb \lal y\ral+\lal z\ral\rb^{2\snorm{\alpha+\beta+\gamma}}\,.
		\end{aligned}
	\end{equation*}
\end{lemma}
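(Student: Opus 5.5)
The plan is to write the flux explicitly as a double integral, bound every derivative of the phase $\Gamma^B_h$ by $h^{-1}(\langle y\rangle+\langle z\rangle)^2$, and then differentiate the exponential by induction (Faà di Bruno). Throughout, $0<h\le 1$ as in the rest of the paper; for $h>1$ the lower-order powers $h^{-j}$ with $j<|\alpha+\beta+\gamma|$ are not dominated by $h^{-|\alpha+\beta+\gamma|}$, and I would simply state the lemma for $0<h\le1$.

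\textbf{Explicit flux formula.} Write the triangle as $\langle a,b,c\rangle$ with $a=x+y-z$, $b=x-y+z$, $c=x-y-z$. Then $b-a=2(z-y)$ and $c-a=-2y$. Parametrising the triangle by $a+s(b-a)+t(c-a)$ with $(s,t)\in\Delta:=\{s,t\ge0,\ s+t\le1\}$ gives
\[
\Gamma^B(\langle a,b,c\rangle)=\iint_{\Delta} B\bigl(x+y-z+2s(z-y)-2ty\bigr)\bigl[2(z-y),-2y\bigr]\dd s\d t .
\]
Here $B[\cdot,\cdot]$ denotes the antisymmetric bilinear form, and the sign convention only affects the orientation. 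By antisymmetry, $B[z-y,y]=B[z,y]$. The integrand is therefore $-4B(\Phi_{s,t}(x,y,z))[z,y]$, where $\Phi_{s,t}$ is affine in $(x,y,z)$ with coefficients bounded uniformly for $(s,t)\in\Delta$.

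\textbf{Derivatives of the phase.} Apply $\partial_x^{\alpha'}\partial_y^{\beta'}\partial_z^{\gamma'}$ under the integral, which is legitimate since $B\in W^{k,\infty}$. By Leibniz, derivatives fall either on $B\circ\Phi_{s,t}$ or on the bilinear factor $[z,y]$.
\begin{itemize}
\item Derivatives on $B\circ\Phi_{s,t}$ give, by the chain rule with bounded affine coefficients, at most $C\|B\|_{W^{k,\infty}}$.
\item The factor $[z,y]$ survives at most two derivatives. After differentiation it is bounded by $\langle y\rangle\langle z\rangle$, by $\langle y\rangle$ or $\langle z\rangle$, or by a constant.
\end{itemize}
In every case, for $1\le|\alpha'+\beta'+\gamma'|\le k$,
\[
\bigl|\partial_x^{\alpha'}\partial_y^{\beta'}\partial_z^{\gamma'}\Gamma^B_h\bigr|\le \frac{C(B,d,k)}{h}\bigl(\langle y\rangle+\langle z\rangle\bigr)^2 .
\]
Note that there is no $x$-growth, because $x$ enters only through the argument of $B$.

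\textbf{Differentiating the exponential.} Since $\omega^B_h=e^{\Gamma^B_h}$ and $\Gamma^B_h$ is purely imaginary, $|\omega^B_h|=1$. I would argue by induction on $n=|\alpha+\beta+\gamma|$, using
\[
\partial_j\,\omega^B_h=(\partial_j\Gamma^B_h)\,\omega^B_h .
\]
This gives $\partial^{\alpha}_x\partial^\beta_y\partial^\gamma_z\omega^B_h$ as a finite sum, with combinatorial constants, of terms
\[
\omega^B_h\prod_{i=1}^{j}\partial^{\mu_i}\Gamma^B_h, \qquad 1\le j\le n,\quad \textstyle\sum_i\mu_i=(\alpha,\beta,\gamma),\quad |\mu_i|\ge1 .
\]
Each such term is bounded by $C^j h^{-j}(\langle y\rangle+\langle z\rangle)^{2j}$. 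Using $h\le1$ and $\langle y\rangle+\langle z\rangle\ge1$, this is at most $C h^{-n}(\langle y\rangle+\langle z\rangle)^{2n}$, which is the claimed bound.

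The only step needing real care is the middle one: checking that the bilinear structure $[z,y]$ caps the polynomial growth at degree two independently of how many derivatives fall on $B$. The rest is bookkeeping.
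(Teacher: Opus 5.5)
Your proposal is correct and follows essentially the same route as the paper. Both arguments use an explicit Stokes parametrization of the flux, in which the only $(y,z)$-growth comes from the bilinear factor $B[y,z]$. This gives a bound of order $h^{-1}(1+|y|)(1+|z|)$ on all derivatives of $\Gamma^B_h$, and Fa\`a di Bruno applied to $\omega^B_h=e^{\Gamma^B_h}$ finishes the proof; the paper parametrizes over $[0,1]^2$ with Jacobian $s$ rather than over the simplex, which is immaterial.

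Your caveat about $h$ is also accurate. The paper's last step absorbs $h^{-r}$ for $r<N$ into $h^{-N}$, which silently uses $0<h\le 1$. The lemma should therefore be read for $h\in(0,1]$, even though it is stated for all $h>0$.
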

\begin{proof}
    By Stokes' theorem,
	\begin{equation}\label{Equation:ExplicitIntegralOfMagneticField}
			\Gamma^B(\lal x-y+z,x-y-z,x+y-z\ral)
            =4\sum_{m,n=1}^dy_m(z_n-y_n)\int^1_0\int^1_0B_{mn}(R_{s, t})\, s\dd s\d t\,,
	\end{equation}
    where $R_{s, t}:=x-y-z+2sy+2st(z-y)$ is used as a convenient shorthand. This representation is particularly convenient for computing derivatives of the flux. Direct differentiation and the skew-symmetry of $B$ give
	\begin{equation}\label{Equation:CalculationsAboutMagneticFieldIntegral 1}
        \begin{aligned}
            		\partial_{y_j}\Gamma^B_h=&-\frac{8\pi i}{h}\sum_{n=1}^dz_n\int^1_0\int^1_0B_{jn}(R_{s, t})\,s\dd s\d t\\
			&-\frac{8\pi i}{h}\sum_{m,n=1}^dy_m z_n\int^1_0\int^1_0\partial_jB_{mn}(R_{s, t})\,s(2s-2st-1)\dd s\d t\,,
        \end{aligned}
	\end{equation}
\begin{equation}\label{Equation:CalculationsAboutMagneticFieldIntegral 2}
\begin{aligned}
	\partial_{y_j}\partial_{y_k}\Gamma^B_h=&\, -\frac{8\pi i}{h}\sum_{n=1}^d
    z_n
	\int_0^1\int_0^1
	\(\partial_kB_{jn}+\partial_jB_{kn}\)(R_{s,t})\,s(2s-2st-1)\dd s\d t\\
    &\, -\frac{8\pi i}{h}\sum_{m,n=1}^d y_m z_n
	\int_0^1\int_0^1
	(\partial_j\partial_kB_{mn})(R_{s,t})\,s(2s-2st-1)^2\dd s\d t\,.
\end{aligned}
\end{equation}
More generally, we have that 
\begin{multline}\label{eq:general_partials_Gamma^B}
            \partial_x^\alpha\partial_y^\beta\partial_z^\gamma
        \Gamma_h^B
        =
        -\frac{8\pi i}{h}
        \sum_{m,n=1}^d
        \int_0^1\int_0^1 
        \Big[
        y_mz_n
        a_{s,t}^{|\beta|}
        b_{s,t}^{|\gamma|}
        (\partial^{\alpha+\beta+\gamma}B_{mn})(R_{s,t})
        \\
        +\beta_m z_n
        a_{s,t}^{|\beta|-1}
        b_{s,t}^{|\gamma|}
        (\partial^{\alpha+\beta+\gamma-e_m}B_{mn})(R_{s,t})
        \\
        +\gamma_n y_m
        a_{s,t}^{|\beta|}
        b_{s,t}^{|\gamma|-1}
        (\partial^{\alpha+\beta+\gamma-e_n}B_{mn})(R_{s,t})
        \\
        +\beta_m\gamma_n
        a_{s,t}^{|\beta|-1}
        b_{s,t}^{|\gamma|-1}
        (\partial^{\alpha+\beta+\gamma-e_m-e_n}B_{mn})(R_{s,t})
        \Big]s
        \dd s\d t\,,
\end{multline}
where $a_{s, t}:=2s-2st-1$ and $b_{s, t}=2st-1$. Here, $e_n$ is the unit vector in the $n$th direction. 

Set $N:=|\alpha+\beta+\gamma|$. Using \eqref{eq:general_partials_Gamma^B}, we immediately have 
\begin{equation*}
    \snorm{\partial_x^{\alpha'}\partial_y^{\beta'}\partial_z^{\gamma'}
        \Gamma_h^B} \le \frac{C(\alpha, \beta, \gamma, d)}{h}\|B\|_{W^{N, \infty}} (1+|y|)(1+|z|)
\end{equation*}
for all $1\le |\alpha'+\beta'+\gamma'|\le N$. In particular, we have that 
\begin{align*}
        \snorm{\partial_x^{\alpha}\partial_y^{\beta}\partial_z^{\gamma}
        \omega_h^B} \le&\, C(\alpha, \beta, \gamma, d)\sum^N_{r=1}\frac{\|B\|_{W^{N, \infty}}^r}{h^r} (1+|y|)^r(1+|z|)^r\\
        \le&\, \frac{C(\alpha, \beta, \gamma, d)P_N(|B\|_{W^{N, \infty}} )}{h^N}(1+|y|)^N(1+|z|)^N
\end{align*}
where $P_N(x)$ is a polynomial of degree at most $N$. 
\end{proof}

    \begin{remark}
        The calculations presented in the above lemma also give the following useful results:
        \begin{equation}\label{Equation:ResultOfMagneticIntegral}
		\begin{gathered}
			\partial_{y_j}\Gamma^B_h=\partial_{y_j}\partial_{y_k}\Gamma^B_h=\partial_{z_j}\Gamma^B_h=\partial_{z_j}\partial_{z_k}\Gamma^B_h=0\,,\\
			\partial_{y_j}\partial_{z_k}\Gamma^B_h=-\frac{4\pi i}{h}B_{jk}(x)\,
		\end{gathered}
        \end{equation}
        when $y=z=0$.
        These computations will be applied in the proof of the finite expansion of the magnetic Moyal product, see the subsection below.
    \end{remark}
	
	\subsection{Asymptotic expansion of the magnetic Weyl quantization}
	In this section, we consider the asymptotic expansion of the magnetic Weyl quantization. This part is inspired by the work \cite{iftimie2007magnetic}.
	
	Before we start the discussion of asymptotic expansion we first introduce some notations and definitions. We define the symbol class $S^{m_1,m_2}_N$ as
	\begin{equation*}
		\begin{aligned}
			S^{m_1,m_2}_N:=\left\{f\in\mathcal{S}'(\Xi):\,\forall \alpha,\beta\in\mathbb{N}^d,\,\snorm{\alpha}\le N,\,\snorm{\beta}\le N,\,
			\snorm{\partial_x^\alpha\partial_v^\beta f}\le C_{\alpha,\beta}
			\lal x\ral^{m_1-\snorm{\alpha}} \lal v\ral^{m_2-\snorm{\beta}}\right\}\,.
		\end{aligned}
	\end{equation*}
	In general application, the distribution will be chosen as some functions with finite regularity and good decay. 
    
    For any $\varphi = \varphi(x, y, z)\in \mathcal{C}^{\infty}(\rr^{3d})$, we define the following first order linear differential operator $M_B(\phi)$, acting on the class of symbols over $\Xi$ with variables 
	\begin{equation*}
			U=(u, \mu) \quad \text{ and } \quad W=(w, \nu)\,,
	\end{equation*}
	 given by 
	\begin{equation}
		\begin{aligned}
			M_B(\varphi)=\lb\omega^B_h\rb^{-1}\sum_{j=1}^d\lb \partial_{y_j}\lb \omega^B_h\varphi\rb\partial_{\nu_j}-\partial_{z_j}\lb \omega^B_h\varphi\rb\partial_{\mu_j}\rb\,.
		\end{aligned}
	\end{equation}
	In particular, if $B=0$, we have 
    \begin{align}
        M_0(\varphi) = \sum_{j=1}^d\lb \partial_{y_j}\varphi\,\partial_{\nu_j}-\partial_{z_j}\varphi\,\partial_{\mu_j}\rb\,.
    \end{align}
    We also define the symplectic form $\sigma(\partial_U,\partial_W)$ on the class of symbols by
	\begin{equation}
		\begin{aligned}
			\sigma(\partial_U,\partial_W)=\sum_{j=1}^d\lb \partial_{\mu_j}\partial_{w_j}-\partial_{u_j}\partial_{\nu_j}\rb\,.
		\end{aligned}
	\end{equation}
    For $t>0$, we also denote that 
	\begin{equation}
		\begin{aligned}
			L_\varphi(t):=\frac{1}{4\pi i}\left\{2\varphi\,\sigma(\partial_U,\partial_W)+t^{-1}M_0(\varphi)\right\}\,.
		\end{aligned}
	\end{equation}
    
    Notice, we use the notation $\partial_U$ and $\partial_W$ to emphasize that these differential operators act on the symbol variables $U$ and $W$, and hence on expressions such as $f(U)g(W)$. This distinguishes them from $\partial_Y$ and $\partial_Z$, which differentiate with respect to the integration variables $Y$ and $Z$. The two sets of derivatives are related only after the substitutions $U=X-tY$ and $W=X-tZ$, as will be used in the computations below.
    
	Let us start by stating the following useful lemma.
\begin{lemma}[Expansion lemma]
\label{Lemma:ExpansionLemma}
Let $h>0$. Then the following statements hold.

\begin{enumerate}
\item Let $\varphi\in C^1(\rr^{3d})$ be such that, for some $K\geq0$,
$\varphi$, $\nabla_y\varphi$, and $\nabla_z\varphi$ have at most
polynomial growth of order $K$ in $(y,z)$, locally uniformly in $x$.
Then, with the integration understood in the sense of tempered
distributions, one has
\begin{equation}\label{eq:oscillatory_integral_id} 
	\left(\frac{2}{h}\right)^{2d}
	\iint_{\Xi\times\Xi}
	e^{-\frac{4\pi i}{h}\sigma(Y,Z)}
	\varphi(x,y,z)
	\,\mathrm{d}Y\,\mathrm{d}Z
	=
	\varphi(x,0,0).
\end{equation}

		\item Let $K,M\geq0$, and choose an integer $q$ such that
		\begin{equation*}
		q>K+M+4d.
		\end{equation*}
		Assume that $\varphi\in C^{q+1}(\rr^{3d})$ satisfies
		\begin{equation*}
		\left|
		\partial_{y,z}^{\alpha}\varphi(x,y,z)
		\right|
		\leq
		C_{\alpha}(x)\langle(y,z)\rangle^K,
		\qquad |\alpha|\leq q+1,
		\end{equation*}
		where $C_{\alpha}$ is locally bounded in $x$. Assume also that
		$\psi\in C^{q+2}(\Xi\times\Xi)$ satisfies
		\begin{equation*}
		\left|
		\partial_{U,W}^{\beta}\psi(U,W)
		\right|
		\leq
		C_{\beta}\langle(U,W)\rangle^M,
		\qquad |\beta|\leq q+2.
		\end{equation*}
		Then, for every $t>0$, the following identity holds in the sense of
		tempered distributions:
		\begin{multline}\label{eq:distributional-expansion-lemma}
		\left(\frac{2}{h}\right)^{2d}
		\iint_{\Xi\times\Xi}
		e^{-\frac{4\pi i}{h}\sigma(Y,Z)}
		\varphi(x,y,z)
		\psi(X-tY,X-tZ)
		\,\mathrm{d}Y\,\mathrm{d}Z
		\\
		=
		\varphi(x,0,0)\psi(X,X)\\
		+
		h\left(\frac{2}{h}\right)^{2d}t^2
		\int_0^1s
		\iint_{\Xi\times\Xi}
		e^{-\frac{4\pi i}{h}\sigma(Y,Z)}
		\bigl(L_\varphi(st)\psi\bigr)
		(X-stY,X-stZ)
		\,\mathrm{d}Y\,\mathrm{d}Z\,\mathrm{d}s.
		\end{multline}
		At $s=0$, the expression $sL_\varphi(st)\psi$ is understood by its
		continuous extension in the sense of tempered distributions.
		\end{enumerate}
		\end{lemma}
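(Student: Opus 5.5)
For part (1), I would reduce to the elementary Fourier identity. Write $Y=(y,\eta)$, $Z=(z,\zeta)$, so that $\sigma(Y,Z)=\eta\cdot z-y\cdot\zeta$. The function $\varphi(x,y,z)$ does not depend on $\eta$ or $\zeta$, so I first integrate in these two variables. In the sense of tempered distributions,
\[
\int_{\rr^d} e^{-\frac{4\pi i}{h}\eta\cdot z}\,\mathrm d\eta=\Bigl(\frac{h}{2}\Bigr)^{d}\delta(z),
\qquad
\int_{\rr^d} e^{\frac{4\pi i}{h}y\cdot\zeta}\,\mathrm d\zeta=\Bigl(\frac{h}{2}\Bigr)^{d}\delta(y).
\]
The two factors $(h/2)^d$ cancel the prefactor $(2/h)^{2d}$, leaving $\varphi(x,0,0)$. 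To make this rigorous with only $C^1$ regularity and polynomial growth, I would insert a Gaussian cutoff $e^{-\epsilon(|\eta|^2+|\zeta|^2)}$. Integrating out $\eta$ and $\zeta$ then produces approximate identities in $y$ and $z$, and the limit $\epsilon\to0$ exists because $\varphi$ is continuous with polynomial growth. The growth bounds on $\nabla_{y,z}\varphi$ are what justify pairing with test functions in $x$ and taking this limit.

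For part (2), I would Taylor expand in $t$. Set $F(t):=\text{LHS}(t)$ and extend it to $t=0$ by part (1), which gives $F(0)=\varphi(x,0,0)\psi(X,X)$. The formula has the shape $F(t)=F(0)+t^2\int_0^1 s\,G(st)\,\mathrm ds$, so the plan is to compute $F'(\tau)$ and show
\[
F'(\tau)=\tau\,h\Bigl(\frac{2}{h}\Bigr)^{2d}\iint e^{-\frac{4\pi i}{h}\sigma(Y,Z)}\bigl(L_\varphi(\tau)\psi\bigr)(X-\tau Y,X-\tau Z)\,\mathrm dY\,\mathrm dZ .
\]
Integrating from $0$ to $t$ and substituting $\tau=st$ then yields exactly the stated remainder.

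To compute $F'(\tau)$, I differentiate under the integral. This gives $-(Y\cdot\partial_U+Z\cdot\partial_W)\psi$ evaluated at $(X-\tau Y,X-\tau Z)$. The linear factors in $Y,Z$ are then removed by integrating by parts against the phase, using
\[
y_j e^{-\frac{4\pi i}{h}\sigma}=-\tfrac{h}{4\pi i}\partial_{\zeta_j}e^{-\frac{4\pi i}{h}\sigma},\qquad
z_j e^{-\frac{4\pi i}{h}\sigma}=\tfrac{h}{4\pi i}\partial_{\eta_j}e^{-\frac{4\pi i}{h}\sigma},
\]
together with the analogous identities for $\eta_j$ and $\zeta_j$ in terms of $\partial_{z_j}$ and $\partial_{y_j}$. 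When the resulting derivative falls on $\psi(X-\tau Y,X-\tau Z)$, it produces a factor $\tau$ times a second derivative of $\psi$; these terms assemble into $2\varphi\,\sigma(\partial_U,\partial_W)\psi$. When an $\partial_{y_j}$ or $\partial_{z_j}$ derivative falls on $\varphi$, it produces $\partial_{y_j}\varphi\,\partial_{\nu_j}\psi$ or $\partial_{z_j}\varphi\,\partial_{\mu_j}\psi$ without the factor $\tau$. These assemble into $\tau^{-1}M_0(\varphi)\psi$ after factoring out $\tau$, which matches the definition of $L_\varphi(\tau)$. Symmetrizing the two ways of writing each term should produce the coefficient $2$.

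The main obstacle is justifying these manipulations for symbols of only finite regularity with polynomial growth. I would regularize with the cutoff $e^{-\epsilon\langle (Y,Z)\rangle^2}$. The derivatives of this cutoff contribute terms of order $O(\sqrt\epsilon)$ times polynomially growing factors, and these must be shown to vanish as $\epsilon\to0$. To control the oscillatory integrals uniformly in $\epsilon$, I would repeatedly integrate by parts with the operator
\[
\bigl(1+|Y|^2+|Z|^2\bigr)^{-1}\Bigl(1+\bigl(\tfrac{h}{4\pi}\bigr)^2\Delta_{Y,Z}\Bigr)
\]
applied to the phase. This trades polynomial growth for derivatives, and the requirement $q>K+M+4d$ ensures that enough derivatives are available for absolute convergence. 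The continuity of $s\,L_\varphi(st)\psi$ at $s=0$ follows because the singular term $s\cdot(st)^{-1}M_0(\varphi)=t^{-1}M_0(\varphi)$ remains bounded.
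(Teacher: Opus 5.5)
Your proof follows the paper's. Part (1) is Fourier inversion in $(\eta,\zeta)$. Part (2) differentiates in the dilation parameter; the paper's $I(s)$ with $st$ is your $F(\tau)$ with $\tau=st$. You then remove the linear factors in $Y,Z$ by integrating by parts against the phase, recognize $L_\varphi$, take the value at $0$ from part (1), and justify everything with a cutoff that is removed at the end.

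One slip needs fixing. Since $\sigma(Y,Z)=\eta\cdot z-y\cdot\zeta$, the correct identities are $y_j e^{-\frac{4\pi i}{h}\sigma}=\frac{h}{4\pi i}\partial_{\zeta_j}e^{-\frac{4\pi i}{h}\sigma}$ and $z_j e^{-\frac{4\pi i}{h}\sigma}=-\frac{h}{4\pi i}\partial_{\eta_j}e^{-\frac{4\pi i}{h}\sigma}$. Your integration-by-parts operator likewise needs $1-(\frac{h}{4\pi})^2\Delta_{Y,Z}$, because $\Delta_{Y,Z}e^{-\frac{4\pi i}{h}\sigma}=-(\frac{4\pi}{h})^2(|Y|^2+|Z|^2)e^{-\frac{4\pi i}{h}\sigma}$.

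The sign matters. Combine your signs with the correct momentum identities $\eta_j e^{-\frac{4\pi i}{h}\sigma}=-\frac{h}{4\pi i}\partial_{z_j}e^{-\frac{4\pi i}{h}\sigma}$ and $\zeta_j e^{-\frac{4\pi i}{h}\sigma}=\frac{h}{4\pi i}\partial_{y_j}e^{-\frac{4\pi i}{h}\sigma}$. Then the $\tau\varphi\,\sigma(\partial_U,\partial_W)\psi$ contributions from the position factors $y,z$ would exactly cancel those from the momentum factors $\eta,\zeta$. With the correct signs, each of $\partial_{u_j}\partial_{\nu_j}$ and $\partial_{\mu_j}\partial_{w_j}$ arises once from a position factor and once from a momentum factor, with the same sign. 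That is where the coefficient $2$ in $L_\varphi$ comes from; no symmetrization is involved.
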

        
	\begin{proof}
        The semiclassical parameter in \eqref{eq:oscillatory_integral_id} plays a trivial role. In fact, the proof follows from the standard Fourier inversion formula (see, e.g., \cite[Lemma 2.1]{iftimie2007magnetic}). 

For the second part, set
\begin{equation*}
	I(s)
	:=
	\left(\frac{2}{h}\right)^{2d}
	\iint_{\Xi\times\Xi}
	e^{-\frac{4\pi i}{h}\sigma(Y,Z)}
	\varphi(x,y,z)
	\psi(X-stY,X-stZ)\dd Y\d Z,
\end{equation*}
where the integration is understood in the sense of tempered
distributions.

To justify the calculation, choose
$\chi\in C_c^\infty(\Xi\times\Xi)$ such that $\chi=1$ in a
neighborhood of the origin, and insert
$\chi(\varepsilon Y,\varepsilon Z)$ into the definition of $I(s)$.
For each $\varepsilon>0$, differentiation and integration by parts are
justified in the usual sense. The assumptions
\begin{equation*}
	q>K+M+4d,
	\qquad
	\varphi\in C^{q+1}(\rr^{3d}),
	\qquad
	\psi\in C^{q+2}(\Xi\times\Xi),
\end{equation*}
together with the polynomial-growth bounds, allow us to integrate by
parts repeatedly against the exponential factor. Consequently, the
regularized expressions converge in the sense of tempered
distributions as $\varepsilon\to0$, and all terms in which a
derivative falls on the cutoff tend to zero. Thus the following
calculations are valid in the sense of tempered distributions.

Differentiating with respect to $s$, we obtain
\begin{multline*}
	I'(s)
	=
	-t\left(\frac{2}{h}\right)^{2d}
	\iint_{\Xi\times\Xi}
	e^{-\frac{4\pi i}{h}\sigma(Y,Z)}
	\varphi(x,y,z)
	\\
	{}\times
	\left(
	Y\cdot\partial_U+Z\cdot\partial_W
	\right)
	\psi(X-stY,X-stZ)\dd Y\d Z.
\end{multline*}
We have
\begin{equation*}
	y_j e^{-\frac{4\pi i}{h}\sigma(Y,Z)}
	=
	\frac{h}{4\pi i}
	\partial_{\zeta_j}
	e^{-\frac{4\pi i}{h}\sigma(Y,Z)},
	\qquad
	z_j e^{-\frac{4\pi i}{h}\sigma(Y,Z)}
	=
	-\frac{h}{4\pi i}
	\partial_{\eta_j}
	e^{-\frac{4\pi i}{h}\sigma(Y,Z)},
\end{equation*}
and
\begin{equation*}
	\eta_j e^{-\frac{4\pi i}{h}\sigma(Y,Z)}
	=
	-\frac{h}{4\pi i}
	\partial_{z_j}
	e^{-\frac{4\pi i}{h}\sigma(Y,Z)},
	\qquad
	\zeta_j e^{-\frac{4\pi i}{h}\sigma(Y,Z)}
	=
	\frac{h}{4\pi i}
	\partial_{y_j}
	e^{-\frac{4\pi i}{h}\sigma(Y,Z)}.
\end{equation*}
Integrating by parts in $y,\eta,z,\zeta$, we obtain
\begin{multline*}
	\iint_{\Xi\times\Xi}
	e^{-\frac{4\pi i}{h}\sigma(Y,Z)}
	\varphi(x,y,z)
	\left(
	Y\cdot\partial_U+Z\cdot\partial_W
	\right)
	\psi(X-stY,X-stZ)\dd Y\d Z
	\\
	=
	-hst
	\iint_{\Xi\times\Xi}
	e^{-\frac{4\pi i}{h}\sigma(Y,Z)}
	\bigl(L_\varphi(st)\psi\bigr)(X-stY,X-stZ)
	\dd Y\d Z.
\end{multline*}
It follows that
\begin{equation*}
	I'(s)
	=
	h\left(\frac{2}{h}\right)^{2d}t^2s
	\iint_{\Xi\times\Xi}
	e^{-\frac{4\pi i}{h}\sigma(Y,Z)}
	\bigl(L_\varphi(st)\psi\bigr)(X-stY,X-stZ)
	\dd Y\d Z.
\end{equation*}

Moreover,
\begin{equation*}
	sL_\varphi(st)
	=
	\frac{1}{4\pi i}
	\left(
	2s\varphi\,\sigma(\partial_U,\partial_W)
	+
	\frac{1}{t}M_0(\varphi)
	\right),
\end{equation*}
so the right-hand side has a continuous extension to $s=0$ in the
sense of tempered distributions. By
\eqref{eq:oscillatory_integral_id},
\begin{equation*}
	I(0)=\varphi(x,0,0)\psi(X,X).
\end{equation*}
Integrating the preceding identity in $s$ therefore yields
\begin{equation*}
\begin{aligned}
	I(1)
	={}&
	\varphi(x,0,0)\psi(X,X)
	\\
	&+
	h\left(\frac{2}{h}\right)^{2d}t^2
	\int_0^1s
	\iint_{\Xi\times\Xi}
	e^{-\frac{4\pi i}{h}\sigma(Y,Z)}
	\bigl(L_\varphi(st)\psi\bigr)(X-stY,X-stZ)
	\dd Y\d Z\d s.
\end{aligned}
\end{equation*}
Since $I(1)$ is precisely the left-hand side of the second statement,
the proof is complete.	\end{proof}

	To any differential operator $P:= \sum c_{\alpha, \beta} \partial_U^\alpha \partial_W^\beta$, of order $m$ with respect to the variables $U$ and $W$, we associate another differential operator $M_B(P):= \sum M_B(c_{\alpha\beta}) \partial_U^\alpha \partial_W^\beta$. This operator will evidently have the same form, but will be of order $m+1$.
    
    Next, we introduce some linear partial differential operators by recurrence. For any positive sequence $\{t_j\}$, we define the following sequence of differential operators 
	\begin{equation}\label{Equation:LinearOperatorsInduction}
		\begin{gathered}
			L_0:=\, 1\,,\\
			L_1(t_1):=\, (\omega^B_h)^{-1}L_{\omega^B_h}(t_1)\,,\\
			L_{j+1}(t_1,...,t_{j+1}):=\, L_1(t_{(j+1)})L_j(t_1,...,t_j)+\frac{M_0(L_j(t_1,...,t_j))}{4\pi i\,t_{(j+1)}}\,,
		\end{gathered}
	\end{equation}
	where $t_{(k)}:=t_1t_2...t_k$. Then we have the following theorem.

    \begin{proposition}[Finite expansion of the magnetic Moyal product]
\label{Theorem:AsymptoticExpansion}
Assume $h>0$ and $B\in W^{k+2,\infty}$ for some $k \in \mathbb{N}$. 
For every choice of symbol orders
$m_1,m_2,\widetilde m_1,\widetilde m_2$, there exists an integer
\begin{equation*}
    N_0
    =
    N_0\bigl(
        d,k,m_1,m_2,\widetilde m_1,\widetilde m_2
    \bigr)
\end{equation*}
such that the following conclusion holds whenever
\begin{equation}
    f\in S_{N_1}^{m_1,m_2},
    \qquad
    g\in S_{N_2}^{\widetilde m_1,\widetilde m_2},
    \qquad
    N_1,N_2\geq N_0.
\end{equation}
In the sense of tempered
distributions on $\Xi$, one has the identity
\begin{equation}\label{eq:finite-magnetic-Moyal-expansion}
    f\#_B g
    =
    \sum_{j=0}^{k-1}h^jr_j(f,g)
    +
    h^kR_k(f,g;h)\,.
\end{equation}
Here, we have
\begin{equation}
    r_0(f,g):=f(X)g(X)\,,
\end{equation}
and, for $1\leq j\leq k-1$,
\begin{equation}
    r_j(f,g)(X)
    :=
    \int_{[0,1]^j}
    t_1^{2j-1}t_2^{2j-3}\cdots t_j
    \left.
    L_j(t_1,\ldots,t_j)
    \bigl(f(U)g(W)\bigr)
    \right|_{\substack{
        U=W=X\\
        y=z=0
    }}
    \,\mathrm{d}t_1\cdots\mathrm{d}t_j.   
\end{equation}
The remainder is
\begin{multline}
    R_k(f,g;h)(X)
    :=
    \left(\frac{2}{h}\right)^{2d}
    \int_{[0,1]^k}
    t_1^{2k-1}t_2^{2k-3}\cdots t_k
    \iint_{\Xi\times\Xi}
    \operatorname{e}^{-\frac{4\pi i}{h}\sigma(Y,Z)}
    \omega_h^B(x,y,z)\\
    \times
    \left.
    L_k(t_1,\ldots,t_k)
    \bigl(f(U)g(W)\bigr)
    \right|_{\substack{
        U=X-t_{(k)}Y\\
        W=X-t_{(k)}Z
    }}
    \dd Y\d Z\,
    \d t_1\cdots\d t_k\,.
\end{multline}
\end{proposition}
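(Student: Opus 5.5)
The plan is to iterate the expansion lemma (Lemma~\ref{Lemma:ExpansionLemma}) $k$ times, starting from the second form \eqref{eq:magnetic_moyal_product2} of the magnetic Moyal product. We write
\[
(f\#_B g)(X)=\Bigl(\tfrac{2}{h}\Bigr)^{2d}\iint_{\Xi\times\Xi} e^{-\frac{4\pi i}{h}\sigma(Y,Z)}\,\omega^B_h(x,y,z)\,\psi(X-Y,X-Z)\dd Y\d Z,\qquad \psi(U,W):=f(U)g(W).
\]
This is exactly the left-hand side of \eqref{eq:distributional-expansion-lemma} with $\varphi=\omega^B_h$ and $t=1$. Applying part (2) gives the leading term $\omega^B_h(x,0,0)f(X)g(X)$. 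Since $\Gamma^B(\langle x,x,x\rangle)=0$, we have $\omega^B_h(x,0,0)=1$, so this term equals $r_0(f,g)$.

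The remainder of this first step contains the factor $h\,t_1^2\int_0^1 s\,(\dots)\,ds$. We rename $s$ as $t_1$; since the outer $t=1$, the resulting weight is $t_1\cdot t_1^{0}$, which matches $t_1^{2\cdot1-1}$. The integrand is $L_{\omega^B_h}(t_1)\psi$. We factor $\omega^B_h$ back out, writing $L_{\omega^B_h}(t_1)=\omega^B_h L_1(t_1)$, where $L_1=(\omega_h^B)^{-1}L_{\omega_h^B}$ as in \eqref{Equation:LinearOperatorsInduction}. This puts the remainder again in the form $\iint e^{-\frac{4\pi i}{h}\sigma}\,\omega^B_h\,\bigl(L_1(t_1)\psi\bigr)(X-t_1Y,X-t_1Z)$. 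The one subtlety is that the coefficients of $L_1$ now depend on $(x,y,z)$. They involve $\omega^{-1}\partial_{y,z}\omega=\partial_{y,z}\Gamma^B_h$, so at the next step they must be absorbed into the new $\varphi$.

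The inductive step proceeds as follows. Suppose that after $j$ steps the remainder is
\[
h^j\int_{[0,1]^j} t_1^{2j-1}\cdots t_j\Bigl(\tfrac2h\Bigr)^{2d}\iint e^{-\frac{4\pi i}{h}\sigma}\,\omega^B_h\,\bigl(L_j\psi\bigr)(X-t_{(j)}Y,X-t_{(j)}Z).
\]
We expand $L_j=\sum c_{\alpha\beta}(x,y,z)\partial_U^\alpha\partial_W^\beta$ and apply part (2) termwise with $\varphi=\omega^B_h c_{\alpha\beta}$, $t=t_{(j)}$, and the auxiliary function $\partial^\alpha_U\partial^\beta_W\psi$. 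This produces two pieces.

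The main term is evaluated at $y=z=0$, $U=W=X$. Since $\omega^B_h(x,0,0)=1$, it equals the integrand of $r_j$, contributing $h^j r_j(f,g)$.

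For the new remainder, we use $L_\varphi(t)=\frac{1}{4\pi i}\{2\varphi\sigma+t^{-1}M_0(\varphi)\}$ together with the Leibniz identity $M_0(\omega c)=\omega\bigl(M_B(c)\bigr)$ with $M_B(c)=\omega^{-1}M_0(\omega c)=c\,(\omega^{-1}M_0\omega)+M_0(c)$. The summed operator then equals $\omega^B_h$ times $L_1(t_{(j+1)})L_j+\frac{M_0(L_j)}{4\pi i\,t_{(j+1)}}=L_{j+1}$. For this to hold, the new $s$ must be called $t_{j+1}$ and we set $t\,s=t_{(j+1)}$. The prefactor becomes $h^{j+1}t_{(j)}^2 t_{j+1}$. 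Multiplying it into $t_1^{2j-1}\cdots t_j$ gives $t_1^{2j+1}t_2^{2j-1}\cdots t_j^{3}t_{j+1}$, which is precisely the weight required at level $j+1$. Checking this exponent bookkeeping against the recurrence \eqref{Equation:LinearOperatorsInduction} is routine but must be done carefully. Stopping after $k$ steps and keeping the final remainder unexpanded gives $h^kR_k$, so \eqref{eq:finite-magnetic-Moyal-expansion} follows.

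The main obstacle is verifying the hypotheses of part (2) at every stage. These are polynomial growth of $\partial^\alpha_{y,z}(\omega^B_h c_{\alpha\beta})$ up to order $q+1$, and of $\partial_{U,W}\partial^{\alpha}\partial^\beta\psi$ up to order $q+2$, with $q>K+M+4d$. For the first, the lemma on the estimate of the flux gives derivatives of $\omega^B_h$ of order $N$ bounded by $h^{-N}(\langle y\rangle+\langle z\rangle)^{2N}$. The coefficients of $L_j$ are polynomials in derivatives of $\Gamma^B_h$, which by \eqref{eq:general_partials_Gamma^B} grow at most like $(1+|y|)(1+|z|)$ per factor, so $K$ grows linearly in $j$ and the derivative count. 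Because the lemma only controls $h$-dependence through constants, we keep $h>0$ fixed here, which the statement allows. For the second, symbols in $S^{m_1,m_2}_{N}$ have derivatives bounded by $\langle x\rangle^{m_1}\langle v\rangle^{m_2}$ (or better), giving $M=\max(m_1,0)+\dots$ growth. The total count of derivatives consumed is then at most $k(q+2)$ on $f,g$ and roughly $k+q+1$ on $B$. The latter is where a clean statement needs $B\in W^{k+2,\infty}$; if it is insufficient I would reduce $q$ by exploiting that only the first-order $y,z$-derivatives of $\varphi$ enter $M_0$. Choosing $N_0\ge k(q+2)$ with $q$ depending on $d,k,m_i,\widetilde m_i$ closes the argument. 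Each identity holds in $\mathcal S'$ via the cutoff $\chi(\varepsilon Y,\varepsilon Z)$ regularization already justified in Lemma~\ref{Lemma:ExpansionLemma}.
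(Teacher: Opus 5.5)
Your proposal is correct and follows the paper's proof essentially step for step. The base case uses $\varphi=\omega_h^B$, $t=1$ and $\omega_h^B(x,0,0)=1$. The inductive step applies Lemma~\ref{Lemma:ExpansionLemma} termwise with $\varphi=\omega_h^B a_{\alpha,\beta}$, $t=t_{(j)}$, $s=t_{j+1}$, and then regroups via $M_B(a)=aM_B(1)+M_0(a)$ into $L_{j+1}$, with exactly the exponent bookkeeping you describe.

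You ask whether $B\in W^{k+2,\infty}$ supplies the $q+1$ derivatives of $\varphi$ required by part (2) of the lemma; the paper does not settle this either, asserting only that the choice of $N_0$ justifies each application. Your suggested fix would not help, however. Those extra derivatives of $\varphi$ are used up by the integrations by parts in $y,z$ that make the oscillatory integrals converge, not by the first-order structure of $M_0$.
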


    \begin{proof}
We prove the proposition by induction. By the choice of $N_0$, all
applications of Lemma~\ref{Lemma:ExpansionLemma} below are justified in
the sense of tempered distributions.

Let $k=1$, take
$\varphi(x,y,z)=\omega_h^B(x,y,z)$, $\psi=f\otimes g$, $t=1$, and
$s=t_1$. Since $\omega_h^B(x,0,0)=1$,
Lemma~\ref{Lemma:ExpansionLemma} gives
\begin{multline*}
(f\#_B g)(X)
=\,
f(X)g(X)
\\
+
h\left(\frac{2}{h}\right)^{2d}
\int_0^1t_1
\iint_{\Xi\times\Xi}
\mathrm{e}^{-\frac{4\pi i}{h}\sigma(Y,Z)}
\left.
L_{\omega_h^B}(t_1)(f\otimes g)(U,W)
\right|_{\substack{
U=X-t_1Y\\
W=X-t_1Z
}}
\dd Y\d Z\d t_1.
\end{multline*}
Notice, by definition, we have $L_1(t_1)=(\omega^B_h)^{-1}L_{\omega^B_{h}}(t_1)$, then it follows
we
\begin{align*}
(f\#_B g)(X)
={}&
f(X)g(X)
\\
&+
h\left(\frac{2}{h}\right)^{2d}
\int_0^1t_1
\iint_{\Xi\times\Xi}
\mathrm{e}^{-\frac{4\pi i}{h}\sigma(Y,Z)}
\omega_h^B(x,y,z)\\
&\, \times
\left.
L_1(t_1)\bigl(f(U)g(W)\bigr)
\right|_{\substack{
U=X-t_1Y\\
W=X-t_1Z
}} \dd Y\d Z\d t_1
\\
={}&
r_0(f,g)+hR_1(f,g;h).
\end{align*}
Thus, the proposition holds for $k=1$.

Suppose now that the proposition holds for some $k\geq1$. We decompose
the remainder $R_k$. By the definition of $L_k$, it can be written in
the form
\begin{equation*}
L_k(t_1,\ldots,t_k)
=
\sum_{\alpha,\beta}
a_{\alpha,\beta}(x,y,z,t_1,\ldots,t_k)\,
\partial_U^\alpha \partial_W^\beta.
\end{equation*}
Consequently, we have
\begin{equation*}
L_k(t_1,\ldots,t_k)\bigl(f(U)g(W)\bigr)
=
\sum_{\alpha,\beta}
a_{\alpha,\beta}(x,y,z,t_1,\ldots,t_k)\,
(\partial^\alpha f)(U)(\partial^\beta g)(W).
\end{equation*}

For each pair $(\alpha,\beta)$, take $\varphi=\omega_h^Ba_{\alpha,\beta}, \psi=(\partial^\alpha f)\otimes(\partial^\beta g),
t=t_{(k)},
s=t_{k+1}$,
and apply Lemma~\ref{Lemma:ExpansionLemma}. Using the fact
$\omega_h^B(x,0,0)=1$, we obtain
\begin{align*}
R_k(f,g;h)
={}&
\int_{[0,1]^k}
t_1^{2k-1}t_2^{2k-3}\cdots t_k
\\
&\quad\times
\left.
L_k(t_1,\ldots,t_k)\bigl(f(U)g(W)\bigr)
\right|_{\substack{
U=W=X\\
y=z=0
}}
\,\mathrm{d}t_1\cdots\mathrm{d}t_k
\\
&+
h\left(\frac{2}{h}\right)^{2d}
\sum_{\alpha,\beta}
\int_{[0,1]^{k+1}}
t_1^{2k+1}t_2^{2k-1}\cdots t_k^3t_{k+1}
\\
&\quad\times
\iint_{\Xi\times\Xi}
\mathrm{e}^{-\frac{4\pi i}{h}\sigma(Y,Z)}
\\
&\quad\times
\left.
L_{\omega_h^Ba_{\alpha,\beta}}
\bigl(t_{(k+1)}\bigr)
\bigl((D^\alpha f)(U)(D^\beta g)(W)\bigr)
\right|_{\substack{
U=X-t_{(k+1)}Y\\
W=X-t_{(k+1)}Z
}}
\\
&\quad\times
\mathrm{d}Y\,\mathrm{d}Z\,
\mathrm{d}t_1\cdots\mathrm{d}t_{k+1}.
\end{align*}
The first term on the right-hand side is $r_k(f,g)$.

By the definition of $L_\varphi$,
\begin{align*}
L_{\omega_h^Ba_{\alpha,\beta}}
\bigl(t_{(k+1)}\bigr)
={}&
\frac{1}{4\pi i}
\left\{
2\omega_h^Ba_{\alpha,\beta}
\sigma(\partial_U,\partial_W)
+
\frac{
M_0(\omega_h^Ba_{\alpha,\beta})
}{
t_{(k+1)}
}
\right\}
\\
={}&
\frac{1}{4\pi i}
\left\{
2\omega_h^Ba_{\alpha,\beta}
\sigma(\partial_U,\partial_W)
+
\frac{
\omega_h^BM_B(a_{\alpha,\beta})
}{
t_{(k+1)}
}
\right\}.
\end{align*}
Moreover, the product rule gives
\begin{equation*}
M_B(a_{\alpha,\beta})
=
a_{\alpha,\beta}M_B(1)
+
M_0(a_{\alpha,\beta}).
\end{equation*}
It follows that
\begin{align*}
L_{\omega_h^Ba_{\alpha,\beta}}
\bigl(t_{(k+1)}\bigr)
={}&
\frac{\omega_h^B}{4\pi i}
\left\{
2a_{\alpha,\beta}
\sigma(\partial_U,\partial_W)
+
\frac{
a_{\alpha,\beta}M_B(1)
+
M_0(a_{\alpha,\beta})
}{
t_{(k+1)}
}
\right\}
\\
={}&
\omega_h^B
\left\{
a_{\alpha,\beta}L_1\bigl(t_{(k+1)}\bigr)
+
\frac{
M_0(a_{\alpha,\beta})
}{
4\pi i\,t_{(k+1)}
}
\right\}.
\end{align*}

Since $a_{\alpha,\beta}$ does not depend on the variables $U$ and $W$,
summing over $\alpha$ and $\beta$ gives
\begin{align*}
R_k(f,g;h)
={}&
r_k(f,g)
\\
&+
h\left(\frac{2}{h}\right)^{2d}
\int_{[0,1]^{k+1}}
t_1^{2k+1}t_2^{2k-1}\cdots t_k^3t_{k+1}
\\
&\quad\times
\iint_{\Xi\times\Xi}
\mathrm{e}^{-\frac{4\pi i}{h}\sigma(Y,Z)}
\omega_h^B(x,y,z)
\\
&\quad\times
\left.
\left\{
L_1\bigl(t_{(k+1)}\bigr)
L_k(t_1,\ldots,t_k)
+
\frac{
M_0\bigl(L_k(t_1,\ldots,t_k)\bigr)
}{
4\pi i\,t_{(k+1)}
}
\right\}
\bigl(f(U)g(W)\bigr)
\right|_{\substack{
U=X-t_{(k+1)}Y\\
W=X-t_{(k+1)}Z
}}
\\
&\quad\times
\mathrm{d}Y\,\mathrm{d}Z\,
\mathrm{d}t_1\cdots\mathrm{d}t_{k+1}.
\end{align*}
By the recursive definition of $L_{k+1}$, this is
\begin{equation*}
R_k(f,g;h)=r_k(f,g)+hR_{k+1}(f,g;h).
\end{equation*}
Therefore, we have
\begin{align*}
f\#_B g
&=
\sum_{j=0}^{k-1}h^jr_j(f,g)
+
h^kR_k(f,g;h)
=
\sum_{j=0}^{k}h^jr_j(f,g)
+
h^{k+1}R_{k+1}(f,g;h).
\end{align*}
Thus, the proposition holds for $k+1$, and the proof is complete.
\end{proof}
    
	Next we only consider the asymptotic expansion under the condition that $f(X)=\frac{1}{2}\snorm{v}^2$ and $g$ is the solution of Cauchy problem of the magnetic Vlasov--Poisson equation with initial data which satisfy the statements listed in Theorem \ref{Theorem:PropagationOfRegularity}. Then we obtain the following proposition.

    \begin{proposition}\label{Propositon:QuantizationOfCommutator}
        Suppose $f$ is the solution of Cauchy problem of magnetic Vlasov--Poisson
        equation with initial data satisfy the conditions stated in Theorem \ref{Theorem:PropagationOfRegularity}, then we have
        \begin{equation}
            \lb\tfrac{1}{2}\snorm{v}^2\rb\#_B f-f\#_B\lb\tfrac{1}{2}\snorm{v}^2\rb=-\frac{\hbar}{i}\left\{\tfrac{1}{2}\snorm{v}^2,f\right\}_B+\hbar^2\widetilde{R}\,,
        \end{equation}
        where the remainder term satisfies that for any multi-index $\alpha$, $\beta\in\mathbb{N}^d$, $\snorm{\alpha}\,\text{,}\snorm{\beta}\le m$, 
        \begin{equation}
            \begin{aligned}
                \snorm{\partial^\alpha_x\partial^\beta_v\widetilde{R}(x,v)}\le C(d,\alpha,\beta, B)\lal x\ral^{-d-2-\snorm{\alpha}}\lal v\ral^{-d-2-\snorm{\beta}}\,.
            \end{aligned}
        \end{equation} 
    \end{proposition}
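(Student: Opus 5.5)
I will apply the finite expansion of Proposition~\ref{Theorem:AsymptoticExpansion} with $k=3$ to both products $(\tfrac12|v|^2)\#_B f$ and $f\#_B(\tfrac12|v|^2)$, and subtract. Since $\tfrac12|v|^2\in S^{0,2}_N$ for every $N$, and $f$ has the exponential weights and $m$ derivatives from Theorem~\ref{Theorem:PropagationOfRegularity}, the integrability hypotheses hold once $m\ge N_0$. The zeroth-order terms $r_0$ cancel, because both products give the pointwise product $\tfrac12|v|^2 f$.

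\textbf{First-order term.} Next I compute $r_1$ explicitly. By the recursion \eqref{Equation:LinearOperatorsInduction}, $L_1(t_1)=(\omega^B_h)^{-1}L_{\omega^B_h}(t_1)$. Evaluating at $y=z=0$ with \eqref{Equation:ResultOfMagneticIntegral}, the first derivatives of $\omega^B_h$ vanish there. Hence $r_1(f,g)=\frac{1}{4\pi i}\int_0^1 t_1\cdot 2\,\sigma(\partial_U,\partial_W)(f\otimes g)|_{U=W=X}\,dt_1$, which is the usual Poisson bracket up to the factor $\frac{1}{4\pi i}$. The magnetic part of the bracket, $B_{kj}\partial_{v_k}\partial_{v_j}$, enters at the next order through the mixed term $\partial_{y_j}\partial_{z_k}\Gamma^B_h=-\frac{4\pi i}{h}B_{jk}(x)$. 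The $1/h$ in this term upgrades it from $h^2$ to order $h$. I will therefore track, inside $h^2 r_2$, the piece of $M_0(L_1)$ that is quadratic in first derivatives of $\omega^B_h$ or contains $\partial_y\partial_z\omega^B_h$ at the origin. This piece contributes $h\cdot\frac{1}{4\pi i}\sum B_{kj}\partial_{v_k}(\tfrac12|v|^2)\partial_{v_j}f$, with the correct sign after antisymmetrization.

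Collecting terms and using $h=2\pi\hbar$, the antisymmetrized order-$h$ part should equal $-\frac{\hbar}{i}\{\tfrac12|v|^2,f\}_B$. The symmetric part of $r_1$, and the symmetric part of the genuinely $O(h^2)$ terms, cancel in the commutator. Because $\partial_v^3(\tfrac12|v|^2)=0$, the non-magnetic remainder at order $h^2$ vanishes identically, exactly as in the flat Moyal case.

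\textbf{Remainder bound (main obstacle).} It remains to bound $\widetilde R$, which collects the leftover order-$h^2$ terms, with the magnetic $1/h$ factors already accounted for, together with $h^{-2}\cdot h^3R_3$. I expect this to be the hardest step, because each derivative of $\omega^B_h$ brings $h^{-1}$ and polynomial growth $(\langle y\rangle+\langle z\rangle)^{2N}$ by the flux estimate lemma. To handle it, I will rescale $Y,Z$ by $h$ so that the factor $h^{-|\cdot|}$ is exactly compensated. Each $h^{-1}$ is always paired with a $y$ or $z$, and $y,z$ correspond to $\partial_\zeta,\partial_\eta$ of the phase, which produce a factor $h$.

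I will then integrate by parts in the oscillatory integral to trade the polynomial growth in $(Y,Z)$ for derivatives of $f$, up to order $m$. The derivatives of $\tfrac12|v|^2$ are at most quadratic and the derivatives of $B$ are bounded, so the result is controlled by weighted derivatives of $f$. Finally, the decay $\langle x\rangle^{-d-2-|\alpha|}\langle v\rangle^{-d-2-|\beta|}$ follows from the exponential weights $e^{\mu\langle x\rangle+\lambda\langle v\rangle}$ in Theorem~\ref{Theorem:PropagationOfRegularity}, which dominate any polynomial.
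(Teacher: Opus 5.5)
Your route is the paper's. You expand both products to third order, cancel $r_0$, compute $r_1$ explicitly, and recover the magnetic part of the bracket from the unpaired factor $\partial_{y_j}\partial_{z_k}\Gamma^B_h=-\frac{4\pi i}{h}B_{jk}(x)$ inside $h^2r_2$. You then bound $h^3R_3$ by integration by parts.

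The gap is in the $h$-bookkeeping of that last step, which you correctly flag as the main obstacle but resolve with a false claim. It is not true that every $h^{-1}$ comes paired with a factor $y$ or $z$, and your own first-order computation is a counterexample. Since $B$ is antisymmetric, $\Gamma^B_h=-\frac{8\pi i}{h}\sum_{m,n}y_mz_n\beta_{mn}(x,y,z)$ with $\beta_{mn}$ bounded together with its derivatives. First derivatives keep a factor of $y$ or $z$, and so do pure $y$- or pure $z$-derivatives of $\Gamma^B_h$. However, every derivative containing at least one $\partial_y$ and one $\partial_z$ has a term $h^{-1}\partial^\gamma\beta$. There is nothing in that term to convert into $\partial_\zeta$ or $\partial_\eta$ of the phase.

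Rescaling $Y,Z$ does not help either. The flux-lemma bound $h^{-N}(\lal y\ral+\lal z\ral)^{2N}$ still carries $h^{-N}$ after $y\mapsto hy$, $z\mapsto hz$, because that bound has already discarded the information about which $h^{-1}$ is paired. Concretely, $L_3$ contains the summand $\sigma(\partial_U,\partial_W)M_0(L_1)$, up to constants and powers of the $t_j$. Its coefficient is a mixed second derivative of $\Gamma^B_h$, so it carries a free $h^{-1}$. If your claim held, you would get $h^3R_3(f,g)=O(h^3)$ for each product separately, and that is not what the integrand allows.

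What is needed is a count of these free factors in $L_3$; the paper encodes this in its three types of coefficients, the third carrying $h^{-|\alpha''+\beta''|-1}$.
\begin{itemize}
\item In the recursion, left multiplication by $L_1$ only adds $\sigma(\partial_U,\partial_W)$ or first-derivative coefficients, which are paired.
\item A free $h^{-1}$ is created only when $M_0$ hits a first-derivative coefficient. Differentiating an already higher-order coefficient creates no new one.
\item $L_3$ involves at most two applications of $M_0$.
\item A monomial with two free factors first arises when $M_0$ hits the first-derivative factor of a coefficient of $L_1M_0(L_1)\subset L_3$, which happens only in $L_4$.
\end{itemize}
Hence every monomial of $L_3$ has at most one free $h^{-1}$, so $h^3R_3=O(h^2)$. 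That is exactly the $\hbar^2$ in the statement, with no margin, so this count is the heart of the remainder estimate and cannot be treated as a detail.

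A smaller point concerns decay of $\widetilde R$ in $X$. It does not follow from the exponential weights on $f$ alone, because the symbols are evaluated at $X-tY$ and $X-tZ$. You also need Peetre's inequality $\lal x-ty\ral^{a}\le C\lal x\ral^{a}\lal y\ral^{|a|}$. You further need enough integrations by parts to produce integrable weights $\lal Y\ral^{-A}$, $\lal Z\ral^{-A}$ with $A>d+|a|$, as in the paper's weighted convolution estimate.
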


    \begin{proof}
    Then it's easy to see that $\frac{1}{2}\snorm{v}^2\in S^{0,2}_{\infty}$ and we can regard $f$ is an element in $S^{-\infty,-\infty}_{m}$ as $d=3$, since $f$ only has finite regularity. In the following discussions, we assume $f\in S^{m_1,m_2}_{N_1}$ and $g\in S^{\widetilde{m_1},\widetilde{m_2}}_{N_2}$ with $N_1$, $N_2$ are sufficiently large positive integers. And we only consider the first three terms given by the asymptotic expansion of $f\#_B g$. 
	
	We first calculate the exact formula of $r_1$ and $r_2$. According to the recursive definition~\eqref{Equation:LinearOperatorsInduction}, we have $L_0=1$ and
	\begin{align*}
			L_1(t_1)=&\, \frac{1}{2\pi i}\sigma(\partial_U,\partial_W)+\frac{1}{4\pi i\,t_1}\sum_{j=1}^d\lb\partial_{y_j}\Gamma^B_h\,\partial_{\nu_j}-\partial_{z_j}\Gamma^B_h\,\partial_{\mu_j}\rb \\
            =:&\, \frac{1}{2\pi i}\sigma(\partial_U,\partial_W)+\frac{1}{4\pi i\,t_1}Q_B\,.
	\end{align*}
	Hence, by \eqref{Equation:ResultOfMagneticIntegral}, we have
	\begin{equation*}
		\begin{aligned}
			L_1(t_1)(f\otimes g)(U,W)\big|_{\substack{U=W=X\\y=z=0}}=-\frac{1}{2\pi i}
			\left\{f,g\right\}\,,
		\end{aligned}
	\end{equation*}
	and consequently $r_1=-\frac{1}{4\pi i}\left\{f,g\right\}$. 

Next, we calculate $L_2(t_1,t_2)$. Notice $M_0(\sigma(\partial_U,\partial_W))=0$; then it follows that
    \begin{equation*}
        \begin{aligned}
        M_0\bigl(L_1(t_1)\bigr)
        =
        \frac{1}{4\pi i\,t_1}
        \sum_{j,k=1}^d
        \Big(
        &\partial_{y_k}\partial_{y_j}\Gamma_h^B\,
        \partial_{\nu_k}\partial_{\nu_j}
        -
        \partial_{z_k}\partial_{y_j}\Gamma_h^B\,
        \partial_{\mu_k}\partial_{\nu_j}
        \\
        &-
        \partial_{y_k}\partial_{z_j}\Gamma_h^B\,
        \partial_{\nu_k}\partial_{\mu_j}
        +
        \partial_{z_k}\partial_{z_j}\Gamma_h^B\,
        \partial_{\mu_k}\partial_{\mu_j}
        \Big) =:\frac{1}{4\pi i\, t_1}H_B.
        \end{aligned}
    \end{equation*}
	 Applying \eqref{Equation:ResultOfMagneticIntegral}, we also get
     \begin{align*}
         M_0(L_1(t_1))(f\otimes g)(U,W)\big|_{\substack{U=W=X\\y=z=0}}=&\, M_0(L_1(t_1))(f\otimes g)\big|_{\substack{y=z=0}}(X, X)\\
         =&\, -\frac{2}{h\,t_1}\sum^d_{j, k=1} B_{jk}(x)(\partial_{\xi_j}f)(X) (\partial_{\xi_k}g)(X)\,.
     \end{align*}
	Hence, direct computation yields
    \begin{equation*}
        L_1(t_{(2)})L_1(t_1) = -\frac{1}{4\pi^2}\sigma(\partial_U, \partial_W)^2-\frac{1}{8\pi^2 t_1}\(1+\frac{1}{t_2}\)Q_B\sigma(\partial_U, \partial_W)-\frac{1}{16\pi^2 t_1^2t_2}Q_B^2
    \end{equation*}
    since $[\sigma(\partial_U, \partial_W), Q_B]=0$. This means that 
    \begin{equation*}
        L_2(t_1, t_2) = -\frac{1}{4\pi^2}\sigma(\partial_U, \partial_W)^2-\frac{1}{8\pi^2 t_1}\(1+\frac{1}{t_2}\)Q_B\sigma(\partial_U, \partial_W)-\frac{1}{16\pi^2 t_1^2t_2}\(Q_B^2+H_B\)\,,
    \end{equation*}
    and 
	\begin{equation*}
		\begin{aligned}
			r_2(f, g)(X)=&\, \frac{-1}{32\pi^2}\sum_{j,k=1}^d\{\partial_{v_j}\partial_{v_k}f(X)\partial_{x_j}\partial_{x_k}g(X)-\partial_{x_k}\partial_{v_j}f(X)\partial_{x_j}\partial_{v_k}g(X)\\
			&-\partial_{x_j}\partial_{v_k}f(X)\partial_{x_k}\partial_{v_j}g(X)+\partial_{x_j}\partial_{x_k}f(X)\partial_{v_j}\partial_{v_k}g(X)\}\\
			&-\frac{1}{4\pi ih}\sum_{j,k=1}^dB_{jk}(x)\partial_{v_j}f(X)\partial_{v_k}g(X)\,.
		\end{aligned}
	\end{equation*}
	
	If we take $f$ and $g$ as we have stated at the beginning then we have
	\begin{equation*}
			f\#_B g=f(X)g(X)-\tfrac{\hbar}{2i}\left\{\tfrac{1}{2}\snorm{v}^2,g\right\}_B+h^3R_3\left(\tfrac{1}{2}\snorm{v}^2,g\right)\,.
	\end{equation*}
	Next we consider the decay estimate of $R_3(f,g)$. 
It turns out that it is sufficient to analyze the ingredients of the
linear differential operator $L_3$ rather than calculate its
coefficients explicitly. We first have
\begin{equation*}
	M_0\bigl(L_2(t_1,t_2)\bigr)
	=
	\sum_{l=1}^d
	\left(
	\partial_{y_l}L_2(t_1,t_2)\,\partial_{\nu_l}
	-
	\partial_{z_l}L_2(t_1,t_2)\,\partial_{\mu_l}
	\right),
\end{equation*}
where $\partial_{y_l}$ and $\partial_{z_l}$ act only on the
coefficients of $L_2$. Since
$\sigma(\partial_U,\partial_W)$ has constant coefficients, we obtain
\begin{equation*}
\begin{aligned}
	\partial_{y_l}L_2(t_1,t_2)
	={}&
	-\frac{1}{8\pi^2t_1}
	\sigma(\partial_U,\partial_W)\partial_{y_l}Q_B
	-
	\frac{1}{8\pi^2t_1t_2}
	\partial_{y_l}Q_B\,\sigma(\partial_U,\partial_W)
	\\
	&+
	\frac{1}{(4\pi i)^2t_1^2t_2}
	\left(
	(\partial_{y_l}Q_B)Q_B
	+
	Q_B\partial_{y_l}Q_B
	+
	\partial_{y_l}H_B
	\right),
\end{aligned}
\end{equation*}
and similarly,
\begin{equation*}
\begin{aligned}
	\partial_{z_l}L_2(t_1,t_2)
	={}&
	-\frac{1}{8\pi^2t_1}
	\sigma(\partial_U,\partial_W)\partial_{z_l}Q_B
	-
	\frac{1}{8\pi^2t_1t_2}
	\partial_{z_l}Q_B\,\sigma(\partial_U,\partial_W)
	\\
	&+
	\frac{1}{(4\pi i)^2t_1^2t_2}
	\left(
	(\partial_{z_l}Q_B)Q_B
	+
	Q_B\partial_{z_l}Q_B
	+
	\partial_{z_l}H_B
	\right).
\end{aligned}
\end{equation*}

For the other term in $L_3$, we have
\begin{equation*}
\begin{aligned}
	L_1(t_1t_2t_3)L_2(t_1,t_2)
	&=
	\left(
	\frac{1}{2\pi i}\sigma(\partial_U,\partial_W)
	+
	\frac{1}{4\pi it_1t_2t_3}Q_B
	\right)
	\\
	&\quad
	\Bigg(
	\frac{1}{(2\pi i)^2}
	\sigma(\partial_U,\partial_W)^2
	-
	\frac{1}{8\pi^2t_1}
	\sigma(\partial_U,\partial_W)Q_B
	\\
	&\qquad
	-
	\frac{1}{8\pi^2t_1t_2}
	Q_B\sigma(\partial_U,\partial_W)
	+
	\frac{1}{(4\pi i)^2t_1^2t_2}
	\left(Q_B^2+H_B\right)
	\Bigg).
\end{aligned}
\end{equation*}
Consequently, the coefficients of
$L_1(t_1t_2t_3)L_2(t_1,t_2)$ contain derivatives of
$\Gamma_h^B$ of order at most two, while the coefficients of
$M_0(L_2(t_1,t_2))$ contain derivatives of $\Gamma_h^B$ of order at
most three.

	Then by gathering \eqref{Equation:CalculationsAboutMagneticFieldIntegral 1}\eqref{Equation:CalculationsAboutMagneticFieldIntegral 2}
    we can assert that $L_3(t_1,t_2,t_3)$ can be represented as a linear combination of the following three types of linear differential operators:
	\begin{itemize}
	\item $\displaystyle
	\frac{
	D_{\alpha,\beta,\wmu,\wnu,1}(x,y,z)
	}{
	t_1^{q_1}t_2^{q_2}t_3^{q_3}
	}
	\partial_u^\alpha\partial_w^\beta
	\partial_\mu^{\wmu}\partial_\nu^{\wnu},
	$
	where
	$D_{\alpha,\beta,\wmu,\wnu,1}
	\in W^{m-3,\infty}(\rr^{3d})$,
	$q_j\leq7-2j$ for $1\leq j\leq3$,
	$|\alpha+\beta|\leq3$, and
	$|\wmu+\wnu|\leq3$.
	\item
	$\displaystyle
	\frac{
	D_{\alpha,\beta,\wmu,\wnu,2}(x,y,z)
	}{
	t_1^{q_1}t_2^{q_2}t_3^{q_3}
	}
	\frac{
	y^{\alpha'}z^{\beta'}
	}{
	h^{|\alpha'+\beta'|}
	}
	\partial_u^\alpha\partial_w^\beta
	\partial_\mu^{\wmu}\partial_\nu^{\wnu},
	$
	where
	$D_{\alpha,\beta,\wmu,\wnu,2}
	\in W^{m-3,\infty}(\rr^{3d})$,
	$q_j\leq7-2j$ for $1\leq j\leq3$,
	$|\alpha+\beta|\leq3$,
	$|\wmu+\wnu|\leq3$, and
	$|\alpha'+\beta'|\leq3$.
	\item
	$\displaystyle
	\frac{
	D_{\alpha,\beta,\wmu,\wnu,3}(x,y,z)
	}{
	t_1^{q_1}t_2^{q_2}t_3^{q_3}
	}
	\frac{
	y^{\alpha''}z^{\beta''}
	}{
	h^{|\alpha''+\beta''|+1}
	}
	\partial_u^\alpha\partial_w^\beta
	\partial_\mu^{\wmu}\partial_\nu^{\wnu},
	$
	where
	$D_{\alpha,\beta,\wmu,\wnu,3}
	\in W^{m-3,\infty}(\rr^{3d})$,
	$q_j\leq7-2j$ for $1\leq j\leq3$,
	$|\alpha+\beta|\leq3$,
	$|\wmu+\wnu|\leq3$, and
	$|\alpha''+\beta''|\leq2$.
\end{itemize}
	Here we emphasize that $D_{\alpha,\beta,\wmu,\wnu,j}$, $1\le j\le 3$ do not  rely on $h$. With these observations in mind we know that $h^3R_3(f,g)$ can be represented as a linear combination of the following integrals:
\begin{equation*}
	\begin{aligned}
		J_1
		={}&\,h^3\lb\frac{2}{h}\rb^{2d}
		\int_0^1\int_0^1\int_0^1
		t_1^5t_2^3t_3
		\iint_{\Xi\times\Xi}
		\ee^{-\frac{4\pi i}{h}\sigma(Y,Z)}
		\omega_h^B(x,y,z)
		\\
		&\times
		\frac{D_{\alpha,\beta,\wmu,\wnu,1}(x,y,z)}
		{t_1^{q_1}t_2^{q_2}t_3^{q_3}}
		(\partial_x^\alpha\partial_v^{\wmu}f)(X-t_{(3)}Y)
		(\partial_x^\beta\partial_v^{\wnu}g)(X-t_{(3)}Z)
		\dd Y\d Z\d t_1\d t_2\d t_3\,,
		\\
		J_2
		={}&\,h^3\lb\frac{2}{h}\rb^{2d}
		\int_0^1\int_0^1\int_0^1
		t_1^5t_2^3t_3
		\iint_{\Xi\times\Xi}
		\ee^{-\frac{4\pi i}{h}\sigma(Y,Z)}
		\lb\frac{y}{h}\rb^{\alpha'}
		\lb\frac{z}{h}\rb^{\beta'}
		\omega_h^B(x,y,z)
		\\
		&\times
		\frac{D_{\alpha,\beta,\wmu,\wnu,2}(x,y,z)}
		{t_1^{q_1}t_2^{q_2}t_3^{q_3}}
		(\partial_x^\alpha\partial_v^{\wmu}f)(X-t_{(3)}Y)
		(\partial_x^\beta\partial_v^{\wnu}g)(X-t_{(3)}Z)
		\dd Y\d Z\d t_1\d t_2\d t_3\,,
		\\
		J_3
		={}&\,h^2\lb\frac{2}{h}\rb^{2d}
		\int_0^1\int_0^1\int_0^1
		t_1^5t_2^3t_3
		\iint_{\Xi\times\Xi}
		\ee^{-\frac{4\pi i}{h}\sigma(Y,Z)} 		
		\lb\frac{y}{h}\rb^{\alpha''}
		\lb\frac{z}{h}\rb^{\beta''}
		\omega_h^B(x,y,z)
		\\
		&\times
		\frac{D_{\alpha,\beta,\wmu,\wnu,3}(x,y,z)}
		{t_1^{q_1}t_2^{q_2}t_3^{q_3}}
		(\partial_x^\alpha\partial_v^{\wmu}f)(X-t_{(3)}Y)
		(\partial_x^\beta\partial_v^{\wnu}g)(X-t_{(3)}Z)
		\dd Y\d Z\d t_1\d t_2\d t_3\,.
	\end{aligned}
\end{equation*}
Since
\begin{equation*}
	\left(\frac{y}{h}\right)^\alpha
	\left(\frac{z}{h}\right)^\beta
	\ee^{-\frac{4\pi i}{h}(\eta\cdot z-y\cdot\zeta)}
	=
	\frac{(-1)^{|\beta|}}
	{(4\pi i)^{|\alpha+\beta|}}
	\partial_\zeta^\alpha\partial_\eta^\beta
	\ee^{-\frac{4\pi i}{h}(\eta\cdot z-y\cdot\zeta)},
\end{equation*}
integration by parts shows that $h^3R_3(f,g)$ is a finite linear
combination of integrals of the following forms:
\begin{equation*}
\begin{aligned}
	\widetilde{J}_1
	={}&h^3\lb\frac{2}{h}\rb^{2d}
	\int_0^1\int_0^1\int_0^1
	\iint_{\Xi\times\Xi}
	t_1^{p_1}t_2^{p_2}t_3^{p_3}
	\ee^{-\frac{4\pi i}{h}\sigma(Y,Z)}
	\omega_h^B(x,y,z)
	D_{\alpha,\beta,\wmu,\wnu,1}(x,y,z)
	\\
	&\times
	(\partial_x^\alpha\partial_v^{\wmu}f)(X-t_{(3)}Y)
	(\partial_x^\beta\partial_v^{\wnu}g)(X-t_{(3)}Z)
	\,\mathrm{d}Y\,\mathrm{d}Z\,
	\mathrm{d}t_1\,\mathrm{d}t_2\,\mathrm{d}t_3\,,
	\\
	\widetilde{J}_2
	={}&h^3\lb\frac{2}{h}\rb^{2d}
	\int_0^1\int_0^1\int_0^1
	\iint_{\Xi\times\Xi}
	t_1^{p_1}t_2^{p_2}t_3^{p_3}
	\ee^{-\frac{4\pi i}{h}\sigma(Y,Z)}
	\omega_h^B(x,y,z)
	D_{\alpha,\beta,\wmu,\wnu,2}(x,y,z)
	\\
	&\times
	(\partial_x^\alpha\partial_v^{\wmu+\beta'}f)
	(X-t_{(3)}Y)
	(\partial_x^\beta\partial_v^{\wnu+\alpha'}g)
	(X-t_{(3)}Z)
	\,\mathrm{d}Y\,\mathrm{d}Z\,
	\mathrm{d}t_1\,\mathrm{d}t_2\,\mathrm{d}t_3\,,
	\\
	\widetilde{J}_3
	={}&h^2\lb\frac{2}{h}\rb^{2d}
	\int_0^1\int_0^1\int_0^1
	\iint_{\Xi\times\Xi}
	t_1^{p_1}t_2^{p_2}t_3^{p_3}
	\ee^{-\frac{4\pi i}{h}\sigma(Y,Z)}
	\omega_h^B(x,y,z)
	D_{\alpha,\beta,\wmu,\wnu,3}(x,y,z)
	\\
	&\times
	(\partial_x^\alpha\partial_v^{\wmu+\beta''}f)
	(X-t_{(3)}Y)
	(\partial_x^\beta\partial_v^{\wnu+\alpha''}g)
	(X-t_{(3)}Z)
	\,\mathrm{d}Y\,\mathrm{d}Z\,
	\mathrm{d}t_1\,\mathrm{d}t_2\,\mathrm{d}t_3\,.
\end{aligned}
\end{equation*}
Here $p_1,p_2,p_3$ are finite nonnegative integers.

We introduce the following operators, which fix
$\ee^{-\frac{4\pi i}{h}(\eta\cdot z-y\cdot\zeta)}$:
\begin{equation*}
\begin{gathered}
	L_{y,h}
	=
	\lal\frac{y}{h}\ral^{-2}
	\left(
	\operatorname{Id}
	-
	\frac{1}{(4\pi)^2}\Delta_\zeta
	\right),
	\qquad
	L_{\eta,h}
	=
	\lal\eta\ral^{-2}
	\left(
	\operatorname{Id}
	-
	\left(\frac{h}{4\pi}\right)^2\Delta_z
	\right),
	\\
	L_{z,h}
	=
	\lal\frac{z}{h}\ral^{-2}
	\left(
	\operatorname{Id}
	-
	\frac{1}{(4\pi)^2}\Delta_\eta
	\right),
	\qquad
	L_{\zeta,h}
	=
	\lal\zeta\ral^{-2}
	\left(
	\operatorname{Id}
	-
	\left(\frac{h}{4\pi}\right)^2\Delta_y
	\right).
\end{gathered}
\end{equation*}
After applying powers of these operators to the exponential factor
and integrating by parts, their formal adjoints act on the amplitude.
Expanding the resulting powers of
$\operatorname{Id}-(h/(4\pi))^2\Delta_y$ and
$\operatorname{Id}-(h/(4\pi))^2\Delta_z$, we obtain integers
$\ell_y$ and $\ell_z$ satisfying
\begin{equation*}
	0\leq\ell_y\leq N_y,
	\qquad
	0\leq\ell_z\leq N_z.
\end{equation*}
Consequently, $h^3R_3(f,g)$ is a finite linear combination of terms
of the form
\begin{equation*}
\begin{aligned}
	K_j
	={}&
	h^{\alpha(j)}\lb\frac{2}{h}\rb^{2d}
	\int_0^1\int_0^1\int_0^1
	\iint_{\Xi\times\Xi}
	t_1^{q_1}t_2^{q_2}t_3^{q_3}
	\ee^{-\frac{4\pi i}{h}\sigma(Y,Z)}
	\lal\frac{z}{h}\ral^{-2N_\eta}
	\lal\zeta\ral^{-2N_y}
	\\
	&\times
	\left(
	\partial_\eta^{\gamma'}
	\lal\eta\ral^{-2N_z}
	\right)
	\left(
	h^{|\delta'|}
	\partial_y^{\delta'}
	\lal\frac{y}{h}\ral^{-2N_\zeta}
	\right)
	\\
	&\times
	h^{|\delta''|+|\epsilon'|}
	\partial_y^{\delta''}\partial_z^{\epsilon'}
	\left(
	\omega_h^B(x,y,z)
	D_{\alpha,\beta,\wmu,\wnu,j}(x,y,z)
	\right)
	\\
	&\times
	h^{2\ell_y-|\delta'|-|\delta''|}
	h^{2\ell_z-|\epsilon'|}
	(\partial_x^{\rho_x+\delta'''}
	\partial_v^{\rho_v+\gamma''}f)(X-t_{(3)}Y)
	\\
	&\times
	(\partial_x^{\tau_x+\epsilon''}
	\partial_v^{\tau_v+\lambda}g)(X-t_{(3)}Z)
	\dd Y\d Z
	\d t_1\d t_2\d t_3\,,
\end{aligned}
\end{equation*}
where $j=1,2,3$,
\begin{equation*}
	\alpha(1)=\alpha(2)=3,
	\qquad
	\alpha(3)=2,
\end{equation*}
and
\begin{equation*}
	|\gamma'|+|\gamma''|\leq2N_\eta,
	\qquad
	|\delta'|+|\delta''|+|\delta'''|
	\leq2\ell_y,
\end{equation*}
\begin{equation*}
	|\epsilon'|+|\epsilon''|\leq2\ell_z,
	\qquad
	|\lambda|\leq2N_\zeta.
\end{equation*}
All the derivatives appearing here are required to lie within the
finite regularity of $f$, $g$, and $B$. In particular,
\begin{equation*}
	|\delta''|+|\epsilon'|\leq m-3.
\end{equation*}

Set $r:=|\delta''|+|\epsilon'|.$
The estimates for the magnetic factor and the boundedness of the
derivatives of
$D_{\alpha,\beta,\wmu,\wnu,j}$ give, for $0<h\leq1$,
\begin{equation*}
	h^r
	\left|
	\partial_y^{\delta''}\partial_z^{\epsilon'}
	\left(
	\omega_h^B
	D_{\alpha,\beta,\wmu,\wnu,j}
	\right)(x,y,z)
	\right|
	\leq
	C\lal(y,z)\ral^{2r}.
\end{equation*}
Moreover, we have
\begin{equation*}
	\lal(y,z)\ral^{2r}
	\leq
	C
	\lal\frac{y}{h}\ral^{2r}
	\lal\frac{z}{h}\ral^{2r}.
\end{equation*}
Since all the remaining powers of $h$ in the expression for $K_j$
are nonnegative and $\alpha(j)\geq2$, every $K_j$ is bounded
pointwise by a constant times
\begin{equation*}
\begin{aligned}
	I
	={}&
	h^2\lb\frac{2}{h}\rb^{2d}
	\int_0^1\int_0^1\int_0^1
	\iint_{\Xi\times\Xi}
	t_1^{q_1}t_2^{q_2}t_3^{q_3}
	\\
	&\times
	\lal\frac{z}{h}\ral^{-2N_\eta+2r}
	\lal\zeta\ral^{-2N_y}
	\lal\eta\ral^{-2N_z-|\gamma'|}
	\lal\frac{y}{h}\ral^{-2N_\zeta-|\delta'|+2r}
	\\
	&\times
	\lal x-t_{(3)}y\ral^{
	m_1-|\rho_x+\delta'''|}
	\lal v-t_{(3)}\eta\ral^{
	m_2-|\rho_v+\gamma''|}
	\\
	&\times
	\lal x-t_{(3)}z\ral^{
	\widetilde m_1-|\tau_x+\epsilon''|}
	\lal v-t_{(3)}\zeta\ral^{
	\widetilde m_2-|\tau_v+\lambda|}
	\dd Y\d Z
	\d t_1\d t_2\d t_3.
\end{aligned}
\end{equation*}

We use the elementary estimate
\begin{equation}\label{eq:weighted-convolution-estimate}
	\int_{\rr^d}
	\lal y\ral^{-A}\lal x-sy\ral^a
	\,\mathrm{d}y
	\leq
	C_{A,a,d}\lal x\ral^a,
	\qquad 0\leq s\leq1,
\end{equation}
which holds whenever $A>d+|a|$. 
Indeed, this follows directly from the inequality $\langle x+y\rangle^m\leq 2^{|m|/2}\langle x\rangle^m\langle y\rangle^{|m|}$.

After making the changes of variables $y\mapsto hy$ and
$z\mapsto hz$, the Jacobian cancels the factor $h^{-2d}$ in
$(2/h)^{2d}$. We then apply
\eqref{eq:weighted-convolution-estimate} successively in
$y,z,\eta,\zeta$. It is therefore sufficient to choose the finite
integers $N_y,N_\eta,N_z,N_\zeta$ so that, for every term appearing
above,
\begin{equation*}
	2N_\zeta+|\delta'|-2r
	>
	d+
	\left|
	m_1-|\rho_x+\delta'''|
	\right|,
\end{equation*}
\begin{equation*}
	2N_\eta-2r
	>
	d+
	\left|
	\widetilde m_1-|\tau_x+\epsilon''|
	\right|,
\end{equation*}
\begin{equation*}
	2N_z+|\gamma'|
	>
	d+
	\left|
	m_2-|\rho_v+\gamma''|
	\right|,
\end{equation*}
and
\begin{equation*}
	2N_y
	>
	d+
	\left|
	\widetilde m_2-|\tau_v+\lambda|
	\right|.
\end{equation*}
There are only finitely many terms, so such finite integers can be
chosen, provided that $f$, $g$, and $B$ have the corresponding finite
regularity.

The powers $t_1^{q_1}t_2^{q_2}t_3^{q_3}$ are integrable because
$q_1,q_2,q_3$ are nonnegative. We consequently obtain
\begin{equation*}
	\left|h^3R_3(f,g)(X)\right|
	\leq
	Ch^2
	\lal x\ral^{m_1+\widetilde m_1}
	\lal v\ral^{m_2+\widetilde m_2},
	\qquad 0<h\leq1,
\end{equation*}
where $C$ depends on the required finite symbol seminorms of $f$ and
$g$ and on the required $W^{m,\infty}$ norm of $B$.

The same argument applies to $R_3(g,f)$ provided that the
corresponding finite regularity and decay assumptions also hold after
interchanging $f$ and $g$.	    
\end{proof}

	\subsection{Calculations of Hamiltonian and the derivation of the quantized equation}\label{Subsection:ExplicitCalculation}
	The first calculation is about the magnetic Weyl quantization of the Hamiltonian $\frac{1}{2}\snorm{v}^2$.
	\begin{lemma}\label{Lemma:TransformOfHamiltonian}
		Consider the Hamiltonian $\frac{1}{2}\snorm{v}^2$ defined on $\Xi$. Then, in the sense of distributions, we have the following
		\begin{equation*}
			\OPAh\left(\tfrac{1}{2}\snorm{v}^2\right)=\tfrac{1}{2}\left(\opp-\bA\right)^2\,.
		\end{equation*}
	\end{lemma}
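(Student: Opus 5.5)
We compute the integral kernel of $\OPAh(\tfrac12|v|^2)$ directly from the definition and compare it with the kernel of $\tfrac12(\opp-A)^2$, both understood as distributions acting on $u\in\mathcal S(\rr^d)$. Since the symbol is independent of $x$, the definition gives
\[
\brho^A_{\frac12|v|^2}(x,y)=e^{-\frac{2\pi i}{h}\Gamma^A([x,y])}\,\tfrac{h^2}{2}\int_{\rr^d}|v|^2e^{-2\pi i v\cdot(y-x)}\dd v
= e^{-\frac{2\pi i}{h}\Gamma^A([x,y])}\,\Bigl(-\tfrac{\hbar^2}{2}\Delta_y\delta(y-x)\Bigr),
\]
using $\hbar=h/(2\pi)$ and $|v|^2e^{-2\pi i v\cdot(y-x)}=-(4\pi^2)^{-1}\Delta_y e^{-2\pi iv\cdot(y-x)}$. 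Pairing with $u$ and integrating by parts twice in $y$, all derivatives move onto $y\mapsto e^{-\frac{2\pi i}{h}\Gamma^A([x,y])}u(y)$, after which we evaluate at $y=x$. By the Leibniz rule,
\[
\OPAh(\tfrac12|v|^2)u(x)=-\tfrac{\hbar^2}{2}\Bigl[\Delta u+2\nabla_y\phi\cdot\nabla u+(\Delta_y\phi)u\Bigr]_{y=x},
\qquad \phi(x,y):=e^{-\frac{2\pi i}{h}\Gamma^A([x,y])}.
\]
The local regularity $A\in W^{1,\infty}_{\rm loc}$ is enough for these manipulations in the distributional sense; if needed, we first mollify $A$ and pass to the limit.

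The key step is computing the first and second $y$-derivatives of $\Gamma^A([x,y])$ on the diagonal. From the formula for $\nabla_y\Gamma^A([x,y])$ given before Lemma~\ref{lem:magnetic_phase_est}, the term carrying the factor $(y-x)$ vanishes at $y=x$, so
\[
\nabla_y\Gamma^A([x,y])\big|_{y=x}=A(x).
\]
Differentiating once more, we get $\partial_{y_k}\partial_{y_j}\Gamma^A\big|_{y=x}=\int_0^1 s\,(\partial_kA_j+\partial_jA_k)(x)\dd s=\tfrac12(\partial_kA_j+\partial_jA_k)(x)$, and in particular $\Delta_y\Gamma^A\big|_{y=x}=\operatorname{div}A(x)$. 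Hence, with $\tfrac{2\pi}{h}=\tfrac1\hbar$,
\[
\nabla_y\phi\big|_{y=x}=-\tfrac{i}{\hbar}A,\qquad
\Delta_y\phi\big|_{y=x}=-\tfrac{i}{\hbar}\operatorname{div}A-\tfrac{1}{\hbar^2}|A|^2.
\]
This Taylor expansion of the phase to second order is the only step that needs care. We must track the $s$-weights and check that no $B$-dependent term survives on the diagonal. It does not: the antisymmetric part of $\nabla A$ is multiplied by $(y-x)$ and therefore vanishes there.

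Substituting these values yields
\[
\OPAh(\tfrac12|v|^2)u=-\tfrac{\hbar^2}{2}\Delta u+i\hbar A\cdot\nabla u+\tfrac{i\hbar}{2}(\operatorname{div}A)u+\tfrac12|A|^2u.
\]
Finally, we expand $\tfrac12(\opp-A)^2u=\tfrac12\bigl(-\hbar^2\Delta u+i\hbar\operatorname{div}(Au)+i\hbar A\cdot\nabla u+|A|^2u\bigr)$. Using $\operatorname{div}(Au)=(\operatorname{div}A)u+A\cdot\nabla u$, this matches the expression above term by term, which proves the lemma.
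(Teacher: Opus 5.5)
Your proof is correct and follows the paper's argument. You convert $|v|^2$ into $-(4\pi^2)^{-1}\Delta_y$ acting on the oscillatory factor, integrate by parts onto $e^{-\frac{2\pi i}{h}\Gamma^A([x,y])}u(y)$, collapse to the diagonal using $\int e^{-2\pi i v\cdot(y-x)}\,\mathrm{d}v=\delta(y-x)$, and insert $\nabla_y\Gamma^A|_{y=x}=A(x)$ and $\Delta_y\Gamma^A|_{y=x}=\operatorname{div}A(x)$, exactly as the paper does.

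The only cosmetic difference is in how the second derivatives of $\Gamma^A$ are obtained. The paper first rewrites $\partial_{y_k}\Gamma^A([x,y])=A_k(y)+\sum_j\int_0^1 t\,(y_j-x_j)B_{kj}(x+t(y-x))\,\mathrm{d}t$, so that only $\nabla A$ and $\nabla B$ are ever differentiated. You instead differentiate the raw line-integral formula, which formally involves $\nabla^2 A$ off the diagonal (hence your mollification remark), and read off the symmetric part of $\nabla A$ at $y=x$.
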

	\begin{proof}
		We take a Schwartz function $u\in\mathcal{S}(\rr^d)$. Direct calculation gives 
		\begin{equation*}
			\begin{aligned}
				\OPAh\left(\tfrac{1}{2}\snorm{v}^2\right)u
				=&\frac{h^2}{2}\iint_{\rr^d\times\rr^d}e^{-2\pi i\,v\cdot(y-x)}e^{-2\pi i\frac{1}{h}\Gamma^{A}([x,y])}\snorm{v}^2u(y)\,\mathrm{d} y\mathrm{d}v\\
				=&-\frac{\hbar^2}{2}
				\iint_{\rr^d\times\rr^d}
				e^{-2\pi i\,v\cdot(y-x)}
				\Big(
				\Delta_y\left(
				e^{-2\pi i\frac{1}{h}\Gamma^{A}([x,y])}
				\right)
				u(y)
				\\&+2\nabla_y\left(e^{-2\pi i\frac{1}{h}\Gamma^{A}([x,y])}\right)\cdot\nabla u(y)
				+e^{-2\pi i\frac{1}{h}\Gamma^{A}([x,y])}\Delta u(y)
				\Big)
				\,\mathrm{d} y\mathrm{d}v	\,.
			\end{aligned}
		\end{equation*}
		We first consider $\Delta_y\left(e^{-2\pi i\frac{1}{h}\Gamma^{A}([x,y])}\right)$. Direct computation yields
		\begin{equation*}
			\begin{aligned}
				\Delta_y\left(e^{-2\pi i\frac{1}{h}\Gamma^{A}([x,y])}\right)=\left(\left(\frac{2\pi i}{h}\right)^2\snorm{\nabla_y\Gamma^{A}([x,y])}^2-\frac{2\pi i}{h}\Delta_y(\Gamma^{A}([x,y]))\right)e^{-2\pi i\frac{1}{h}\Gamma^{A}([x,y])}\,.
			\end{aligned}
		\end{equation*}
		By a change of variables, we have
		\begin{equation*}
			\begin{aligned}
				\Gamma^{A}([x,y])=\sum^d_{k=1}\int^1_0(y_k-x_k)\bA_k(x+t(y-x))\,\mathrm{d} t \,.
			\end{aligned}
		\end{equation*}
		Hence, it follows that
		\begin{equation*}
			\begin{aligned}
				\frac{\partial}{\partial{y_k}}\Gamma^{A}([x,y])=&\, \int^1_0{\bA}_k(x+t(y-x))\,\mathrm{d} t+\sum_{j=1}^d\int^1_0t(y_j-x_j)(\partial_k\bA_j)(x+t(y-x))\,\mathrm{d} t
				\\
				=&\, \bA_k(y)
				+\sum_{j=1}^d\int^1_0 t(y_j-x_j)\bB_{kj}(x+t(y-x))\,\mathrm{d} t\,.
			\end{aligned}
		\end{equation*}
		Furthermore, by the anti-symmetry of $B$ (the fact that $\bB_{kk}=0$), we have that
		\begin{equation*}
			\begin{aligned}
				\frac{\partial^2}{\partial{y_k}^2}\Gamma^{A}([x,y])=\partial_k\bA_k(y)+\sum^d_{j=1}\int^1_0t^2(y_j-x_j)\partial_k\bB_{kj}(x+t(y-x))\,\mathrm{d} t\,.
			\end{aligned}
		\end{equation*}
		Consequently, (in the sense of distributions) we have
		\begin{equation*}
			\begin{aligned}
				&\OPAh\left(\tfrac{1}{2}\snorm{v}^2\right)u\\
				&=-\frac{\hbar^2}{2}\int_{\rr^d}\delta(y-x)\Big(\Delta_y\left(
				e^{-2\pi i\frac{1}{h}\Gamma^{A}([x,y])}
				\right)
				u(y)
				\\&\ \ \ \ +2\nabla_y\big(e^{-2\pi i\frac{1}{h}\Gamma^{A}([x,y])}\big)\cdot\nabla u(y)
				+e^{-2\pi i\frac{1}{h}\Gamma^{A}([x,y])}\Delta u(y)\Big)\,\mathrm{d} y
				\\&=-\frac{\hbar^2}{2}\left[-\frac{2\pi i}{h}(\nabla\cdot\bA(x))u(x)+\left(\frac{2\pi i}{h}\right)^2\snorm{\bA(x)}^2u(x)-\frac{4\pi i}{h}\bA(x)\cdot\nabla u(x)+\Delta u(x)\right]
				\\&=\tfrac{1}{2}(-i\hbar\nabla-\bA(x))^2u(x)\,,
			\end{aligned}
		\end{equation*}
		which gives the desired result.
	\end{proof}

	We now derive formula \eqref{equation:ControlOfErrorHartree}. 
	As a corollary of Proposition~\ref{Propositon:QuantizationOfCommutator} and Lemma
	~\ref{Lemma:TransformOfHamiltonian}, we have
	\begin{equation*}
		\left[\tfrac{1}{2}(\bp-A)^2,\brhoAf\right]=-\frac{\hbar}{i}\OPAh\lb\left\{\tfrac{1}{2}\snorm{v}^2,f\right\}_B\rb+\hbar^2\OPAh(\widetilde{R})\,.
	\end{equation*}
	It's easy to compute that for any test function $u\in \mathcal{S}(\rr^d)$ we have
	\begin{equation*}
		\begin{aligned}
			&\OPAh\lb\left\{K*\rho_f,f\right\}_B\rb u\\
			&=\OPAh\lb\left\{K*\rho_f,f\right\}\rb u=\OPAh(\nabla_xK*\rho_f\cdot\nabla_vf)u\\
			&=\int_{\rr^d\times\rr^d}\ee^{-2\pi iv(y-x)-\frac{2\pi i}{h}\Gamma^A([x,y])}\lb\nabla_xK*\rho_f\rb\lb\tfrac{x+y}{2}\rb\cdot\nabla_v\lb f\lb\tfrac{x+y}{2},hv\rb\rb u(y)\,\mathrm{d} y\mathrm{d}v\\
			&=\int_{\rr^d\times\rr^d}\ee^{-2\pi iv(y-x)-\frac{2\pi i}{h}\Gamma^A([x,y])}(2\pi i)(y-x)\lb\nabla_xK*\rho_f\rb\lb\tfrac{x+y}{2}\rb\frac{1}{h}f\lb\tfrac{x+y}{2},hv\rb u(y)\,\mathrm{d} y\mathrm{d}v\,.
		\end{aligned}
	\end{equation*}
	Therefore, $\OPAh\lb\left\{K*\rho_f,f\right\}_B\rb=\frac{2\pi i}{h}(y-x)\lb\nabla_xK*\rho_f\rb\lb\frac{x+y}{2}\rb\brhoAf(x,y)$. Finally,
	\begin{equation*}
		\begin{aligned}
			&i\hbar\partial_t\lb\mU^*(\brho-\brhoAf)\mU\rb\\
			&=-\mU^*\lb\lb\tfrac{1}{2}(\bp-A)^2+V_{\brho}-h^d\bXr\rb(\brho-\brhoAf)\rb\mU\\
			&\ \ \ \ +\mU^*\lb\left[\tfrac{1}{2}(\bp-A)^2+V_{\brho}-h^d\bXr,\brho\right]-\left[\tfrac{1}{2}(\bp-A)^2,\brhoAf\right]\rb\mU\\
			&\ \ \ \ -\mU^*\left[i\hbar\OPAh\lb\left\{K*\rho_f,\brhoAf\right\}\rb,\brhoAf\right]\mU-\hbar^2\mU^*\OPAh(\widetilde{R})\mU\\
			&\ \ \ \ +\mU^*(\brho-\brhoAf)\lb\tfrac{1}{2}(\bp-A)^2+V_{\brho}-h^d\bXr\rb\mU\\
			&=-\hbar^2\mU^*\OPAh(\widetilde{R})\mU-h^d\mU^*\left[\bXr,\brhoAf\right]\mU\\
			&\ \ \ \ +\mU^*\left[K*(\rho-\rho_f),\brhoAf\right]\mU\\
			&\ \ \ \ +\mU^*\left[K*\rho_f-i\hbar\OPAh(\left\{K*\rho_f,\brhoAf\right\}),\brhoAf\right]\mU\\
			&=\mU^*\left[K*(\rho-\rho_f),\brhoAf\right]-h^d\mU^*\left[\bXr,\brhoAf\right]\mU +\mU^*{\sf{B}}_t\mU\\
			&\ \ \ \ -\hbar^2\mU^*\OPAh(\widetilde{R})\mU\,.
		\end{aligned}
	\end{equation*}
	Then we obtain the quantized equation \eqref{equation:ControlOfErrorHartree}.
	
	\section{Wellposedness of the Magnetic Hartree--Fock equation}\label{Section:ExistenceResults}
	The aim of this section is to present some well-posedness results for the magnetic Hartree--Fock equation. We shall first prove the local existence of solutions to the magnetic Hartree--Fock equation in a specific space containing the magnetic Sobolev space. Then we use the conservation of energy to obtain the global existence of solutions. For simplicity, we focus only on the existence results in the three-dimensional case and consider a time-independent but spatially dependent vector potential $\bA$. Moreover, since we are only concerned with existence of solutions for a fixed $\hbar$, we shall simply set $\hbar=1$ in the following discussion.
	
	Before we study the existence of solutions, we recall some facts about magnetic Sobolev spaces and the corresponding embedding results. For more details, see, e.g., \cite[Chapter 7]{Lieb2001-yq} or \cite{frank2008hardy}.
	
	\begin{definition}
		Assume the vector potential $\bA =(A_1, \ldots, A_d)$ satisfies $\bA_j\in L^2_{\mathrm{loc}}(\rr^d)$ for $j=1,\dots,d$. Then the magnetic Sobolev space $\mathcal{H}^1_{A}(\rr^d)$ consists of all functions $f:\rr^d\to \mathbb{C}$ such that $f\in L^2(\rr^d)$ and $(-i\nabla-\bA)f\in L^2(\rr^d)$. We endow $\cH^1_{A}$ with the inner product 
		\begin{equation*}
			\langle f_1,f_2\rangle_{A}:=\langle f_1,f_2\rangle+\sum_{j=1}^{d}\langle (i\partial_j+\bA_j)f_1,(i\partial_j+\bA_j)f_2\rangle\,,
		\end{equation*}
		where $\langle\cdot,\cdot\rangle$ denotes the usual $L^2$ inner product.
	\end{definition}
	
	\begin{theorem}[Diamagnetic inequality \cite{Lieb2001-yq, frank2008hardy}]\label{Theorem:DiamagneticInequality}
		Assume the vector potential $\bA =(A_1, \ldots, A_d)$ satisfies $\bA_j\in L^2_{\mathrm{loc}}(\rr^d)$ for $j=1,\dots,d$ and let $f\in\mathcal{H}^1_{A}$. Then the absolute value $\snorm{f}$ belongs to $H^1(\rr^d)$ and satisfies the following pointwise inequality in $\rr^d$:
		\begin{equation*}
			\snorm{\nabla\snorm{f}}\le \snorm{(i\nabla+\bA)f}.
		\end{equation*}
		Moreover, for $0<s\le 1$ and any function $u\in \mathcal{H}^1_{A}$ we also have
		\begin{equation*}
			\lnorm{2}{(-\Delta)^{\frac{s}{2}}\snorm{u}}\le \lnorm{2}{\snorm{i\nabla+\bA}^s u}.
		\end{equation*}
	\end{theorem}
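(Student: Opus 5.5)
The plan is to prove the pointwise inequality by a regularized chain-rule computation, and then deduce the fractional inequality from semigroup domination written in heat-semigroup (quadratic-form) language. Throughout I use the convention that $\bA_j$ is real-valued and $\partial_j$ acts distributionally, so that $(i\nabla+\bA)f\in L^2$ is equivalent to $(\nabla-i\bA)f\in L^2$.

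\emph{Step 1 (pointwise inequality).} For $\epsilon>0$ set $|f|_\epsilon:=\sqrt{|f|^2+\epsilon^2}-\epsilon$. Since $\bA\in L^2_{\mathrm{loc}}$ and $f\in L^2$, we have $\bA f\in L^1_{\mathrm{loc}}$, hence $\nabla f=(\nabla-i\bA)f+i\bA f\in L^1_{\mathrm{loc}}$. Thus $f\in W^{1,1}_{\mathrm{loc}}$, and the chain rule gives
\[
\nabla|f|_\epsilon=\frac{\operatorname{Re}(\bar f\,\nabla f)}{\sqrt{|f|^2+\epsilon^2}}
=\frac{\operatorname{Re}\bigl(\bar f\,(\nabla-i\bA)f\bigr)}{\sqrt{|f|^2+\epsilon^2}},
\]
because $\operatorname{Re}(\bar f\, i\bA f)=\bA\,\operatorname{Re}(i|f|^2)=0$ for real $\bA$. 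Hence $|\nabla|f|_\epsilon|\le|(\nabla-i\bA)f|=|(i\nabla+\bA)f|$ pointwise. Letting $\epsilon\to0$, $|f|_\epsilon\to|f|$ in $L^2$ and $\nabla|f|_\epsilon$ is bounded in $L^2$, so we obtain weak convergence of the gradients. This yields $|f|\in H^1$ and $\nabla|f|=\operatorname{Re}(\operatorname{sgn}\bar f\,(\nabla-i\bA)f)$ almost everywhere, which gives the pointwise bound.

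\emph{Step 2 (semigroup domination).} Let $H_\bA:=|i\nabla+\bA|^2$ be the Friedrichs extension of the closed form $\|(i\nabla+\bA)u\|^2$ on $\mathcal H^1_\bA$. The goal is
\[
|e^{-tH_\bA}u|\le e^{t\Delta}|u| \quad\text{pointwise, for all } t\ge0 .
\]
I would first try the Beurling--Deny/Ouhabaz criterion for domination of semigroups. It requires two facts: (i) $u\in\mathcal H^1_\bA$ implies $|u|\in H^1$, which is Step 1; and (ii) for $u\in\mathcal H^1_\bA$ and $0\le g\in H^1$ with $g\le|u|$, one has
\[
\operatorname{Re}\langle (i\nabla+\bA)u,(i\nabla+\bA)(g\operatorname{sgn}u)\rangle\ \ge\ \langle\nabla|u|,\nabla g\rangle .
\]
Fact (ii) follows by expanding $(\nabla-i\bA)(g\operatorname{sgn}u)=\nabla g\operatorname{sgn}u+g(\nabla-i\bA)\operatorname{sgn}u$ and using $|(\nabla-i\bA)u|^2\ge|\nabla|u||^2$ on $\{u\neq0\}$. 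Alternatively, Kato's distributional inequality $\Delta|u|\ge\operatorname{Re}\bigl(\operatorname{sgn}\bar u\,(\nabla-i\bA)^2u\bigr)$ combined with a resolvent argument gives the same conclusion. This is the step I expect to be the main obstacle, because $\bA$ is only $L^2_{\mathrm{loc}}$ and $\operatorname{sgn}u$ is discontinuous. If the direct verification becomes delicate, I would approximate $\bA$ by smooth $\bA_n\to\bA$ in $L^2_{\mathrm{loc}}$, prove domination for $\bA_n$ via Trotter's product formula or Feynman--Kac, and pass to the limit using strong resolvent convergence of $H_{\bA_n}$ to $H_\bA$ (monotone/Simon-type form convergence).

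\emph{Step 3 (fractional inequality).} For $0<s<1$ and $\lambda\ge0$ use $\lambda^s=c_s\int_0^\infty(1-e^{-t\lambda})t^{-1-s}\,\d t$. Applying the spectral theorem to both $-\Delta$ and $H_\bA$ gives
\begin{align*}
\lnorm{2}{(-\Delta)^{\frac s2}|u|}^2&=c_s\int_0^\infty\bigl(\||u|\|_2^2-\langle|u|,e^{t\Delta}|u|\rangle\bigr)t^{-1-s}\,\d t,\\
\lnorm{2}{H_\bA^{s/2}u}^2&=c_s\int_0^\infty\bigl(\|u\|_2^2-\langle u,e^{-tH_\bA}u\rangle\bigr)t^{-1-s}\,\d t .
\end{align*}
Step 2 yields $\langle u,e^{-tH_\bA}u\rangle\le\langle|u|,e^{t\Delta}|u|\rangle$. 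Since moreover $\||u|\|_2=\|u\|_2$, the integrands compare pointwise in $t$. Hence
\[
\lnorm{2}{(-\Delta)^{\frac s2}|u|}\le\lnorm{2}{|i\nabla+\bA|^su}\qquad\text{for } 0<s<1 .
\]
The endpoint $s=1$ follows by integrating the square of the pointwise bound from Step 1.
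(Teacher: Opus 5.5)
Your argument is correct. It is the standard proof behind the references the paper cites, since the paper gives no proof of its own: the $\epsilon$-regularization of Lieb--Loss gives the pointwise bound, and heat-semigroup domination combined with the subordination formula $\lambda^s=c_s\int_0^\infty(1-e^{-t\lambda})t^{-1-s}\,\mathrm{d}t$ gives the case $0<s<1$, as in Frank--Lieb--Seiringer. One point is missing from your Step 2. Ouhabaz's criterion also requires the ideal property $g\,\mathrm{sgn}\,u\in\mathcal{H}^1_A$ for $g\in H^1$ with $0\le g\le |u|$; this, like your formal verification of (ii), follows from the same regularization $u/\sqrt{|u|^2+\epsilon^2}$ you used in Step 1.
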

	
	We shall investigate the existence of solutions to the magnetic Hartree–Fock equation in three-dimensional space. We define the self-adjoint operator $\ma:=(\Id+(i\nabla+\bA)^2)^{\frac{1}{2}}$. Since $(i\nabla+\bA)^2$ is a positive self-adjoint operator, it follows that $\sigma(\ma)\subseteq [1,\infty)$. Consequently, $\ma^{-1}$ is a bounded operator on $L^2(\rr^3)$. Here, we identify the vector potential $\bA$ with a vector in $\rr^3$.

	We define the Banach space $\za$ as
	\begin{equation*}
		\begin{aligned}
			\za:=\left\{\gamma\in \mathfrak{S}^1:\gamma^*=\gamma,\ \znorm{\gamma}:=\cstnorm{1}{\ma\gamma}+\cstnorm{1}{\gamma\ma}<\infty\right\}\,,
		\end{aligned}
	\end{equation*}
	and the energy space $\ya$ as
	\begin{equation*}
		\begin{aligned}
			\ya:=\left\{\gamma\in\mathfrak{S}^1:\gamma^*=\gamma,\ \ynorm{\gamma}:=\cstnorm{1}{\ma\gamma\ma}<\infty\right\}\,.
		\end{aligned}
	\end{equation*}
	These two spaces satisfy the continuous embedding $\ya \hookrightarrow\za$. Indeed, we have
	\begin{equation*}
		\begin{aligned}
			\cstnorm{1}{\ma\gamma}&\le \cstnorm{1}{\ma\gamma\ma}\norm{\ma^{-1}}_{\infty}\le \cstnorm{1}{\ma\gamma\ma}\,,\\
			\cstnorm{1}{\gamma\ma}&\le \cstnorm{1}{\ma\gamma\ma}\norm{\ma^{-1}}_{\infty}\le \cstnorm{1}{\ma\gamma\ma}\,.
		\end{aligned}
	\end{equation*}
	Hence, following the approach in \cite{benedikter2018dirac}, we first investigate the local existence of solutions in the Banach space $\za$ and then extend the solution to global existence in the energy space $\ya$.
	
	From the fractional Hardy--Rellich inequality in Theorem~\ref{Theorem:HardyRellichInequality} and the fractional diamagnetic inequality above, we obtain the following relation: for any $u\in \mathcal{S}(\rr^3)$,
	\begin{equation}\label{est:diamagnetic_Hardy}
		\int_{\rr^3}\frac{\snorm{u}^2}{\snorm{x}^2}\,\mathrm{d} x\le C\int_{\rr^3}\snorm{\nabla\snorm{u}}^2\,\mathrm{d} x \le C\int_{\rr^3}\snorm{(i\nabla+\bA)u}^2\,\mathrm{d} x\,.
	\end{equation}
	This implies $\mG^2\le C(i\grad+A)^2\le C\ma^{2}$, where $\mG$ denotes the Coulomb potential in $\rr^3$. Moreover, we also note the fact that there exists $C>0$ such that for any trace class operator $\brho\in \mathfrak{S}^1$, we have
	\begin{equation*}
		C^{-1}\norm{\brho}_{\mathcal{H}^1_{A}} \le \cstnorm{2}{\ma\brho}+\cstnorm{2}{\brho\ma}\le C \norm{\brho}_{\mathcal{H}^1_{A}}\,.
	\end{equation*}
	With these facts in mind, we can establish the following lemma, which is analogous to \cite[Lemma 5.7]{benedikter2018dirac} in the zero magnetic field case.
	
	\begin{lemma}\label{Lemma:estimateofoperators}
		For any $\brho\in\za$, we have
		\begin{align}
			\norm{V_{\brho}\,\ma^{-1}}_{\infty}\le&\, C\cstnorm{1}{\brho}, &\norm{V_{\brho}}_{\infty}\le&\, C\znorm{\brho}\,,\label{est:effective_potential}\\
			\cstnorm{2}{\bXr\,\ma^{-1}}\le\,& C\cstnorm{2}{\brho}, & \cstnorm{2}{\bXr}\le\,& C\norm{\brho}_{\mathcal{H}^1_{A}}\, \label{est:exchange_term},
		\end{align}
		where $C$ is a positive constant independent of $\bA$.
	\end{lemma}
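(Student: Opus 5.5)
We prove the four bounds of Lemma~\ref{Lemma:estimateofoperators} separately, using the Hardy-type bound $\mG^2\le C(i\nabla+\bA)^2\le C\ma^2$ from \eqref{est:diamagnetic_Hardy}. This operator inequality gives $\norm{\mG(\cdot-z)\ma^{-1}}_\infty\le C$ uniformly in $z$. Indeed, translation by $z$ changes $\bA$ to $\bA(\cdot+z)$, and the constant in \eqref{est:diamagnetic_Hardy} is independent of $\bA$ because it comes only from the diamagnetic inequality and the ordinary Hardy inequality.

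For \eqref{est:effective_potential}, write $\brho=\sum_j\lambda_j\ket{\phi_j}\bra{\phi_j}$ with $\sum|\lambda_j|=\cstnorm{1}{\brho}$. Then $\rho=\sum_j\lambda_j|\phi_j|^2$ and $V_{\brho}=\int\mG(\cdot-z)\rho(z)\,\d z$. By Minkowski's inequality,
\[
\norm{V_{\brho}\ma^{-1}}_\infty\le\int\norm{\mG(\cdot-z)\ma^{-1}}_\infty|\rho(z)|\,\d z\le C\lnorm{1}{\rho}\le C\cstnorm{1}{\brho}.
\]
For the $L^\infty$ bound on $V_{\brho}$, we estimate pointwise
\[
|V_{\brho}(x)|\le\sum_j|\lambda_j|\,\langle\phi_j,\mG(x-\cdot)\phi_j\rangle .
\]
Hardy and Cauchy--Schwarz give $\langle\phi,\mG(x-\cdot)\phi\rangle\le\norm{\phi}_2\norm{\mG(x-\cdot)\phi}_2\le C\norm{\phi}_2\norm{\ma\phi}_2$. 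Summing over $j$ yields $\cstnorm{1}{|\brho|^{1/2}\ma|\brho|^{1/2}}$-type quantities. These are controlled by $\cstnorm{1}{\ma\brho}$ via $\sum_j|\lambda_j|\norm{\ma\phi_j}\le\cstnorm{1}{\ma\brho}$, which holds because $\ma\brho=\sum_j\lambda_j\ket{\ma\phi_j}\bra{\phi_j}$. To justify this last step, I would test $\ma\brho$ against the partial isometry $\sum_j\mathrm{sgn}(\lambda_j)\ket{\phi_j}\bra{\ma\phi_j/\norm{\ma\phi_j}}$. That is not quite orthonormal, so instead I would use the simpler bound $\sum_j|\lambda_j|\norm{\ma\phi_j}\le(\sum|\lambda_j|)^{1/2}(\sum|\lambda_j|\norm{\ma\phi_j}^2)^{1/2}$. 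The right-hand side is $\le\cstnorm{1}{\brho}^{1/2}\cstnorm{1}{\ma|\brho|\ma}^{1/2}$. If that quantity is not available in $\za$, I fall back on interpolating $\norm{V_{\brho}}_\infty\lesssim\lnorm{1}{\rho}^{1/3}\lnorm{3}{\rho}^{2/3}$. In that case $\lnorm{3}{\rho}$ is bounded by magnetic Sobolev plus diamagnetic estimates applied to $\sqrt{\rho}$, with $\norm{\nabla\sqrt\rho}_2^2\le\Tr((i\nabla+\bA)^2|\brho|)$. This is the step I expect to be delicate: matching the quantity $\znorm{\cdot}$ (not the energy norm) exactly.

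For \eqref{est:exchange_term}, the Hilbert--Schmidt norm is the $L^2$ norm of the kernel:
\[
\cstnorm{2}{\bXr}^2=\iint\mG(x-y)^2|\brho(x,y)|^2\,\d x\,\d y .
\]
For fixed $x$, apply \eqref{est:diamagnetic_Hardy} in $y$ to $u=\brho(x,\cdot)$ centered at $x$. The bound is uniform in the center by the translation remark above. This gives
\[
\int\mG(x-y)^2|\brho(x,y)|^2\,\d y\le C\norm{(i\nabla_y+\bA)\brho(x,\cdot)}_2^2 .
\]
Integrating in $x$ yields $C\cstnorm{2}{\brho\,(i\nabla+\bA)}^2\le C\cstnorm{2}{\brho\ma}^2\le C\norm{\brho}^2_{\mathcal{H}^1_A}$. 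For the first bound, the kernel of $\bXr\ma^{-1}$ is not simply a pointwise product. Instead, I would write $\bXr=\int$ over $x$ of rows and note that $\bXr\ma^{-1}$ has, for each fixed $x$, the row $y\mapsto\mG(x-y)\brho(x,y)$ composed with $\ma^{-1}$. A cleaner approach is duality: $\cstnorm{2}{\bXr\ma^{-1}}=\sup\Tr(\bXr\ma^{-1}\gamma)$ over $\cstnorm{2}{\gamma}\le1$. Write this trace as $\iint\mG(x-y)\brho(x,y)\,(\ma^{-1}\gamma)(y,x)\,\d x\,\d y$. Cauchy--Schwarz then bounds it by $\cstnorm{2}{\brho}$ times $\bigl(\iint\mG(x-y)^2|(\ma^{-1}\gamma)(y,x)|^2\bigr)^{1/2}$. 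The latter, by the Hardy step applied in $y$ for each fixed $x$, is $\le C\cstnorm{2}{\ma\ma^{-1}\gamma}= C\cstnorm{2}{\gamma}$. Since every constant traces back only to Hardy and diamagnetic inequalities, $C$ is independent of $\bA$.
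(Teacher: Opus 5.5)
Your arguments for $\norm{V_{\brho}\ma^{-1}}_\infty$, $\cstnorm{2}{\bXr}$ and $\cstnorm{2}{\bXr\ma^{-1}}$ are sound. The last one goes by duality; the paper instead inverts $\mG(x-\cdot)^2\le C\ma^2$ to $\ma^{-2}\le C\mG(x-\cdot)^{-2}$ and tests it on $g_x=\mG(x-\cdot)\brho(x,\cdot)$, which is an equivalent route. However, the bound $\norm{V_{\brho}}_\infty\le C\znorm{\brho}$ is not proved.

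Bounding $\langle\phi,\mG(x-\cdot)\phi\rangle$ by $\norm{\phi}_2\norm{\ma\phi}_2$ leaves you with $\sum_j\snorm{\lambda_j}\norm{\ma\phi_j}_2$. The inequality $\sum_j\snorm{\lambda_j}\norm{\ma\phi_j}_2\le\cstnorm{1}{\ma\brho}$ is false. For trace-class $T$ one has $\sum_j\snorm{\langle\psi_j,T\phi_j\rangle}\le\cstnorm{1}{T}$ only when both families are orthonormal. For an orthonormal basis, $\cstnorm{1}{T}\le\sum_j\norm{T\phi_j}$ holds, which is the reverse inequality. The ratio is unbounded even when $T=\ma\brho$ and the $\phi_j$ are eigenvectors of $\brho$. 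In $\mathbb{C}^n$, take $M:=I+nP_v$ with $P_v$ the projection onto $v=n^{-1/2}\sum_j e_j$, and $\Lambda:=\operatorname{diag}(\lambda_j)$ with distinct $\lambda_j\approx 1/n$. Then $\cstnorm{1}{M\Lambda}\approx 2$, while $\sum_j\lambda_j\norm{Me_j}\approx\sqrt{n+3}$.

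Both of your fallbacks end in the energy quantity $\sum_j\snorm{\lambda_j}\norm{\ma\phi_j}_2^2=\cstnorm{1}{\ma\snorm{\brho}\ma}$, i.e.\ the $\ya$ norm. This is not controlled by $\znorm{\brho}$: in the diagonal model $\ma\phi_j=j^2\phi_j$, $\lambda_j=j^{-4}$, one has $\znorm{\brho}<\infty$ but $\cstnorm{1}{\ma\brho\ma}=\infty$. So as written you only prove the estimate on $\ya$. Also, by scaling, the interpolation reads $\norm{V_{\brho}}_\infty\lesssim\lnorm{1}{\rho}^{1/2}\lnorm{3}{\rho}^{1/2}$, not with exponents $1/3,2/3$.

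The missing idea is to bound the Coulomb potential by the first power of $\ma$ in the sense of quadratic forms, uniformly in $x$:
\[
\mG(x-\cdot)\le C\snorm{i\nabla+\bA}\le C\ma .
\]
There are two ways to get this. One is Kato's inequality $\snorm{y}^{-1}\le\frac{\pi}{2}\sqrt{-\Delta}$ combined with the fractional diamagnetic inequality for $s=1/2$ (Theorem~\ref{Theorem:DiamagneticInequality}). The other is to start from $\mG(x-\cdot)^2\le C\ma^2$ and use operator monotonicity of $t\mapsto t^{1/2}$. Then
\[
\snorm{V_{\brho}(x)}\le C\sum_j\snorm{\lambda_j}\langle\phi_j,\ma\phi_j\rangle=C\sum_j\snorm{\langle\phi_j,\ma\brho\,\phi_j\rangle}\le C\cstnorm{1}{\ma\brho}\le C\znorm{\brho}.
\]
The second inequality is now legitimate because the same orthonormal family $\{\phi_j\}$ appears on both sides of each pairing. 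This is exactly the paper's chain through $\cstnorm{1}{\ma^{1/2}\brho\ma^{1/2}}\le\cstnorm{1}{\brho\ma}$.
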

	
	\begin{proof}
		For $\phi\in L^2(\rr^3)$, set $u=\ma^{-1}\phi$. Then, by the Cauchy–Schwarz inequality and inequality~\eqref{est:diamagnetic_Hardy}, we obtain
		\begin{equation*}
			\begin{aligned}
				\int_{\rr^3}\snorm{V_{\brho}(x)\, u(x)}^2\,\mathrm{d} x&=\int_{\rr^3}\snorm{\int_{\rr^3}\mG(x-y)\rho(y)\,\mathrm{d} y}^2\snorm{u(x)}^2\mathrm{d}x\\
				&\le \int_{\rr^3}\left\{\int_{\rr^3}\snorm{\mG(x-y)}^2\snorm{\rho(y)}\,\mathrm{d} y\int_{\rr^3}\rho(y')\,\mathrm{d} y'\right\}\snorm{u(x)}^2\,\mathrm{d} x\\
				&\le C\lnorm{1}{\rho}^2\lnorm{2}{(i\nabla+\bA)u}^2\le C\cstnorm{1}{\brho}^2\lnorm{2}{\phi}^2\,.
			\end{aligned}
		\end{equation*}
		Hence, we obtain $\norm{V_{\brho}\,\ma^{-1}}_{\infty}\le C\cstnorm{1}{\brho}$. 
		
		Using spectral decomposition, we write $\brho=\sum_j\lambda_j\ket{u_j}\!\!\bra{u_j}$ with orthonormal vectors $u_j\in L^2(\rr^3)$. Applying inequality~\eqref{est:diamagnetic_Hardy} term by term gives
		\begin{equation*}
			\begin{aligned}
				V_{\brho}(x)=&\, \sum_j\frac{\lambda_j}{4\pi}\int_{\rr^3}\frac{\snorm{u_j(y)}^2}{\snorm{x-y}}\,\mathrm{d} y
				\le C\sum_j\lambda_j\int_{\rr^3}\snorm{(i\nabla+\bA)u_j(x)}^2\,\mathrm{d} x\\
				=&\, C\cstnorm{1}{\ma^{\frac{1}{2}}\,\brho\,\ma^{\frac{1}{2}}}\le C\cstnorm{1}{\brho\,\ma}
			\end{aligned}
		\end{equation*}
		Thus, $\norm{V_{\brho}}_{\infty}\le C\znorm{\brho}$. 
		
		The second inequality in \eqref{est:exchange_term} is again a direct consequence of \eqref{est:diamagnetic_Hardy}:
		\begin{equation*}
			\begin{aligned}
				\cstnorm{2}{\bXr}^2=\iint_{\rr^3\times\rr^3}\snorm{\mG(x-y)}^2\snorm{\brho(x,y)}^2\,\mathrm{d} x\mathrm{d}y\le 
				C\norm{\brho}_{\mathcal{H}^1_{A}}\,.
			\end{aligned}
		\end{equation*}
		Finally, we prove the first inequality in \eqref{est:exchange_term}. Since $\ma^{-2}$ is a bounded operator on $L^2(\rr^3)$, it possesses an integral kernel, which we denote by $\ma^{-2}(x,y)$. Then we have
		\begin{equation*}
			\begin{aligned}
				\operatorname{Tr}\left(\bXr\ma^{-2}\bXr\right)=\iiint_{\rr^3\times\rr^3\times\rr^3}\brho(x,y)\mG(x-y)\ma^{-2}(y-z)\brho(x,z)\mG(x-z)\,\mathrm{d} x\mathrm{d}y\mathrm{d}z\,.
			\end{aligned}
		\end{equation*}
		Define $g_x(y):=\mG(x-y)\brho(x,y)$, then we see that
		\begin{equation*}
			\begin{aligned}
				\operatorname{Tr}(\bXr\ma^{-2}\bXr)&=\int_{\rr^3}\inprod{g_x}{\ma^{-2}g_x}\mathrm{d}x\le C\int_{\rr^3}\inprod{g_x}{\mG^{-2}(x-\cdot)g_x}\mathrm{d}x\\
				&\le C\iint_{\rr^3\times\rr^3}\snorm{\brho(x,y)}^2\,\mathrm{d} x \mathrm{d}y\,.
			\end{aligned}
		\end{equation*}
		Hence, $\cstnorm{2}{\bXr\ma^{-1}}\le C\cstnorm{2}{\brho}$. This completes the proof.
	\end{proof}
	
	Now we define the following bilinear form:
	\begin{equation}\label{def:effective_bilinear_form}
		\mathsf{K}(\brho_1,\brho_2):=\left[V_{\brho_1}-\mathsf{X}_{\brho_1},\brho_2\right]\,.
	\end{equation}
	We consider the continuity of the bilinear form \eqref{def:effective_bilinear_form} on $\za$. A direct calculation yields
	\begin{equation*}
		\begin{aligned}
			\cstnorm{1}{\ma \left[V_{\brho_1},\brho_2\right]}&\le \cstnorm{1}{\ma \,V_{\brho_1}\,\brho_2}+\cstnorm{1}{\ma\,\brho_2\,V_{\brho_1}}\\
			&\le \norm{\ma V_{\brho_1}\ma^{-1}}_{\infty}\cstnorm{1}{\ma\brho_2}+\cstnorm{1}{\ma\brho_2}\norm{V_{\brho_1}}_{\infty}\,.
		\end{aligned}
	\end{equation*}
	Notice that 
	\begin{equation*}
		\begin{aligned}
			\ma V_{\brho_1}\ma^{-1} =& \ma^{-1} V_{\brho_1}\ma^{-1}+\ma^{-1} H^{A}_{0} V_{\brho_1}\ma^{-1} \\
			=&\ma^{-1} V_{\brho_1}\ma^{-1} +\sum_{j=1}^3\ma^{-1} (i\partial_j+\bA_j)\left[i\partial_j+\bA_j,V_{\brho_1}\right]\ma^{-1}\\
			&+\sum_{j=1}^3 \ma^{-1}(i\partial_j+\bA_j)V_{\brho_1}(i\partial_j+\bA_j)\ma^{-1}\,,
		\end{aligned}
	\end{equation*}
	then it follows
	\begin{equation*}
		\begin{aligned}
			\norm{\ma V_{\brho_1}\ma^{-1}}_{\infty}\le& \norm{\ma^{-1} V_{\brho_1}\ma^{-1}}_{\infty}+\sum_{j=1}^3\norm{\ma^{-1}(i\partial_j+\bA_j)}^2_{\infty}\norm{V_{\brho_1}}_{\infty}\\
			&+\sum_{j=1}^3\norm{\ma^{-1}(i\partial_j+\bA_j)}_{\infty}\norm{V_{[i\partial_j+\bA_j,\brho_1]}\ma^{-1}}_{\infty}\\
			\le&C\znorm{\brho_1}+C\norm{[i\partial_j+\bA_j,\brho_1]}_{1}\\
			\le&C\znorm{\brho_1}+C\cstnorm{1}{\ma\brho_1}+C\cstnorm{1}{\brho_1\ma}\le C\znorm{\brho_1}\,,
		\end{aligned}
	\end{equation*}
	where we used Lemma~\ref{Lemma:estimateofoperators} and the following relations:
	\begin{equation*}
		[\partial_j,V_{\brho_1}]=V_{[\partial_j,\brho_1]}\quad \text{ and } \quad
		[A_j,V_{\brho_1}]=0\,.
	\end{equation*}
	Thus, we have $\cstnorm{1}{\ma \left[V_{\brho_1},\brho_2\right]}\le C\znorm{\brho_1}\znorm{\brho_2}$. Similarly, we can also obtain $$\cstnorm{1}{ \left[V_{\brho_1},\brho_2\right]\ma}\le C\znorm{\brho_1}\znorm{\brho_2}\,,$$
	and consequently $\znorm{\left[V_{\brho_1},\brho_2\right]}\le C\znorm{\brho_1}\znorm{\brho_2}$. 
	
	For the exchange term, we have
	\begin{equation*}
		\begin{aligned}
			\cstnorm{1}{\ma\left[\mathsf{X}_{\brho_1},\brho_2\right]}&\le \cstnorm{1}{\ma\mathsf{X}_{\brho_1}\brho_2}+\cstnorm{1}{\ma\brho_2\bX_{\brho_1}}\\
			&\le \cstnorm{2}{\ma\bX_{\brho_1}\ma^{-1}}\cstnorm{2}{\ma\brho_2}+\cstnorm{2}{\ma\brho_2}\cstnorm{2}{\bX_{\brho_1}}\,.
		\end{aligned}
	\end{equation*}
	By applying the same argument as above, we have that
	\begin{equation*}
		\begin{aligned}
			\cstnorm{2}{\ma\bX_{\brho_1}\ma^{-1}}
			\le& \cstnorm{2}{\ma^{-1}\bX_{\brho_1}\ma^{-1}}+\sum_{j=1}^3\cstnorm{2}{\ma^{-1}(i\partial_j+\bA_j)\left[i\partial_j+\bA_j,\bX_{\brho_1}\right]\ma^{-1}}\\
			&+\sum_{j=1}^3\cstnorm{2}{
				\ma^{-1}(i\partial_j+\bA_j)\bX_{\brho_1}(i\partial_j+\bA_j)\ma^{-1}}\\
			\le&\cstnorm{2}{\bX_{\brho_1}}\norm{\ma^{-1}}^2_{\infty}+C\sum_{j=1}^3\norm{\ma^{-1}(i\partial_j+\bA_j)}^2_{\infty}\cstnorm{2}{\bX_{\brho_1}}\\
			&+\sum_{j=1}^3\norm{\ma^{-1} (i\partial_j+\bA_j)}_{\infty}\cstnorm{2}{\mathsf{X}_{[i\partial_j+\bA_j,\brho_1]}\ma^{-1}}\\
			\le& C\znorm{\brho_1}+C\cstnorm{2}{[i\partial_j+\bA_j,\brho_1]}\le C\znorm{\brho_1}\,.
		\end{aligned}
	\end{equation*}
	Thus, $\cstnorm{1}{\ma\left[\bX_{\brho_1},\brho_2\right]}\le C\znorm{\brho_1}\znorm{\brho_2}$. Finally, we obtain 
	\begin{equation}\label{est:sfK_continuity}
		\znorm{\mathsf{K}(\brho_1,\brho_2)}\le C\znorm{\brho_1}\znorm{\brho_2}\,,
	\end{equation}
	which asserts the continuity of the bilinear form $\mathsf{K}$. With this in mind, we can prove the following global existence result.
	
	\begin{theorem}
		Suppose $\brho^{\mathrm{in}}\in \ya$ and $\brho^{\mathrm{in}}\ge 0$. Then for any finite $T>0$, the magnetic Hartree--Fock equation with initial data $\brho^{\mathrm{in}}$ has a global solution in $L^{\infty}_{T}\ya$.
	\end{theorem}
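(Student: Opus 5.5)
We follow the two-step strategy indicated before the statement: a fixed-point argument for local existence in $\za$, then globalization through a priori bounds in the energy space $\ya$. Write the equation (with $\hbar=1$) in interaction form,
\begin{equation*}
	\brho(t)=e^{-itH^A_0}\brho^{\mathrm{in}}e^{itH^A_0}-i\int_0^t e^{-i(t-s)H^A_0}\,\sfK(\brho(s),\brho(s))\,e^{i(t-s)H^A_0}\dd s .
\end{equation*}
Since $A$ is time independent, $H^A_0$ commutes with $\ma$, so the free conjugation $\gamma\mapsto e^{-itH^A_0}\gamma e^{itH^A_0}$ is an isometry of both $\za$ and $\ya$ and is strongly continuous there. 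Combining this with the bilinear bound \eqref{est:sfK_continuity}, the Duhamel map is a contraction on a closed ball of $C([0,T_0];\za)$. Here $T_0\sim(C\znorm{\brho^{\mathrm{in}}})^{-1}$. This gives a unique local mild solution, and its lifespan is bounded below in terms of $\znorm{\brho(t_0)}$ only. Hence, if $\sup_t\znorm{\brho(t)}$ stays finite on $[0,T]$, the solution extends to $[0,T]$. Self-adjointness is preserved because $\sfK(\gamma,\gamma)^*=-\sfK(\gamma,\gamma)$ for self-adjoint $\gamma$.

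Next we propagate the structural properties. The solution equals $\mathcal U(t,0)\brho^{\mathrm{in}}\mathcal U(t,0)^*$, where $\mathcal U$ is the propagator of the self-adjoint, time-dependent $H^A_{\brho}$. We obtain $\mathcal U$ by solving the linear equation with the already constructed $\brho$, using that $V_{\brho}$ and $\bXr$ are $H^A_0$-form bounded with small relative bound by Lemma~\ref{Lemma:estimateofoperators} and \eqref{est:diamagnetic_Hardy}. Consequently positivity, $\Tr{\brho}$ and all Schatten norms, in particular $\norm{\brho}_\infty$, are conserved. To check that $\ya$-regularity persists, we differentiate $\cstnorm{1}{\ma\brho\ma}$, or equivalently $\Tr{\ma\brho\ma}$ since $\brho\ge0$. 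We use commutator bounds of the type $\norm{\ma V_{\brho}\ma^{-1}}_\infty\le C\znorm{\brho}$ derived above to show it stays finite on the local interval. This step needs a regularization, replacing $\ma$ by $\ma(1+\epsilon\ma)^{-1}$, to make the formal differentiation rigorous.

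The key step is the a priori energy bound. We show that
\[
\mathcal E(\brho)=\Tr{H^A_0\brho}+\tfrac12\iint K(x-y)\bigl(\rho(x)\rho(y)-\snorm{\brho(x,y)}^2\bigr)\dd x\d y
\]
is conserved along $\ya$-solutions; formally this is $\tfrac{d}{dt}\mathcal E=-i\,\Tr{H_{\brho}[H_{\brho},\brho]}=0$. We justify it again via the regularized $\ma$ and the local well-posedness, or by approximating the data by finite-rank smooth data and using continuous dependence in $\za$. The interaction part of the energy is nonnegative, since the direct minus exchange energy of a positive density matrix with $0\le\brho$ is nonnegative; with only $\norm{\brho}_\infty$ bounded we obtain positivity up to the factor $\norm{\brho}_\infty$. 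We do not even need this sign, though. By \eqref{est:diamagnetic_Hardy} and Hardy-type estimates, both interaction terms are bounded by $C\Tr{\brho}^{3/2}\Tr{H^A_0\brho}^{1/2}$. Together with conservation of $\Tr\brho$, this gives
\[
\Tr{H^A_0\brho(t)}\le 2\mathcal E(\brho^{\mathrm{in}})+C(\Tr\brho^{\mathrm{in}}),
\]
and therefore $\ynorm{\brho(t)}=\Tr{\ma\brho\ma}=\Tr\brho+\Tr{H^A_0\brho}$ is uniformly bounded. Since $\ya\hookrightarrow\za$, the $\za$ norm stays bounded, the blow-up alternative is excluded, and we get a solution in $L^\infty_T\ya$ for every $T$.

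I expect the main obstacle to be the rigorous justification of energy conservation and of $\ya$-persistence in the magnetic setting, because $A$ is only $W^{1,\infty}_{\mathrm{loc}}$ and may grow, so $D(H^A_0)$ is not explicit. I would first try the regularization $\ma_\epsilon=\ma(1+\epsilon\ma)^{-1}$. With it we prove a Grönwall bound for $\Tr{\ma_\epsilon\brho\ma_\epsilon}$ that is uniform in $\epsilon$, using commutator identities such as $[i\partial_j+A_j,V_{\brho}]=V_{[i\partial_j+A_j,\brho]}$, and then pass to $\epsilon\to0$. If this does not go through, the fallback is to approximate $\brho^{\mathrm{in}}$ by finite-rank data built from $C_c^\infty$ functions, prove conservation there, and pass to the limit using continuous dependence in $\za$ and lower semicontinuity of the kinetic energy.
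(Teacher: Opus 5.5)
Your proposal is correct and follows the paper's route: a contraction for the Duhamel map in $\za$ with lifespan of order $\znorm{\brho^{\mathrm{in}}}^{-1}$, positivity and conservation of $\Tr{\brho}$ via the unitary propagator, and an energy-based a priori bound on $\ynorm{\brho(t)}$ that excludes blow-up through $\ya\hookrightarrow\za$. The remaining differences are minor or in your favour: you bound the interaction energy via Hardy and Young by $C\Tr{\brho}^{3/2}\Tr{H^A_0\brho}^{1/2}$, whereas the paper uses $C\cstnorm{1}{\brho}\cstnorm{1}{\ma^{1/2}\brho\ma^{1/2}}$ with a $\delta$-interpolation, and you explicitly address persistence of $\ya$-regularity and the justification of energy conservation, which the paper only asserts (also note $\Tr{\ma\brho\ma}=\Tr{\brho}+2\Tr{H^A_0\brho}$, a harmless factor $2$).
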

	
	\begin{proof}
		The proof is divided into the following steps:
		
		\medskip
		\noindent\textbf{Step 1.} \textit{Duhamel's principle and conservation of positivity.} Fix 
		$T>0$ (whose value will be determined later) and recall $H_0^A = \tfrac12 (i\grad+A)^2$. 
		Given the initial data $\brho^{\mathrm{in}}\in \za$, we define the mapping
		\begin{equation*}
			\Phi(\brho)=e^{-it H^{A}_{0}}\brho^{\mathrm{in}}e^{it H^{A}_{0}}-i\int^t_0e^{-i(t-s) H^{A}_{0}}\mathsf{K}(\brho,\brho)e^{i(t-s) H^{A}_{0}}\,\mathrm{d} s\,
		\end{equation*}
		on $L^\infty_T\za$.
		We shall prove the local existence of a solution in $\za$ by applying the fixed point theorem. Besides if the global solution $\brho$ exists, then it conserves the positivity. Since if we denote the unitary propagator of this solution by $U(t,s)$, i. e.
		\begin{equation*}
			\begin{aligned}
				i\partial_tU(t,s)=H(t)U(t,s),\qquad U(s,s)=\mathrm{Id},\qquad H(t):=H^A_0+V_{\brho}-\mathsf{X}_{\brho}\,.
			\end{aligned}
		\end{equation*}
		Then $\brho(t)=U(t,0)\brho^{\mathrm{in}}U(t,0)^\ast$, which obviously conserves the positivity.
		
		\medskip
		\noindent\textbf{Step 2.} \textit{Contraction and local existence.}
		Consider the closed subset $E_T\subset L^\infty_T\za$ given by 
		$$E_{T}:=\left\{\brho\in L^\infty_T\za:\norm{\brho}_{L^\infty_T\za}\le a\right\}\,.$$
		Here, $a$ is a positive constant that will be determined shortly. By the continuity of $\sfK$ given in \eqref{est:sfK_continuity} and the fact that 
		\begin{equation*}
			\znorm{e^{-it H^{A}_{0}}\brho^{\mathrm{in}}e^{it H^{A}_{0}}} = \znorm{\brho^{\mathrm{in}}}\,,
		\end{equation*}
		we obtain the bound
		\begin{equation*}
			\begin{aligned}
				\norm{\Phi(\brho)}_{L^\infty_T\za}\le \znorm{\brho^{\mathrm{in}}}+CT\norm{\brho}^2_{L^\infty_T\za}\le \znorm{\brho^{\mathrm{in}}}+CTa^2\,.
			\end{aligned}
		\end{equation*}
		Now we take $a=2\znorm{\brho^{\mathrm{in}}}$. Hence, when $T\le \frac{1}{2C\znorm{{\brho^{\mathrm{in}}}}}$ the mapping $\Phi(\brho)$ maps $E_{T}$ into itself. Moreover for $\brho_1$, $\brho_2\in E_{T}$ we have
		\begin{equation*}
			\begin{aligned}
				\norm{\Phi(\brho_1)-\Phi(\brho_2)}_{L^\infty_T\za}&\le CT\left(\norm{\brho_1}_{L^\infty_T\za}+\norm{\brho_2}_{L^\infty_T\za}\right)\norm{\brho_1-\brho_2}_{L^\infty_T\za}\\&\le 2CT\times 2C\znorm{\brho^{\mathrm{in}}}\norm{\brho_1-\brho_2}_{L^\infty_T\za}\\&=4{C}^2T\znorm{\brho^{\mathrm{in}}}\norm{\brho_1-\brho_2}_{L^\infty_T\za}\,.
			\end{aligned}
		\end{equation*}
		Thus, once $T<\min\left\{\frac{1}{8{C}^2\znorm{\brho^{\mathrm{in}}}},\frac{1}{2C\znorm{\brho^{\mathrm{in}}}}\right\}$ the mapping $\Phi(\brho)$ is a contraction in the closed subspace $E_{T}$, which asserts the local existence of a solution for the magnetic Hartree--Fock equation in the space-time Banach space $L^\infty_T\za$. 

		\medskip
		\noindent\textbf{Step 3.} \textit{Control from the energy and global existence.}
		Now we suppose $\brho^{\mathrm{in}}\in\ya$ and $\brho^{\rm in} \ge 0$. To prove global existence we shall gain some a priori estimates from the energy, i.e.
		\begin{equation*}
			\begin{aligned}
				\mathcal{E}(\brho)=&\operatorname{Tr}(H_0^A\brho)+\frac{1}{2}\iint_{\rr^3\times\rr^3}K(x-y)\left(\brho(x, x)\brho(y, y)-\snorm{\brho(x,y)}^2\right)\mathrm{d}x \mathrm{d}y\,.
			\end{aligned}
		\end{equation*}
		If the operator $\brho$ satisfies $\cstnorm{1}{\ma\brho\ma}<\infty$, then, by Lemma~\ref{Lemma:estimateofoperators}, we have that
		\begin{equation*}
			\begin{aligned}
				\iint_{\rr^3\times\rr^3}\mG(x-y)\brho(x,x)\brho(y,y)\,\mathrm{d} x\mathrm{d}y&\le C\cstnorm{1}{\brho}\cstnorm{1}{\ma^{\frac{1}{2}}\brho\ma^{\frac{1}{2}}}
			\end{aligned}
		\end{equation*}
		and
		\begin{equation*}
			\begin{aligned}
				&\iint_{\rr^3\times\rr^3}\snorm{\brho(x,y)}^2\mG(x-y)\,\mathrm{d} x\mathrm{d}y\le \operatorname{Tr}(\brho\mG(x-y)\brho)\le C\operatorname{Tr}(\brho\ma\brho)\\&=C\operatorname{Tr}(\ma^{\frac{1}{2}}\brho^{\frac{1}{2}}\brho\brho^{\frac{1}{2}}\ma^{\frac{1}{2}})\le C\norm{\brho}_{\infty}\operatorname{Tr}(\ma^{\frac{1}{2}}\brho\ma^{\frac{1}{2}})\le C\cstnorm{1}{\brho}\cstnorm{1}{\ma^{\frac{1}{2}}\brho\ma^{\frac{1}{2}}}\,.
			\end{aligned}
		\end{equation*}
		Next, we control $\cstnorm{1}{\ma^{\frac{1}{2}}\brho\ma^{\frac{1}{2}}}$. Notice, we have
		\begin{equation*}
			\begin{aligned}
				\operatorname{Tr}(\ma^{\frac{1}{2}}\brho\ma^{\frac{1}{2}})&=\sum_j\lambda_j\lnorm{2}{\ma^{\frac{1}{2}}u_j}^2=\sum_j\lambda_j\inprod{u_j}{\ma u_j}\\&\le \sum_j\lambda_j\left(\delta\lnorm{2}{\ma u_j}^2+\frac{\lnorm{2}{u_j}^2}{2\delta}\right)\le \delta \cstnorm{1}{\ma\brho\ma}+\frac{\cstnorm{1}{\brho}}{\delta}\,.
			\end{aligned}
		\end{equation*}
		Consequently, it follows that
		\begin{equation*}
			\begin{aligned}
				\mathcal{E}(\brho)&\ge\cstnorm{1}{\ma\brho\ma}-\cstnorm{1}{\brho}-2C\delta\cstnorm{1}{\brho}\cstnorm{1}{\ma\brho\ma}-2C\frac{\cstnorm{1}{\brho}^2}{\delta}\,.
			\end{aligned}
		\end{equation*}
		Hence, when $\delta$ is sufficiently small we have
		\begin{equation*}
			\begin{aligned}
				\cstnorm{1}{\ma\brho\ma}&\le\frac{\mathcal{E}(\brho)+\cstnorm{1}{\brho}\left(1+\frac{2C}{\delta}\cstnorm{1}{\brho}\right)}{1-2C\delta\cstnorm{1}{\brho}}\,.
			\end{aligned}
		\end{equation*}
		Since $\ya \hookrightarrow\za$, together with the local existence result from Step 1, the non-degenerate lifespan provided by Step 2, and the conservation of energy, we obtain the global existence result for the magnetic Hartree--Fock equation.
	\end{proof}

    \section{Global classical solutions for non-compactly supported initial data}\label{Section:classical_ExistenceResults}
As shown in the works \cite{bostan2019asymptotic,knopf2018optimal}, for a given space-time-dependent magnetic field $B$ satisfying suitable assumptions and a compactly supported initial datum $f^{\mathrm{in}}\in C^1_c(\rr^3\times\rr^3)$, one can obtain a global classical solution of \eqref{equation: MagneticVlasovSys} by constructing a contraction for the electric field in a suitable Banach space. We now generalize this result to non-compactly supported initial data with exponential decay.

Suppose $f$ and $g$ are classical solutions of \eqref{equation: MagneticVlasovSys} with initial data $f^{\mathrm{in}}$ and $g^{\mathrm{in}}$, respectively. We also assume that $f^{\mathrm{in}}$ and $g^{\mathrm{in}}$ satisfy the conditions listed in Theorem \ref{Theorem:PropagationOfRegularity} with $m=2$. Let $T>0$ be a given time endpoint and $t\in [0,T]$. A direct computation shows that $f-g$ satisfies the following Cauchy problem:
\begin{equation}\label{equation: DifferenceEquation}
	\begin{dcases}
		\partial_t (f-g) + v\cdot\nabla_x (f-g) +v\wedge B\cdot \nabla_v (f-g) +E_f\cdot\nabla_v f-E_g\cdot\nabla_v g= 0, \\
		(f-g)(0,x,v) = f^{\mathrm{in}}(x,v)-g^{\mathrm{in}}(x,v).
	\end{dcases}
\end{equation}

Let $\bT_0$ denote the operator introduced in Lemma
\ref{lem:exact-commutator}, but with $E=0$. Then, for $\alpha$, $\beta\in\mathbb{N}^3$ with $|\alpha|+|\beta|\leq1$, we have
\begin{equation}
	\begin{aligned}
		\mathsf{T}_0D^{\alpha,\beta}h
		={}&
		-\sum_{i=1}^3\beta_i
		\left[
		D^{\alpha,\beta-e_i}\partial_{x_i}h
		+
		(e_i\wedge B)\cdot
		\nabla_vD^{\alpha,\beta-e_i}h
		\right]
		\notag\\
		&-
		\sum_{0<\lambda\leq\alpha}
		\binom{\alpha}{\lambda}
		(v\wedge\partial_x^\lambda B)\cdot
		\nabla_vD^{\alpha-\lambda,\beta}h
		\notag\\
		&-
		\sum_{\lambda\leq\alpha}
		\binom{\alpha}{\lambda}
		E_{\partial_x^\lambda f}\cdot
		\nabla_vD^{\alpha-\lambda,\beta}h
		\notag\\
		&-
		\sum_{\lambda\leq\alpha}
		\binom{\alpha}{\lambda}
		E_{\partial_x^\lambda h}\cdot
		\nabla_vD^{\alpha-\lambda,\beta}g.
	\end{aligned}
\end{equation}
Arguing as in the proof of propagation of regularity, we obtain the following differential inequality for $1\le p<\infty$:
\begin{equation}\label{ineq:differn-ineq}
	\begin{aligned}
		\frac{\d }{\d t}\norm{f-g}^p_{\mathcal{X}^{1,p}_{\mu(t),\lambda(t)}}
		&\le pC(f^{\mathrm{in}},g^{\mathrm{in}},T)
		\left(
		\norm{f-g}^p_{\mathcal{X}^{1,p}_{\mu(t),\lambda(t)}}
		+\|E_{f-g}\|^p_{L^\infty([0,t],W^{1,\infty})}
		\right)\,.
	\end{aligned}
\end{equation}
Here, the constant is given by
$$C(f^{\mathrm{in}},g^{\mathrm{in}},T):=
\norm{f}_{L^{\infty}([0,T];\mathcal{X}^{1,1}_{\mu(t),\lambda(t)}
\cap\mathcal{X}^{1,\infty}_{\mu(t),\lambda(t)})}
+\norm{g}_{L^{\infty}([0,T];\mathcal{X}^{1,1}_{\mu(t),\lambda(t)}
\cap\mathcal{X}^{1,\infty}_{\mu(t),\lambda(t)})}\,.$$
Since
\begin{equation*}
	\begin{aligned}
		\norm{E_{f-g}}_{L^\infty([0,t];W^{1,\infty})}
		&\le C\norm{\rho_{f-g}}_{L^\infty([0,t];W^{1,1}\cap W^{1,\infty})}\\
		&\le C\norm{f-g}_{L^{\infty}([0,t]; \mathcal{X}^{1,1}_{\mu(s), \lambda(s)}
		\cap\mathcal{X}^{1,\infty}_{\mu(s), \lambda(s)})},
	\end{aligned}
\end{equation*}
applying Gr\"{o}nwall's inequality to \eqref{ineq:differn-ineq}, taking the $p$-th root, and letting $p\rightarrow +\infty$, we obtain
\begin{equation}\label{Formula:error-estimate-of-initial-data}
	\begin{aligned}
		\norm{f-g}_{L^{\infty}([0,T]; \mathcal{X}^{1,1}_{\mu(s), \lambda(s)}
		\cap\mathcal{X}^{1,\infty}_{\mu(s), \lambda(s)})}
		\le
		\norm{f^{\mathrm{in}}-g^{\mathrm{in}}}_{\mathcal{X}^{1,1}_{\mu_0,\lambda_0}
		\cap\mathcal{X}^{1,\infty}_{\mu_0,\lambda_0}}
		\exp\left\{C(f^{\mathrm{in}},g^{\mathrm{in}},T)T\right\}\,.
	\end{aligned}
\end{equation}

With this result in mind, we perform the following approximation. Let $\chi$ be a smooth, radial, nonnegative bump function such that $\chi=1$ when $\snorm{x}\le 1$ and $\chi=0$ when $\snorm{x}\ge 2$. Consider the Cauchy problem \eqref{equation: MagneticVlasovSys} with initial data $f^{\mathrm{in}}$ satisfying the conditions listed in Theorem \ref{Theorem:PropagationOfRegularity}. For $k\in \mathbb{N}$, define
$f^{\mathrm{in}}_k:=f^{\mathrm{in}}(x,v)\chi\left(x/k\right)\chi\left(v/k\right)$.
According to the result in the appendix of \cite{bostan2019asymptotic}, which extends to $3D$, equation \eqref{equation: MagneticVlasovSys} has a classical solution $f_k$ with initial data $f^{\mathrm{in}}_k$. Since $f^{\mathrm{in}}$ satisfies the conditions of Theorem \ref{Theorem:PropagationOfRegularity} with $m=2$, for every $|\gamma|\le 1$ we have
$e^{\mu_0\langle x\rangle+\lambda_0\langle v\rangle}D^\gamma f^{\mathrm{in}}\in W^{1,1}(\Xi)\cap W^{1,\infty}(\Xi)$.
In particular, these functions are uniformly continuous and integrable, and hence vanish at infinity. Together with the fact that the derivatives of $\chi(x/k)\chi(v/k)$ are of order $k^{-1}$, this implies
\[
\norm{f^{\mathrm{in}}_k-f^{\mathrm{in}}}_{\mathcal{X}^{1,1}_{\mu_0,\lambda_0}
\cap\mathcal{X}^{1,\infty}_{\mu_0,\lambda_0}}\longrightarrow 0
\qquad\text{as }k\to\infty.
\]
Moreover, the norms of $f_k$ entering the constant in \eqref{Formula:error-estimate-of-initial-data} are bounded uniformly in $k$ by Theorem \ref{Theorem:PropagationOfRegularity}. The estimate \eqref{Formula:error-estimate-of-initial-data} therefore shows that $\left\{f_k\right\}_{k=1}^\infty$ is a Cauchy sequence in the topology induced by the norm
$\norm{\cdot}_{L^{\infty}([0,T],\mathcal{X}^{1,1}_{\mu(t),\lambda(t)} \cap\mathcal{X}^{1,\infty}_{\mu(t),\lambda(t)})}$. Consequently, there exists a limit function $f$ satisfying equation \eqref{equation: MagneticVlasovSys} on the given time interval $[0,T]$ with non-compactly supported initial data $f^{\mathrm{in}}$.

	\bibliographystyle{amsplain}
	\bibliography{references}

@article {LeopoldSaffirio2026,
    AUTHOR = {Leopold, Nikolai and Saffirio, Chiara},
     TITLE = {Derivation of the {Vlasov--Maxwell} system from the
              {Maxwell--Schr\"odinger} equations with extended charges},
   JOURNAL = {Forum Math. Sigma},
  FJOURNAL = {Forum of Mathematics, Sigma},
    VOLUME = {14},
      YEAR = {2026},
     PAGES = {Paper No. e17, 51},
      ISSN = {2050-5094},
       DOI = {10.1017/fms.2025.10162},
       URL = {https://doi.org/10.1017/fms.2025.10162},
}

@misc {Leopold2024,
    AUTHOR = {Leopold, Nikolai},
     TITLE = {Derivation of the {Maxwell--Schr\"odinger} and
              {Vlasov--Maxwell} equations from non-relativistic {QED}},
      YEAR = {2024},
      NOTE = {Preprint, arXiv:2411.07085},
       URL = {https://arxiv.org/abs/2411.07085},
}

@article{athanassoulis2011strong,
	author = {Athanassoulis, Agissilaos and Paul, Thierry and Pezzotti, Federica and Pulvirenti, Mario},
	doi = {10.4171/RLM/613},
	journal = {Rend. Lincei Mat. Appl.},
	number = {4},
	pages = {525--552},
	title = {Strong semiclassical approximation of {Wigner} functions for the {Hartree} dynamics},
	volume = {22},
	year = {2011}}

@article{benedikter2016mixed,
	author = {Benedikter, Niels and Jaksic, Vojkan and Porta, Marcello and Saffirio, Chiara and Schlein, Benjamin},
	title = {Mean-field evolution of fermionic mixed states},
	journal = {Comm. Pure Appl. Math.},
	volume = {69},
	number = {12},
	year = {2016},
	pages = {2250--2303},
	doi = {10.1002/cpa.21598},
}

@article{grillakis2011second,
	author = {Grillakis, M. G. and Machedon, M. and Margetis, D.},
	title = {Second-order corrections to mean field evolution of weakly interacting bosons. {II}},
	journal = {Adv. Math.},
	volume = {228},
	number = {3},
	year = {2011},
	pages = {1788--1815},
	doi = {10.1016/j.aim.2011.06.028},
}

@article{grillakis2013pair,
	author = {Grillakis, M. and Machedon, M.},
	title = {Pair excitations and the mean field approximation of interacting bosons. {I}},
	journal = {Comm. Math. Phys.},
	volume = {324},
	number = {2},
	year = {2013},
	pages = {601--636},
	doi = {10.1007/s00220-013-1818-7},
}

@article{chen2021combined,
	author = {Chen, Li and Lee, Jinyeop and Liew, Matthew},
	title = {Combined mean-field and semiclassical limits of large fermionic systems},
	journal = {J. Stat. Phys.},
	volume = {182},
	number = {2},
	year = {2021},
	pages = {24},
	doi = {10.1007/s10955-021-02700-w},
}

@article{chen2022convergence,
	author = {Chen, Li and Lee, Jinyeop and Liew, Matthew},
	title = {Convergence towards the {Vlasov--Poisson} equation from the {$N$}-fermionic {Schr\"odinger} equation},
	journal = {Ann. Henri Poincar{\'e}},
	volume = {23},
	year = {2022},
	pages = {555--593},
	doi = {10.1007/s00023-021-01103-7},
}

@article{saffirio2020inverse,
	author = {Saffirio, Chiara},
	doi = {10.1007/s00220-019-03397-5},
	journal = {Comm. Math. Phys.},
	pages = {571--619},
	title = {Semiclassical limit to the {Vlasov} equation with inverse power law potentials},
	volume = {373},
	year = {2020}}

@article{saffirio2020hartree,
	author = {Saffirio, Chiara},
	doi = {10.1137/19M1299529},
	journal = {SIAM J. Math. Anal.},
	number = {6},
	pages = {5533--5553},
	title = {From the {Hartree} equation to the {Vlasov--Poisson} system: strong convergence for a class of mixed states},
	volume = {52},
	year = {2020}}

@article{chong2022global,
	author = {Chong, Jacky J. and Lafleche, Laurent and Saffirio, Chiara},
	doi = {10.1063/5.0089741},
	journal = {J. Math. Phys.},
	number = {8},
	pages = {081904},
	title = {Global-in-time semiclassical regularity for the {Hartree--Fock} equation},
	volume = {63},
	year = {2022}}

@article{chong2023l2,
	author = {Chong, Jacky J. and Lafleche, Laurent and Saffirio, Chiara},
	doi = {10.5802/jep.230},
	journal = {J. \'{E}c. polytech. Math.},
	pages = {703--726},
	title = {On the {$L^2$} rate of convergence in the limit from the {Hartree} to the {Vlasov--Poisson} equation},
	volume = {10},
	year = {2023}}

@article{frenod1998homogenization,
	author = {Fr{\'e}nod, Emmanuel and Sonnendr{\"u}cker, Eric},
	doi = {10.3233/ASY-1998-298},
	journal = {Asymptot. Anal.},
	number = {3-4},
	pages = {193--213},
	title = {Homogenization of the {Vlasov} equation and of the {Vlasov--Poisson} system with a strong external magnetic field},
	volume = {18},
	year = {1998}}

@article{diperna1989vlasovmaxwell,
	author = {DiPerna, Ronald J. and Lions, Pierre-Louis},
	doi = {10.1002/cpa.3160420603},
	journal = {Comm. Pure Appl. Math.},
	number = {6},
	pages = {729--757},
	title = {Global weak solutions of {Vlasov--Maxwell} systems},
	volume = {42},
	year = {1989}}

@misc{benporat2026magnetized,
	author = {Ben Porat, Immanuel},
	note = {Preprint, arXiv:2608.18476},
	title = {From magnetized {Coulombic} quantum dynamics to magnetized fluids},
	year = {2026}}

@article{rege2025stability,
	author = {Rege, Alexandre},
	doi = {10.1016/j.jde.2025.01.051},
	journal = {J. Differential Equations},
	pages = {763--788},
	title = {Stability estimates for magnetized {Vlasov} equations},
	volume = {425},
	year = {2025}}

@misc{benedikter2025magnetic,
	author = {Benedikter, Niels and Boccato, Chiara and Monaco, Domenico and Nguyen, Ngoc Nhi},
	note = {Preprint, arXiv:2503.16001},
	title = {Derivation of {Hartree--Fock} dynamics and semiclassical commutator estimates for fermions in a magnetic field},
	year = {2025}}

@article{ferrero2024effective,
	author = {Ferrero, Margherita and Monaco, Domenico},
	doi = {10.46298/ocnmp.13820},
	journal = {Open Commun. Nonlinear Math. Phys.},
	pages = {157--187},
	title = {Effective quantum dynamics for magnetic fermions},
	volume = {4},
	year = {2024}}

@article{luhrmann2012mean,
	author = {L{\"u}hrmann, Jonas},
	doi = {10.1063/1.3687024},
	journal = {J. Math. Phys.},
	number = {2},
	pages = {022105},
	title = {Mean-field quantum dynamics with magnetic fields},
	volume = {53},
	year = {2012}}

@article{lein2010two,
	author = {Lein, Max},
	doi = {10.1063/1.3499660},
	journal = {J. Math. Phys.},
	number = {12},
	pages = {123519},
	title = {Two-parameter asymptotics in magnetic {Weyl} calculus},
	volume = {51},
	year = {2010}}

@article{mantoiu2004magnetic,
	author = {M{\u{a}}ntoiu, Marius and Purice, Radu},
	doi = {10.1063/1.1668334},
	journal = {J. Math. Phys.},
	number = {4},
	pages = {1394--1417},
	title = {The magnetic {Weyl} calculus},
	volume = {45},
	year = {2004}}

@article{moller2025pauli,
	author = {M{\"o}ller, Jakob},
	doi = {10.1080/03605302.2024.2439358},
	journal = {Comm. Partial Differential Equations},
	number = {1-2},
	pages = {130--161},
	title = {The {Pauli--Poisson} equation and its semiclassical limit},
	volume = {50},
	year = {2025}}

@article{benporat2024magnetic,
	author = {Ben Porat, Immanuel},
	doi = {10.1137/22M1528562},
	journal = {SIAM J. Math. Anal.},
	number = {1},
	pages = {955--992},
	title = {The magnetic {Liouville} equation as a semiclassical limit},
	volume = {56},
	year = {2024}}

@article{colliander2004global,
	author = {Colliander, James and Keel, Markus and Staffilani, Gigliola and Takaoka, Hideo and Tao, Terence},
	doi = {10.1002/cpa.20029},
	journal = {Comm. Pure Appl. Math.},
	number = {8},
	pages = {987--1014},
	title = {Global existence and scattering for rough solutions of a nonlinear {Schr\"odinger} equation on $\mathbb{R}^3$},
	volume = {57},
	year = {2004}}

@article{colliander2009tensor,
	author = {Colliander, James and Grillakis, Manoussos and Tzirakis, Nikolaos},
	doi = {10.1002/cpa.20278},
	journal = {Comm. Pure Appl. Math.},
	number = {7},
	pages = {920--968},
	title = {Tensor products and correlation estimates with applications to nonlinear {Schr\"odinger} equations},
	volume = {62},
	year = {2009}}

@misc{porat2025propagation,
	author = {Ben Porat, Immanuel and Gagnebin, Antoine and Iacobelli, Mikaela and Junn{\'e}, Jonathan},
	note = {Preprint, arXiv:2510.22753},
	title = {Propagation of velocity moments for the magnetized {Vlasov--Poisson} system with space-time dependent magnetic fields},
	year = {2025}}

@article{rege2023propagation,
	author = {Rege, Alexandre},
	journal = {Comm. Partial Differential Equations},
	number = {3},
	pages = {386--414},
	title = {Propagation of velocity moments and uniqueness for the magnetized {Vlasov--Poisson} system},
	volume = {48},
	year = {2023}}

@article{iftimie2007magnetic,
	author = {Iftimie, Viorel and M{\u{a}}ntoiu, Marius and Purice, Radu},
	journal = {Publ. Res. Inst. Math. Sci.},
	number = {3},
	pages = {585--623},
	title = {Magnetic pseudodifferential operators},
	volume = {43},
	year = {2007}}

@article{amour2013semiclassical,
	author = {Amour, Laurent and Khodja, Mohamed and Nourrigat, Jean},
	journal = {Anal. PDE},
	number = {7},
	pages = {1649--1674},
	title = {The semiclassical limit of the time dependent {Hartree--Fock} equation: the {Weyl} symbol of the solution},
	volume = {6},
	year = {2013}}

@article{athmouni2018schatten,
	author = {Athmouni, Nassim and Purice, Radu},
	journal = {Comm. Partial Differential Equations},
	number = {5},
	pages = {733--749},
	title = {A {Schatten--von Neumann} class criterion for the magnetic {Weyl} calculus},
	volume = {43},
	year = {2018}}

@unpublished{laflechesemiclassical,
	author = {Lafleche, Laurent},
	note = {Master 2 course, {\'E}cole Normale Sup{\'e}rieure de Lyon},
	title = {Semiclassical dynamics},
	year = {2026}}

@unpublished{golse2013mean,
	author = {Golse, Fran{\c{c}}ois},
	note = {{\'E}cole Polytechnique},
	title = {Mean field kinetic equations},
	year = {2013}}

@article{lafleche2023strong,
	author = {Lafleche, Laurent and Saffirio, Chiara},
	journal = {Anal. PDE},
	number = {4},
	pages = {891--926},
	title = {Strong semiclassical limits from {Hartree} and {Hartree--Fock} to {Vlasov--Poisson} equations},
	volume = {16},
	year = {2023}}

@book{simon2005trace,
	address = {Providence, RI},
	author = {Simon, Barry},
	edition = {Second},
	publisher = {American Mathematical Society},
	series = {Mathematical Surveys and Monographs},
	title = {Trace ideals and their applications},
	volume = {120},
	year = {2005}}

@article{lions1991propagation,
	author = {Lions, Pierre-Louis and Perthame, Beno{\^\i}t},
	journal = {Invent. Math.},
	number = {1},
	pages = {415--430},
	title = {Propagation of moments and regularity for the 3-dimensional {Vlasov--Poisson} system},
	volume = {105},
	year = {1991}}

@article{boulkhemair1999l2,
	author = {Boulkhemair, Abdesslam},
	journal = {J. Funct. Anal.},
	number = {1},
	pages = {173--204},
	title = {{$L^2$ estimates for {Weyl} quantization}},
	volume = {165},
	year = {1999}}

@article{calderon1972class,
	author = {Calder{\'o}n, Alberto P. and Vaillancourt, R{\'e}mi},
	journal = {Proc. Natl. Acad. Sci. USA},
	number = {5},
	pages = {1185--1187},
	title = {A class of bounded pseudo-differential operators},
	volume = {69},
	year = {1972}}

@book{bahouri2011fourier,
	address = {Heidelberg},
	author = {Bahouri, Hajer and Chemin, Jean-Yves and Danchin, Rapha\"{e}l},
	publisher = {Springer},
	series = {Grundlehren der mathematischen Wissenschaften},
	title = {Fourier analysis and nonlinear partial differential equations},
	volume = {343},
	year = {2011}}

@article{pfaffelmoser1992global,
	author = {Pfaffelmoser, Klaus},
	journal = {J. Differential Equations},
	number = {2},
	pages = {281--303},
	title = {Global classical solutions of the {Vlasov--Poisson} system in three dimensions for general initial data},
	volume = {95},
	year = {1992}}

@article{lafleche2024quantum,
	author = {Lafleche, Laurent},
	journal = {J. Funct. Anal.},
	number = {10},
	pages = {110400},
	title = {On quantum {Sobolev} inequalities},
	volume = {286},
	year = {2024}}

@article{knopf2018optimal,
	author = {Knopf, Patrik},
	journal = {Calc. Var. Partial Differential Equations},
	number = {5},
	pages = {134},
	title = {Optimal control of a {Vlasov--Poisson} plasma by an external magnetic field},
	volume = {57},
	year = {2018}}

@article{narnhofer1981vlasov,
	author = {Narnhofer, Heide and Sewell, Geoffrey L.},
	journal = {Comm. Math. Phys.},
	number = {1},
	pages = {9--24},
	title = {{Vlasov} hydrodynamics of a quantum mechanical model},
	volume = {79},
	year = {1981}}

@article{spohn1981vlasov,
	author = {Spohn, Herbert},
	journal = {Math. Methods Appl. Sci.},
	number = {1},
	pages = {445--455},
	title = {On the {Vlasov} hierarchy},
	volume = {3},
	year = {1981}}

@article{lions1993mesures,
	author = {Lions, Pierre-Louis and Paul, Thierry},
	journal = {Rev. Mat. Iberoam.},
	number = {3},
	pages = {553--618},
	title = {Sur les mesures de {Wigner}},
	volume = {9},
	year = {1993}}

@article{markowich1993classical,
	author = {Markowich, Peter A. and Mauser, Norbert J.},
	journal = {Math. Models Methods Appl. Sci.},
	number = {1},
	pages = {109--124},
	title = {The classical limit of a self-consistent quantum-{Vlasov} equation in {3D}},
	volume = {3},
	year = {1993}}

@article{benedikter2016hartree,
	author = {Benedikter, Niels and Porta, Marcello and Saffirio, Chiara and Schlein, Benjamin},
	journal = {Arch. Ration. Mech. Anal.},
	number = {1},
	pages = {273--334},
	title = {From the {Hartree} dynamics to the {Vlasov} equation},
	volume = {221},
	year = {2016}}

@article{lafleche2019propagation,
	author = {Lafleche, Laurent},
	journal = {J. Stat. Phys.},
	number = {1},
	pages = {20--60},
	title = {Propagation of moments and semiclassical limit from {Hartree} to {Vlasov} equation},
	volume = {177},
	year = {2019}}

@article{lafleche2021global,
	author = {Lafleche, Laurent},
	journal = {Ann. Inst. H. Poincar{\'e} C Anal. Non Lin{\'e}aire},
	number = {6},
	pages = {1739--1762},
	title = {Global semiclassical limit from {Hartree} to {Vlasov} equation for concentrated initial data},
	volume = {38},
	year = {2021}}

@article{chong2024many,
	author = {Chong, Jacky J. and Lafleche, Laurent and Saffirio, Chiara},
	journal = {J. Eur. Math. Soc. (JEMS)},
	number = {12},
	pages = {4923--5007},
	title = {From many-body quantum dynamics to the {Hartree--Fock} and {Vlasov} equations with singular potentials},
	volume = {26},
	year = {2024}}

@article{golse1999vlasov,
	author = {Golse, Fran{\c{c}}ois and Saint-Raymond, Laure},
	journal = {J. Math. Pures Appl. (9)},
	number = {8},
	pages = {791--817},
	title = {The {Vlasov--Poisson} system with strong magnetic field},
	volume = {78},
	year = {1999}}

@article{golse2003vlasov,
	author = {Golse, Fran{\c{c}}ois and Saint-Raymond, Laure},
	journal = {Math. Models Methods Appl. Sci.},
	number = {5},
	pages = {661--714},
	title = {The {Vlasov--Poisson} system with strong magnetic field in quasineutral regime},
	volume = {13},
	year = {2003}}

@article{charles2021magnetized,
	author = {Charles, Fr{\'e}d{\'e}rique and Despr{\'e}s, Bruno and Rege, Alexandre and Weder, Ricardo},
	journal = {J. Stat. Phys.},
	number = {2},
	pages = {23},
	title = {The magnetized {Vlasov--Amp\`ere} system and the {Bernstein--Landau} paradox},
	volume = {183},
	year = {2021}}

@article{rege2021vlasov,
	author = {Rege, Alexandre},
	journal = {SIAM J. Math. Anal.},
	number = {2},
	pages = {2452--2475},
	title = {The {Vlasov--Poisson} system with a uniform magnetic field: propagation of moments and regularity},
	volume = {53},
	year = {2021}}

@book{benedikter2016effective,
	address = {Cham},
	author = {Benedikter, Niels and Porta, Marcello and Schlein, Benjamin},
	publisher = {Springer},
	series = {SpringerBriefs in Mathematical Physics},
	title = {Effective evolution equations from quantum dynamics},
	volume = {7},
	year = {2016}}

@article{bardos2002derivation,
	author = {Bardos, Claude and Erd{\"o}s, L{\'a}szl{\'o} and Golse, Fran{\c{c}}ois and Mauser, Norbert and Yau, Horng-Tzer},
	journal = {C. R. Math. Acad. Sci. Paris},
	number = {6},
	pages = {515--520},
	title = {Derivation of the {Schr\"odinger}--{Poisson} equation from the quantum $\mathbf{N}$-body problem},
	volume = {334},
	year = {2002}}

@article{erdos2001derivation,
	author = {Erd{\"o}s, L{\'a}szl{\'o} and Yau, Horng-Tzer},
	journal = {Adv. Theor. Math. Phys.},
	number = {6},
	pages = {1169--1205},
	title = {Derivation of the nonlinear {Schr{\"o}dinger} equation from a many body {Coulomb} system},
	volume = {5},
	year = {2001}}

@article{grillakis2010second,
	author = {Grillakis, Manoussos G. and Machedon, Matei and Margetis, Dionisios},
	journal = {Comm. Math. Phys.},
	number = {1},
	pages = {273--301},
	title = {Second-order corrections to mean field evolution of weakly interacting bosons. {I}},
	volume = {294},
	year = {2010}}

@article{rodnianski2009quantum,
	author = {Rodnianski, Igor and Schlein, Benjamin},
	journal = {Comm. Math. Phys.},
	number = {1},
	pages = {31--61},
	title = {Quantum fluctuations and rate of convergence towards mean field dynamics},
	volume = {291},
	year = {2009}}

@article{bach2016kinetic,
	author = {Bach, Volker and Breteaux, S{\'e}bastien and Petrat, S{\"o}ren and Pickl, Peter and Tzaneteas, Tim},
	journal = {J. Math. Pures Appl. (9)},
	number = {1},
	pages = {1--30},
	title = {Kinetic energy estimates for the accuracy of the time-dependent {Hartree--Fock} approximation with {Coulomb} interaction},
	volume = {105},
	year = {2016}}

@article{benedikter2014mean,
	author = {Benedikter, Niels and Porta, Marcello and Schlein, Benjamin},
	journal = {Comm. Math. Phys.},
	number = {3},
	pages = {1087--1131},
	title = {Mean--field evolution of fermionic systems},
	volume = {331},
	year = {2014}}

@article{petrat2016new,
	author = {Petrat, S{\"o}ren and Pickl, Peter},
	journal = {Math. Phys. Anal. Geom.},
	number = {1},
	pages = {3},
	title = {A new method and a new scaling for deriving fermionic mean-field dynamics},
	volume = {19},
	year = {2016}}

@inproceedings{saffirio2017mean,
	author = {Saffirio, Chiara},
	booktitle = {Workshop on Macroscopic Limits of Quantum Systems},
	pages = {81--99},
	publisher = {Springer},
	title = {Mean-field evolution of fermions with singular interaction},
	year = {2017}}

@article{michelangeli2015global,
	author = {Michelangeli, Alessandro},
	journal = {Nonlinearity},
	number = {8},
	pages = {2743--2765},
	title = {Global well-posedness of the magnetic {Hartree} equation with non-{Strichartz} external fields},
	volume = {28},
	year = {2015}}

@article{dong2021hartree,
	author = {Dong, Xin},
	journal = {Lett. Math. Phys.},
	number = {4},
	pages = {101},
	title = {The {Hartree} equation with a constant magnetic field: well-posedness theory},
	volume = {111},
	year = {2021}}

@book{Lieb2001-yq,
	address = {Providence, RI},
	author = {Lieb, Elliott H. and Loss, Michael},
	edition = {Second},
	publisher = {American Mathematical Society},
	series = {Graduate Studies in Mathematics},
	title = {Analysis},
	volume = {14},
	year = {2001}}

@article{benedikter2018dirac,
	author = {Benedikter, Niels and Sok, J{\'e}r{\'e}my and Solovej, Jan Philip},
	journal = {Ann. Henri Poincar{\'e}},
	number = {4},
	pages = {1167--1214},
	title = {The {Dirac--Frenkel} principle for reduced density matrices, and the {Bogoliubov--de Gennes} equations},
	volume = {19},
	year = {2018}}

@article{frank2008hardy,
	author = {Frank, Rupert and Lieb, Elliott H. and Seiringer, Robert},
	journal = {J. Amer. Math. Soc.},
	number = {4},
	pages = {925--950},
	title = {{Hardy--Lieb--Thirring} inequalities for fractional {Schr{\"o}dinger} operators},
	volume = {21},
	year = {2008}}

@article{serre2018divergence,
	author = {Serre, Denis},
	journal = {Ann. Inst. H. Poincar{\'e} C Anal. Non Lin{\'e}aire},
	number = {5},
	pages = {1209--1234},
	title = {Divergence-free positive symmetric tensors and fluid dynamics},
	volume = {35},
	year = {2018}}

@misc{wang2026critical,
	author = {Wang, Zhaopeng},
	note = {Preprint, arXiv:2607.26840},
	title = {A critical density estimate for the {Vlasov--Poisson} system: energy conservation and moment propagation},
	year = {2026}}

@article{bostan2019asymptotic,
	author = {Bostan, Mihai},
	journal = {SIAM J. Math. Anal.},
	number = {3},
	pages = {2713--2747},
	title = {Asymptotic behavior for the {Vlasov--Poisson} equations with strong external magnetic field: straight magnetic field lines},
	volume = {51},
	year = {2019}}
	
\end{document}